\documentclass[11pt]{article}

\usepackage[a4paper,margin=1in]{geometry}
\usepackage[T1]{fontenc}
\usepackage[utf8]{inputenc}
\usepackage{lmodern}
\usepackage{graphicx}
\usepackage{tikz}
\usetikzlibrary{arrows.meta,calc}
\usepackage{subcaption}
\usepackage{float}
\usepackage{microtype}
\usepackage{amsmath,amssymb,amsthm,mathtools}
\usepackage{bm}
\usepackage{enumitem}
\usepackage{xcolor}
\usepackage{hyperref}
\usepackage[nameinlink,noabbrev]{cleveref}
\usepackage{mathrsfs}
\usepackage{setspace}
\usepackage{titlesec}
\usepackage{fancyhdr}
\usepackage{orcidlink}
\usepackage[misc]{ifsym}
\usepackage[affil-it]{authblk}

\hypersetup{
  colorlinks=true,
  linkcolor=blue!50!black,
  citecolor=blue!50!black,
  urlcolor=blue!50!black,
  pdftitle={Thin Domains, Reduction, and Slow Manifolds},
  pdfauthor={Christian Kuehn and Jan-Eric Sulzbach}
}

\newcommand\blfootnote[1]{%
	\begingroup
	\renewcommand\thefootnote{}\footnote{#1}%
	\addtocounter{footnote}{-1}%
	\endgroup
}

\newtheorem{theorem}{Theorem}[section]
\newtheorem{lemma}[theorem]{Lemma}
\newtheorem{proposition}[theorem]{Proposition}
\newtheorem{corollary}[theorem]{Corollary}
\theoremstyle{definition}

\newtheorem{assumption}[theorem]{Assumption}
\theoremstyle{remark}
\newtheorem{remark}[theorem]{Remark}
\newtheorem{example}[theorem]{Example}

\numberwithin{equation}{section}

\newcommand{\eps}{\varepsilon}
\newcommand{\R}{\mathbb{R}}
\newcommand{\N}{\mathbb{N}}
\newcommand{\Qe}{Q_\varepsilon}
\newcommand{\Q}{Q}
\newcommand{\Om}{\Omega}

\newcommand{\Id}{\mathrm{Id}}
\newcommand{\mc}[1]{\mathcal{#1}}

\newcommand{\norm}[1]{\left\lVert#1\right\rVert}
\newcommand{\set}[1]{\left\{#1\right\}}
\newcommand{\pr}{\textnormal{pr}}

\begin{document}
\title{\bf Thin Domains, Reduction, and Slow Manifolds}

\author{Christian Kuehn \orcidlink{0000-0002-7063-6173}$^{1,2,3}$
\& Jan-Eric Sulzbach \orcidlink{ 0000-0002-9446-2366}$^{1,4}$ \thanks{JES acknowledges partial support from DFG grant 571660837.}
}

\date{
	\small{$^1$\textit{Technical University of Munich, School of Computation, Information and Technology, \\ Department of Mathematics, Boltzmannstraße 3, 85748 Garching, Germany} \\
	$^2$\textit{Munich Data Science Institute (MDSI), Garching, Germany } \\
	$^3$\textit{Munich Center for Machine Learning (MCML), München, Germany }\\
    $^4$\textit{Leiden University, Mathematical Institute, Einsteinweg 55, 2333 CC Leiden, Netherlands}}
}

\maketitle


\begin{abstract}
We propose a unified framework for dimension reduction of partial differential equations posed on thin domains.
Our approach combines three complementary ingredients: a careful boundary-condition analysis, an averaging-based splitting for general thin geometries, and a slow-manifold viewpoint for the resulting fast-slow system.
Homogeneous Neumann conditions on the thin faces emerge as the most relevant and physical regime because they preserve the transverse zero mode and therefore lead to a genuine lower-dimensional reduced equation. 
For general thin domains we derive the averaged fast-slow system and isolate the geometry-induced correction term.
We then formulate a splitting-based Lyapunov-Perron construction for an exact slow manifold when a suitable spectral decomposition of the slow variable is available, and we construct approximate slow manifolds and corrected reduced dynamics directly from the invariance equation by asymptotic expansion.
Moreover, we propose the Schnakenberg reaction-diffusion system as a canonical test problem for comparing the full thin-domain dynamics, the averaged model, and the manifold-corrected reduced dynamics.
Finally, we also extend the framework to thin tubular domains and derive the corresponding rescaled fast-slow system in
curved geometry.
\end{abstract}

\vspace{2mm}
	
	\noindent {\small \textbf{Keywords:} thin domains, dimension reduction, slow manifolds, singular perturbations, reaction-diffusion systems}
	
	\vspace{1mm}
	
	\noindent {\small \textbf{MSC (2020) Classification:} 35B25, 37L25, 35K57, 35C20}
	\vspace{3mm}
\blfootnote{\textcolor{white}{.}\\[-2.5mm]
	\hspace{-5.4mm}\Letter \; ckuehn@ma.tum.de (Christian Kuehn)  \\[0.8mm]
	\Letter\; j.e.sulzbach@math.leidenuniv.nl (Jan-Eric Sulzbach)
	}

{\hypersetup{linkcolor=black}
    \tableofcontents}

\section{Introduction}

\textbf{Background:} Partial differential equations on thin domains arise when one or more spatial directions are strongly confined, so that the physical region is narrow compared to the characteristic tangential length scale. Typical examples include thin films and coatings in lubrication theory \cite{OronDavisBankoff1997}, channels, tubes, and waveguides \cite{Grieser2008}, and reaction-diffusion processes on growing, evolving, or strongly confined biological domains \cite{CrampinGaffneyMaini1999,li2017limiting,Miura2017,Miura2023,keller2026asymptotic}. In all these settings one expects that the geometry should enforce an effective lower-dimensional description, but the mechanism by which this reduction occurs depends delicately on the interaction between diffusion, geometry, and boundary conditions. From a mathematical point of view, thin-domain problems therefore provide a natural testing ground for dimension reduction, singular perturbation theory, homogenization, spectral analysis, and long-time dynamics.

A basic observation underlies the present work. After rescaling the thin variable to a fixed reference interval, the transverse diffusion appears with the singular prefactor $\varepsilon^{-2}$, or, in variable-thickness coordinates, with the leading coefficient $\varepsilon^{-2}g(x)^{-2}$. Hence the thin direction is associated with a fast relaxation mechanism, while, in the homogeneous Neumann or compatible-flux regimes, the cross-sectional average evolves on an $\mathcal O(1)$ scale. This suggests that thin-domain problems should be viewed as fast-slow systems: the averaged component is the candidate slow variable, whereas the mean-zero transverse fluctuations form the fast part. Making this heuristic rigorous, however, requires more than a formal averaging step. 

\begin{figure}[H]
\centering

\begin{tikzpicture}[x=1cm,y=1cm,>=Latex]
  \coordinate (B0) at (0,0);
  \coordinate (B1) at (0.9,0);
  \coordinate (B2) at (1.8,0);
  \coordinate (B3) at (2.7,0);
  \coordinate (B4) at (3.6,0);
  \coordinate (B5) at (4.5,0);
  \coordinate (B6) at (5.4,0);
  \coordinate (B7) at (6.3,0);
  \coordinate (B8) at (7.2,0);

  \coordinate (T0) at (0,1.34);
  \coordinate (T1) at (0.9,1.42);
  \coordinate (T2) at (1.8,1.57);
  \coordinate (T3) at (2.7,1.80);
  \coordinate (T4) at (3.6,1.60);
  \coordinate (T5) at (4.5,1.73);
  \coordinate (T6) at (5.4,2.04);
  \coordinate (T7) at (6.3,1.84);
  \coordinate (T8) at (7.2,1.93);

  \fill[blue!10]
    plot[smooth] coordinates {(B0) (B1) (B2) (B3) (B4) (B5) (B6) (B7) (B8)}
    --
    plot[smooth] coordinates {(T8) (T7) (T6) (T5) (T4) (T3) (T2) (T1) (T0)}
    -- cycle;

  \draw[thick]
    plot[smooth] coordinates {(B0) (B1) (B2) (B3) (B4) (B5) (B6) (B7) (B8)};

  \draw[thick]
    plot[smooth] coordinates {(T0) (T1) (T2) (T3) (T4) (T5) (T6) (T7) (T8)};

  \draw[thick] (B0) -- (T0);
  \draw[thick] (B8) -- (T8);

  \draw[->] (-0.35,0.0) -- (7.75,0.0) node[right] {$x$};
  \draw[->] (0.0,-0.1) -- (0.0,2.45) node[above] {$y$};

  \draw[<->] (5.55,0.16) -- (5.55,2.00) node[midway,right] {$\varepsilon g(x)$};

  \node at (3.6,-0.23) {$\Omega$};
  \node at (1.35,1.05) {$Q_\varepsilon$};
\end{tikzpicture}
\caption{A schematic thin domain $Q_\varepsilon=\{(x,y):x\in\Omega,\ 0<y<\varepsilon g(x)\}$ with a tangential variable $x$ on the base domain and a strongly confined transversal direction $y$. After the rescaling $Y=y/(\varepsilon g(x))$, the problem is posed on the fixed reference strip $\Omega\times(0,1)$; the transverse diffusion then carries the singular $\varepsilon^{-2}$ scaling, while the variable thickness enters the coefficients and transformed boundary conditions.}
\label{fig:thin-domain-sketch}
\end{figure}

\textbf{Mathematical overview:}
To see why this thin domain problem is not merely solvable by standard averaging, it is useful to display the concrete model that will be analyzed throughout this work. This part of the introduction also serves as a high-level formal overview of the strategy we apply, and the reader can refer back to it as necessary. Let 
\[ Q_\varepsilon := \{(x,y)\in\mathbb R^2:\ x\in\Omega,\ 0<y<\varepsilon g(x)\} \]
be the thin domain, where \(\Omega=(0,L)\) and \(g>0\); see also \Cref{fig:thin-domain-sketch}.
We consider the reaction-diffusion equation
\[ \partial_t \widehat u = \Delta \widehat u + f(\widehat u) \quad \text{in } Q_\varepsilon, \qquad \partial_\nu \widehat u =0 \quad \text{on } \partial Q_\varepsilon . \]
After introducing a fixed transverse variable and rescaled domain
\[ Y=\frac{y}{\varepsilon g(x)}, \qquad Q=\Omega\times(0,1), \]
and writing \(u(x,Y,t)=\widehat u(x,\varepsilon g(x)Y,t)\), the evolution equation becomes 
\[ \partial_t u = \frac{1}{\varepsilon^2 g(x)^2}\partial_Y^2 u + \mc D^2u + f(u)\quad \text{in } Q, \quad \text{where }\quad\mc D:=\partial_x-Y\frac{g'(x)}{g(x)}\partial_Y . \]
The homogeneous Neumann condition on the physical thin domain is transformed into the conormal boundary conditions 
\[ \partial_Y u=0 \quad \text{on } Y=0, \qquad \partial_Y u-\varepsilon^2 gg'\mc Du=0 \quad \text{on } Y=1, \qquad \mc Du=0 \quad \text{on } \partial\Omega\times(0,1). \] 
Thus the singular transverse diffusion and the boundary condition are coupled at order \(\varepsilon^2\). 
This coupling is the source of the trace terms that appear in the averaged equation and is the reason why the analysis cannot be reduced to a purely formal projection onto the transverse zero mode. Next, let 
\[ Mu(x):=\int_0^1 u(x,Y)\,dY, \qquad v:=Mu, \qquad w:=(\mathrm{Id}-M)u . \]
Then \(v\) is the cross-sectional average and \(w\in\ker M\) is the mean-zero transverse fluctuation. Then one observes that the rescaled equation can be written in the fast-slow form 
\[ \partial_t v = B_g v+R_g w+Mf(v+w), \qquad B_g v:=\frac1g\partial_x(g\partial_xv), \]
and 
\[ \partial_t w = \frac{1}{\varepsilon^2g^2}\partial_Y^2 w + \mc D^2w - \mc E\left(\frac{g'}g\partial_xv+R_gw\right) + (\mathrm{Id}-M)f(v+w), \]
where 
\[ R_gw:=-\frac1g\partial_x\bigl(g'w(\cdot,1)\bigr), \qquad \mc E\phi(x,Y):=\phi(x). \]
Note that the boundary conditions for \(w\) are no longer homogeneous: 
\[ \partial_Yw=0 \quad \text{on } Y=0, \qquad \partial_Yw-\varepsilon^2gg'\mc Dw = \varepsilon^2gg'\partial_xv \quad \text{on } Y=1, \]
and 
\[ \mc Dw=-\partial_xv \quad \text{on } \partial\Omega\times(0,1). \]
Averaging the lateral condition yields the induced slow boundary compatibility condition 
\[ \partial_xv-\frac{g'}g w(\cdot,1)=0 \quad \text{on } \partial\Omega . \] 
This is now the point at which the boundary geometry, the singular limit, and the choice of the underlying function spaces interact. To tackle the problem geometrically and construct a slow-manifold we therefore do not work directly with the fast variable \(w\). Instead, we introduce a conormal lifting \(\Lambda_\varepsilon^1 v\) and set 
\[ z:=w-\Lambda_\varepsilon^1v . \]
The variable \(z\) satisfies homogeneous conormal boundary conditions and belongs to the domain of the projected fast operator. 
After a suitable spectral splitting in the slow variable
\[ v=v_F+v_S,\qquad Y=Y_F^\varepsilon\oplus Y_S^\varepsilon, \]
the lifted thin-domain system takes the abstract split form 
\[ \partial_t z^\varepsilon = \varepsilon^{-2}A_\varepsilon z^\varepsilon + F_\varepsilon(z^\varepsilon,v_F^\varepsilon+v_S^\varepsilon), \]
\[ \partial_t v_F^\varepsilon = \varepsilon^{-2}\widetilde B_{g,\varepsilon}v_F^\varepsilon + \Pi_F^\varepsilon G_\varepsilon(z^\varepsilon,v_F^\varepsilon+v_S^\varepsilon), \qquad \widetilde B_{g,\varepsilon}:=\varepsilon^2 B_g|_{Y_F^\varepsilon}, \] 
and 
\[ \partial_t v_S^\varepsilon = B_gv_S^\varepsilon + \Pi_S^\varepsilon G_\varepsilon(z^\varepsilon,v_F^\varepsilon+v_S^\varepsilon). \]
The fast operator \(A_\varepsilon\) is generated on \(\mc X=\ker M\) by the conormal form 
\[ \mathfrak a_\varepsilon(z,\phi) = \int_Q \frac1{g(x)^2}\partial_Yz\,\partial_Y\phi\,g(x)\,dx\,dY + \varepsilon^2 \int_Q \mc Dz\,\mc D\phi\,g(x)\,dx\,dY = \mathfrak a_0(z,\phi)+\varepsilon^2 \mathfrak a_1(z,\phi). \]
At the same time, the trace term \(R_gw\) forces the fast topology to control boundary traces; this motivates the trace-compatible space 
\[ \mc Z_\theta^\varepsilon := \mc X_\theta^\varepsilon\cap H^{2\theta}(Q)\cap \mc X, \qquad \theta>\frac34 . \]
Thus the central analytical issue is already visible at the level of the rescaled model: the transverse diffusion is singular, the transformed Neumann condition is conormal and \(\varepsilon\)-dependent, the averaged equation contains a boundary trace of the fast variable, and the slow-manifold formulation requires a lifted homogeneous fast variable in a topology strong enough to control that trace.\\

\textbf{Main contributions:} The aim of this paper is to develop a reduction framework organized around three steps. One must first determine which boundary conditions preserve a nontrivial slow variable, then derive the correct averaged equations for general thin geometries, and finally justify the reduced dynamics geometrically in an infinite-dimensional setting.

Our first contribution is a systematic analysis of the
boundary conditions on the thin faces. Among the standard local homogeneous
thin-face conditions considered here, homogeneous Neumann conditions form the
distinguished regime for genuine dimension reduction: they preserve the
transverse zero mode and thereby allow a nontrivial lower-dimensional variable
to persist in the thin-domain limit. By contrast, homogeneous Dirichlet
conditions eliminate this mode and enforce fast transverse decay, while general
inhomogeneous data naturally give rise to liftings, singular forcing, or
nontrivial transverse profiles.

Building on this observation, we derive an averaging-based splitting for
variable-thickness thin strips. The splitting separates the averaged dynamics
from the mean-zero transverse fluctuation and makes the geometry-induced trace
terms explicit. This reveals the fast-slow structure that underlies the
reduction. We then connect this structure with infinite-dimensional
slow-manifold theory. At the exact level, we formulate a Lyapunov-Perron
construction after a spectral splitting of the slow variable. This identifies
the main dynamical obstruction: the slow evolution has to be decomposed into a
part that can be continued backward and a high-mode part that decays forward.

For the thin-domain application, the exact construction requires a
trace-compatible fast topology and a uniform parameter-elliptic estimate for the
projected conormal fast operator. These estimates are verified, or explicitly
isolated as elliptic regularity inputs, in \Cref{subsec:thin-domain-application}
and \Cref{app:uniform-parameter-elliptic-estimate}. The asymptotic construction
in \Cref{sec:reduced} is developed first in an abstract split setting. For the
thin-domain system we then identify the corresponding lifted first-order
correction in the homogeneous fast variable. The formal expansion is upgraded to
a finite-order thin-domain approximation theorem in
\Cref{subsec:thin-domain-approximation-theorem}: using the form identity for the
fast operator, the conormal coefficient hierarchy, and a final boundary
correction, one obtains an \(O(\varepsilon^{2N})\) finite-time tracking estimate
for graph-prepared data. The additional finite-order trace liftings needed for
this construction are recorded in the appendix. Finally, we illustrate the
reduction mechanism with a heterogeneous Schnakenberg system.\\

\textbf{Embedding into the literature:} Research on thin domains is broad, and the present paper lies at the intersection of several strands. A foundational direction is the asymptotic analysis of elliptic boundary value problems in singularly perturbed and thin geometries, as developed in the work of Maz'ya, Nazarov, and Plamenevskii \cite{MazyaNazarovPlamenevskii2000,MazyaNazarovPlamenevskii2000b}. A second direction, closer to nonlinear dynamics, was shaped by Hale and Raugel, who derived limit equations for dissipative PDEs on thin domains, proved convergence properties for the associated attractors, and highlighted the decisive role of the transverse spectrum and of boundary conditions \cite{HaleRaugel1992,HaleRaugel1995,Raugel1995}. Related thin-domain singular limits for fluid equations were studied, for example, by Raugel and Sell and by Iftimie \cite{RaugelSell1993,Iftimie2001}. From the viewpoint of curved and squeezed domains, Prizzi, Rinaldi, and Rybakowski developed an influential dynamical-systems theory for parabolic equations on thin domains, including the characterization of limit phase spaces and the behavior of attractors under squeezing \cite{PrizziRybakowski1999,PrizziRybakowski2001,PrizziRybakowski2002}. In parallel, spectral and waveguide aspects of thin structures were clarified in the work surveyed by Grieser \cite{Grieser2008,Grieser2017}.

Thin-domain limits also fit naturally into the broader theory of domain perturbations for PDE, where one studies how spectra, semiflows, and attractors change under deformations of the underlying region. In the regular regime this leads to continuity questions for semilinear parabolic problems under smooth or Lipschitz perturbations of the domain \cite{Arrieta2004,PereiraPereira2007,LeeNguyen2023}. In more singular regimes, rapidly oscillating boundaries, concentrated terms near the boundary, small holes, and squeezing toward lower-dimensional sets can change the effective equation itself and therefore connect thin-domain analysis with homogenization and boundary-layer phenomena \cite{JimenezCasas2020,TavaresLimaLozada2024,AragaoArrietaBruschi2025}.

Recent work has pushed the thin-domain theory toward geometrically more complex and analytically more delicate settings. Arrieta and collaborators studied spectral convergence, nonlinear dynamics under domain perturbation, and thin domains with oscillatory boundaries, thereby linking thin-domain limits with homogenization and attractor continuity \cite{Arrieta2004,Arrieta2024,ArrietaNakasatoVillanueva2025,BarbosaVillanueva2024}. Miura analyzed moving and curved thin domains, where the limit problem lives on an evolving surface and geometric effects enter the reduced equation explicitly \cite{Miura2017,Miura2023}, with a particular application to the Navier-Stokes equations on thin domains in a series of papers \cite{miura2022navier,miura2021navier,miura2020navier}. 
These works make clear that thin-domain problems cannot be treated by a single universal argument: geometry, boundary conditions, and the choice of functional-analytic framework strongly influence both the limit equation and the quality of the approximation.

A natural language for this separation of scales is geometric singular perturbation theory.
In the finite-dimensional setting, Fenichel's theory \cite{fenichel1979geometric} shows that normally hyperbolic slow manifolds persist under perturbation and carry reduced dynamics.
Later works such as \cite{Jones1995} and \cite{kuehn2015multiple} helped establish this viewpoint as a standard framework for multiple-scale problems.
The thin-domain problem studied here is infinite-dimensional, so the classical ODE theory does not apply verbatim. We refer to \cite{hummel2022slow,kuehn2024infinite,kuehn2025fast,kuehn2026approximate} for the development of a generalized Fenichel theory in infinite-dimensional Banach spaces.
Nevertheless, the same organizing idea remains:
under suitable spectral and regularity hypotheses, rapid transverse relaxation should lead to an invariant or approximately invariant slow object on which the effective lower-dimensional dynamics evolve.

Against this background, the perspective adopted in this paper is to connect thin-domain reduction with recent slow-manifold theory for infinite-dimensional evolution equations.
Rather than interpreting the reduced model merely as an averaged limit, we view it as the effective dynamics induced on a slow manifold associated with the rapid transverse relaxation.
This viewpoint offers two advantages for the present work. 
First, it places the reduction procedure into a single conceptual framework: the boundary conditions determine whether a nontrivial slow variable survives, the averaging step reveals the underlying fast-slow structure, and the invariance equation yields corrected reduced dynamics. 
Second, it provides a systematic route to higher-order approximations, which becomes essential in situations where the leading averaged equation captures the dominant tangential behavior but does not recover the small transverse profile of the full solution.\\

\textbf{Structure of the paper:}
In \Cref{sec:boundary} we analyze the role of the boundary conditions on the thin faces and identify homogeneous Neumann data as the standard homogeneous regime leading to genuine
lower-dimensional reduction.
\Cref{sec:general-thin} derives the averaged fast-slow splitting for variable-thickness thin strips over a one-dimensional base and isolates the geometry-induced correction term.
\Cref{sec:manifolds} formulates the splitting-based Lyapunov-Perron construction and verifies the trace-compatible hypotheses for the thin-strip setting, under the stated elliptic regularity assumptions.
\Cref{sec:reduced} proves the asymptotic graph construction in an abstract split setting, derives the lifted first-order thin-domain correction, and proves the finite-order thin-domain approximation theorem in \Cref{subsec:thin-domain-approximation-theorem}.
\Cref{sec:example} applies the formal and numerical reduction procedure to a transversely heterogeneous Schnakenberg system.
\Cref{sec:tubular domains} derives the corresponding rescaled fast-slow system for thin tubular domains and records the further verification steps needed in curved geometry. 
\Cref{sec:discussion} concludes with a discussion of the scope of the framework and of several directions for future work.

\section{Boundary conditions on the thin faces}\label{sec:boundary}

In this section we study the role of the boundary conditions imposed on the thin part of the boundary. We deliberately emphasize the homogeneous Neumann case, because it is both the most natural physical regime for impermeable thin faces and the only standard homogeneous choice that leads to a genuine nontrivial lower-dimensional slow variable.

Let $\Om\subset \R^{n-1}$ be a bounded domain with sufficiently smooth boundary and consider the flat thin domain
\[
\Qe := \Om\times (-\eps,\eps).
\]
On $\Qe$ we study the semilinear parabolic problem with forcing term
\begin{equation}\label{eq:flat-physical}
\partial_t  u^\eps = \Delta  u^\eps + f( u^\eps)+\ell,
\qquad  u^\eps(0)= u_0^\eps,
\end{equation}
with homogeneous Neumann conditions on the lateral boundary $\partial\Om\times(-\eps,\eps)$ and varying conditions on the thin faces
\[
\Gamma_\eps^{\pm}:=\Om\times\{\pm\eps\}.
\]
Here, $f$ is a nonlinear reaction term depending on $u$ and $\ell$ a forcing term. The exact regularity classes are detailed below.

After the rescaling $Y=y/\eps\in(-1,1)$ the equation becomes
\begin{equation}\label{eq:flat-rescaled}
\partial_t u^\eps = \Delta_x u^\eps + \frac{1}{\eps^2}\partial_Y^2u^\eps + f(u^\eps)+\ell
\qquad \text{in }\Q:=\Om\times(-1,1).
\end{equation}
The behavior as $\eps\to 0$ is therefore governed by the transverse operator $\partial_Y^2$ together with the boundary conditions at $Y=\pm1$.

\subsection{The transverse operators}

We first isolate the one-dimensional operators on $I:=(-1,1)$:
\begin{align*}
A_N\phi &:= \phi'', & D(A_N)&:=\set{\phi\in H^2(I): \phi'(-1)=\phi'(1)=0},\\
A_D\phi &:= \phi'', & D(A_D)&:=\set{\phi\in H^2(I): \phi(-1)=\phi(1)=0}.
\end{align*}
The corresponding rescaled operators on $\Q$ are
\[
A_{\mathrm{bc}}^\eps := \Delta_x + \frac{1}{\eps^2}A_{\mathrm{bc}},\quad \text{with } bc=\{D,N\},
\]
with homogeneous Neumann conditions on the lateral boundary.

\begin{lemma}[Transverse spectra]\label{lem:transverse-spectra}
The spectra of the transverse operators are given by
\[
\sigma(-A_N)=\set{0}\cup \set{\bigl(\tfrac{k\pi}{2}\bigr)^2:k\in\N},
\qquad
\sigma(-A_D)=\set{\bigl(\tfrac{k\pi}{2}\bigr)^2:k\in\N}.
\]
In particular,
\[
\ker A_N = \mathrm{span}\set{1},
\qquad
\ker A_D = \set{0},
\qquad
-A_D\ge \frac{\pi^2}{4}
\]
in the sense of quadratic forms.
\end{lemma}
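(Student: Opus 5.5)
Both operators are self-adjoint and nonpositive on $L^2(I)$ with compact resolvent, since $D(A_{\mathrm{bc}})\subset H^2(I)\hookrightarrow\hookrightarrow L^2(I)$. Hence each spectrum consists only of real eigenvalues, and it suffices to solve the eigenvalue problem $-\phi''=\lambda\phi$ on $I=(-1,1)$ with the respective boundary conditions. Integrating by parts against $\phi\in D(A_{\mathrm{bc}})$ gives $\langle -A_{\mathrm{bc}}\phi,\phi\rangle=\int_I|\phi'|^2\,dY\ge0$, because the boundary terms $\phi'\phi|_{-1}^{1}$ vanish under either condition. Consequently every eigenvalue satisfies $\lambda\ge0$.

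For $\lambda=0$, any solution is affine, $\phi=a+bY$. The Neumann conditions force $b=0$, so $\ker A_N=\mathrm{span}\{1\}$. The Dirichlet conditions force $a=b=0$, so $\ker A_D=\{0\}$.

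For $\lambda=\mu^2>0$, write $\phi=\alpha\cos(\mu(Y+1))+\beta\sin(\mu(Y+1))$, so that the left endpoint is normalized.
\begin{itemize}
\item In the Neumann case, $\phi'(-1)=0$ gives $\beta=0$. Then $\phi'(1)=0$ gives $\sin(2\mu)=0$, i.e.\ $\mu=k\pi/2$ with $k\in\N$.
\item In the Dirichlet case, $\phi(-1)=0$ gives $\alpha=0$. Then $\phi(1)=0$ gives $\sin(2\mu)=0$, which yields the same values $\mu=k\pi/2$.
\end{itemize}
Every such $\mu$ produces a nonzero eigenfunction, namely $\cos(k\pi(Y+1)/2)$ and $\sin(k\pi(Y+1)/2)$ respectively. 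Together with the $\lambda=0$ case, this establishes both spectral formulas.

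For the quadratic form bound, I would expand in the orthonormal eigenbasis $\{\sin(k\pi(Y+1)/2)\}_{k\ge1}$, which is complete in $L^2(I)$ as the eigenbasis of a self-adjoint operator with compact resolvent. Writing $\phi=\sum_k c_k\phi_k$, we get $\langle -A_D\phi,\phi\rangle=\sum_k (k\pi/2)^2|c_k|^2\ge(\pi/2)^2\|\phi\|^2$. The same bound extends by density to the form domain $H_0^1(I)$; equivalently, it is the sharp Poincaré inequality on an interval of length $2$.

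No step is a genuine obstacle. The only point needing care is justifying that the spectrum is purely point spectrum, which the compact resolvent argument handles.
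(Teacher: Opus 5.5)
Your proof is correct and follows the same route as the paper, which simply invokes the standard Sturm--Liouville computation on $(-1,1)$. You carry out that computation explicitly (affine solutions for $\lambda=0$, the trigonometric ansatz giving $\sin(2\mu)=0$, and the eigenbasis/Poincar\'e argument for $-A_D\ge\pi^2/4$), and you add the compact-resolvent justification that the spectrum consists only of eigenvalues, which the paper leaves implicit.
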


\begin{proof}
This is the standard Sturm-Liouville computation on $(-1,1)$. The homogeneous Neumann problem admits the constant eigenfunction for eigenvalue $0$, whereas the homogeneous Dirichlet problem has first eigenvalue $\pi^2/4$.
\end{proof}

\begin{remark}
\Cref{lem:transverse-spectra} contains the main idea of the section. Among the standard local homogeneous face conditions considered here, the homogeneous Neumann problem is the distinguished case because it preserves a transverse zero mode. It is therefore the case in which averaging in the thin direction produces a nontrivial slow variable. Homogeneous Dirichlet data remove this mode and force the whole solution onto the fast time scale.
\end{remark}

\subsection{The zero Neumann case}

We now impose
\begin{equation}\label{eq:zero-neumann-faces}
\partial_Yu^\eps(x,\pm1,t)=0.
\end{equation}
Define the averaging operator
\begin{equation}\label{eq:M-def}
Mu(x):=\frac12\int_{-1}^1 u(x,Y)\,dY.
\end{equation}
Set
\[
v:=Mu,
\qquad
w:=(\Id-M)u.
\]
Then $v$ is independent of $Y$ and $Mw=0$.

\begin{proposition}[Averaging for zero Neumann data]\label{prop:neumann-averaging}
Let $u\in H^2(\Q)$ satisfy \eqref{eq:zero-neumann-faces}. Then
\[
M\partial_Y^2u = 0.
\]
Consequently, under \eqref{eq:zero-neumann-faces} the decomposition $u=v+w$ is precisely the decomposition into the kernel part and the mean-zero part of the transverse Neumann operator.
\end{proposition}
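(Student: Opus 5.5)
The identity $M\partial_Y^2u=0$ follows from the fundamental theorem of calculus in the transverse variable, and the structural claim then follows from the spectral description in \Cref{lem:transverse-spectra}.

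\textbf{Step 1: the identity.} For $u\in H^2(\Q)$, Fubini gives that for almost every $x\in\Om$ the slice $u(x,\cdot)$ lies in $H^2(-1,1)$. Hence $\partial_Yu(x,\cdot)\in H^1(-1,1)$ is absolutely continuous. Its boundary values coincide with the traces of $\partial_Yu$ on $\Gamma^\pm:=\Om\times\{\pm1\}$ for almost every $x$, and these traces vanish by \eqref{eq:zero-neumann-faces}. Then
\[
M\partial_Y^2u(x)=\tfrac12\bigl(\partial_Yu(x,1)-\partial_Yu(x,-1)\bigr)=0 .
\]
The one point needing care is identifying the slice boundary values with the $H^{1/2}$ trace of $\partial_Yu\in H^1(\Q)$ almost everywhere in $x$. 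I would handle this by density: approximate $u$ by smooth functions, for which the slice values and the trace agree, and pass to the limit in $L^2(\Om)$. Alternatively, one can test against $\psi\in C_c^\infty(\Om)$ and integrate by parts in $Y$ on $\Q$ using the Gauss--Green formula for $H^1$ functions. This yields $\int_\Om\psi\,M\partial_Y^2u\,dx=0$ for all such $\psi$, which avoids slicing altogether.

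\textbf{Step 2: kernel and range of the transverse operator.} Apply \Cref{lem:transverse-spectra} fibrewise in $x$. The operator $A_N$ is self-adjoint on $L^2(-1,1)$, so
\[
L^2(-1,1)=\ker A_N\oplus\overline{\operatorname{ran}A_N},\qquad \ker A_N=\mathrm{span}\{1\},\qquad \overline{\operatorname{ran}A_N}=\{1\}^\perp=\{\phi:M\phi=0\}.
\]
Moreover, $M$ is exactly the orthogonal projection onto the constants, since $\tfrac12\int_{-1}^1\phi\,dY$ is the coefficient of the normalized constant eigenfunction. Consequently $v=Mu$ is the kernel component and $w=(\Id-M)u$ is the mean-zero component.

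Two further facts make this compatible with the dynamics.
\begin{enumerate}[label=(\roman*)]
\item $v$ is $Y$-independent, so $\partial_Y^2v=0$.
\item $w$ inherits \eqref{eq:zero-neumann-faces}, because $\partial_Yw=\partial_Yu$.
\end{enumerate}
Thus $w\in D(A_N)$ fibrewise, and $\partial_Y^2u=\partial_Y^2w$ lies in $\ker M$, which is the content of Step 1. So the splitting commutes with the transverse Neumann operator. Since $0$ is an isolated eigenvalue, $A_N$ is invertible on $\ker M$ with spectral gap $\pi^2/4$. This is the "precisely" in the statement: no mean-zero part lies in the kernel.
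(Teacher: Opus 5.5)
Your proof is correct and follows the paper's argument: the identity is the fundamental theorem of calculus (integration by parts) in $Y$ combined with the vanishing Neumann traces, and the structural claim follows because $M$ is the orthogonal projection onto $\ker A_N=\mathrm{span}\{1\}$. Your extra steps make rigorous what the paper states in one line: identifying slice boundary values with traces (or the Gauss--Green variant with $Y$-independent test functions), and the explicit appeal to \Cref{lem:transverse-spectra}.
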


\begin{proof}
Integrating by parts in $Y$ and using \eqref{eq:zero-neumann-faces},
\[
M\partial_Y^2u = \frac12\int_{-1}^1 \partial_Y^2u\,dY
= \frac12\bigl[\partial_Yu(x,Y)\bigr]_{Y=-1}^{Y=1}=0.
\]
The remaining statements follow immediately from the definition of $M$.
\end{proof}

Let $u^\varepsilon\in W^{1,2}(0,T;L^2(Q)) \cap L^2(0,T;D(A^\varepsilon_N))$ be a strong solution of the rescaled equation \eqref{eq:flat-rescaled}. 
Since \(M\) is a bounded, time-independent linear operator from \(L^2(Q)\) to \(L^2(\Omega)\) we can directly apply it to \eqref{eq:flat-rescaled}.
Thus we obtain
\[
  M(\partial_t u^\varepsilon)=\partial_t(Mu^\varepsilon),
\]
and, using \Cref{prop:neumann-averaging},
\begin{equation}\label{eq:avg-flat}
  \partial_t(Mu^\varepsilon)   =   \Delta_x(Mu^\varepsilon)   +M f(u^\varepsilon)   +M\ell .
\end{equation}
For weak or mild solutions, equation \eqref{eq:avg-flat} is understood in the
distributional sense in time. 
More precisely, since \(M\) is bounded and does not depend on \(t\), one has
\[
  \partial_t(Mu^\varepsilon)=M(\partial_tu^\varepsilon)
  \quad\text{in } \mathcal D'(0,T;H^{-1}(\Omega)).
\]
Equivalently, in the weak formulation one may choose test functions of the form \(\psi(x,Y,t)=\varphi(x,t)\), independent of \(Y\). 
This choice projects the full equation onto the transverse zero mode and gives precisely
\eqref{eq:avg-flat}.
Note that the term \(\varepsilon^{-2}\partial_Y^2u^\varepsilon\) contributes only the Neumann
boundary traces, which vanish under \eqref{eq:zero-neumann-faces}.

This is the fundamental mechanism behind the reduction theory developed later. The fast transverse diffusion does not affect the cross-sectional mean directly; it only forces the mean-zero component $w$ to relax rapidly.

\subsection{The zero Dirichlet case}

We next impose
\begin{equation}\label{eq:zero-dirichlet-faces}
u^\eps(x,\pm1,t)=0.
\end{equation}
In this regime there is no transverse kernel.
For bounded forcing, the full solution relaxes on the fast time scale to an $\mathcal O(\eps^2)$ remainder after the initial layer.

\begin{theorem}[Fast decay under zero Dirichlet data]\label{thm:dirichlet-decay}
Assume that $f$ is globally Lipschitz with Lipschitz constant $L_f$ and that
\[
\ell\in L^\infty(0,T;L^2(\Q)).
\]
Let $u^\eps$ be the mild solution of \eqref{eq:flat-rescaled} subject to \eqref{eq:zero-dirichlet-faces}. Then there exist $\eps_0>0$ and $C>0$ such that for all $\eps\in(0,\eps_0]$ and all $t\in[0,T]$,
\begin{equation}\label{eq:dirichlet-est}
\norm{u^\eps(t)}_{L^2(\Q)}
\le e^{-\frac{\pi^2}{8\eps^2}t}\norm{u_0^\eps}_{L^2(\Q)} + C\eps^2.
\end{equation}
In particular, if
\[
\sup_{0<\eps\le \eps_0}\norm{u_0^\eps}_{L^2(\Q)}<\infty,
\]
then, for every $\tau\in(0,T]$,
\begin{equation}\label{eq:dirichlet-conv}
\sup_{t\in[\tau,T]}\norm{u^\eps(t)}_{L^2(\Q)}\longrightarrow 0
\qquad \text{as }\eps\to0.
\end{equation}
\end{theorem}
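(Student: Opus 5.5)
The estimate \eqref{eq:dirichlet-est} should follow from a direct $L^2$ energy estimate, with the transverse Poincaré inequality of \Cref{lem:transverse-spectra} as the source of the $\eps^{-2}$ damping. I would first carry it out for strong solutions $u^\eps\in W^{1,2}(0,T;L^2(\Q))\cap L^2(0,T;D(A_D^\eps))$, taking initial data in $D(A_D^\eps)$ and $\ell$ smooth in time. I would then pass to mild solutions by density. Since $f$ is globally Lipschitz, the mild solution map is Lipschitz continuous from $(u_0^\eps,\ell)\in L^2(\Q)\times L^\infty(0,T;L^2(\Q))$ into $C([0,T];L^2(\Q))$, with constants depending on $\eps$. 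This is harmless, because the final bound is stable under $L^2$ limits for each fixed $\eps$.

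For a strong solution I would test \eqref{eq:flat-rescaled} with $u^\eps$. The lateral Neumann condition and the Dirichlet condition \eqref{eq:zero-dirichlet-faces} remove all boundary terms, giving
\[
\tfrac12\tfrac{d}{dt}\norm{u^\eps}^2 = -\norm{\nabla_x u^\eps}^2 - \eps^{-2}\norm{\partial_Y u^\eps}^2 + (f(u^\eps),u^\eps) + (\ell,u^\eps).
\]
I would drop the $\nabla_x$ term. Applying $-A_D\ge \pi^2/4$ fiberwise in $x$ gives $\norm{\partial_Y u^\eps}^2\ge \frac{\pi^2}{4}\norm{u^\eps}^2$. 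Writing $|f(s)|\le |f(0)|+L_f|s|$ gives
\[
(f(u^\eps),u^\eps)\le L_f\norm{u^\eps}^2 + K_0\norm{u^\eps}, \qquad K_0:=|f(0)|\,|\Q|^{1/2}.
\]
Setting $K:=K_0+\norm{\ell}_{L^\infty(0,T;L^2(\Q))}$, these combine into
\[
\tfrac12\tfrac{d}{dt}\norm{u^\eps}^2 \le -\Bigl(\tfrac{\pi^2}{4\eps^2}-L_f\Bigr)\norm{u^\eps}^2 + K\norm{u^\eps}.
\]
Next I would choose $\eps_0$ so that $L_f\le \pi^2/(8\eps_0^2)$. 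Then the damping rate is at least $\lambda_\eps:=\pi^2/(8\eps^2)$ for all $\eps\le\eps_0$.

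The one technical point is turning this into a linear differential inequality for $\norm{u^\eps}$ without dividing by zero. I would work with $\phi_\delta:=(\norm{u^\eps}^2+\delta)^{1/2}$. Differentiating gives
\[
\phi_\delta' \le -\lambda_\eps \norm{u^\eps}^2/\phi_\delta + K \le -\lambda_\eps\phi_\delta + \lambda_\eps\sqrt\delta + K.
\]
Gronwall then yields
\[
\phi_\delta(t)\le e^{-\lambda_\eps t}\phi_\delta(0) + \frac{\lambda_\eps\sqrt\delta+K}{\lambda_\eps}.
\]
Letting $\delta\to0$ gives \eqref{eq:dirichlet-est} with $C=8K/\pi^2$, which is independent of $\eps$ and $t$.

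Finally, \eqref{eq:dirichlet-conv} is immediate. For $t\ge\tau$ the first term is bounded by $e^{-\pi^2\tau/(8\eps^2)}\sup_\eps\norm{u_0^\eps}$, which tends to $0$, and the second term is $C\eps^2\to0$. The main obstacle is really only the bookkeeping in the regularization to mild solutions. The analytic core is the single Poincaré step, which is exactly where the absence of a Dirichlet transverse kernel enters.
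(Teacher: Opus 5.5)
Your proof is correct, but it follows a different route from the paper's. The paper never writes an energy identity. It takes the semigroup bound $\norm{e^{tA_D^\eps}}_{\mathcal L(L^2(\Q))}\le e^{-\mu_\eps t}$ with $\mu_\eps=\pi^2/(4\eps^2)$, which is the spectral bottom of the self-adjoint operator $-A_D^\eps$ and so encodes the same transverse Poincar\'e constant you use. It then inserts the bound $\norm{f(u)}\le L_f\norm{u}+|f(0)|\,|\Q|^{1/2}$ into the variation-of-constants formula. The convolution form of Gronwall's inequality closes the argument, giving $\norm{u^\eps(t)}\le e^{-(\mu_\eps-L_f)t}\norm{u_0^\eps}+C_0/(\mu_\eps-L_f)$ with your $K$ equal to the paper's $C_0$. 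After that, $\eps_0$ is chosen exactly as you do, so both routes give the rate $\pi^2/(8\eps^2)$ and the constant $8K/\pi^2$.

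The Duhamel route is shorter and works for mild solutions directly. The integral inequality is already linear in $\norm{u^\eps(s)}$, so it needs no strong solutions, no $(\norm{u}^2+\delta)^{1/2}$ regularization, and no density step.

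Your energy route costs that bookkeeping but is more robust. It uses only the form identity and the one-sided bound $(f(u),u)\le L_f\norm{u}^2+K_0\norm{u}$. It would therefore survive one-sided Lipschitz nonlinearities or non-self-adjoint lower-order perturbations, where an explicit semigroup bound is less convenient.

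One small repair is needed in your density step: smooth-in-time functions are not dense in $L^\infty(0,T;L^2(\Q))$. You can fix this in either of two ways:
\begin{itemize}
\item Approximate $\ell$ in $L^1(0,T;L^2(\Q))$ by time mollification. This does not increase $\norm{\ell}_{L^\infty(0,T;L^2(\Q))}$, so $K$ is unchanged, and the mild solution map is Lipschitz in the forcing for the $L^1(0,T;L^2(\Q))$-norm.
\item Use maximal $L^2$-regularity of the self-adjoint generator. This makes the mild solution strong for any $u_0^\eps$ in the form domain and any $\ell\in L^2(0,T;L^2(\Q))$, so only the initial data need approximating.
\end{itemize}
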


\begin{proof}
Let $A_D^\eps$ denote the realization of $\Delta_x+\eps^{-2}\partial_Y^2$ with homogeneous Neumann conditions on the lateral boundary and homogeneous Dirichlet conditions on $Y=\pm1$. By \Cref{lem:transverse-spectra}, the lowest point of the spectrum of $-A_D^\eps$ is at least $\pi^2/(4\eps^2)$. Hence
\[
\norm{e^{tA_D^\eps}}_{\mc L(L^2(\Q))}
\le e^{-\mu_\eps t},
\qquad
\mu_\eps:=\frac{\pi^2}{4\eps^2}.
\]
The variation-of-constants formula gives
\[
u^\eps(t)
=
e^{tA_D^\eps}u_0^\eps
+
\int_0^t e^{(t-s)A_D^\eps}\bigl(f(u^\eps(s))+\ell(s)\bigr)\,ds.
\]
Since $f$ is globally Lipschitz,
\[
\norm{f(u^\eps(s))}_{L^2(\Q)}
\le L_f\norm{u^\eps(s)}_{L^2(\Q)}
+ |f(0)|\,|\Q|^{1/2}.
\]
Thus, with
\[
C_0:=|f(0)|\,|\Q|^{1/2}+\norm{\ell}_{L^\infty(0,T;L^2(\Q))},
\]
we obtain
\[
\norm{u^\eps(t)}
\le
e^{-\mu_\eps t}\norm{u_0^\eps}
+
\int_0^t e^{-\mu_\eps(t-s)}
\bigl(L_f\norm{u^\eps(s)}+C_0\bigr)\,ds.
\]
The convolution form of Gronwall's inequality yields
\[
\norm{u^\eps(t)}
\le
e^{-(\mu_\eps-L_f)t}\norm{u_0^\eps}
+
\frac{C_0}{\mu_\eps-L_f}
\]
whenever $\mu_\eps>L_f$. Choosing $\eps_0>0$ so small that
\[
\mu_\eps-L_f\ge \frac{\pi^2}{8\eps^2}
\qquad\text{for }0<\eps\le\eps_0
\]
gives \eqref{eq:dirichlet-est}. The convergence statement follows from this estimate and the assumed uniform boundedness of the initial data.
\end{proof}

\begin{remark}
Note that under the Dirichlet conditions one generally expects an initial layer of thickness $\mc O(\eps^2)$ in time. 
Thus, the conclusion of \Cref{thm:dirichlet-decay} is not that the mean survives in a modified form, but rather that the transverse constraint removes the slow mode altogether.
\end{remark}

\subsection{General boundary data}\label{subsec:general-bc}

In this final subsection, we outline how to treat general boundary data.

\paragraph{Inhomogeneous Neumann data.}
Assume on the physical thin domain that
\[
\partial_\nu  u^\eps\big|_{y=-\eps}=q_-(x,t),
\qquad
\partial_\nu  u^\eps\big|_{y=\eps}=q_+(x,t).
\]
After rescaling, these become
\[
\partial_Yu^\eps(x,-1,t)=-\eps q_-(x,t),
\qquad
\partial_Yu^\eps(x,1,t)=\eps q_+(x,t).
\]
A quadratic lifting
\[
\chi^\eps(x,Y,t)=\frac\eps4(Y+1)^2q_+(x,t)+\frac\eps4(Y-1)^2q_-(x,t)
\]
realizes these traces exactly.
We now set
\begin{equation*}
w^\varepsilon:=u^\varepsilon-\chi^\varepsilon.
\end{equation*}
Then $w^\varepsilon$ satisfies homogeneous Neumann conditions on $Y=\pm1$, and substitution back into \eqref{eq:flat-rescaled} yields
\begin{equation}\label{eq:remainder-neumann-raw}
\partial_t w^\varepsilon
=
\Delta_x w^\varepsilon+\frac1{\varepsilon^2}\partial_Y^2w^\varepsilon
+f(w^\varepsilon+\chi^\varepsilon)+\ell-\partial_t \chi^\varepsilon+\Delta_x \chi^\varepsilon
+\frac{q_++q_-}{2\varepsilon}.
\end{equation}
The singular term $\frac{q_++q_-}{2\varepsilon}$ is the key new feature of the inhomogeneous Neumann problem.
With this setup in place, averaging across the thin direction yields the effective equation for the cross-sectional mean.
The averaged equation now acquires the forcing term
\begin{equation}\label{eq:neumann-general-avg}
\partial_t(Mu^\eps)=\Delta_x(Mu^\eps)+M f(u^\eps)+M\ell+\frac{q_+(x,t)+q_-(x,t)}{2\eps}.
\end{equation}
Thus the sum of the fluxes on the two thin faces enters the averaged equation as a forcing term of size $1/\varepsilon$.
A finite limit therefore requires the compatibility scaling
\[
q_+(x,t)+q_-(x,t)=\mc O(\eps).
\]
If the combined flux is larger than order $\eps$, one should expect singular forcing, transition layers, or look for a different renormalization of the solution. The mechanism becomes transparent in the following explicit elliptic example on a thin rectangle.
\begin{example}[Singular scaling from inhomogeneous Neumann flux]
    Let
\[
Q_\varepsilon=(0,1)\times(0,\varepsilon).
\]
Consider the linear elliptic problem
\begin{equation}\label{eq:ex-1-linear}
\begin{cases}
\partial_x^2u+\partial_y^2u-u=0, & \text{in }Q_\varepsilon,\\[0.3em]
\partial_yu\big|_{y=0}=0,\qquad \partial_yu\big|_{y=\varepsilon}=\dfrac{c}{\varepsilon^\alpha}, &\\[0.7em]
\partial_xu\big|_{x=0}=\partial_xu\big|_{x=1}=0,&
\end{cases}
\end{equation}
where $c>0$ and $\alpha\in\mathbb R$ are parameters.

\begin{proposition}[Explicit solution and scaling thresholds]\label{prop:explicit-neumann-example}
The unique solution of \eqref{eq:ex-1-linear} is independent of $x$ and is given by
\begin{equation}\label{eq:explicit-neumann-solution}
u^\varepsilon(x,y)=\frac{c}{\varepsilon^\alpha}\frac{\cosh(y)}{\sinh(\varepsilon)}.
\end{equation}
Moreover,
\begin{equation}\label{eq:explicit-neumann-l2-asymptotic}
\|u^\varepsilon\|_{L^2(Q_\varepsilon)}\simeq \varepsilon^{-\alpha-\frac12}
\qquad\text{as }\varepsilon\to0,
\end{equation}
and, after rescaling $y=\varepsilon Y$,
\begin{equation}\label{eq:explicit-neumann-rescaled-solution}
U^\varepsilon(x,Y):=u^\varepsilon(x,\varepsilon Y)
=
\frac{c}{\varepsilon^\alpha}\frac{\cosh(\varepsilon Y)}{\sinh(\varepsilon)}
\sim c\,\varepsilon^{-\alpha-1}
\qquad\text{uniformly on }(0,1)^2.
\end{equation}
In particular, the rescaled solution exhibits the following trichotomy:
\begin{equation}\label{eq:trichotomy}
U^\varepsilon\to
\begin{cases}
0, & \alpha<-1,\\
c, & \alpha=-1,\\
+\infty, & \alpha>-1,
\end{cases}
\qquad\text{uniformly on }(0,1)^2.
\end{equation}
\end{proposition}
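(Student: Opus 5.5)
Uniqueness comes first, and it follows from the energy identity for the Neumann problem. If $u_1,u_2$ both solve \eqref{eq:ex-1-linear}, then $d:=u_1-u_2$ satisfies $\Delta d-d=0$ in $\Qe$ with homogeneous Neumann data on the whole boundary. Multiplying by $d$ and integrating by parts gives
\[
\int_{Q_\varepsilon}|\nabla d|^2+d^2=0,
\]
so $d=0$; the same coercivity, via Lax--Milgram on $H^1(Q_\varepsilon)$, also gives existence. It therefore suffices to exhibit one solution.

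I look for an $x$-independent candidate. The ODE $u''=u$ with $u'(0)=0$ forces $u=a\cosh y$. Imposing $u'(\varepsilon)=a\sinh\varepsilon=c\varepsilon^{-\alpha}$ gives $a=c\varepsilon^{-\alpha}/\sinh\varepsilon$, which is \eqref{eq:explicit-neumann-solution}. Since this function does not depend on $x$, the lateral conditions hold trivially, and by uniqueness it is the solution.

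The asymptotics then follow from Taylor expansion as $\varepsilon\to0$. First,
\[
\|u^\varepsilon\|_{L^2}^2=\frac{c^2\varepsilon^{-2\alpha}}{\sinh^2\varepsilon}\int_0^\varepsilon\cosh^2y\,dy,
\]
and the integral equals $\varepsilon(1+O(\varepsilon^2))$. Together with $\sinh\varepsilon=\varepsilon(1+O(\varepsilon^2))$ this gives $\|u^\varepsilon\|_{L^2}^2\simeq c^2\varepsilon^{-2\alpha-1}$, which is \eqref{eq:explicit-neumann-l2-asymptotic}. For the rescaled solution, $\cosh(\varepsilon Y)=1+O(\varepsilon^2)$ uniformly for $Y\in(0,1)$. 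Hence
\[
U^\varepsilon=c\,\varepsilon^{-\alpha-1}\bigl(1+O(\varepsilon^2)\bigr)
\]
uniformly on $(0,1)^2$, which is \eqref{eq:explicit-neumann-rescaled-solution}.

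The trichotomy \eqref{eq:trichotomy} is read off from the sign of the exponent $-\alpha-1$:
\begin{itemize}
\item if $\alpha<-1$, the exponent is positive and $U^\varepsilon\to0$;
\item if $\alpha=-1$, the exponent is zero and $U^\varepsilon\to c$;
\item if $\alpha>-1$, the exponent is negative and, since $c>0$, $U^\varepsilon\to+\infty$.
\end{itemize}
In every case the convergence is uniform because the $O(\varepsilon^2)$ error is uniform in $(x,Y)$.

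None of these steps is a genuine obstacle. The only point needing care is to keep the relative error $O(\varepsilon^2)$ uniform in $Y$, so that convergence is uniform rather than pointwise; this holds because $|\varepsilon Y|\le\varepsilon$ on the rescaled domain.
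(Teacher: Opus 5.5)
Your proof is correct and complete. Energy uniqueness in $H^1(Q_\varepsilon)$, the $x$-independent ansatz $u=a\cosh y$ with $a\sinh\varepsilon=c\varepsilon^{-\alpha}$, and the expansions $\int_0^\varepsilon\cosh^2y\,dy=\varepsilon(1+O(\varepsilon^2))$, $\sinh\varepsilon=\varepsilon(1+O(\varepsilon^2))$ and $1\le\cosh(\varepsilon Y)\le\cosh\varepsilon$ give everything claimed, including uniformity of the trichotomy. The paper writes out no proof of this proposition and treats it as a direct computation; its only further check is the averaging identity in \Cref{rem:interpret-example-average}, which yields $\bar u^\varepsilon=c\,\varepsilon^{-\alpha-1}$ exactly. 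Your argument is therefore precisely the verification the paper leaves implicit.
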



\begin{remark}[Physical and rescaled thresholds]\label{rem:interpret-example-l2}
Proposition~\ref{prop:explicit-neumann-example} displays two different thresholds, corresponding to two different ways of measuring the solution. In the physical norm one has
\[
\|u^\varepsilon\|_{L^2(Q_\varepsilon)}\simeq \varepsilon^{-\alpha-1/2},
\]
so the threshold for blow-up in $L^2(Q_\varepsilon)$ is $\alpha=-\tfrac12$. By contrast, on the fixed rescaled domain the relevant amplitude is $\varepsilon^{-\alpha-1}$, so the threshold for an $O(1)$ limit is $\alpha=-1$.
\end{remark}

\begin{remark}[Consistency with the averaged equation]\label{rem:interpret-example-average}
The example is fully consistent with the averaged equation. Indeed, if one averages \eqref{eq:ex-1-linear} in the thin direction, then the $x$-independence and the boundary conditions give
\[
-\bar u^\varepsilon + \frac1\varepsilon\bigl(\partial_yu^\varepsilon(\varepsilon)-\partial_yu^\varepsilon(0)\bigr)=0,
\]
that is,
\[
\bar u^\varepsilon = \frac{c}{\varepsilon^{\alpha+1}},
\]
where
\[
\bar u^\varepsilon(x):=\frac1\varepsilon\int_0^\varepsilon u^\varepsilon(x,y)\,dy.
\]
Thus the average itself already scales like $\varepsilon^{-\alpha-1}$, exactly as indicated by \eqref{eq:neumann-general-avg}. The three regimes $\alpha<-1$, $\alpha=-1$, and $\alpha>-1$ therefore correspond, respectively, to vanishing forcing, finite forcing, and singular forcing in the averaged equation.
\end{remark}

\end{example}

\paragraph{Inhomogeneous Dirichlet data.}
Assume instead that
\[
u^\eps(x,-1,t)=d_-(x,t),
\qquad
u^\eps(x,1,t)=d_+(x,t).
\]
The natural affine lifting
\[
b(x,Y,t)=\frac{1-Y}{2}d_-(x,t)+\frac{1+Y}{2}d_+(x,t)
\]
removes the boundary traces and leaves a homogeneous Dirichlet problem for \(w^\eps:=u^\eps-b\).
The shifted equation for $w^\eps$ contains the additional forcing terms generated by $b$, namely time derivatives, tangential derivatives, and the shifted nonlinearity. If $d_\pm$ are sufficiently regular and these forcing terms remain bounded in the relevant norms, then the same fast-decay estimate as in \Cref{thm:dirichlet-decay} shows that $w^\eps$ is of order $\mc O(\eps^2)$ after the initial layer. Thus the leading profile is the lifting $b$ itself.

In general this profile depends on $Y$, and therefore the limit is not genuinely lower-dimensional unless $d_+=d_-$. Indeed,
\[
Mb=\frac{d_-+d_+}{2},
\qquad
b-Mb=\frac{Y}{2}(d_+-d_-).
\]
Hence any order-one mismatch in the two traces survives in the thin direction. In physical variables the corresponding affine profile has transverse derivative of size
\[
\partial_y b_{\mathrm{phys}}
=
\frac{d_+-d_-}{2\eps},
\]
and therefore
\[
\|\partial_y b_{\mathrm{phys}}\|_{L^2(\Omega\times(-\eps,\eps))}
=
\frac{1}{\sqrt{2\eps}}\|d_+-d_-\|_{L^2(\Omega)}.
\]
Thus general inhomogeneous Dirichlet data naturally produce large transverse gradients, rather than a purely lower-dimensional limit.

\section{Variable-thickness thin strips and the averaging process}\label{sec:general-thin}

We now pass from flat thin domains to two-dimensional thin strips with
variable thickness over a one-dimensional base. 
This is the setting in which the rigorous slow-manifold verification and approximation in \Cref{subsec:thin-domain-application} and \Cref{subsec:thin-domain-asymptotic-expansion} are carried out.
This introduces two new features. 
First, after rescaling the thin direction to a fixed reference interval, the physical homogeneous Neumann condition is no longer a pure coordinate Neumann condition on the entire boundary of the fixed strip.
Instead, the upper boundary condition becomes a geometry-dependent conormal condition.
Second, because of this transformed boundary condition, the singular transverse diffusion term and the transformed tangential operator interact in the averaged equation.

The purpose of this section is therefore twofold. 
In the first step we derive the equation and boundary conditions on the fixed strip. 
In the second step we use the cross-sectional average to split the solution into an averaged component and
a mean-zero fluctuation.
This yields a fast-slow system in which the averaged variable evolves on the base domain and the mean-zero component is driven by the strong transverse diffusion.
The variable thickness enters the reduced equation through a geometry-induced correction term.

Let $\Om\subset\R$ be a bounded interval and let $g\in C^2(\overline\Om)$ satisfy $g(x)>0$. Consider the thin domain
\[
\Qe:=\set{(x,y)\in\R^2: x\in\Om,\ 0<y<\eps g(x)}.
\]
On $\Qe$ we study the reaction-diffusion equation
\begin{equation}\label{eq:general-physical}
\partial_t \hat u = \Delta \hat u + f(\hat u)
\qquad\text{in }\Qe,
\end{equation}
with homogeneous Neumann boundary conditions on $\partial \Qe$.
Next, we introduce the rescaled thin variable
\[
Y:=\frac{y}{\eps g(x)}\in(0,1),
\]
and the rescaled domain
\[
\Q:=\Om\times(0,1).
\]

\begin{proposition}[Rescaled equation and conormal boundary conditions]\label{prop: rescaled thin domain}
 Let $\widehat u$ be a sufficiently smooth solution of \eqref{eq:general-physical}, and define
\[
u(x,Y,t):=\widehat u(x,\varepsilon g(x)Y,t).
\]
Set
\[
\mathcal D:=\partial_x-b(x,Y)\partial_Y,
\qquad
b(x,Y):=Y\frac{g'(x)}{g(x)} .
\]
Here and below \(\mc D^2\) denotes the composition \(\mc D\circ \mc D\).
Then, the transformed equation is given by
\begin{equation}\label{eq:rescaled-operator-general}
\partial_t u = \frac{1}{\eps^2 g(x)^2}\partial_Y^2u + \mathcal D^2 u + f(u)
\qquad\text{in }\Q.
\end{equation}
The pulled-back homogeneous Neumann conditions are
\begin{align*}
\partial_Yu=0
\qquad\text{on } \Omega \times \{0\},\\
\partial_Yu-\varepsilon^2g(x)g'(x)\mathcal D u=0
\qquad\text{on } \Omega\times \{1\},
\intertext{and}
\mathcal D u=0
\qquad\text{on }\partial\Omega\times(0,1).
\end{align*}  
\end{proposition}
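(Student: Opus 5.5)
The plan is a direct change-of-variables computation. It rests on one observation: under the map $\Phi_\eps(x,Y)=(x,\eps g(x)Y)$, the physical derivatives $\partial_x$ and $\partial_y$ correspond exactly to $\mc D$ and $(\eps g(x))^{-1}\partial_Y$. I would first establish this correspondence for an arbitrary sufficiently smooth function $\widehat\varphi$ on $\Qe$ with pullback $\varphi:=\widehat\varphi\circ\Phi_\eps$. By the chain rule,
\[
\partial_Y\varphi=\eps g(x)\,(\partial_y\widehat\varphi)\circ\Phi_\eps,
\qquad
\partial_x\varphi=(\partial_x\widehat\varphi)\circ\Phi_\eps+\eps g'(x)Y\,(\partial_y\widehat\varphi)\circ\Phi_\eps .
\]
Substituting the first identity into the second gives
\[
(\partial_y\widehat\varphi)\circ\Phi_\eps=\frac{1}{\eps g}\partial_Y\varphi,
\qquad
(\partial_x\widehat\varphi)\circ\Phi_\eps=\partial_x\varphi-Y\frac{g'}{g}\partial_Y\varphi=\mc D\varphi .
\]
The factor $\eps$ cancels in the second identity, which is why $b=Yg'/g$ is independent of $\eps$.

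Because these identities hold for every smooth $\widehat\varphi$, they can be iterated. I would apply them with $\widehat\varphi=\partial_x\widehat u$ to get $(\partial_x^2\widehat u)\circ\Phi_\eps=\mc D(\mc D u)=\mc D^2u$. Applying them with $\widehat\varphi=\partial_y\widehat u$ gives $(\partial_y^2\widehat u)\circ\Phi_\eps=(\eps g)^{-1}\partial_Y\bigl((\eps g)^{-1}\partial_Y u\bigr)=\eps^{-2}g^{-2}\partial_Y^2u$, since $g$ does not depend on $Y$. Summing, and noting that $\partial_t$ and $f$ commute with the pullback because $\Phi_\eps$ is time-independent, yields \eqref{eq:rescaled-operator-general}.

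For the boundary conditions I would write down the outward normals on the three pieces of $\partial\Qe$ and translate $\partial_\nu\widehat u=0$ using the two identities above.
\begin{itemize}
\item On the bottom face $y=0$ the normal is $(0,-1)$, so $\partial_y\widehat u=0$, which becomes $\partial_Y u=0$ at $Y=0$.
\item On the top face $y=\eps g(x)$ the normal is proportional to $(-\eps g'(x),1)$, so the condition is $\partial_y\widehat u-\eps g'\partial_x\widehat u=0$. This becomes $(\eps g)^{-1}\partial_Yu-\eps g'\mc Du=0$, and multiplying by $\eps g$ gives $\partial_Yu-\eps^2gg'\mc Du=0$ at $Y=1$.
\item On the lateral sides $x\in\partial\Om$ the normal is $\pm e_x$, so $\partial_x\widehat u=0$, i.e.\ $\mc Du=0$.
\end{itemize}
The corner points form a null set and are ignored, consistent with the standing smoothness assumption.

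The only step needing care is the second-order term $\mc D^2$. A naive substitution of $\partial_x\mapsto\partial_x-b\partial_Y$ into $\partial_x^2$ can suggest lost cross terms involving $\partial_x b$ and $\partial_Y b$. Expanding $\mc D^2$ directly and comparing it with the chain rule applied twice would be error-prone. I would avoid this by arguing at the level of the first-order identity for arbitrary $\widehat\varphi$, so that the composition is automatically correct.

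It is worth noting that $\mc D^2$ contains $\partial_x b$, and hence $g''$. This is exactly where the assumption $g\in C^2(\overline\Om)$ is used, and it is sufficient for the formula to hold pointwise for smooth $\widehat u$.
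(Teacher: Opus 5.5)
Your proposal is correct and follows essentially the same route as the paper: the chain rule identifies $\partial_x\widehat u$ with $\mathcal D u$ and $\partial_y\widehat u$ with $(\varepsilon g)^{-1}\partial_Y u$, and the three outward normals are then translated exactly as you describe. Your iteration of these identities through an arbitrary $\widehat\varphi$ simply justifies the step the paper asserts in one line, namely $\Delta\widehat u=\mathcal D^2u+\varepsilon^{-2}g^{-2}\partial_Y^2u$.
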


\begin{remark}
    Note that the physical homogeneous Neumann condition on \(\partial Q_\varepsilon\) does not transform into the coordinate condition \(\partial_Y u=0\) on the whole boundary of the fixed strip.
\end{remark}

\begin{proof}
We first record the transformed boundary conditions. Since
\[
u(x,Y)=\widehat u(x,\varepsilon g(x)Y),
\]
the chain rule gives
\[
\partial_Yu=\varepsilon g\,\partial_y\widehat u,
\]
and
\[
\partial_xu
=
\partial_x\widehat u+\varepsilon g'Y\partial_y\widehat u
=
\partial_x\widehat u+b\partial_Yu.
\]
Hence
\[
\partial_x\widehat u
=
(\partial_x-b\partial_Y)u
=
\mathcal Du.
\]
Moreover,
\[
\partial_y\widehat u
=
\frac1{\varepsilon g}\partial_Yu.
\]
Thus the physical Laplacian becomes
\[
\Delta\widehat u
=
\partial_x^2\widehat u+\partial_y^2\widehat u
=
\mathcal D^2u+\frac1{\varepsilon^2g^2}\partial_Y^2u.
\]
This gives the transformed equation
\[
\partial_tu
=
\frac1{\varepsilon^2g^2}\partial_Y^2u+\mathcal D^2u+f(u).
\]

The boundary conditions transform as follows. On \(Y=0\), the physical outward normal is vertical,
so \(\partial_\nu\widehat u=0\) gives
\[
\partial_Yu=0.
\]
On the upper boundary \(y=\varepsilon g(x)\), an outward normal is
\[
(-\varepsilon g'(x),1).
\]
Therefore
\[
0
=
-\varepsilon g'\partial_x\widehat u+\partial_y\widehat u
=
-\varepsilon g'\mathcal Du+\frac1{\varepsilon g}\partial_Yu.
\]
Multiplying by \(\varepsilon g\) gives
\[
\partial_Yu-\varepsilon^2gg'\mathcal Du=0
\qquad\text{on }Y=1.
\]
Finally, on the lateral sides \(x\in\partial\Omega\), the physical normal is horizontal, so
\[
\partial_x\widehat u=0.
\]
Since \(\partial_x\widehat u=\mathcal Du\), this becomes
\[
\mathcal Du=0
\qquad\text{on }\partial\Omega\times(0,1).
\] 
\end{proof}

Thus, at the level of smooth solutions, the rescaled equation still contains the singular transverse operator \(\varepsilon^{-2}g^{-2}\partial_Y^2\), but the transformed Neumann condition on the upper boundary couples this operator to the tangential derivative \(\mc D\).
Consequently, the averaged equation is no longer obtained by simply discarding the transverse diffusion term.
The boundary contribution from the singular operator has to be combined with the averaged tangential operator.
This cancellation is the key point in the formal fast-slow splitting below.

We now define the cross-sectional average on the fixed strip by
\[
Mu(x):=\int_0^1 u(x,Y)\,dY .
\]
This is exactly the physical fiber average over the thin cross-section, since
\[
\frac1{\varepsilon g(x)}\int_0^{\varepsilon g(x)}
\widehat u(x,y)\,dy
=
\int_0^1 u(x,Y)\,dY .
\]
We then write
\[
v:=Mu,\qquad w:=(\Id-M)u,
\]
so that \(v\) is independent of \(Y\) and \(Mw=0\).

\begin{theorem}[Fast-slow splitting on a variable-thickness thin strip]
\label{prop:general-splitting}
Let \(u\) be a sufficiently smooth solution of \eqref{eq:rescaled-operator-general} satisfying the pulled-back boundary conditions from \Cref{prop: rescaled thin domain}. Define
\[
v:=Mu,
\qquad
w:=(\Id-M)u,
\qquad
u=v+w.
\]
Then \(v\) is independent of \(Y\), \(Mw=0\), and all traces below are understood in the classical sense.
Set
\begin{align}
     B_g v&:=\frac1g\partial_x(g\partial_xv),\\
     \intertext{and} \label{eq: boundary term Rg}
     R_gw&:=-\frac1g\partial_x\bigl(g' w(\cdot,1)\bigr).
\end{align}
Then the rescaled problem can be written as
\begin{align}
\partial_t v
&=
 B_gv+R_gw+Mf(v+w),
\label{eq:slow-general}
\\
\partial_t w
&=
\frac1{\varepsilon^2g(x)^2}\partial_Y^2w
+\mathcal D^2w
-\mathcal E\bigl(\frac{g'(x)}{g(x)}\,\partial_xv+R_gw\bigr)
+(\Id-M)f(v+w),
\label{eq:fast-general}
\end{align}
where \(\mathcal E\phi(x,Y):=\phi(x)\).
The boundary conditions for the split variables are
\begin{align*}
\partial_Yw=0
\qquad\text{on } \Omega \times \{0\},\\
\partial_Yw-\varepsilon^2gg'\mathcal Dw
=
\varepsilon^2gg'\partial_xv
\qquad\text{on } \Omega\times \{1\},
\intertext{and}
\mathcal Dw=-\partial_xv
\qquad\text{on }\partial\Omega\times(0,1).
\end{align*}
In addition, averaging the lateral boundary condition gives the compatibility condition
\[
\partial_x v-\frac{g'}g w(\cdot,1)=0
\qquad\text{on }\partial\Omega .
\]
Moreover, $B_g=\partial_x^2$ and $R_g=0$ whenever \(g\) is constant.
\end{theorem}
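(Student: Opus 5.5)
The plan is to apply $M$ to \eqref{eq:rescaled-operator-general} and compute $M$ of each term by integrating by parts in $Y$, using the pulled-back boundary conditions from \Cref{prop: rescaled thin domain}. The fast equation then follows by subtraction, and the boundary conditions are obtained by inserting $u=v+w$. Since $v$ does not depend on $Y$, we have $\mathcal Dv=\partial_xv$ and $\partial_Yv=0$. Hence $\mathcal D^2v=\partial_x^2v$ and $\mathcal D(\mathcal Dv)=\partial_x^2v$.

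\emph{Step 1: averaging the transverse term.} First,
\[
M\Big(\tfrac{1}{\varepsilon^2g^2}\partial_Y^2u\Big)=\tfrac{1}{\varepsilon^2g^2}\big[\partial_Yu\big]_{Y=0}^{Y=1}=\tfrac{g'}{g}\,\mathcal Du(\cdot,1).
\]
This uses the conormal condition at $Y=1$ and $\partial_Yu=0$ at $Y=0$. The point is that the $\varepsilon^{-2}$ cancels exactly against the $\varepsilon^2$ in the boundary condition.

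\emph{Step 2: averaging the tangential term.} Here I use the commutation identity
\[
M\mathcal D\phi=\partial_x M\phi+\tfrac{g'}{g}\big(M\phi-\phi(\cdot,1)\big).
\]
It follows from $\int_0^1 Y\partial_Y\phi\,dY=\phi(\cdot,1)-M\phi$. Equivalently, $gM\mathcal D\phi=\partial_x(gM\phi)-g'\phi(\cdot,1)$. Apply this with $\phi=\mathcal Du$ and combine with Step 1; the trace terms $\tfrac{g'}{g}\mathcal Du(\cdot,1)$ cancel. This leaves
\[
M(\cdots)=\tfrac1g\partial_x\big(gM\mathcal Du\big).
\]
Applying the identity once more with $\phi=u$ gives
\[
gM\mathcal Du=g\partial_xv-g'u(\cdot,1)+g'v=g\partial_xv-g'w(\cdot,1),
\]
because $u(\cdot,1)-v=w(\cdot,1)$. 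Therefore $M$ of the linear part equals $B_gv+R_gw$, which proves \eqref{eq:slow-general}.

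\emph{Step 3: the fast equation.} Subtracting $\mathcal E$ of the slow equation from the full equation yields \eqref{eq:fast-general}. The only computation needed is
\[
\mathcal D^2u-\mathcal E(B_gv)=\mathcal D^2w+\partial_x^2v-\partial_x^2v-\tfrac{g'}{g}\partial_xv,
\]
which uses $B_gv=\partial_x^2v+\tfrac{g'}{g}\partial_xv$. Also $\partial_Y^2u=\partial_Y^2w$.

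\emph{Step 4: boundary conditions and the constant case.} Substituting $u=v+w$ into the three boundary conditions, together with $\mathcal Dv=\partial_xv$ and $\partial_Yv=0$, gives the stated conditions for $w$. Averaging $\mathcal Dw=-\partial_xv$ over $Y$ at $x\in\partial\Omega$ with the identity from Step 2, and using $Mw=0$, gives
\[
\partial_x(Mw)-\tfrac{g'}{g}w(\cdot,1)=-\tfrac{g'}{g}w(\cdot,1)=-\partial_xv.
\]
This is exactly the compatibility condition. Here $\partial_x(Mw)=0$ holds since $Mw\equiv0$ on $\Omega$, by smoothness up to the boundary. Finally, if $g'\equiv0$ then $B_g=\partial_x^2$ and $R_g=0$.

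The main obstacle is the bookkeeping in Step 2, namely the precise sign and trace terms in the commutator $[M,\mathcal D]$. Everything hinges on the trace terms cancelling the boundary contribution from Step 1, so I would verify that identity carefully first.
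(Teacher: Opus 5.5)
Your proposal is correct and follows essentially the same route as the paper: you average the singular transverse term using the conormal condition at $Y=1$, then apply the identity for $M\mathcal D\phi$ with $\phi=\mathcal Du$ and with $\phi=u$ so that the trace terms $\frac{g'}{g}\mathcal Du(\cdot,1)$ cancel, obtain the fast equation by subtraction, and substitute $u=v+w$ into the boundary conditions. The only difference is that you write the identity in divergence form, $gM\mathcal D\phi=\partial_x(gM\phi)-g'\phi(\cdot,1)$, which gives $\frac1g\partial_x\bigl(g\partial_xv-g'w(\cdot,1)\bigr)=B_gv+R_gw$ directly, without the paper's intermediate expansion and regrouping into $R_g$.
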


\begin{proof}
We start with the averaged equation. 
The main point is that, unlike in the flat coordinate-Neumann case, the average of the singular transverse term produces a boundary contribution. Indeed,
\[
M\left(\frac1{\varepsilon^2g^2}\partial_Y^2u\right)
=
\frac1{\varepsilon^2g^2}
\int_0^1\partial_Y^2u\,dY
=
\frac1{\varepsilon^2g^2}
\bigl[\partial_Yu\bigr]_{Y=0}^{Y=1}.
\]
Using the transformed Neumann conditions,
\[
\partial_Yu|_{Y=0}=0,
\qquad
\partial_Yu|_{Y=1}
=
\varepsilon^2gg'\mathcal Du|_{Y=1},
\]
we obtain
\[
M\left(\frac1{\varepsilon^2g^2}\partial_Y^2u\right)
=
\frac{g'}g\,\mathcal Du|_{Y=1}.
\]
Next we compute \(M\mathcal D^2u\). For any sufficiently smooth \(\phi\), one has
\[
M(\mathcal D\phi)
=
\int_0^1(\partial_x\phi-Y \frac{g'}g\partial_Y\phi)\,dY.
\]
Since \(\frac{g'}g\) is independent of \(Y\),
\[
M(\mathcal D\phi)
=
\partial_xM\phi
-
\frac{g'}g\int_0^1Y\partial_Y\phi\,dY.
\]
Integration by parts in \(Y\) gives
\[
\int_0^1Y\partial_Y\phi\,dY
=
\bigl[Y\phi\bigr]_{0}^{1}
-
\int_0^1\phi\,dY
=
\phi(\cdot,1)-M\phi.
\]
Therefore
\[
M(\mathcal D\phi)
=
\partial_x M\phi
-
\frac{g'}g\phi(\cdot,1)
+
\frac{g'}g M\phi.
\]
Applying this first with \(\phi=u=v+w\), and using \(Mu=v\), \(Mw=0\), gives
\[
M(\mathcal Du)
=
\partial_xv-\frac{g'}g(v+w(\cdot,1))+\frac{g'}g v
=
\partial_xv-\frac{g'}g w(\cdot,1).
\]
Set
\[
P:=M(\mathcal Du)=\partial_xv-\frac{g'}g w(\cdot,1).
\]
Applying the same identity with \(\phi=\mathcal Du\), we get
\[
M(\mathcal D^2u)
=
\partial_xP
-
\frac{g'}g(\mathcal Du)(\cdot,1)
+
\frac{g'}gP.
\]
Substituting \(P=\partial_xv-\frac{g'}gw(\cdot,1)\) yields
\[
\begin{aligned}
M(\mathcal D^2u)
&=
\partial_x\bigl(\partial_xv-\frac{g'}gw(\cdot,1)\bigr)
-
\frac{g'}g(\mathcal Du)(\cdot,1)
+
\frac{g'}g\bigl(\partial_xv-\frac{g'}g w(\cdot,1)\bigr)
\\
&=
\partial_x^2v
+
\frac{g'}g\partial_xv
-
\partial_x\Bigl(\frac{g'}gw(\cdot,1)\Bigr)
-
\Bigl(\frac{g'}g\Bigr)^2w(\cdot,1)
-
\frac{g'}g(\mathcal Du)(\cdot,1).
\end{aligned}
\]
Adding the averaged singular term yields
\[
\begin{aligned}
M\left(
\frac1{\varepsilon^2g^2}\partial_Y^2u+\mathcal D^2u
\right)
&=
\frac{g'}g(\mathcal Du)(\cdot,1)
+
M(\mathcal D^2u)
\\
&=
\partial_x^2v
+
\frac{g'}g\partial_xv
-
\partial_x\Bigl(\frac{g'}g w(\cdot,1)\Bigr)
-
\Bigl( \frac{g'}g\Bigr)^2w(\cdot,1).
\end{aligned}
\]
The boundary contribution \(\frac{g'}g(\mathcal Du)(\cdot,1)\) cancels exactly. Thus, defining
\[
 B_gv:=\frac1g\partial_x(g\partial_xv),
\]
and
\[
R_gw
:=
-\partial_x\Bigl( \frac{g'}g w(\cdot,1)\Bigr)-\Bigl(\frac{g'}g\Bigr)^2w(\cdot,1)= -\frac1g\partial_x\bigl(g'w(\cdot,1)\bigr),
\]
we have
\[
M\left(
\frac1{\varepsilon^2g^2}\partial_Y^2u+\mathcal D^2u
\right)
=
B_gv+R_gw.
\]
Applying \(M\) to the equation therefore gives
\[
\partial_tv
=
 B_gv+R_gw+Mf(v+w).
\]

It remains to derive the equation for \(w\). Since \(w=u-v\), we have
\[
\partial_tw=\partial_tu-\partial_tv.
\]
Using
\[
\mathcal Dv=\partial_xv,
\qquad
\mathcal D^2v=\partial_x^2v,
\quad \text{and}\quad 
\partial_Yv=0,
\]
we obtain
\[
\begin{aligned}
\partial_tw
&=
\frac1{\varepsilon^2g^2}\partial_Y^2w
+
\mathcal D^2w
+
\partial_x^2v
+
f(v+w)
-
\left(
 B_gv+R_gw+Mf(v+w)
\right)
\\
&=
\frac1{\varepsilon^2g^2}\partial_Y^2w
+
\mathcal D^2w
-
\frac{g'}g\partial_xv
-
R_gw
+
(\Id-M)f(v+w).
\end{aligned}
\]
Viewing the \(x\)-dependent terms as functions on \(Q\) through the embedding
\[
(\mathcal E\phi)(x,Y):=\phi(x),
\]
this becomes
\[
\partial_tw
=
\frac1{\varepsilon^2g^2}\partial_Y^2w
+
\mathcal D^2w
-
\mathcal E\bigl(\frac{g'}g\partial_xv+R_gw\bigr)
+
(\Id-M)f(v+w).
\]

Finally, the boundary conditions for \(v\) and \(w\) follow by substituting \(u=v+w\) into the
transformed Neumann conditions. Since \(v\) is independent of \(Y\),
\[
\partial_Yu=\partial_Yw,
\qquad
\mathcal Du=\partial_xv+\mathcal Dw.
\]
Thus the lower boundary condition gives
\[
\partial_Yw=0
\qquad\text{on }Y=0.
\]
The upper boundary condition gives
\[
\partial_Yw-\varepsilon^2gg'(\partial_xv+\mathcal Dw)=0,
\]
or equivalently
\[
\partial_Yw-\varepsilon^2gg'\mathcal Dw
=
\varepsilon^2gg'\partial_xv
\qquad\text{on }Y=1.
\]
The lateral condition gives
\[
\partial_xv+\mathcal Dw=0
\qquad\text{on }\partial\Omega\times(0,1),
\]
that is,
\[
\mathcal Dw=-\partial_xv
\qquad\text{on }\partial\Omega\times(0,1).
\]
Averaging this last condition in \(Y\), and using
\[
M(\mathcal Dw)
=
-\frac{g'}gw(\cdot,1),
\]
gives the induced slow boundary condition
\[
\partial_xv- \frac{g'}gw(\cdot,1)=0
\qquad\text{on }\partial\Omega.
\]

If \(g\) is constant, then \(g'=0\), \(b=0\), and \(R_g=0\). Hence
\[
 B_g=\partial_x^2,
\]
and the system reduces to the standard flat-domain fast-slow splitting. This completes the proof.
\end{proof}

\begin{remark}
System \eqref{eq:slow-general}-\eqref{eq:fast-general} identifies the structural mechanism by which variable-thickness thin domains lead to fast-slow systems.
The averaged variable \(v\) evolves on the base domain and carries the reduced dynamics.
The fluctuation \(w\) belongs to the mean-zero subspace and is acted upon by the singularly strong transverse operator \(\varepsilon^{-2}g^{-2}\partial_Y^2\).
The operator \(B_g=(1/g)\partial_x(g\partial_x)\) records the leading cross-sectional geometry in the averaged equation, while \(R_gw\) records the additional coupling between the averaged equation and the upper trace of the mean-zero fluctuation.
\end{remark}

\section{Existence of slow manifolds}\label{sec:manifolds}

We now pass from the concrete fast-slow splitting derived in \Cref{sec:general-thin} to an abstract semilinear framework in which the fast variable evolves in a stable space \(\mc X\) and the slow variable evolves in a space \(\mc Y\). 
We first state the abstract assumptions needed for the Lyapunov-Perron construction and then verify them for the thin-domain system from \Cref{sec:general-thin}. 
For the general framework we follow the Generalized Fenichel Theory introduced in a series of papers  by Hummel \& Kuehn \cite{hummel2022slow} and Kuehn \& Sulzbach \cite{kuehn2024infinite,kuehn2025fast,kuehn2026approximate}.\\

We consider the following abstract Cauchy problem on a Banach space:
\begin{align}\label{eq: fast diffusion}
    \begin{split}
        \partial_t w^\varepsilon &= \frac{1}{\varepsilon^2} A_\varepsilon w^\varepsilon + F(w^\varepsilon, v^\varepsilon),\\
        \partial_t v^\varepsilon&= B v^\varepsilon + G(w^\varepsilon, v^\varepsilon).
    \end{split}
\end{align}
Here $0< \varepsilon\ll 1$ is a small parameter encoding the separation of  (time) scales, and in the thin domain setting, the separation of spatial scales.
Moreover, \(A_\varepsilon\) is a family of sectorial operators on a fixed Hilbert space \(\mc X\), and  $B$ is a sectorial operator on the Hilbert space \(\mc Y\).
The initial data satisfies
\begin{align}
    w^\varepsilon(0)&=  w^\varepsilon_0 \in \mc X_1^\eps,\qquad  v^\varepsilon(0)=  v^\varepsilon_0\in \mc Y_1,
\end{align}
where $\mc X_1^\eps=D(A_\eps)$ denotes the domain of $A_\eps$, which can depend on $\eps$, and $\mc Y_1=D(B)$ denotes the domain of $B$.
The formal limit of the system as $\varepsilon\to 0$ is
\begin{align}\label{eq: fast diffuion limit}
   \begin{split}
       0&= A_0 w^0,\\
       \partial_t v^0&= B v^0+ G(w^0,v^0),\\
       v^0(0)&=v_0,
   \end{split}
\end{align} 
where the critical manifold $S_0$ is given as the null space of the operator $A_0$ in the space $\mc X_{1}^0$, i.e.
\begin{align}\label{eq: critical manifold}
    S_0 := \{(w,v)\in \mc X_1^0\times \mc Y_1 : A_0 w = 0\}.
\end{align}
If \(A_0\) is invertible on the fast space, then \(\ker A_0=\{0\}\) and hence
\[
 S_0 = \{(0,v): v\in \mc Y_1\}.
\]

\subsection{Assumptions}\label{Sec:assumptions}
For the operators $A$ and $B$ we make the following assumptions:
\begin{assumption}[Assumptions on the operators $A_\eps$ and $B$ ]\label{ass: operators}
\leavevmode
\begin{itemize}
    \item $A_\eps$ and $B$ are closed linear operators, where $A_\varepsilon : \mc X \supset D(A_\varepsilon) \to \mc X$ generates the exponentially stable and analytic $C_0$-semigroup $(\textnormal{e}^{tA_\eps})_{t\geq 0}\subset \mathcal{B}(\mc X)$ and $B:\mc Y\supset D(B)\to \mc Y$ generates the analytic $C_0$-semigroup $(\textnormal{e}^{tB})_{t\geq 0}\subset \mathcal{B}(\mc Y)$, respectively, where $\mathcal{B}(\cdot)$ denotes the space of bounded linear operators.
    \item The interpolation-extrapolation scales generated by $(\mc X,A_\eps)$ and $(\mc Y,B)$ are given by $(\mc X_\alpha^\eps)_{\alpha\in [-1,\infty)}$ and $(\mc Y_\alpha)_{\alpha\in [-1,\infty)}$ (for details regarding the definitions and basic properties of interpolation-extrapolation scales see \cite{amann1995linear}).
     \item There are constants $C_A,C_B, M_A, M_B>0$, $\omega_A <0 $ and $\omega_B\in \mathbb{R}$ such that
    \begin{align}\label{eq: semigroup estimate}
    \begin{split}
        \|\textnormal{e}^{tA_\eps}\|_{\mathcal{B}(\mc X)}&\leq M_A \textnormal{e}^{\omega_A t},\qquad \|\textnormal{e}^{tA_\eps}\|_{\mathcal{B}(\mc X_\gamma^\eps,\mc X_{\theta}^\eps)}\leq C_At^{\gamma-\theta}\textnormal{e}^{\omega_A t},\\
        \|\textnormal{e}^{tB}\|_{\mathcal{B}(\mc Y)}&\leq M_B \textnormal{e}^{\omega_B t},\qquad \|\textnormal{e}^{tB}\|_{\mathcal{B}(\mc Y_\delta,\mc Y_{\theta})}\leq C_B t^{\delta-\theta} \textnormal{e}^{\omega_B t}
        \end{split}
    \end{align}
    hold for all $t\in (0,\infty)$, where $0<\delta,\gamma\leq \theta\leq 1$.
\end{itemize}
\begin{remark}
    A particular realization of the assumptions for the $\eps$-dependent operator $A_\eps$ and fast space $\mc X_\alpha^\eps$ is the following.\\
    Let \(\mc X\) be a Hilbert space and let \(\mc V\hookrightarrow \mc X\) be a dense, continuously embedded Hilbert space.
    For each \(\varepsilon\in(0,1]\), let \(A_\varepsilon\) be the self-adjoint operator associated with a closed, symmetric, coercive form 
    \[ \mathfrak a_\varepsilon:\mc V\times \mc V\to\mathbb R .\]
    Assume that there are constants \(c_0,c_1,\mu>0\), independent of \(\varepsilon\), such that
    \[ \mathfrak a_\varepsilon(w,w)\ge \mu\|w\|_{\mc X}^2,\]
    and
    \[ c_0\|w\|_{\mc V}^2 \le \|w\|_{\mc X}^2+\mathfrak a_\varepsilon(w,w) \le c_1\|w\|_{\mc V}^2 \qquad\text{for all }w\in \mc V. \]
    Then \(A_\varepsilon\) generates an exponentially stable analytic semigroup on \(\mc X\), and
    \[\|e^{tA_\varepsilon}\|_{\mathcal B(\mc X)}\le M_Ae^{\omega_A t},\qquad \omega_A<0, \]
    with constants independent of \(\varepsilon\).
    Moreover, if
    \[\mc X_{\alpha}^\eps:=D((-A_\varepsilon)^\alpha),\]
    then
    \[\mc X_{\varepsilon,1/2}=\mc V\]
    as sets, with uniformly equivalent norms, while \(\mc X_{1}^\eps=D(A_\varepsilon)\) may depend on \(\varepsilon\).
    For \(0\le \beta\le \theta<1\), the analytic semigroup satisfies
    \[ \|e^{tA_\varepsilon}\|_{\mathcal B(\mc X_{\gamma}^\eps, \mc X_{\theta}^\eps)} \le C_A t^{\gamma-\theta}e^{\omega_A t}, \qquad t>0. \]
\end{remark}

\end{assumption}
For the nonlinear functions $F$ and $G$ we have the following assumptions:
\begin{assumption}[Assumptions on nonlinearities $F$ and $G$]\label{ass: nonlinearities}
\leavevmode
\begin{itemize}
    \item The nonlinearities $F:\mc X_\theta^\eps\times \mc Y_{1}\to \mc X$ and $G:\mc X_{\theta}^\eps\times \mc Y_{1}\to\mc  Y_\delta$ are Fr\'echet differentiable. Moreover, there are constants $L_F, L_G>0$ such that
    \begin{align} \label{eq: lip est non lin}
       \|\textnormal{D} F(x,y)\|_{\mathcal{B}(\mc X_\theta^\eps\times \mc Y_{1},\mc X)}&\leq L_F,\qquad  \|\textnormal{D} G(x,y)\|_{\mathcal{B}(\mc X_\theta^\eps\times \mc Y_{1},\mc Y_\delta)}\leq L_G, 
    \end{align}
    for all $(x,y)\in  \mc X_\theta^\eps\times \mc Y_{1}$.
    This implies that the following inequalities hold
        \begin{align}\label{eq: Lip F}
        \|F(x_1,y_1)-F(x_2,y_2)\|_{\mc X}&\leq L_F( \|x_1-x_2\|_{\mc X_\theta^\eps}+\|y_1-y_2\|_{\mc Y_{1}}),\\
        \intertext{and}
        \label{eq: Lip G}
        \|G(x_1,y_1)-G(x_2,y_2)\|_{\mc Y_\delta}&\leq L_G( \|x_1-x_2\|_{\mc X_\theta^\eps}+\|y_1-y_2\|_{\mc Y_{1}})
    \end{align}
    for all $x_1,x_2\in \mc X_\theta^\eps$ and $y_1,y_2\in\mc Y_{1}$.  
    \item For convenience, we assume that $F(0,0)=0$ and $G(0,0)=0$.
\end{itemize}
\end{assumption}

\begin{remark}[Strong solutions under the abstract assumptions]\label{rem: existence strong sol}
These assumptions guarantee the existence of strong solutions $(w^\varepsilon,v^\varepsilon)$ and $(w^0,v^0)$ in the space $C^1((0,T_*);\mc X\times \mc Y)\cap C([0,T_*);\mc X_\theta^\eps\times \mc Y_1)$, where $T_*$ denotes the maximal time of existence of the solutions. 
Moreover, we can write the solutions in the mild solution form using the Duhamel formula.
For more details we refer to \cite{lunardi2012analytic}.
\end{remark}

\begin{remark}
In applications, the Lipschitz assumption on the nonlinearities can be justified in several ways.
If a priori estimates or boundedness results are available, the assumption follows directly, as we will see in \Cref{sec:example}.
Alternatively, one may introduce a cut-off function to enforce the required Lipschitz bounds.
\end{remark}

We next impose additional technical assumptions on the Banach space \(Y\) that allow a decomposition into slow and fast spectral blocks. This splitting is needed because the operator \(B\) is unbounded and, in general, does not generate a backward evolution on the whole of \(\mc Y_1\); see \cite{hummel2022slow} for the underlying Lyapunov-Perron framework.\\

Let \(\zeta>0\) be a small parameter and write
\[
\mc Y = \mc Y_F^\zeta \oplus \mc Y_S^\zeta,
\]
to indicate the fast and slow components.
The splitting satisfies the following assumptions.
\begin{assumption}[Assumptions on the splitting and the slow variable space $\mc Y$]\label{ass: splitting slow comp}
\leavevmode
\begin{itemize}
    \item The spaces $\mc Y^\zeta_F$ and $\mc Y^\zeta_S$ are closed in $\mc Y$ and the projections $\pr_{\mc Y^\zeta_F}$ and $\pr_{\mc Y^\zeta_S}$ commute with $B$ on $\mc Y_1$.
    \item The spaces $\mc Y^\zeta_F\cap \mc Y_1$ and $\mc Y^\zeta_S\cap \mc Y_1$ are closed subspaces of $\mc Y_1$ and are endowed with the norm $\|\cdot\|_{\mc Y_1}$.
    \item The realization of \(B\) in \(\mc Y_F^\zeta\),
    \[
    B_F^\zeta:D(B_F^\zeta)\subset \mc Y_F^\zeta\to \mc Y_F^\zeta,
    \qquad
    B_F^\zeta v:=Bv,
    \]
    with
    \[
    D(B_F^\zeta)
    :=
    \{v\in D(B)\cap \mc Y_F^\zeta:\ Bv\in \mc Y_F^\zeta\},
    \]
    has \(0\) in its resolvent set.
    \item The realization \(B_S^\zeta\) of \(B\) in \(\mc Y_S^\zeta\) generates a
    \(C_0\)-group
    \[
    (e^{tB_S^\zeta})_{t\in\mathbb R}
    \subset \mathcal B(\mc Y_S^\zeta),
    \]
    and for \(t\ge0\) this group agrees with \(e^{tB}\) on \(\mc Y_S^\zeta\).  
    \item The fast subspace $\mc Y^\zeta_F$ contains the parts of $\mc Y_1$ that decay under the semigroup $(\textnormal{e}^{tB})_{t\geq 0}$ almost as fast as functions in the fast variable space $\mc X_\theta^\eps$ under the semi-group $(\textnormal{e}^{\zeta^{-1}t A_\eps})_{t\geq 0}$ generated by the operator $A$.
    The space $\mc Y^\zeta_S$ on the other hand contains the parts of $\mc Y_1$ that do not decay or which decay only slowly under the semigroup $(\textnormal{e}^{tB})_{t\geq 0}$ compared to $\mc X_\theta^\eps$ under $(\textnormal{e}^{\zeta^{-1}t A_\eps})_{t\geq 0}$. 
    Hence, there are constants $C_B,\,M_B>0$ such that for $\zeta>0$ small enough there are constants $N_F^\zeta,N_S^\zeta$ satisfying   
    $$0\leq N_F^\zeta< N_S^\zeta\leq |\zeta^{-1}\omega_A|$$
    such that for all $t\geq 0$ and $y_F\in \mc Y^\zeta_F,\, y_S\in \mc Y^\zeta_S$ we have 
    \begin{align*}
        \|\textnormal{e}^{tB}y_F\|_{\mc Y_1}&\leq C_B t^{\delta -1} \textnormal{e}^{(N_F^\zeta +\zeta^{-1}\omega_A)t}\|y_F\|_{\mc Y_\delta},\\
        \|\textnormal{e}^{-tB}y_S\|_{\mc Y_1}&\leq C_B t^{\delta -1} \textnormal{e}^{-(N_S^\zeta +\zeta^{-1}\omega_A)t}\|y_S\|_{\mc Y_\delta}.
    \end{align*}
   
    \item The Lyapunov-Perron contraction constant satisfies
    \begin{align}\label{spectral gap condition}
    L_{\mathrm{spec}}
    :={}&
    \frac{
    2^{1-\theta}\varepsilon^2 L_F C_A\Gamma(1-\theta)
    }{
    \Bigl(
    2(\varepsilon^2\zeta^{-1}-1)\omega_A
    +\varepsilon^2(N_S^\zeta+N_F^\zeta)
    \Bigr)^{1-\theta}
    }
    +
    \frac{
    2^{1+\delta}L_GC_B\Gamma(\delta)
    }{
    (N_S^\zeta-N_F^\zeta)^\delta
    }
    <1.
    \end{align}
\end{itemize}
\end{assumption}

\begin{remark}
    Note that, in contrast to previous results, we may choose $\varepsilon^2=\zeta$. This is a consequence of the special structure of the fast-diffusion problem \eqref{eq: fast diffusion}.
\end{remark}

\subsection{Slow Manifolds via the splitting of the slow variable}\label{sec:slow manifold}

To generalize Fenichel's Theorem for the existence of a slow manifold  to the present fast diffusion setting,
we introduce a system with a splitting of the formal slow variable, i.e., the $v$-component, satisfying the above assumptions, which gives
\begin{align}\label{eq:fast diffusion system splitting}
    \begin{split}
        \partial_t w^\varepsilon &= \frac{1}{\varepsilon^2}A_\eps w^\varepsilon + F(w^\varepsilon,v_F^\varepsilon, v_S^\varepsilon),\\
        \partial_t v_F^\varepsilon &= B v_F^\varepsilon +\pr_{\mc Y_F^\zeta} G(w^\varepsilon,v_F^\varepsilon, v_S^\varepsilon),\\
        \partial_t v_S^\varepsilon &= B v_S^\varepsilon +\pr_{\mc Y_S^\zeta} G(w^\varepsilon,v_F^\varepsilon, v_S^\varepsilon),\\
        w^\varepsilon(0)&= w^\varepsilon_0,\quad  v^\varepsilon_F(0)= \pr_{\mc Y_F^\zeta} v_0,\quad v^\varepsilon_S(0)= \pr_{\mc Y_S^\zeta} v_0.
    \end{split}
\end{align}
We first establish the invariant graph by considering, for a fixed $v_{S,0}\in \mc Y_S^\zeta\cap \mc Y_1$,
the Lyapunov-Perron operator
\[
\mathcal L_{v_{S,0},\varepsilon}:C_\eta^{\varepsilon,\theta}\to
C_\eta^{\varepsilon,\theta}.
\]
It is defined by
\[
\mathcal L_{v_{S,0},\varepsilon}
\begin{pmatrix}
w\\ v_F\\ v_S
\end{pmatrix}
(t)
:=
\begin{pmatrix}
\displaystyle
\int_{-\infty}^t e^{\varepsilon^{-2}A_\varepsilon(t-s)}
F(w(s),v_F(s),v_S(s))\,ds
\\[3mm]
\displaystyle
\int_{-\infty}^t e^{B_F^\zeta(t-s)}
\pr_{\mc Y_F^\zeta}G(w(s),v_F(s),v_S(s))\,ds
\\[3mm]
\displaystyle
e^{tB_S^\zeta}v_{S,0}
+
\int_0^t e^{B_S^\zeta(t-s)}
\pr_{\mc Y_S^\zeta}G(w(s),v_F(s),v_S(s))\,ds
\end{pmatrix},
\qquad t\le0 .
\]
Here the last integral is understood as an integral over the interval
\([t,0]\) with reversed orientation when \(t<0\), and it is meaningful because
\(B_S^\zeta\) generates a group on \(\mc Y_S^\zeta\).

We seek a fixed point in
\[
C_\eta^{\varepsilon,\theta}
:=
\left\{
(w,v_F,v_S)\in C((-\infty,0],
\mc X_\theta^\varepsilon\times(\mc Y_F^\zeta\cap\mc Y_1)
\times(\mc Y_S^\zeta\cap\mc Y_1)):
\|(w,v_F,v_S)\|_{C_\eta^{\varepsilon,\theta}}<\infty
\right\},
\]
where
\[
\|(w,v_F,v_S)\|_{C_\eta^{\varepsilon,\theta}}
:=
\sup_{t\le0}e^{-\eta t}
\left(
\|w(t)\|_{\mc X_\theta^\varepsilon}
+\|v_F(t)\|_{\mc Y_1}
+\|v_S(t)\|_{\mc Y_1}
\right),
\]
and
\[
\eta=\zeta^{-1}\omega_A+\frac{N_S^\zeta+N_F^\zeta}{2}<0.
\]

\begin{lemma}[Lyapunov-Perron fixed point]\label{lemma: lyapunov perron}
  Let the Assumptions \ref{ass: operators}, \ref{ass: nonlinearities} and \ref{ass: splitting slow comp} hold.
  Then the Lyapunov-Perron operator has a unique fixed point.
\end{lemma}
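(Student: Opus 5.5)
We prove the lemma with the Banach fixed point theorem on the weighted space $C_\eta^{\varepsilon,\theta}$. It is a complete normed space, since it is isometric to a space of bounded continuous functions after multiplying by $e^{-\eta t}$. Two things have to be shown: $\mathcal L_{v_{S,0},\varepsilon}$ maps $C_\eta^{\varepsilon,\theta}$ into itself, and it is a contraction with constant $L_{\mathrm{spec}}$ from \eqref{spectral gap condition}. Because $F(0,0)=0$ and $G(0,0)=0$, the self-mapping property follows from the Lipschitz estimates plus a bound on the free term $e^{tB_S^\zeta}v_{S,0}$. That free term is controlled by the backward estimate for $\mc Y_S^\zeta$ in \Cref{ass: splitting slow comp}, or, for the $\mc Y_1$-norm of a fixed datum, by the group bound. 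Its growth rate as $t\to-\infty$ is at most $e^{-(N_S^\zeta+\zeta^{-1}\omega_A)t}$, and the weight $e^{-\eta t}$ dominates this because $\eta$ lies strictly between the two rates. Continuity in $t$ follows from standard properties of analytic semigroups and groups. So the core of the proof is the difference estimate.

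\textbf{The $w$-component.} Take two elements $U_1,U_2$ of the space and write $\rho:=\|U_1-U_2\|_{C_\eta^{\varepsilon,\theta}}$. By \eqref{eq: Lip F}, $\|F(U_1(s))-F(U_2(s))\|_{\mc X}\le L_F e^{\eta s}\rho$. The rescaled semigroup estimate \eqref{eq: semigroup estimate} with $\gamma=0$ gives $\|e^{\varepsilon^{-2}A_\varepsilon(t-s)}\|_{\mathcal B(\mc X,\mc X_\theta^\varepsilon)}\le C_A\varepsilon^{2\theta}(t-s)^{-\theta}e^{\varepsilon^{-2}\omega_A(t-s)}$. Multiplying by $e^{-\eta t}$ leaves the integral $\int_{-\infty}^t (t-s)^{-\theta}e^{(\varepsilon^{-2}\omega_A-\eta)(t-s)}\,ds=\Gamma(1-\theta)\,(\eta-\varepsilon^{-2}\omega_A)^{\theta-1}$. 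This is finite because $\varepsilon^{-2}\omega_A<\eta$, which holds when $\varepsilon^2\zeta^{-1}\le 1$ (in particular for $\zeta=\varepsilon^2$). Next substitute $\eta-\varepsilon^{-2}\omega_A=\tfrac{1}{2\varepsilon^{2}}\bigl(2(\varepsilon^2\zeta^{-1}-1)\omega_A+\varepsilon^2(N_S^\zeta+N_F^\zeta)\bigr)$ and collect the powers of $\varepsilon$. This reproduces exactly the first term of $L_{\mathrm{spec}}$, including the factor $2^{1-\theta}$.

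\textbf{The $v_F$- and $v_S$-components.} Here we use \eqref{eq: Lip G} in $\mc Y_\delta$ together with the dichotomy estimates of \Cref{ass: splitting slow comp}. For $v_F$ the forward estimate yields $\int_{-\infty}^t (t-s)^{\delta-1}e^{(N_F^\zeta+\zeta^{-1}\omega_A-\eta)(t-s)}ds=\Gamma(\delta)\bigl(\tfrac{N_S^\zeta-N_F^\zeta}{2}\bigr)^{-\delta}$. For $v_S$, which is integrated over $[t,0]$ with $t\le 0$, the backward group estimate yields $\int_t^0 (s-t)^{\delta-1}e^{(\eta-N_S^\zeta-\zeta^{-1}\omega_A)(s-t)}ds\le\Gamma(\delta)\bigl(\tfrac{N_S^\zeta-N_F^\zeta}{2}\bigr)^{-\delta}$. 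Adding the two contributions gives $2\cdot 2^{\delta}L_GC_B\Gamma(\delta)(N_S^\zeta-N_F^\zeta)^{-\delta}$, which is the second term of $L_{\mathrm{spec}}$. The projections only affect constants, and we absorb them into $C_B$ as the assumption implicitly does. Summing the three component bounds gives $\|\mathcal L U_1-\mathcal L U_2\|\le L_{\mathrm{spec}}\rho$, and $L_{\mathrm{spec}}<1$ by assumption. Banach's theorem then gives the unique fixed point.

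\textbf{Main obstacle.} The delicate step is the bookkeeping of exponents. The choice of the midpoint rate $\eta$ must make all three kernels integrable at once, and the $\varepsilon$-scaling in the $w$-integral must come out with precisely the normalization appearing in \eqref{spectral gap condition}. A further subtlety is measurability and continuity of the $v_S$ integral for negative times, which relies on the group property from \Cref{ass: splitting slow comp}. I would check this through the identity $e^{tB_S^\zeta}=e^{(t-s)B_S^\zeta}e^{sB_S^\zeta}$, valid for all real $t,s$.
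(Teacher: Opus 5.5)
Your proof is correct and follows the same route as the paper. The paper's proof only cites \cite[Lemma 3.3]{kuehn2026approximate}, observing that the constants in \eqref{spectral gap condition} come from estimating the three components of the Lyapunov--Perron map in $C_\eta^{\varepsilon,\theta}$; your explicit computation of $(\eta-\varepsilon^{-2}\omega_A)^{\theta-1}$ and of the two integrals $\Gamma(\delta)\bigl(\tfrac{N_S^\zeta-N_F^\zeta}{2}\bigr)^{-\delta}$ reproduces $L_{\mathrm{spec}}$ exactly, so it spells out that cited argument. Two small slips, neither of which affects the argument: you need $\gamma=0$ in \eqref{eq: semigroup estimate}, which the factor $\Gamma(1-\theta)$ already presupposes, and for $t\le 0$ the free term grows like $e^{(N_S^\zeta+\zeta^{-1}\omega_A)t}$, not $e^{-(N_S^\zeta+\zeta^{-1}\omega_A)t}$.
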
 

\begin{proof}
This is the Lyapunov-Perron contraction theorem for the fast-diffusion scaling used here. 
The constants in \eqref{spectral gap condition} are precisely those obtained by estimating the three components of the Lyapunov-Perron map in the weighted space \(C_\eta^{\varepsilon,\theta}\).
We therefore invoke \cite[Lemma 3.3]{kuehn2026approximate}.
\end{proof}

We denote the fixed point of the Lyapunov-Perron operator evaluated at $t=0$ by 
\[
\bigl(h_w^{\varepsilon,\zeta}(v_{S,0}),
h_{v_F}^{\varepsilon,\zeta}(v_{S,0}),
v_{S,0}\bigr),
\]
where
\[
h_w^{\varepsilon,\zeta}:\mc Y_S^\zeta\cap \mc Y_1
\to \mc X_{\theta}^\varepsilon,
\qquad
h_{v_F}^{\varepsilon,\zeta}:\mc Y_S^\zeta\cap \mc Y_1
\to\mc  Y_F^\zeta\cap \mc Y_1.
\]
The invariant manifold $S_{\varepsilon,\zeta}$ is then given by
\begin{align}
    S_{\varepsilon,\zeta}:=\{ \big(h^{\varepsilon,\zeta}_{w}(v_S),h^{\varepsilon,\zeta}_{v_F}(v_S),v_S\big):v_S\in \mc Y_1\cap \mc Y_S^\zeta\}.
   \end{align}

The next result shows that this invariant manifold is indeed the slow manifold of the system and we obtain the following generalized Fenichel Theorem.
\begin{theorem}[Generalized Fenichel Theorem]\label{thm: Fenichel abstract}
   Let the Assumptions \ref{ass: operators}, \ref{ass: nonlinearities} and \ref{ass: splitting slow comp} hold and let the parameters $\varepsilon,\zeta>0$ satisfy $\varepsilon^2\zeta^{-1}\leq1$.
    Then the invariant manifold $S_{\varepsilon,\zeta}$ is indeed the slow manifold of the system satisfying
    \begin{enumerate}
        \item[(i)] Regularity: The manifold \(S_{\varepsilon,\zeta}\) is Lipschitz continuous.
        If, in addition, the nonlinearities satisfy the corresponding \(C^k\)-bounds
        required in the Lyapunov-Perron theorem, then \(S_{\varepsilon,\zeta}\) is
        \(C^k\)-smooth.
        \item[(ii)] Distance: The slow manifold has a (Hausdorff semi-)distance of $\mathcal{O}(\varepsilon)$ to $S_0$ as $\varepsilon,\zeta\to 0$.
        \item[(iii)] Attraction: $S_{\varepsilon,\zeta}$ is a locally exponential attracting invariant manifold.
        \item[(iv)] Convergence of semiflows: The semiflow on the slow manifold $S_{\varepsilon,\zeta}$ converges to the semiflow on the critical manifold $S_0$.
    \end{enumerate}
\end{theorem}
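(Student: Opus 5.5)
The plan is to work throughout with the fixed point of \Cref{lemma: lyapunov perron} and derive each of (i)--(iv) from the contraction estimate \eqref{spectral gap condition}. This follows the scheme of \cite{hummel2022slow,kuehn2026approximate}. The only new ingredient is tracking the fast-diffusion scaling $\varepsilon^{-2}A_\varepsilon$ together with the coupling $\varepsilon^2\zeta^{-1}\le 1$.

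\emph{Regularity.} Write $\Phi(v_{S,0})$ for the fixed point in $C_\eta^{\varepsilon,\theta}$. The map $v_{S,0}\mapsto \mathcal L_{v_{S,0},\varepsilon}$ is affine in $v_{S,0}$, entering only through $e^{tB_S^\zeta}v_{S,0}$. The exponential dichotomy of \Cref{ass: splitting slow comp} gives $\|e^{tB_S^\zeta}v_{S,0}\|_{C_\eta}\lesssim \|v_{S,0}\|_{\mc Y_1}$. The uniform contraction principle then yields
\[
\|\Phi(v_1)-\Phi(v_2)\|_{C_\eta}\le (1-L_{\mathrm{spec}})^{-1}C\|v_1-v_2\|_{\mc Y_1}.
\]
Evaluating at $t=0$ shows that $h_w^{\varepsilon,\zeta}$ and $h_{v_F}^{\varepsilon,\zeta}$ are Lipschitz. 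For $C^k$-smoothness we apply the fibre contraction theorem to the formally differentiated fixed-point equation. This needs the usual gap condition $k\eta<\eta'$ between the weights, which we absorb into the $C^k$ hypothesis.

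\emph{Invariance and distance.} Invariance is standard: a fixed point shifted in time is again a fixed point with a new initial slow value, and by uniqueness it lies on the graph. For (ii), the critical manifold is $S_0=\{(0,v)\}$. It therefore suffices to bound $\|h_w^{\varepsilon,\zeta}(v_S)\|_{\mc X_\theta^\varepsilon}$ and the $v_F$-component. We use the first Lyapunov--Perron component together with $\|e^{\varepsilon^{-2}A_\varepsilon\tau}\|_{\mathcal B(\mc X,\mc X_\theta^\varepsilon)}\le C_A(\varepsilon^{-2}\tau)^{-\theta}e^{\varepsilon^{-2}\omega_A\tau}$. Integration produces a factor $\varepsilon^{2(1-\theta)}$ times $\sup\|F\|$ along the orbit. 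Since $\theta<1$, this gives $\mathcal O(\varepsilon^{2(1-\theta)})$, which is $\mathcal O(\varepsilon)$ for $\theta\le 1/2$. For larger $\theta$ I would instead use that $F$ vanishes on $S_0$ to leading order, or accept the rate in the abstract form $\varepsilon^{2(1-\theta)}$. The $v_F$-part is controlled in the same way via the rate $N_F^\zeta+\zeta^{-1}\omega_A$, because $\zeta\ge\varepsilon^2$. This step is the delicate one.

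\emph{Attraction and convergence.} For (iii), take an arbitrary solution, form the difference with the solution on $S_{\varepsilon,\zeta}$ having the same slow projection at a suitable time, and write the Duhamel formula forward in time. A Gronwall argument in the weighted norm with weight $e^{\eta t}$, which is the forward analogue of the contraction, then gives exponential decay at rate $|\eta|$. For (iv), the flow on $S_{\varepsilon,\zeta}$ is $\partial_t v_S=Bv_S+\mathrm{pr}\,G(h_w(v_S),h_{v_F}(v_S),v_S)$. Comparing with the limit flow $\partial_t v^0=Bv^0+G(0,v^0)$ by variation of constants and Gronwall on $[0,T]$, and inserting the bound from (ii), shows convergence on compact time intervals as $\varepsilon,\zeta\to0$.

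I expect the main obstacle to be (ii). The bound must hold uniformly despite the $\varepsilon$-dependent domains $\mc X_\theta^\varepsilon$, and the claimed $\mathcal O(\varepsilon)$ rate must come out correctly. The smoothing-in-time factor and the choice of $\theta$ interact here, so this is the step where I would check the constants most carefully, starting from the cited \cite[Lemma 3.3]{kuehn2026approximate}.
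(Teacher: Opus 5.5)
The paper gives no separate proof of this theorem. It is taken over from the generalized Fenichel theory of the cited works, on top of \Cref{lemma: lyapunov perron}. Building everything from that fixed point is therefore the intended route. Your treatment of (i) (uniform contraction in $v_{S,0}$, fibre contraction for $C^k$) and of invariance (time shift plus uniqueness) is fine. Two steps, however, do not work as written.

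\emph{Distance (ii).} Your integration is wrong. With the bound you quote, substituting $r=\varepsilon^{-2}\tau$ gives
\[
\int_0^\infty C_A(\varepsilon^{-2}\tau)^{-\theta}e^{\varepsilon^{-2}\omega_A\tau}\,d\tau=\varepsilon^{2}\,C_A\,\Gamma(1-\theta)\,|\omega_A|^{\theta-1}.
\]
This is a factor $\varepsilon^2$ for every $\theta\in(0,1)$; you dropped the $\varepsilon^{2\theta}$ contained in $(\varepsilon^{-2}\tau)^{-\theta}$. The same $\varepsilon^2$ sits in the numerator of $L_{\mathrm{spec}}$. So there is no restriction to $\theta\le\frac12$, and your fallbacks would not have rescued the argument:
\begin{itemize}
\item $F(0,0,v_S)$ does not vanish in general. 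It is exactly $F_0$ in $A_0h_{w,1}=-F_0$ (cf.\ the Schnakenberg forcing $\eta\rho(Y)U_S^2V_S$).
\item A rate $\varepsilon^{2(1-\theta)}$ would fail to give the claim in the regime $\theta>\frac34$ used later.
\end{itemize}
The point that actually needs care is ``$\sup\|F\|$ along the orbit''. Under \Cref{ass: nonlinearities}, $F$ is only globally Lipschitz with $F(0,0)=0$. Backward orbits in $C_\eta^{\varepsilon,\theta}$ may grow like $e^{\eta t}$ as $t\to-\infty$, so this supremum need not be finite. You have two options:
\begin{itemize}
\item Assume $F$ bounded (e.g.\ after the cut-off). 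This gives $\|h_w^{\varepsilon,\zeta}(v_S)\|_{\mc X_\theta^\varepsilon}\le C\varepsilon^2\|F\|_\infty$ uniformly.
\item Use the weighted estimate. This gives a bound proportional to $\|v_S\|_{\mc Y_1}$, i.e.\ a statement on bounded sets, with the first denominator of $L_{\mathrm{spec}}$. Keeping $\eta-\varepsilon^{-2}\omega_A>0$ there is precisely where $\varepsilon^2\zeta^{-1}\le1$ is used, which your sketch never locates.
\end{itemize}
Also, the $v_F$-graph is irrelevant for (ii), since $S_0=\{(0,v):v\in\mc Y_1\}$ contains every $v_F$. Moreover, ``because $\zeta\ge\varepsilon^2$'' points the wrong way: the guaranteed tail decay rate $|N_F^\zeta+\zeta^{-1}\omega_A|\le\zeta^{-1}|\omega_A|\le\varepsilon^{-2}|\omega_A|$ is no better than that of $w$.

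\emph{Attraction (iii).} Here an idea is missing. Suppose the comparison trajectory on $S_{\varepsilon,\zeta}$ is fixed by matching $v_S$ at one time and both trajectories are run forward. Then the difference need not decay. On $\mc Y_S^\zeta$ the linear flow decays at most at rate $|\zeta^{-1}\omega_A|-N_S^\zeta<|\eta|$, and not at all on the retained constant mode in the thin-domain case. A forward Gronwall argument therefore only yields growth bounds of the form $Ce^{c(t-t_0)}$.

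A toy example shows this. For $w'=-\mu w$, $v'=cw$ with $\mu>0$, the slow manifold is $\{w=0\}$. The solution from $(w_0,v_0)$ minus the manifold solution matched at time $t_1$ tends to $(0,cw_0e^{-\mu t_1}/\mu)$, which is nonzero when $cw_0\neq0$. Only the limit $t_1\to\infty$, the asymptotic phase, works.

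What you need is the stable-fibre construction. Set up a second Lyapunov--Perron map on $[t_0,\infty)$ for the difference, in the norm $\sup_{t\ge t_0}e^{-\eta t}\|\cdot\|$. Propagate the fast components forward, and define the slow component by
\[
e_S(t)=-\int_t^\infty e^{(t-s)B_S^\zeta}\,\pr_{\mc Y_S^\zeta}\bigl[G(\cdots)-G(\cdots)\bigr]\,ds .
\]
This integral converges because the group on $\mc Y_S^\zeta$ grows backward at most at rate $|\zeta^{-1}\omega_A|-N_S^\zeta<|\eta|$. The map contracts with the same constant $L_{\mathrm{spec}}$ and yields attraction at rate $|\eta|$. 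Alternatively, estimate $w(t)-h_w^{\varepsilon,\zeta}(v_S(t))$ directly, which needs the $C^1$ graph and the invariance equation.

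There is also a smaller omission in (iv). You should compare $v_S+h_{v_F}^{\varepsilon,\zeta}(v_S)$ with $v^0$; by invariance it solves $\partial_tv=Bv+G(h_w^{\varepsilon,\zeta}(v_S),v)$. To control the initial mismatch you must also show $h_{v_F}^{\varepsilon,\zeta}\to0$ and $\pr_{\mc Y_F^\zeta}v_0\to0$ as $\zeta\to0$.
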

The dynamics on the slow manifold $S_{\varepsilon,\zeta}$ are thus given by
\begin{align}\label{eq: slow dynamics}
\begin{split}
    \partial_t v_S^\eps&= Bv_S^\eps +\pr_{\mc Y_S^\zeta}G\bigl(h^{\eps,\zeta}_{w}(v_S^\eps),h^{\eps,\zeta}_{v_F}(v_S^\eps),v_S^\eps\bigr),\\
    v_S^\eps(0)&=  \pr_{\mc Y_S^\zeta} v_0.
    \end{split}
\end{align}

\begin{remark}[Why do we need the splitting?]\label{rem:why-splitting}
    The obstruction to a direct two-component Lyapunov-Perron argument is easy to state: $B$ generates a forward analytic semigroup, but in general there is no backward evolution on the whole of $\mc Y_1$. The remedy is to split the slow variable into finitely many retained modes and a stable complement.
    Without the splitting $v=v_F+v_S$, the backward integral for the slow component is not available because $B$ only generates a forward analytic semigroup. The restriction to the finite-dimensional space $\mc Y_S^\zeta$ removes this obstruction.
\end{remark}

\begin{remark}[From the splitting to the spectral gap]
    The splitting $Y=\mc Y_F^\zeta\oplus \mc Y_S^\zeta$ resolves the first obstruction from \Cref{rem:why-splitting}: it allows us to use the backward evolution only on the finite-dimensional part $\mc Y_S^\zeta$, while the tail $\mc Y_F^\zeta$ is treated by forward decay. However, this splitting by itself is not sufficient for the Lyapunov-Perron map to be a contraction.

    The reason is that the nonlinear terms still couple the three components $w$, $v_F$, and $v_S$. Hence one must ensure that the decay on the tail $\mc Y_F^\zeta$ is strong enough, and that the backward evolution on $\mc Y_S^\zeta$ is separated enough from this tail decay, so that these couplings are dominated by the linear dynamics. In other words, the modes placed into $\mc Y_F^\zeta$ must be genuinely fast relative to the retained modes in $\mc Y_S^\zeta$; otherwise the nonlinear transfer between the two blocks is too large and the Lyapunov-Perron operator need not be contractive.

    This is precisely the role of the spectral gap condition~\eqref{spectral gap condition}. The quantity $N_S^\zeta-N_F^\zeta$ measures the separation between the retained block and the discarded tail of the slow operator $B$, while the term involving $\zeta^{-1}\omega_A$ measures the comparison with the fast decay in the $w$-equation. The requirement $L_{\mathrm{spec}}<1$ is therefore the quantitative condition that the linear separation of scales dominates the nonlinear coupling.

    Put differently: the splitting is needed to define the mixed backward-forward Lyapunov-Perron map, and the spectral gap is needed to make this map contractive. Without such a gap, the tail of $B$ may decay too slowly compared to the fast scale, so the distinction between retained slow modes and discarded fast modes is not strong enough to produce an invariant slow manifold by this method.

    The thin-domain application in the next part verifies this mechanism explicitly for a one-dimensional base domain.
\end{remark}

\subsection{Application to the thin-domain problem}
\label{subsec:thin-domain-application}

We now verify the abstract assumptions from \Cref{Sec:assumptions} for the thin-domain
fast-slow system derived in \Cref{sec:general-thin}. In this subsection we work in one
tangential dimension and assume
\[
\Omega=(0,L),\qquad
g\in C^3(\overline\Omega),\qquad
0<g_*\le g(x)\le g^* .
\]
We also impose the endpoint compatibility condition
\[
g'(0)=g'(L)=0.
\]
This condition is not merely technical. It is the hypothesis which decouples
the slow boundary condition from the fast trace. Indeed, \Cref{prop:general-splitting} gives the
induced slow boundary condition
\[
\partial_xv-\frac{g'}g\,w(\cdot,1)=0
\qquad\text{on }\partial\Omega .
\]
If \(g'=0\) on \(\partial\Omega\), this reduces to the homogeneous Neumann
condition
\[
\partial_xv=0
\qquad\text{on }\partial\Omega.
\]
Without this endpoint condition the slow domain would depend on the fast
boundary trace \(w(\cdot,1)\), and the slow operator could not be treated as a
fixed Sturm-Liouville operator with Neumann boundary conditions on \(\Omega\).

The verification of the abstract hypotheses is organized as follows.
First we define the fixed fast and slow spaces and the corresponding fast and slow operators. 
Second, we explain why the fluctuation \(w=u-v\) does not by itself belong to the homogeneous fast domain and introduce a lifting that produces a homogeneous fast variable. 
Third, we strengthen the fast topology to a trace-compatible space, because the geometry-induced term \(R_gw\) (see \eqref{eq: boundary term Rg}) depends on the boundary trace \(w(\cdot,1)\).
Fourth, we derive the lifted mild system and prove that its nonlinearities satisfy the required uniform Lipschitz estimates.
Finally, we verify the spectral splitting of the slow Sturm-Liouville operator and invoke the abstract Lyapunov-Perron theorem.

\subsubsection*{Fast and slow spaces}
\label{subsubsec:thin-domain-fast-slow-spaces}

We use the notation
\[
Q:=\Omega\times(0,1),
\qquad
q(x):=\frac{g'(x)}{g(x)},
\qquad
\mathcal D:=\partial_x-Yq(x)\partial_Y,
\]
and
\[
\mathcal E\phi(x,Y):=\phi(x).
\]
The averaging operator is
\[
Mw(x):=\int_0^1 w(x,Y)\,dY.
\]
The fast space is the mean-zero space
\[
\mathcal X:=\ker M
=
\{z\in L^2(Q):Mz=0\}.
\]
We equip \(\mathcal X\) with the weighted inner product
\[
(z,\phi)_g
:=
\int_Q z(x,Y)\phi(x,Y)g(x)\,dx\,dY.
\]
The corresponding Hilbert space will be denoted by \(\mathcal X_g\). Since
\(g_*\le g\le g^*\), this norm is equivalent to the standard \(L^2(Q)\)-norm.
The common form domain is
\[
\mathcal V:=H^1(Q)\cap \mathcal X.
\]

For \(\varepsilon\in(0,1]\), define the symmetric form
\[
\mathfrak a_\varepsilon(z,\phi)
:=
\int_Q \frac1{g(x)^2}\partial_Yz\,\partial_Y\phi\,g(x)\,dx\,dY
+
\varepsilon^2
\int_Q \mathcal Dz\,\mathcal D\phi\,g(x)\,dx\,dY,
\qquad
z,\phi\in \mathcal V.
\]
The fast operator \(A_\varepsilon\) is defined as the operator associated with
\(-\mathfrak a_\varepsilon\), namely
\[
D(A_\varepsilon)
:=
\left\{
z\in \mathcal V:
\exists h\in\mathcal X
\text{ such that }
\mathfrak a_\varepsilon(z,\phi)=-(h,\phi)_g
\text{ for all }\phi\in\mathcal V
\right\},
\]
and
\[
A_\varepsilon z:=h.
\]
This variational definition is the primary definition of the fast operator.

For the slow variable we use
\[
\mathcal Y:=L^2(\Omega,g(x)\,dx)
\]
and the Neumann Sturm-Liouville operator
\[
B_gv:=\frac1g\partial_x(g\partial_xv),
\qquad
\mathcal Y_1:=D(B_g)
=
\{v\in H^2(\Omega):\partial_xv=0\text{ on }\partial\Omega\}.
\]

\begin{lemma}[Fast operator on the mean-zero space]
\label{lem:fast-operator-thin-domain}
For every \(\varepsilon\in(0,1]\), the operator
\[
A_\varepsilon:D(A_\varepsilon)\subset\mathcal X\to\mathcal X
\]
is densely defined, self-adjoint on \(\mathcal X_g\), sectorial, and generates
an exponentially stable analytic \(C_0\)-semigroup. Moreover, if
\[
\mu:=\frac{\pi^2}{(g^*)^2},
\]
then
\[
\sigma(A_\varepsilon)\subset(-\infty,-\mu]
\qquad\text{for all }\varepsilon\in(0,1].
\]
In particular,
\[
0\in\rho(A_\varepsilon),
\qquad
\|A_\varepsilon^{-1}\|_{\mathcal B(\mathcal X_g)}
\le
\mu^{-1}.
\]
Finally,
\[
D((-A_\varepsilon)^{1/2})
=
\mathcal V
=
H^1(Q)\cap\ker M,
\]
with equivalence of norms uniform in \(\varepsilon\) when \(\mathcal V\) is
equipped with the form norm
\[
\|z\|_{\mathcal V,\varepsilon}^2
:=
\|z\|_{\mathcal X_g}^2+\mathfrak a_\varepsilon(z,z).
\]
\end{lemma}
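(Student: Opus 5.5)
The plan is to use the representation theorem for closed, symmetric, semibounded forms (Kato's first representation theorem) on the Hilbert space $\mathcal X_g$. The form domain is $\mathcal V=H^1(Q)\cap\ker M$.

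\textbf{Step 1: $\mathfrak a_\varepsilon$ is a densely defined, closed, symmetric, nonnegative form.}
Symmetry and nonnegativity are immediate from the definition. For density, note that $C^\infty(\overline Q)$ is dense in $L^2(Q)$ and that $\Id-M$ is a bounded projection onto $\ker M$ which maps $C^\infty(\overline Q)$ into $H^1(Q)$. Hence $\mathcal V$ is dense in $\mathcal X$.

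For closedness, it suffices to show that the form norm $\|\cdot\|_{\mathcal V,\varepsilon}$ is equivalent to the $H^1(Q)$-norm on $\mathcal V$, for each fixed $\varepsilon\in(0,1]$. The upper bound follows from $g_*\le g\le g^*$ and from $|\mathcal D z|\le|\partial_x z|+\|q\|_\infty|\partial_Y z|$. For the lower bound, use $\partial_x z=\mathcal Dz+Yq\,\partial_Yz$. Then $\|\partial_x z\|^2\lesssim\|\mathcal Dz\|^2+\|\partial_Y z\|^2$, and the right-hand side is controlled by $\varepsilon^{-2}\mathfrak a_\varepsilon(z,z)$. Since $\mathcal V$ is closed in $H^1(Q)$ (because $M$ is continuous), the form is closed.

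The representation theorem then yields a unique self-adjoint, nonpositive operator, which is exactly $A_\varepsilon$ as defined variationally. Its form domain satisfies $D((-A_\varepsilon)^{1/2})=\mathcal V$. Self-adjointness and semiboundedness give sectoriality and an analytic contraction semigroup, via the spectral theorem.

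\textbf{Step 2: the uniform spectral bound.}
This is the key step. Drop the nonnegative $\varepsilon^2$-term and estimate
\[
\mathfrak a_\varepsilon(z,z)\ge\int_\Omega\frac1{g(x)}\int_0^1|\partial_Yz|^2\,dY\,dx .
\]
For almost every $x$, the function $z(x,\cdot)\in H^1(0,1)$ has zero mean, since $Mz=0$. The one-dimensional Poincaré–Wirtinger inequality on $(0,1)$, whose optimal constant is the first nonzero Neumann eigenvalue $\pi^2$, gives
\[
\int_0^1|\partial_Y z|^2\,dY\ge\pi^2\int_0^1|z|^2\,dY .
\]
Multiplying by $1/g=g/g^2\ge g/(g^*)^2$ and integrating in $x$ gives
\[
\mathfrak a_\varepsilon(z,z)\ge\frac{\pi^2}{(g^*)^2}\|z\|_{\mathcal X_g}^2=\mu\|z\|_{\mathcal X_g}^2 .
\]
By the min-max characterization, $\sigma(A_\varepsilon)\subset(-\infty,-\mu]$. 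This yields $0\in\rho(A_\varepsilon)$, the bound $\|A_\varepsilon^{-1}\|\le\mu^{-1}$, and $\|e^{tA_\varepsilon}\|\le e^{-\mu t}$, all uniformly in $\varepsilon$.

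\textbf{Step 3: the norm equivalence.}
The norms of $D((-A_\varepsilon)^{1/2})$ and $(\mathcal V,\|\cdot\|_{\mathcal V,\varepsilon})$ coincide by construction, since $\|(-A_\varepsilon)^{1/2}z\|^2=\mathfrak a_\varepsilon(z,z)$. By Step 2 the form norm is equivalent to $\mathfrak a_\varepsilon(z,z)^{1/2}$ alone, with constants $1$ and $(1+\mu^{-1})^{1/2}$ that do not depend on $\varepsilon$.

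\textbf{Expected obstacle.}
The only delicate point is the claim of uniformity. Uniformity holds for the form norm, but not with respect to the fixed $H^1(Q)$-norm: the $\varepsilon^2$ in front of the tangential term makes the $H^1$-equivalence constants degenerate as $\varepsilon\to0$. I would state explicitly that closedness is needed only for each fixed $\varepsilon$, and that the uniform statement is the one measured in $\|\cdot\|_{\mathcal V,\varepsilon}$, as formulated in the lemma.
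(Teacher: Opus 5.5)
Your proposal is correct and follows the paper's proof essentially step by step: closedness for fixed $\varepsilon$ via $\partial_x z=\mathcal D z+Yq\,\partial_Y z$, the fibrewise Poincar\'e--Wirtinger inequality with constant $\pi^2$ after discarding the $\varepsilon^2$-term, Kato's representation theorem, and identification of the square-root domain with the form domain. Your explicit density argument via $\Id-M$ and your observation that uniformity holds only in the form norm, not the $H^1(Q)$-norm, are useful additions. The paper leaves the density argument implicit and defers the uniformity point to the remark following the lemma.
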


\begin{proof}
For each fixed \(\varepsilon>0\), the form
\(\mathfrak a_\varepsilon\) is densely defined, symmetric, and closed on
\(\mathcal V\). Indeed, the form norm controls \(\partial_Yz\) and
\(\varepsilon\mathcal Dz\). Since
\[
\partial_xz=\mathcal Dz+Yq\partial_Yz,
\]
it controls the full \(H^1(Q)\)-norm for fixed \(\varepsilon\), although the
corresponding constant may degenerate as \(\varepsilon\to0\).

Since \(Mz=0\), the one-dimensional Poincar\'e inequality in the \(Y\)-variable
gives, for a.e. \(x\in\Omega\),
\[
\int_0^1 |z(x,Y)|^2\,dY
\le
\frac1{\pi^2}
\int_0^1 |\partial_Yz(x,Y)|^2\,dY .
\]
Using \(g(x)\le g^*\), we obtain
\[
\|z\|_{\mathcal X_g}^2
\le
\frac{(g^*)^2}{\pi^2}
\int_Q \frac1{g(x)^2}|\partial_Yz|^2g(x)\,dx\,dY
\le
\frac{(g^*)^2}{\pi^2}\mathfrak a_\varepsilon(z,z).
\]
Thus
\[
\mathfrak a_\varepsilon(z,z)
\ge
\mu\|z\|_{\mathcal X_g}^2,
\qquad
\mu=\frac{\pi^2}{(g^*)^2},
\]
uniformly in \(\varepsilon\). The representation theorem for closed coercive
forms therefore gives a self-adjoint operator associated with
\(-\mathfrak a_\varepsilon\), and the spectral inclusion follows from the
coercivity estimate.
For more details we refer to \cite{Kato1966,ouhabaz2009analysis}.
Hence \(A_\varepsilon\) is sectorial of angle zero and
generates an analytic exponentially stable semigroup on \(\mathcal X_g\).
Since the weighted and unweighted \(L^2\)-norms are equivalent, the same
generation and stability estimates hold on \(\mathcal X\), up to changing
constants.

The square-root domain of the nonnegative self-adjoint operator
\(-A_\varepsilon\) associated with \(\mathfrak a_\varepsilon\) is exactly the
form domain. Hence
\[
D((-A_\varepsilon)^{1/2})=\mathcal V
\]
with equivalence of the square-root norm and the form norm
\(\|\cdot\|_{\mathcal V,\varepsilon}\).
\end{proof}

\begin{remark}[No uniform full \(H^1\)-control]
Note the norm on $\mc V$ is not uniformly equivalent to the standard \(H^1(Q)\)-norm as \(\varepsilon\to0\). 
For example, in the flat case
take
\[
z_N(x,Y)=\sin(N\pi x/L)\cos(\pi Y),
\qquad Mz_N=0.
\]
Then
\[
\|z_N\|_{H^1(Q)}^2\sim 1+N^2,
\]
whereas
\[
\|z_N\|_{\mathcal X}^2+\mathfrak a_\varepsilon(z_N,z_N)
\sim
1+\varepsilon^2N^2.
\]
Choosing \(N\sim\varepsilon^{-2}\) shows that no \(\varepsilon\)-independent
lower bound by the standard \(H^1\)-norm can hold.
\end{remark}

\begin{remark}[Strong form under additional regularity]
\label{rem:strong-form-fast-operator}
If the conormal elliptic problem has the required \(H^2\)-regularity, then the
variational operator is represented by
\[
A_\varepsilon z
=
\frac1{g(x)^2}\partial_Y^2z
+
\varepsilon^2\mathcal D^2z
-
\varepsilon^2\mathcal E(R_gz),
\]
where
\[
R_gz:=-\frac1g\partial_x\bigl(g'z(\cdot,1)\bigr).
\]
In that regular case,
\[
D(A_\varepsilon)
=
\left\{
z\in H^2(Q)\cap \mathcal X:
\begin{array}{ll}
\partial_Yz=0, & Y=0,\\[1mm]
\partial_Yz-\varepsilon^2gg'\mathcal Dz=0, & Y=1,\\[1mm]
\mathcal Dz=0, & \partial\Omega\times(0,1)
\end{array}
\right\}.
\]
Equivalently, the upper conormal condition is
\[
\bigl(1+\varepsilon^2(g')^2\bigr)\partial_Yz
-
\varepsilon^2gg'\partial_xz
=
0
\qquad\text{on }Y=1.
\]
The term \(-\varepsilon^2\mathcal E(R_gz)\) is the projection which keeps
\(A_\varepsilon z\) in the mean-zero space \(\ker M\). Without additional
elliptic regularity, the displayed \(H^2\)-domain should be read only as the
formal strong realization; the variational definition above is the operator
definition.

To verify the natural boundary conditions, let \(z,\phi\in H^2(Q)\cap\mathcal X\).
Then
\[
\mathfrak a_\varepsilon(z,\phi)
=
\int_Q \frac1g\partial_Yz\,\partial_Y\phi
+
\varepsilon^2\int_Q g\mathcal Dz\,\mathcal D\phi .
\]
Since
\[
\mathcal D\phi=\partial_x\phi-Yq\partial_Y\phi,
\qquad q=\frac{g'}g,
\]
integration by parts gives the boundary contribution
\[
-\int_\Omega \frac1g\partial_Yz\,\phi\Big|_{Y=0}\,dx
+
\int_\Omega
\left(
\frac1g\partial_Yz-\varepsilon^2g'\mathcal Dz
\right)
\phi\Big|_{Y=1}\,dx
+
\varepsilon^2\int_{\partial\Omega\times(0,1)}
n_x\,g\mathcal Dz\,\phi\,dY .
\]
Hence the natural boundary conditions are
\[
\partial_Yz=0\quad\text{on }Y=0,
\]
\[
\partial_Yz-\varepsilon^2gg'\mathcal Dz=0\quad\text{on }Y=1,
\]
and
\[
\mathcal Dz=0\quad\text{on }\partial\Omega\times(0,1).
\]
Thus the variational form indeed generates the conormal boundary conditions
used in the strong realization.
\end{remark}

\begin{remark}[Limit operator and critical manifold]
The conormal strong domains depend on \(\varepsilon\), so one should not
interpret \(A_\varepsilon\to A_0\) as convergence on a common strong domain.
The natural convergence is form convergence. The limit form is
\[
\mathfrak a_0(z,\phi)
=
\int_Q \frac1{g(x)^2}\partial_Yz\,\partial_Y\phi\,g(x)\,dx\,dY,
\]
with form domain
\[
\mathcal V_0
=
\{z\in L^2(Q):\partial_Yz\in L^2(Q),\ Mz=0\}.
\]
The associated limit operator is
\[
A_0z=\frac1{g(x)^2}\partial_Y^2z,
\]
with Neumann boundary conditions in \(Y\) and the mean-zero constraint. No
lateral boundary condition appears in the limiting vertical operator. Since
\(A_0\) is invertible on \(\ker M\), the critical manifold of the lifted
fast-slow system is
\[
S_0=\{(0,v):v\in\mathcal Y_1\}.
\]
\end{remark}

\subsubsection*{Conormal liftings and homogeneous boundary conditions}
\label{subsubsec:thin-domain-conormal-liftings}

The fluctuation \(w=u-v\) from Section~3 does not satisfy homogeneous conormal
boundary conditions. Instead,
\[
\partial_Yw-\varepsilon^2gg'\mathcal Dw
=
\varepsilon^2gg'\partial_xv
\qquad\text{on }Y=1,
\]
and
\[
\mathcal Dw=-\partial_xv
\qquad\text{on }\partial\Omega\times(0,1).
\]
Since \(v\in\mathcal Y_1\) and \(g'=0\) on \(\partial\Omega\), the lateral
compatibility is consistent with homogeneous conormal data after lifting.

\begin{lemma}[Strong and weak conormal liftings]
\label{lem:strong-weak-conormal-lifting}
For every sufficiently small \(\varepsilon>0\), there are linear operators
\[
\Lambda_\varepsilon^1:\mathcal Y_1\to H^2(Q)\cap\mathcal X,
\qquad
\Lambda_\varepsilon^0:\mathcal Y\to\mathcal X,
\]
with the following properties.

For every \(v\in\mathcal Y_1\), the strong lifting
\(\Lambda_\varepsilon^1v\) satisfies
\[
M\Lambda_\varepsilon^1v=0,
\]
\[
\partial_Y(\Lambda_\varepsilon^1v)=0
\qquad\text{on }\Omega\times\{0\},
\]
\[
\partial_Y(\Lambda_\varepsilon^1v)
-
\varepsilon^2gg'\mathcal D(\Lambda_\varepsilon^1v)
=
\varepsilon^2gg'\partial_xv
\qquad\text{on }\Omega\times\{1\},
\]
and
\[
\mathcal D(\Lambda_\varepsilon^1v)=0
\qquad\text{on }\partial\Omega\times(0,1).
\]
Moreover,
\[
\|\Lambda_\varepsilon^1v\|_{H^2(Q)}
\le
C\varepsilon^2\|v\|_{\mathcal Y_1}.
\]
Consequently, for every \(\theta\in(3/4,1)\),
\[
\|\Lambda_\varepsilon^1v\|_{H^{2\theta}(Q)}
\le
C\varepsilon^2\|v\|_{\mathcal Y_1}.
\]
In particular, for every \(0<\delta<\theta-\frac34\),
\[
\left\|
\frac1{\varepsilon^2g^2}\partial_Y^2\Lambda_\varepsilon^1v
\right\|_{L^2(Q)}
+
\|\mathcal D^2\Lambda_\varepsilon^1v\|_{L^2(Q)}
+
\|R_g\Lambda_\varepsilon^1v\|_{\mathcal Y_\delta}
\le
C\|v\|_{\mathcal Y_1}.
\]
The weak lifting satisfies
\[
\|\Lambda_\varepsilon^0\eta\|_{\mathcal X}
\le
C\varepsilon^2\|\eta\|_{\mathcal Y},
\qquad \eta\in\mathcal Y.
\]
If \(\eta\in\mathcal Y_1\), then
\[
\Lambda_\varepsilon^0\eta=\Lambda_\varepsilon^1\eta .
\]
\end{lemma}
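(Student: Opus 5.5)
Since $g'=0$ at $x=0,L$ and $v\in\mathcal Y_1$ has $\partial_xv=0$ on $\partial\Omega$, the boundary data are compatible at the corners. We therefore construct $\Lambda_\varepsilon^1$ explicitly as a polynomial profile in $Y$, corrected to be mean-zero, and then perturb it to absorb the $\varepsilon^2$-tangential part of the conormal operator.

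\textbf{Step 1: ansatz and exact boundary conditions.} Make the ansatz
\[
\Lambda_\varepsilon^1v(x,Y)=a_\varepsilon(x)\,\psi(Y),\qquad \psi(Y):=\tfrac12Y^2-\tfrac16 .
\]
Then $M\psi=0$, $\psi'(0)=0$ and $\psi'(1)=1$. Since $\psi$ depends only on $Y$,
\[
\mathcal D(a\psi)=a'\psi-Yq\,a\,\psi' .
\]
At $Y=1$ the upper condition becomes a scalar equation for $a=a_\varepsilon$:
\[
a-\varepsilon^2gg'\bigl(\tfrac13a'-q\,a\bigr)=\varepsilon^2gg'\partial_xv ,
\]
that is,
\[
\bigl(1+\varepsilon^2(g')^2\bigr)a-\tfrac{\varepsilon^2}{3}gg'a'=\varepsilon^2gg'\partial_xv .
\]
This first-order linear ODE has a small coefficient $\varepsilon^2$ in front of $a'$. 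Solving it by a Neumann series is cumbersome. Instead, use the more robust choice
\[
a_\varepsilon=\varepsilon^2gg'\partial_xv\cdot\bigl(1+\varepsilon^2(g')^2\bigr)^{-1}+\text{correction},
\]
or simpler still, a two-term profile $a(x)\psi(Y)+c(x)\chi(Y)$. Here $\chi$ is mean-zero with $\chi'(0)=\chi'(1)=0$ and $\chi(1)\neq0$, for instance $\chi(Y)=\cos(\pi Y)$. This gives two free functions, and we impose the two conditions $\mathcal D(\cdot)|_{Y=1}$-consistency and the conormal flux.

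The cleanest route is to choose $a,c$ so that both
\[
\partial_Y\Lambda|_{Y=1}=\varepsilon^2gg'\bigl(\partial_xv+\mathcal D\Lambda|_{Y=1}\bigr)
\]
holds and the lateral condition holds. The lateral condition $\mathcal D\Lambda=0$ on $\partial\Omega\times(0,1)$ reduces to $a'=c'=0$ at $x\in\partial\Omega$, because $q=0$ there. Since $g'$ vanishes at the endpoints, the right-hand side $\varepsilon^2gg'\partial_xv$ and its $x$-derivative vanish there. Therefore
\[
a'(0)=a'(L)=0
\]
is consistent.

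\textbf{Step 2: solving for $a_\varepsilon$ in $H^2(\Omega)$.} With the single profile $\psi$, I would treat the relation
\[
a=\frac{\varepsilon^2gg'\partial_xv+\tfrac{\varepsilon^2}{3}gg'a'}{1+\varepsilon^2(g')^2}
\]
by a contraction or fixed-point argument on $H^2(\Omega)$. The term $a'$ costs one derivative, so this fixed point does not close directly. This loss of derivative is the main obstacle. The remedy I would try first is to use the second free function $c\chi$ to absorb the $\mathcal D$-term: the $a'$ contribution at $Y=1$ is cancelled by choosing $c\,\chi(1)$ appropriately. Concretely, take a profile $\psi_1$ with $\psi_1(1)=0$, $\psi_1'(1)=1$, $\psi_1'(0)=0$ and mean zero, for example a cubic. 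Then
\[
\mathcal D(a\psi_1)|_{Y=1}=-q\,a ,
\]
so no $a'$ appears, and the condition becomes algebraic:
\[
\bigl(1+\varepsilon^2(g')^2\bigr)a=\varepsilon^2gg'\partial_xv .
\]
Hence
\[
a=\frac{\varepsilon^2gg'\partial_xv}{1+\varepsilon^2(g')^2}\in H^1 .
\]
Since $v\in H^2$ only, $a$ lies only in $H^1$. We gain regularity by replacing the pointwise profile with a harmonic-type extension. Define $\Lambda^1_\varepsilon v$ as the solution of a fixed (not $\varepsilon$-dependent) elliptic Neumann problem on $Q$ with boundary flux datum $a\in H^{1/2}(\Omega)$, then project onto mean zero. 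Standard $H^2$ elliptic regularity on the rectangle (the corners are compatible thanks to $g'(0)=g'(L)=0$) then gives
\[
\|\Lambda^1_\varepsilon v\|_{H^2}\le C\|a\|_{H^{1/2}}\le C\varepsilon^2\|v\|_{\mathcal Y_1} .
\]
The remaining $\varepsilon^2\mathcal D$ part of the conormal condition is a lower-order perturbation, handled by a Neumann series for $\varepsilon$ small. This is where "sufficiently small $\varepsilon$" enters.

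\textbf{Step 3: consequences.} The $H^{2\theta}$ bound follows by the embedding $H^2\hookrightarrow H^{2\theta}$. The bound on $\varepsilon^{-2}g^{-2}\partial_Y^2\Lambda$ and on $\mathcal D^2\Lambda$ in $L^2$ follows from the $H^2$ bound with its $\varepsilon^2$ factor. For $R_g\Lambda$, the trace $\Lambda(\cdot,1)\in H^{3/2}(\Omega)$ is $O(\varepsilon^2)$. Therefore $\partial_x(g'\Lambda(\cdot,1))\in H^{1/2}\subset\mathcal Y_\delta$ for $2\delta<1/2$, which is consistent with $\delta<\theta-\tfrac34<\tfrac14$.

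\textbf{Step 4: the weak lifting.} Define $\Lambda^0_\varepsilon$ as the unique continuous extension of $\Lambda^1_\varepsilon$ from $\mathcal Y_1$ to $\mathcal Y$. This extension exists by a duality/transposition estimate
\[
\|\Lambda^1_\varepsilon\eta\|_{L^2}\le C\varepsilon^2\|\eta\|_{L^2},
\]
obtained by testing against solutions of the adjoint elliptic problem and integrating the $\partial_x v$ datum by parts in $x$. The boundary terms vanish because $g'=0$ on $\partial\Omega$. Density of $\mathcal Y_1$ in $\mathcal Y$ then yields both the bound and the identity $\Lambda^0_\varepsilon=\Lambda^1_\varepsilon$ on $\mathcal Y_1$.
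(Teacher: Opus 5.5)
Your strong lifting, in the form you finally settle on, is the paper's argument. You take an elliptic Neumann extension of an $H^{1/2}$ top flux, project to mean zero, and absorb the tangential part $-\varepsilon^2gg'\mathcal D$ of the conormal operator by a Neumann series in $\varepsilon^2$. The paper does the same thing abstractly: it takes a bounded right inverse of the Neumann trace map $\mathcal T_0$ and absorbs $\mathcal T_\varepsilon-\mathcal T_0=O(\varepsilon^2)$ by a Neumann series.

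The product ans\"atze in Steps 1--2 are dead ends, as you observe. They give only $a\in H^1(\Omega)$, so $a\psi_1\notin H^2(Q)$ and the endpoint condition $a'(0)=a'(L)=0$ has no meaning. Once you use the elliptic extension, the normalization $a=\varepsilon^2gg'\partial_xv/(1+\varepsilon^2(g')^2)$ gains nothing over the datum $\varepsilon^2gg'\partial_xv$ itself. Step 3 is fine, and reading off $R_g\Lambda_\varepsilon^1v\in H^{1/2}(\Omega)$ from the $H^{3/2}$ trace is slightly more direct than the paper's $H^{2\theta}$ route.

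The gap is in Step 4. Your $\Lambda_\varepsilon^1$ is not the solution of a single elliptic boundary value problem. It is $PE\,S_\varepsilon^{-1}\beta_0$, where $\beta_0=\varepsilon^2gg'\partial_xv$, $E$ is the Neumann extension, $P$ is the mean projection, $S_\varepsilon=I-K_\varepsilon$, $K_\varepsilon\beta=\varepsilon^2gg'\mathcal D(PE\beta)|_{Y=1}$, and $S_\varepsilon^{-1}=\sum_kK_\varepsilon^k$. Testing against an adjoint Neumann problem and integrating by parts in $x$ controls only the leading term: it gives $\|PE\beta_0\|_{L^2(Q)}\le C\|\beta_0\|_{(H^1(\Omega))'}\le C\varepsilon^2\|v\|_{\mathcal Y}$. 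There is no adjoint problem for the whole series.

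You still need $K_\varepsilon$ to be an $O(\varepsilon^2)$ contraction on $(H^1(\Omega))'$. The $\partial_Y$-part of $\mathcal D$ only returns $-q\beta$ and is harmless. The $\partial_x$-part needs the trace bound $\|(E\beta)(\cdot,1)\|_{L^2(\Omega)}\le C\|\beta\|_{(H^1(\Omega))'}$, which follows from a second transposition:
\begin{itemize}
\item test against the Neumann problem with $L^2(\Omega)$ top flux, whose solution lies in $H^{3/2}(Q)$ with trace in $H^1(\Omega)$;
\item then write $\langle gg'\partial_xu(\cdot,1),\psi\rangle=-\int_\Omega u(\cdot,1)\,\partial_x(gg'\psi)\,dx$, which has no boundary terms because $g'(0)=g'(L)=0$.
\end{itemize}
With this, $S_\varepsilon^{-1}$ is uniformly bounded on both $H^{1/2}(\Omega)$ and $(H^1(\Omega))'$ with the same action, and your density argument closes.

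The paper itself only asserts that its abstract right inverse extends continuously to $H^{-1}$ data. Your choice of an elliptic right inverse is what makes such an extension provable at all, but the weak-norm contraction is the step you still have to supply.
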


\begin{proof}
Define the upper conormal trace operator
\[
\mathcal B_\varepsilon \xi
:=
\partial_Y\xi-\varepsilon^2gg'\mathcal D\xi
=
\bigl(1+\varepsilon^2(g')^2\bigr)\partial_Y\xi
-
\varepsilon^2gg'\partial_x\xi
\qquad\text{on }Y=1.
\]
Consider the trace map
\[
\mathcal T_\varepsilon \xi
:=
\left(
M\xi,\,
\partial_Y\xi|_{Y=0},\,
\mathcal B_\varepsilon\xi|_{Y=1},\,
\mathcal D\xi|_{\partial\Omega\times(0,1)}
\right).
\]
At \(\varepsilon=0\), this becomes
\[
\mathcal T_0\xi
=
\left(
M\xi,\,
\partial_Y\xi|_{Y=0},\,
\partial_Y\xi|_{Y=1},\,
\partial_x\xi|_{\partial\Omega\times(0,1)}
\right).
\]
By the standard right-inverse theorem for compatible Neumann trace data on
rectangles, together with the prescribed fibre mean,
\(\mathcal T_0\) admits a bounded right inverse from the compatible trace space
into \(H^2(Q)\); see \cite{Grisvard1985,McLean2000}.
Moreover,
\[
\mathcal T_\varepsilon-\mathcal T_0
=
\mathcal O(\varepsilon^2)
\]
as a map from \(H^2(Q)\) into the same trace space. Hence the right inverse
persists for sufficiently small \(\varepsilon\) by a Neumann-series argument.
We denote the resulting uniformly bounded right inverse by
\(\mathcal R_\varepsilon\).

For \(v\in\mc Y_1\), set
\[
h_\varepsilon(v):=\varepsilon^2gg'\partial_xv
\]
and define
\[
\Lambda_\varepsilon^1v
:=
\mathcal R_\varepsilon(0,0,h_\varepsilon(v),0).
\]
Therefore, by construction,
\[
M\Lambda_\varepsilon^1v=0.
\]

From the definition of $h_\varepsilon(v)$ it follows that
\[
\|h_\varepsilon(v)\|_{H^{1/2}(\Omega)}
\le
C\varepsilon^2\|v\|_{\mathcal Y_1}.
\]
Applying now the uniformly bounded right inverse to the compatible trace datum $(0,0,h_\varepsilon(v),0)$
gives, by construction, \(\Lambda_\varepsilon^1v\), and hence
\[
\|\Lambda_\varepsilon^1v\|_{H^2(Q)}
\le
C\varepsilon^2\|v\|_{\mathcal Y_1}.
\]
Interpolation then gives the \(H^{2\theta}\)-estimate. 

The estimate involving
\(\varepsilon^{-2}\partial_Y^2\Lambda_\varepsilon^1v\) follows from the
\(H^2\)-estimate. The estimate for \(\mathcal D^2\Lambda_\varepsilon^1v\)
follows from boundedness of the coefficients of \(\mathcal D\). Finally, the
trace theorem gives
\[
\Lambda_\varepsilon^1v(\cdot,1)\in H^{2\theta-1/2}(\Omega).
\]
Thus
\[
R_g\Lambda_\varepsilon^1v
=
-\frac1g\partial_x\bigl(g'\Lambda_\varepsilon^1v(\cdot,1)\bigr)
\in H^{2\theta-3/2}(\Omega).
\]
Since \(\mathcal Y_\delta\simeq H^{2\delta}(\Omega)\) for
\(0<\delta<1\), the embedding
\[
H^{2\theta-3/2}(\Omega)\hookrightarrow H^{2\delta}(\Omega)
\]
holds whenever
\[
0<\delta<\theta-\frac34.
\]
It remains to construct the weak lifting. The same family of trace right
inverses admits a weak extension. More precisely, the conormal datum
\[
\varepsilon^2gg'\partial_x\eta
\]
is meaningful as an element of \(H^{-1}(\Omega)\) whenever
\(\eta\in \mc Y=L^2(\Omega,g\,dx)\), and
\[
\|\varepsilon^2gg'\partial_x\eta\|_{H^{-1}(\Omega)}
\le C\varepsilon^2\|\eta\|_{\mc Y}.
\]
Using the weak trace right inverse associated with the same operator
\(\mathcal T_\varepsilon\), we define
\[
\Lambda_\varepsilon^0\eta
:=
\mathcal R_\varepsilon^{\,0}
\bigl(0,0,\varepsilon^2gg'\partial_x\eta,0\bigr).
\]
Then
\[
\Lambda_\varepsilon^0:\mc Y\to\mc X
\]
is linear and satisfies
\[
\|\Lambda_\varepsilon^0\eta\|_{\mc X}
\le C\varepsilon^2\|\eta\|_{\mc Y}.
\]

The weak right inverse \(\mathcal R_\varepsilon^{\,0}\) is chosen as the
continuous extension of the strong right inverse
\(\mathcal R_\varepsilon\). Equivalently, if
\(\eta_n\in\mc Y_1\) and \(\eta_n\to\eta\) in \(\mc Y\), then
\[
\Lambda_\varepsilon^0\eta
=
\lim_{n\to\infty}\Lambda_\varepsilon^1\eta_n
\qquad\text{in }\mc X.
\]
Therefore, for every \(\eta\in\mc Y_1\),
\[
\Lambda_\varepsilon^0\eta=\Lambda_\varepsilon^1\eta .
\]
\end{proof}

\subsubsection*{Lifted mild formulation}
\label{subsubsec:thin-domain-lifted-mild-system}

We now introduce the homogeneous fast variable
\[
z^\varepsilon:=w^\varepsilon-\Lambda_\varepsilon^1v^\varepsilon.
\]
Formally,
\[
w^\varepsilon=z^\varepsilon+\Lambda_\varepsilon^1v^\varepsilon,
\]
and \(z^\varepsilon\) satisfies homogeneous conormal boundary conditions.
However, at the regularity level needed for the slow-manifold construction one
should not differentiate this identity pointwise in time. Instead, the lifted
equation is understood in the mild sense.

\begin{proposition}[Mild lifted fast-slow system]
\label{prop:mild-lifted-fast-slow-system}
Let
\[
z_0^\varepsilon:=w_0^\varepsilon-\Lambda_\varepsilon^1v_0^\varepsilon.
\]
The lifted variables \((z^\varepsilon,v^\varepsilon)\) satisfy the mild system
\[
z^\varepsilon(t)
=
e^{t\varepsilon^{-2}A_\varepsilon}z_0^\varepsilon
+
\int_0^t
e^{(t-s)\varepsilon^{-2}A_\varepsilon}
F_\varepsilon(z^\varepsilon(s),v^\varepsilon(s))\,ds,
\]
\[
v^\varepsilon(t)
=
e^{tB_g}v_0^\varepsilon
+
\int_0^t
e^{(t-s)B_g}
G_\varepsilon(z^\varepsilon(s),v^\varepsilon(s))\,ds,
\]
where
\[
G_\varepsilon(z,v)
:=
R_g(z+\Lambda_\varepsilon^1v)
+
Mf(v+z+\Lambda_\varepsilon^1v),
\]
and
\begin{align*}
F_\varepsilon(z,v)
:={}&
(I-M)f(v+z+\Lambda_\varepsilon^1v)
+
\frac1{\varepsilon^2g^2}\partial_Y^2\Lambda_\varepsilon^1v
+
\mathcal D^2\Lambda_\varepsilon^1v
\\
&-
\mathcal E(R_g\Lambda_\varepsilon^1v)
-
\mathcal E\!\left(\frac{g'}g\partial_xv\right)
-
\Lambda_\varepsilon^0
\bigl(B_gv+G_\varepsilon(z,v)\bigr).
\end{align*}
\end{proposition}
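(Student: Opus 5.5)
Starting from the splitting of \Cref{prop:general-splitting}, we set $w^\varepsilon=z^\varepsilon+\Lambda_\varepsilon^1v^\varepsilon$ and rewrite both equations. The slow equation is immediate: substituting $w=z+\Lambda_\varepsilon^1v$ into \eqref{eq:slow-general} gives $\partial_tv=B_gv+G_\varepsilon(z,v)$ with $G_\varepsilon$ as stated. Since $g'=0$ on $\partial\Omega$, the induced slow boundary condition reduces to $\partial_xv=0$, so $v(t)\in\mathcal Y_1=D(B_g)$. The variation-of-constants formula for the analytic semigroup $e^{tB_g}$ then yields the second mild identity, first for strong solutions.

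For the fast equation we proceed first at the level of sufficiently smooth solutions, as in \Cref{prop:general-splitting}, and compute $\partial_tz=\partial_tw-\Lambda_\varepsilon^1\partial_tv$. We use linearity and time-independence of $\Lambda_\varepsilon^1$, and replace $\partial_tv=B_gv+G_\varepsilon(z,v)$. Since $\partial_tv$ need only lie in $\mathcal Y$ (not $\mathcal Y_1$), this term must be read as $\Lambda_\varepsilon^0(B_gv+G_\varepsilon)$. This is legitimate by the consistency $\Lambda_\varepsilon^0=\Lambda_\varepsilon^1$ on $\mathcal Y_1$ and the density/continuity of $\Lambda_\varepsilon^0$ from \Cref{lem:strong-weak-conormal-lifting}.

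Next, we insert \eqref{eq:fast-general} for $\partial_tw$ and split
\[
\frac1{\varepsilon^2g^2}\partial_Y^2w+\mathcal D^2w
=\Bigl(\frac1{\varepsilon^2g^2}\partial_Y^2z+\mathcal D^2z\Bigr)
+\frac1{\varepsilon^2g^2}\partial_Y^2\Lambda_\varepsilon^1v
+\mathcal D^2\Lambda_\varepsilon^1v .
\]
In the same way we split $-\mathcal E(R_gw)=-\mathcal E(R_gz)-\mathcal E(R_g\Lambda_\varepsilon^1v)$. The key identification is that, because $z$ satisfies the homogeneous conormal conditions (the lifting absorbs exactly the inhomogeneous data $\varepsilon^2gg'\partial_xv$ on $Y=1$, with zero data at $Y=0$ and on the lateral sides), the $z$-terms combine to
\[
\frac1{\varepsilon^2g^2}\partial_Y^2z+\mathcal D^2z-\mathcal E(R_gz)=\varepsilon^{-2}A_\varepsilon z .
\]
This follows from the strong representation in \Cref{rem:strong-form-fast-operator}. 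To avoid relying on $H^2$-regularity, we verify it weakly instead: test against $\phi\in\mathcal V$ with the weight $g$. The integration-by-parts computation of that remark shows that the boundary terms vanish precisely because of the homogeneous conditions on $z$. It also shows that the $\mathcal E(R_gz)$ term is orthogonal to mean-zero test functions, since $(\mathcal E\psi,\phi)_g=\int_\Omega\psi g\,M\phi\,dx=0$. Collecting the remaining terms gives exactly $\partial_tz=\varepsilon^{-2}A_\varepsilon z+F_\varepsilon(z,v)$. Finally, we check $MF_\varepsilon=0$: this uses $M\Lambda_\varepsilon^{0,1}=0$, together with the averaging identities of \Cref{prop:general-splitting} applied to $\Lambda_\varepsilon^1v$, which give $M(\varepsilon^{-2}g^{-2}\partial_Y^2+\mathcal D^2)\Lambda_\varepsilon^1v=R_g\Lambda_\varepsilon^1v+(\text{terms cancelling } \tfrac{g'}g\partial_xv)$. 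We then apply Duhamel with the semigroup from \Cref{lem:fast-operator-thin-domain}.

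The main obstacle is passing from smooth solutions to the mild formulation at the actual regularity. Our plan is to derive the identity for smooth (or Galerkin/regularized) data, where every term is classical. We then pass to the limit using the bounds of \Cref{lem:strong-weak-conormal-lifting}: $F_\varepsilon$ is bounded into $\mathcal X$, $G_\varepsilon$ is bounded into $\mathcal Y_\delta$, and $\Lambda_\varepsilon^0$ is continuous on $\mathcal Y$. These bounds make both Duhamel integrands continuous in the data, and the mild identities then persist in the limit.
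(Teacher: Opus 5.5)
Your proposal is correct and follows essentially the same route as the paper. Both arguments derive the strong lifted equations for smooth solutions by differentiating $z^\varepsilon=w^\varepsilon-\Lambda_\varepsilon^1v^\varepsilon$, replace $\Lambda_\varepsilon^1\partial_tv^\varepsilon$ by $\Lambda_\varepsilon^0(B_gv^\varepsilon+G_\varepsilon)$, identify the homogeneous part as $\varepsilon^{-2}A_\varepsilon z^\varepsilon$ through the homogeneous conormal conditions, apply Duhamel, and pass to general data by approximation. Your form-based identification of $\varepsilon^{-2}A_\varepsilon z^\varepsilon$ and your check of $MF_\varepsilon=0$ are details the paper leaves implicit in this proof; it proves the latter in the subsequent mean-zero consistency lemma.
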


\begin{proof}
For smooth data and smooth solutions  with \(\partial_t v^\varepsilon\in\mc Y_1\), one may differentiate
\[
z^\varepsilon=w^\varepsilon-\Lambda_\varepsilon^1v^\varepsilon
\]
and obtain formally
\[
\partial_tz^\varepsilon
=
\partial_tw^\varepsilon
-
\Lambda_\varepsilon^1(\partial_tv^\varepsilon).
\]
Since the weak lifting is the continuous extension of the strong lifting and
\(\Lambda_\varepsilon^0=\Lambda_\varepsilon^1\) on \(\mc Y_1\), this can be
written as
\[
\partial_tz^\varepsilon
=
\partial_tw^\varepsilon
-
\Lambda_\varepsilon^0(\partial_tv^\varepsilon).
\]
The slow equation gives
\[
\partial_tv^\varepsilon
=
B_gv^\varepsilon+G_\varepsilon(z^\varepsilon,v^\varepsilon).
\]
Substituting
\[
w^\varepsilon=z^\varepsilon+\Lambda_\varepsilon^1v^\varepsilon
\]
into the split system from \Cref{sec:general-thin}, and using the homogeneous
conormal boundary conditions satisfied by \(z^\varepsilon\), the homogeneous
fast part is represented by \(A_\varepsilon z^\varepsilon\). The remaining
terms are precisely $F_\varepsilon(z^\varepsilon,v^\varepsilon)$.
Thus, for smooth solutions,
\[
\partial_t z^\varepsilon
=
\varepsilon^{-2}A_\varepsilon z^\varepsilon
+
F_\varepsilon(z^\varepsilon,v^\varepsilon),
\qquad
\partial_t v^\varepsilon
=
B_gv^\varepsilon
+
G_\varepsilon(z^\varepsilon,v^\varepsilon).
\]
Applying the variation-of-constants formula gives the two displayed mild
equations.

We now pass to general data. Let
\[
z_0^\varepsilon\in \mathcal X,
\qquad
v_0^\varepsilon\in \mathcal Y_1,
\]
and approximate these data by smooth data
\[
z_{0,n}^\varepsilon\in D(A_\varepsilon),
\qquad
v_{0,n}^\varepsilon\in D(B_g)
\]
such that
\[
z_{0,n}^\varepsilon\to z_0^\varepsilon
\quad\text{in }\mathcal X,
\qquad
v_{0,n}^\varepsilon\to v_0^\varepsilon
\quad\text{in }\mathcal Y_1 .
\]
For the corresponding smooth solutions
\((z_n^\varepsilon,v_n^\varepsilon)\), the mild identities have already been
proved. The continuity of the semigroups \(e^{t\varepsilon^{-2}A_\varepsilon}\) and \(e^{tB_g}\),
the boundedness of
\[
\Lambda_\varepsilon^0:\mathcal Y\to\mathcal X,
\qquad
\Lambda_\varepsilon^1:\mathcal Y_1\to H^2(Q)\cap\mathcal X,
\]
and the continuity of the nonlinear terms imply
\[
F_\varepsilon(z_n^\varepsilon,v_n^\varepsilon)
\to
F_\varepsilon(z^\varepsilon,v^\varepsilon)
\quad\text{in }L^1_{\mathrm{loc}}(0,T;\mathcal X),
\]
and
\[
G_\varepsilon(z_n^\varepsilon,v_n^\varepsilon)
\to
G_\varepsilon(z^\varepsilon,v^\varepsilon)
\quad\text{in }L^1_{\mathrm{loc}}(0,T;\mathcal Y).
\]
Passing to the limit in the two Duhamel formulas therefore yields the displayed
mild system for general data.
\end{proof}

\begin{remark}[The weak lifting]
The term
\[
\Lambda_\varepsilon^0\bigl(B_gv^\varepsilon
+G_\varepsilon(z^\varepsilon,v^\varepsilon)\bigr)
\]
is the mild replacement of the formal expression
\[
\Lambda_\varepsilon^1(\partial_t v^\varepsilon).
\]
For smooth solutions these two expressions agree, because
\[
\partial_t v^\varepsilon
=
B_gv^\varepsilon+G_\varepsilon(z^\varepsilon,v^\varepsilon)
\]
and \(\Lambda_\varepsilon^0=\Lambda_\varepsilon^1\) on \(\mathcal Y_1\).
For general mild solutions, however, one should not differentiate
\(v^\varepsilon\) pointwise in time. The extension
\[
\Lambda_\varepsilon^0:\mathcal Y\to\mathcal X
\]
allows the same term to be interpreted directly in the Duhamel formula.
Thus the lifted system is obtained by closing the smooth identity in the
natural semigroup topology.
\end{remark}

\subsubsection*{Trace-compatible topology and nonlinear estimates}
\label{subsubsec:thin-domain-trace-compatible-topology}

For nonconstant \(g\), the nonlinear estimates require a trace-compatible fast
topology. We therefore define, for \(\theta\in(3/4,1)\),
\[
\mc Z_\theta^\varepsilon
:=
\mathcal X_\theta^\varepsilon
\cap H^{2\theta}(Q)\cap\mathcal X,
\]
with norm
\[
\|z\|_{\mc Z_\theta^\varepsilon}
:=
\|z\|_{\mathcal X_\theta^\varepsilon}
+
\|z\|_{H^{2\theta}(Q)}.
\]
The additional isotropic Sobolev component is needed because
\[
R_gz=-\frac1g\partial_x\bigl(g'z(\cdot,1)\bigr)
\]
contains a tangential derivative of a boundary trace. If
\[
0<\delta<\theta-\frac34,
\]
then the trace theorem gives
\[
z(\cdot,1)\in H^{2\theta-1/2}(\Omega),
\]
and hence
\[
R_gz\in H^{2\theta-3/2}(\Omega)
\hookrightarrow H^{2\delta}(\Omega)
\simeq\mathcal Y_\delta.
\]
Consequently,
\[
\|R_gz\|_{\mathcal Y_\delta}
\le
C\|z\|_{\mc Z_\theta^\varepsilon}.
\]

\begin{lemma}[Mean-zero consistency of the lifted fast vector field]
For every \(v\in \mc Y_1\), the geometric lifting terms satisfy
\[
M\left[
 \frac{1}{\varepsilon^2g^2}\partial_Y^2\Lambda_\varepsilon^1v
 +\mc D^2\Lambda_\varepsilon^1v
 -\mc E(R_g\Lambda_\varepsilon^1v)
 -\mc E\left(\frac{g'}{g}\partial_xv\right)
\right]=0.
\]
Consequently,
\[
F_\varepsilon(z,v)\in\mc X
\]
whenever the terms in the definition of \(F_\varepsilon\) are well-defined.

\end{lemma}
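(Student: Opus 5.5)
The identity should follow from the same two averaging computations used in the proof of \Cref{prop:general-splitting}, now applied to $\phi:=\Lambda_\varepsilon^1v$ instead of $u$. The differences are that $M\phi=0$ and that the upper conormal datum is inhomogeneous. I would first argue for $v\in\mc Y_1$ and then note that nothing beyond $\phi\in H^2(Q)$, given by \Cref{lem:strong-weak-conormal-lifting}, is used.

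\emph{Step 1: the singular term.} Integrate in $Y$ and use the boundary conditions from \Cref{lem:strong-weak-conormal-lifting}, namely $\partial_Y\phi=0$ at $Y=0$ and $\partial_Y\phi=\varepsilon^2gg'(\mc D\phi+\partial_xv)$ at $Y=1$. This should give
\[
M\Bigl(\frac1{\varepsilon^2g^2}\partial_Y^2\phi\Bigr)=\frac1{\varepsilon^2g^2}\bigl[\partial_Y\phi\bigr]_{Y=0}^{Y=1}=q\,(\mc D\phi)(\cdot,1)+q\,\partial_xv,
\qquad q=\frac{g'}g .
\]
The factor $\varepsilon^{-2}$ is cancelled exactly by the $\varepsilon^2$ in the conormal datum. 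This is why the $\varepsilon^2$-scaling of the lifting datum $h_\varepsilon(v)$ was chosen.

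\emph{Step 2: the tangential term.} I would use the identity $M(\mc D\psi)=\partial_xM\psi-q\,\psi(\cdot,1)+q\,M\psi$ from the proof of \Cref{prop:general-splitting}. With $\psi=\phi$ and $M\phi=0$ it gives $P:=M(\mc D\phi)=-q\,\phi(\cdot,1)$. With $\psi=\mc D\phi$ it gives
\[
M(\mc D^2\phi)=\partial_xP-q\,(\mc D\phi)(\cdot,1)+qP .
\]
Adding Steps 1 and 2, the boundary term $q(\mc D\phi)(\cdot,1)$ cancels, as in \Cref{prop:general-splitting}, and one is left with
\[
q\,\partial_xv-\partial_x\bigl(q\,\phi(\cdot,1)\bigr)-q^2\phi(\cdot,1)=q\,\partial_xv+R_g\phi .
\]
Since $M\mc E\psi=\psi$ for functions of $x$, subtracting $M\mc E(R_g\phi)=R_g\phi$ and $M\mc E(q\partial_xv)=q\partial_xv$ yields zero, which is the claimed identity.

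\emph{Step 3: consequence for $F_\varepsilon$.} The term $(I-M)f(\cdot)$ lies in $\ker M$ by construction, and $\Lambda_\varepsilon^0$ maps into $\mc X=\ker M$ by \Cref{lem:strong-weak-conormal-lifting}. The remaining four terms of $F_\varepsilon$ form exactly the bracket treated above, which has zero mean. Hence $F_\varepsilon(z,v)\in\mc X$.

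\emph{Main obstacle: regularity.} The only delicate point is justifying the integrations by parts and traces at the $H^2$ level rather than classically. For a.e.\ $x$, $\partial_Y\phi(x,\cdot)\in H^1(0,1)$, so the fundamental theorem of calculus in $Y$ holds. The traces $\partial_Y\phi(\cdot,1)$ and $(\mc D\phi)(\cdot,1)$ lie in $H^{1/2}(\Omega)$, and $\phi(\cdot,1)\in H^{3/2}(\Omega)$, so $R_g\phi\in H^{1/2}(\Omega)$ is well defined. If a direct argument is awkward, I would prove the identity for smooth $\phi$ and pass to the limit by density in $H^2(Q)$. Both sides are continuous $H^2(Q)\to L^2(\Omega)$, and the boundary conditions enter only through the trace identity in Step 1, which is continuous in $H^2$.
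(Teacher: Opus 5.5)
Your proposal is correct and follows the paper's proof. You apply the averaging identities from \Cref{prop:general-splitting} to $\Lambda_\varepsilon^1v$, and use $M\Lambda_\varepsilon^1v=0$ together with the inhomogeneous upper conormal datum to obtain $M\bigl[\varepsilon^{-2}g^{-2}\partial_Y^2\Lambda_\varepsilon^1v+\mathcal D^2\Lambda_\varepsilon^1v\bigr]=R_g\Lambda_\varepsilon^1v+\frac{g'}{g}\partial_xv$; you then note that $\Lambda_\varepsilon^0$ and $(I-M)f$ take values in $\ker M$. Your version also makes explicit the algebraic identification $-\partial_x\bigl(q\phi(\cdot,1)\bigr)-q^2\phi(\cdot,1)=R_g\phi$ and the $H^2$-level justification of the traces, both of which the paper leaves implicit.
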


\begin{proof}
Apply the averaging identity from the fast-slow splitting to
\(\xi=\Lambda_\varepsilon^1v\).  Since \(M\Lambda_\varepsilon^1v=0\) and
\[
\partial_Y\Lambda_\varepsilon^1v-\varepsilon^2gg'\mc D\Lambda_\varepsilon^1v
 =\varepsilon^2gg'\partial_xv
 \quad\text{on }Y=1,
\]
the same cancellation as in the derivation of Theorem~3.3 gives
\[
M\left[
 \frac{1}{\varepsilon^2g^2}\partial_Y^2\Lambda_\varepsilon^1v
 +\mc D^2\Lambda_\varepsilon^1v
\right]
=
R_g\Lambda_\varepsilon^1v+\frac{g'}{g}\partial_xv .
\]
This is precisely the displayed identity.  
Next, by construction,
\[
\Lambda_\varepsilon^0:\mc Y\to \mc X=\ker M.
\]
Therefore
\[
M\Lambda_\varepsilon^0\bigl(B_gv+G_\varepsilon(z,v)\bigr)=0.
\]
Finally, \((I-M)f(\cdot)\in \mc X\) by definition. 
Combining these observations shows that all terms in \(F_\varepsilon(z,v)\) have zero average after the geometric lifting terms are grouped as above. 
Hence \(F_\varepsilon(z,v)\in \mc X\).
\end{proof}

\begin{lemma}[Trace-compatible fast smoothing estimate]
\label{lem:trace-compatible-fast-smoothing}
Let
\[
L_\varepsilon:=\varepsilon^{-2}A_\varepsilon .
\]
By \Cref{prop:uniform-H2-projected-fast-operator}, the conormal realization of
\(L_\varepsilon=\varepsilon^{-2}A_\varepsilon\) satisfies
\[
D(L_\varepsilon)\subset H^2(Q)\cap\mathcal X
\]
and
\begin{align} \label{eq:uniform-parameter-elliptic-estimate}
\|u\|_{H^2(Q)}
\le
C\bigl(\|L_\varepsilon u\|_{\mathcal X}+\|u\|_{\mathcal X}\bigr),
\quad \textnormal{for }\quad u\in D(L_\varepsilon),
\end{align}
with \(C\) independent of sufficiently small \(\varepsilon>0\).
Then, for every \(\theta\in(0,1)\), there are constants
\(C_\theta>0\) and \(c>0\), independent of sufficiently small \(\varepsilon>0\), such that
\[
\|e^{tL_\varepsilon}\|_{\mathcal B(\mc X,\mc Z_\theta^\varepsilon)}
 \le C_\theta t^{-\theta}e^{-ct/\varepsilon^2},
 \qquad t>0 .
\]
\end{lemma}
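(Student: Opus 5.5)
We estimate the two components of the norm of $\mc Z_\theta^\varepsilon$ separately, using the spectral calculus of the self-adjoint operator $L_\varepsilon=\varepsilon^{-2}A_\varepsilon$ on $\mathcal X_g$ together with the uniform elliptic estimate \eqref{eq:uniform-parameter-elliptic-estimate}.

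\textbf{Spectral part.} By \Cref{lem:fast-operator-thin-domain}, $\sigma(L_\varepsilon)\subset(-\infty,-\mu\varepsilon^{-2}]$, with $\mu$ independent of $\varepsilon$. For $u\in\mathcal X$ and $\alpha\in[0,1]$ the spectral theorem gives
\[
\|(-L_\varepsilon)^{\alpha}e^{tL_\varepsilon}u\|_{\mathcal X_g}
\le \sup_{\lambda\ge\mu\varepsilon^{-2}}\lambda^{\alpha}e^{-\lambda t}\,\|u\|_{\mathcal X_g}.
\]
Split $e^{-\lambda t}=e^{-\lambda t/2}e^{-\lambda t/2}$, bound $\lambda^\alpha e^{-\lambda t/2}\le C_\alpha t^{-\alpha}$, and bound the other factor by $e^{-\mu t/(2\varepsilon^2)}$. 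This yields, with $c=\mu/2$,
\[
\|(-L_\varepsilon)^{\alpha}e^{tL_\varepsilon}\|\le C_\alpha t^{-\alpha}e^{-ct/\varepsilon^2}.
\]
Taking $\alpha=\theta$ controls the $\mathcal X_\theta^\varepsilon$-component, provided the interpolation scale is normalized by $L_\varepsilon$. If it is normalized by $A_\varepsilon$, the extra factor $\varepsilon^{2\theta}\le1$ only helps.

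\textbf{Sobolev part.} This is the main step. Apply \eqref{eq:uniform-parameter-elliptic-estimate} to $e^{tL_\varepsilon}u\in D(L_\varepsilon)$ and bound the right-hand side by the spectral estimate with $\alpha=1$ and $\alpha=0$:
\[
\|e^{tL_\varepsilon}u\|_{H^2(Q)}\le C\bigl(t^{-1}+1\bigr)e^{-ct/\varepsilon^2}\|u\|_{\mathcal X}.
\]
Together with the trivial bound $\|e^{tL_\varepsilon}u\|_{L^2}\le e^{-ct/\varepsilon^2}\|u\|$, interpolation through $H^{2\theta}=[L^2,H^2]_\theta$ gives
\[
\|e^{tL_\varepsilon}u\|_{H^{2\theta}(Q)}\le C(t^{-1}+1)^\theta e^{-ct/\varepsilon^2}\|u\|.
\]
The factor $(1+t^{-1})^\theta\le C(t^{-\theta}+1)$ must be absorbed into $t^{-\theta}$. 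For $t\le1$ this is immediate. For $t\ge1$, trade half of the exponential: $e^{-ct/(2\varepsilon^2)}\le C_\theta t^{-\theta}$ uniformly for $\varepsilon\le1$. Alternatively, avoid the issue by writing
\[
\|L_\varepsilon e^{tL_\varepsilon}u\|+\|e^{tL_\varepsilon}u\|\le C t^{-1}e^{-ct/\varepsilon^2}\|u\|,
\]
using $1\le \varepsilon^{2}\mu^{-1}\lambda$ on the spectrum and $\lambda e^{-\lambda t}\le Ct^{-1}e^{-\lambda t/2}$.

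\textbf{Main obstacle.} The delicate point is that this interpolation must be uniform in $\varepsilon$. The $H^2$ bound relies entirely on the assumed uniform parameter-elliptic estimate, and the interpolation constants for the fixed domain $Q$ are $\varepsilon$-independent. The one subtlety is that $e^{tL_\varepsilon}$ maps into $\mathcal X$, so the mean-zero constraint is preserved automatically. Summing the two components, after shrinking $c$ as needed, gives the claim with $C_\theta$ and $c$ independent of small $\varepsilon$.
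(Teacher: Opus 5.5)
Your proof is correct and follows the same structure as the paper's. The \(\mathcal X_\theta^\varepsilon\)-component comes from spectral smoothing (with the harmless factor \(\varepsilon^{2\theta}\)), and the \(H^{2\theta}\)-component from the uniform elliptic estimate combined with interpolation. The only difference is where you interpolate: the paper interpolates between \(\mathcal X\) and \(D(L_\varepsilon)\) to get a uniform embedding \(D((-L_\varepsilon)^\theta)\hookrightarrow H^{2\theta}(Q)\) and then uses \(\|(-L_\varepsilon)^\theta e^{tL_\varepsilon}\|\le Ct^{-\theta}e^{-ct/\varepsilon^2}\). You instead apply the \(H^2\)-estimate to \(e^{tL_\varepsilon}u\) and use the fixed-domain inequality \(\|v\|_{H^{2\theta}}\le C\|v\|_{L^2}^{1-\theta}\|v\|_{H^2}^{\theta}\), which needs only the \(\varepsilon\)-independent pair \((L^2(Q),H^2(Q))\), at the price of the \((1+t^{-1})^\theta\) bookkeeping that you handle correctly.
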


\begin{proof}
The \(\mathcal X_\theta^\varepsilon\)-part follows from analyticity of
\(A_\varepsilon\). Since \(L_\varepsilon=\varepsilon^{-2}A_\varepsilon\),
\[
(-L_\varepsilon)^\theta
=
\varepsilon^{-2\theta}(-A_\varepsilon)^\theta.
\]
Therefore
\[
\|e^{tL_\varepsilon}\|_{\mathcal B(\mathcal X,\mathcal X_\theta^\varepsilon)}
\le
C \varepsilon^{2\theta}t^{-\theta}e^{-ct/\varepsilon^2}.
\]

It remains to control the isotropic Sobolev component. The uniform elliptic
estimate \eqref{eq:uniform-parameter-elliptic-estimate} gives a continuous
embedding
\[
D(L_\varepsilon)\hookrightarrow H^2(Q)
\]
with constant independent of \(\varepsilon\). Interpolating between
\(\mathcal X=L^2(Q)\cap\ker M\) and \(D(L_\varepsilon)\) yields
\[
D((-L_\varepsilon)^\theta)\hookrightarrow H^{2\theta}(Q),
\]
again with a uniform embedding constant. Hence
\[
\|e^{tL_\varepsilon}\|_{\mathcal B(\mathcal X,H^{2\theta}(Q))}
\le
C
\|(-L_\varepsilon)^\theta e^{tL_\varepsilon}\|_{\mathcal B(\mathcal X)}
\le
Ct^{-\theta}e^{-ct/\varepsilon^2}.
\]
Finally, equip
\[
\mc Z_\theta^\varepsilon
 :=
 \mathcal X_\theta^\varepsilon\cap H^{2\theta}(Q)\cap\mathcal X
\]
with the intersection norm
\[
\|z\|_{\mc Z_\theta^\varepsilon}
 :=
 \|z\|_{\mathcal X_\theta^\varepsilon}
 +
 \|z\|_{H^{2\theta}(Q)} .
\]
 Hence, for \(u\in\mathcal X\),
the previous two estimates imply
\[
\begin{aligned}
\|e^{tL_\varepsilon}u\|_{\mc Z_\theta^\varepsilon}
&\le
C_\theta\varepsilon^{2\theta}t^{-\theta}e^{-ct/\varepsilon^2}
\|u\|_{\mathcal X}
+
C_\theta t^{-\theta}e^{-ct/\varepsilon^2}
\|u\|_{\mathcal X}  \\
&\le
C_\theta(1+\varepsilon^{2\theta})
t^{-\theta}e^{-ct/\varepsilon^2}
\|u\|_{\mathcal X}.
\end{aligned}
\]
Since \(0<\varepsilon\le1\), we have \(1+\varepsilon^{2\theta}\le2\). Absorbing
this factor into \(C_\theta\) yields
\[
\|e^{tL_\varepsilon}\|_{\mathcal B(\mathcal X,\mc Z_\theta^\varepsilon)}
\le
C_\theta t^{-\theta}e^{-ct/\varepsilon^2},
\qquad t>0.
\]
\end{proof}

\begin{remark}[Role of the parameter-elliptic estimate]
The estimate \eqref{eq:uniform-parameter-elliptic-estimate} is the analytic
input which allows the Lyapunov-Perron argument to be run in the
trace-compatible space \(\mc Z_\theta^\varepsilon\). In the flat case it follows
directly by expanding in the Neumann eigenbasis. For nonconstant \(g\), it is a
standard uniform parameter-elliptic estimate for the smooth conormal problem on
the fixed strip, with the nonlocal projection term
\(-\mathcal E(R_gz)\) treated as part of the projected realization on
\(\ker M\).
\end{remark}

\begin{lemma}[Nonlinear estimates in the trace-compatible topology]
\label{lem:thin-domain-nonlinear-estimates}
Assume that \(f\in C_b^2(\mathbb R)\). More generally, for polynomial or only
locally smooth nonlinearities, assume that \(f\) has been replaced by a smooth
cutoff on a bounded absorbing set. Then, for
\[
\theta\in(3/4,1),
\qquad
0<\delta<\theta-\frac34,
\]
the maps
\[
F_\varepsilon:\mc Z_\theta^\varepsilon\times\mathcal Y_1\to\mathcal X,
\qquad
G_\varepsilon:\mc Z_\theta^\varepsilon\times\mathcal Y_1\to\mathcal Y_\delta
\]
are Lipschitz on bounded sets, with Lipschitz constants independent of
sufficiently small \(\varepsilon>0\). If, in addition, the system is translated
so that \(f(0)=0\), then
\[
F_\varepsilon(0,0)=0,
\qquad
G_\varepsilon(0,0)=0.
\]
\end{lemma}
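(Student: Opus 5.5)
The plan is to split $F_\varepsilon$ and $G_\varepsilon$ into a purely linear part in $(z,v)$ and a Nemytskii part coming from $f$, and to estimate each separately with constants tracked in $\varepsilon$. For $G_\varepsilon$ the linear part is $R_g z + R_g\Lambda_\varepsilon^1 v$. The trace estimate preceding the lemma gives $\|R_g z\|_{\mathcal Y_\delta}\le C\|z\|_{\mc Z_\theta^\varepsilon}$. Lemma~\ref{lem:strong-weak-conormal-lifting} gives $\|R_g\Lambda_\varepsilon^1 v\|_{\mathcal Y_\delta}\le C\|v\|_{\mathcal Y_1}$. Both constants are $\varepsilon$-independent, and linearity turns these bounds directly into Lipschitz bounds.

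For the linear part of $F_\varepsilon$ I would use three facts. First, $\|\Lambda_\varepsilon^1 v\|_{H^2}\le C\varepsilon^2\|v\|_{\mathcal Y_1}$ bounds $\varepsilon^{-2}g^{-2}\partial_Y^2\Lambda_\varepsilon^1 v$ and $\mathcal D^2\Lambda_\varepsilon^1 v$ in $L^2(Q)$ by $C\|v\|_{\mathcal Y_1}$. This is exactly the displayed estimate in Lemma~\ref{lem:strong-weak-conormal-lifting}, and the $\varepsilon^2$ gain cancels the $\varepsilon^{-2}$ prefactor. Second, $\mathcal E(R_g\Lambda^1_\varepsilon v)$ and $\mathcal E(q\partial_x v)$ lie in $L^2(Q)$ with norm at most $C\|v\|_{\mathcal Y_1}$, since $\mathcal Y_\delta\hookrightarrow L^2$ and $g\in C^3$. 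Third, the weak-lifting term satisfies
\[
\|\Lambda^0_\varepsilon(B_g v+G_\varepsilon(z,v))\|_{\mathcal X}\le C\varepsilon^2\bigl(\|v\|_{\mathcal Y_1}+\|G_\varepsilon(z,v)\|_{\mathcal Y}\bigr),
\]
and is Lipschitz because $\Lambda^0_\varepsilon$ is linear, combined with the Lipschitz bound for $G_\varepsilon$ into $\mathcal Y_\delta\hookrightarrow\mathcal Y$. By the mean-zero consistency lemma, the image of $F_\varepsilon$ lies in $\mathcal X$, so only $L^2$ bounds are needed.

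The Nemytskii part is $u\mapsto f(u)$ with $u=v+z+\Lambda^1_\varepsilon v$, viewed as a map into $L^2(Q)$ and then composed with the bounded maps $M:L^2(Q)\to L^2(\Omega)$ and $I-M$. For $f\in C_b^2$, hence globally Lipschitz, we have $\|f(u_1)-f(u_2)\|_{L^2}\le\|f'\|_\infty\|u_1-u_2\|_{L^2}$. The right-hand side is bounded by $C(\|z_1-z_2\|_{\mc Z_\theta^\varepsilon}+\|v_1-v_2\|_{\mathcal Y_1})$, using $\mathcal E:\mathcal Y_1\to L^2(Q)$ and the $\varepsilon$-uniform lifting bound.

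The one delicate point is that $G_\varepsilon$ must land in $\mathcal Y_\delta\simeq H^{2\delta}(\Omega)$ with $2\delta<1/2$, not merely in $L^2$. So I need $\|Mf(u_1)-Mf(u_2)\|_{H^{2\delta}}$. I would estimate this via $H^1$: $M$ maps $H^1(Q)$ to $H^1(\Omega)$, and $\mathcal Z_\theta^\varepsilon\hookrightarrow H^{2\theta}(Q)\hookrightarrow H^1(Q)$ uniformly since $\theta>1/2$. Then I would write
\[
\partial_x\bigl(f(u_1)-f(u_2)\bigr)=f'(u_1)\,\partial_x(u_1-u_2)+\bigl(f'(u_1)-f'(u_2)\bigr)\partial_x u_2 .
\]
The second term is where boundedness of the sets enters. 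It is controlled by $\|f''\|_\infty\,|u_1-u_2|\,|\partial_x u_2|$, estimated in $L^2$ through the embeddings $H^{2\theta}(Q)\hookrightarrow L^\infty$ (valid in two dimensions since $2\theta>1$) and $\partial_x u_2\in L^2$. This yields a Lipschitz constant depending only on the radius of the bounded set, which is why the statement says Lipschitz on bounded sets. I expect this step to be the main obstacle; if $\mathcal Y_\delta$ needs a compatibility condition, the choice $2\delta<1/2$ avoids Neumann traces.

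Finally, $F_\varepsilon(0,0)=G_\varepsilon(0,0)=0$ follows from linearity of all lifting terms together with $f(0)=0$. All constants come from $g$, $f$, $\theta$, $\delta$ and the $\varepsilon$-uniform lifting bounds, so they are independent of small $\varepsilon$.
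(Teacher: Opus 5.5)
Your proposal is correct and follows the paper's proof essentially step by step. You use the same split into linear trace/lifting terms: the trace estimate for $R_g z$, the lifting bound for $R_g\Lambda_\varepsilon^1 v$, the $\varepsilon^2$ gain in $\|\Lambda_\varepsilon^1 v\|_{H^2(Q)}$ cancelling the $\varepsilon^{-2}$ prefactor, and the $C\varepsilon^2$ weak-lifting bound combined with $\mathcal Y_\delta\hookrightarrow\mathcal Y$. You also use the same Nemytskii part and the same appeal to $f(0)=0$ at the origin. The only divergence is the estimate of $Mf$ in $\mathcal Y_\delta$: the paper cites a fractional composition estimate $\|f(U_1)-f(U_2)\|_{H^{2\delta}(Q)}\le C_R\|U_1-U_2\|_{H^{2\theta}(Q)}$ and then averages, whereas you use an $H^1$ chain-rule computation with $H^{2\theta}(Q)\hookrightarrow L^\infty$, $\|\partial_x M\phi\|_{L^2(\Omega)}\le\|\partial_x\phi\|_{L^2(Q)}$ and $H^1(\Omega)\hookrightarrow H^{2\delta}(\Omega)$. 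Your route is valid, more elementary, and uses exactly the boundedness of $f'$ and $f''$.
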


\begin{proof}
For \(G_\varepsilon\), write
\[
G_\varepsilon(z,v)
=
R_gz+R_g\Lambda_\varepsilon^1v
+
Mf(v+z+\Lambda_\varepsilon^1v).
\]
The trace estimate gives
\[
\|R_g(z_1-z_2)\|_{\mathcal Y_\delta}
\le
C\|z_1-z_2\|_{\mc Z_\theta^\varepsilon}.
\]
The lifting estimate gives
\[
\|R_g\Lambda_\varepsilon^1(v_1-v_2)\|_{\mathcal Y_\delta}
\le
C\varepsilon^2\|v_1-v_2\|_{\mathcal Y_1}.
\]
Since \(2\theta>1\), \(H^{2\theta}(Q)\) is an algebra in two dimensions. Since
\(\mathcal Y_1=H^2_N(\Omega)\hookrightarrow C^1(\overline\Omega)\), the
extension \(v\mapsto\mathcal Ev\) has the required Sobolev regularity on \(Q\).
The assumption \(f\in C_b^2\), or the smooth cutoff, gives the standard
Nemytskii estimate, see for example \cite{runst2011sobolev} for details,
\[
\|f(U_1)-f(U_2)\|_{H^{2\delta}(Q)}
\le
C_R\|U_1-U_2\|_{H^{2\theta}(Q)}
\]
on bounded subsets, where
\[
U_i=v_i+z_i+\Lambda_\varepsilon^1v_i.
\]
Averaging in \(Y\) is bounded from \(H^{2\delta}(Q)\) to
\(H^{2\delta}(\Omega)\). Therefore
\[
\|G_\varepsilon(z_1,v_1)-G_\varepsilon(z_2,v_2)\|_{\mathcal Y_\delta}
\le
C_R
\bigl(
\|z_1-z_2\|_{\mc Z_\theta^\varepsilon}
+
\|v_1-v_2\|_{\mathcal Y_1}
\bigr).
\]

For \(F_\varepsilon\), the Nemytskii term satisfies
\[
\|(I-M)f(U_1)-(I-M)f(U_2)\|_{\mathcal X}
\le
C_R
\bigl(
\|z_1-z_2\|_{\mc Z_\theta^\varepsilon}
+
\|v_1-v_2\|_{\mathcal Y_1}
\bigr).
\]
The strong lifting estimates imply
\[
\left\|
\frac1{\varepsilon^2g^2}\partial_Y^2\Lambda_\varepsilon^1(v_1-v_2)
+
\mathcal D^2\Lambda_\varepsilon^1(v_1-v_2)
-
\mathcal E(R_g\Lambda_\varepsilon^1(v_1-v_2))
\right\|_{L^2(Q)}
\le
C\|v_1-v_2\|_{\mathcal Y_1}.
\]
Also,
\[
\left\|
\mathcal E\!\left(\frac{g'}g\partial_x(v_1-v_2)\right)
\right\|_{L^2(Q)}
\le
C\|v_1-v_2\|_{\mathcal Y_1}.
\]
Finally, the weak lifting estimate gives
\begin{align*}
&\left\|
\Lambda_\varepsilon^0
\bigl(
B_g(v_1-v_2)
+
G_\varepsilon(z_1,v_1)-G_\varepsilon(z_2,v_2)
\bigr)
\right\|_{L^2}
\\
&\qquad\le
C\varepsilon^2
\left(
\|B_g(v_1-v_2)\|_{\mathcal Y}
+
\|G_\varepsilon(z_1,v_1)-G_\varepsilon(z_2,v_2)\|_{\mathcal Y}
\right)
\\
&\qquad\le
C_R
\bigl(
\|z_1-z_2\|_{\mc Z_\theta^\varepsilon}
+
\|v_1-v_2\|_{\mathcal Y_1}
\bigr),
\end{align*}
because \(\mathcal Y_\delta\hookrightarrow\mathcal Y\). Combining the estimates
proves the Lipschitz bound for \(F_\varepsilon\).

If \(f(0)=0\), then
\[
\Lambda_\varepsilon^1 0=0,
\qquad
B_g0=0,
\qquad
G_\varepsilon(0,0)=0,
\]
and the formula for \(F_\varepsilon\) gives \(F_\varepsilon(0,0)=0\).
\end{proof}

\subsubsection*{Spectral splitting of the slow variable}
\label{subsubsec:thin-domain-slow-spectral-splitting}

It remains to verify the splitting assumption for the slow space. Since \(B_g\)
is a regular self-adjoint Sturm-Liouville operator on
\(\mathcal Y=L^2(\Omega,g\,dx)\), the eigenvalues of \(-B_g\) satisfy
\[
0=\lambda_0<\lambda_1<\lambda_2<\cdots,
\qquad
\lambda_k\sim\left(\frac{k\pi}{L_g}\right)^2
\]
for an effective length \(L_g>0\). In particular,
\[
\lambda_k-\lambda_{k-1}\sim Ck.
\]
Let \(\{\varphi_k\}_{k\ge0}\) be the corresponding orthonormal eigenbasis of
\(\mathcal Y\). For \(\rho\ge0\), the interpolation spaces of \(B_g\) may be
characterized spectrally by
\[
\|y\|_{\mathcal Y_\rho}^2
\sim
\sum_{k=0}^\infty (1+\lambda_k)^{2\rho}|y_k|^2,
\qquad
y=\sum_{k=0}^\infty y_k\varphi_k.
\]

Set
\[
\zeta=\varepsilon^2,
\qquad
\alpha_\zeta:=|\zeta^{-1}\omega_A|,
\]
where \(\omega_A<0\) is the uniform exponential rate from the fast semigroup
estimate. Choose \(k_0=k_0(\zeta)\) such that
\[
\lambda_{k_0}<\alpha_\zeta\le\lambda_{k_0+1}.
\]
For sufficiently small \(\varepsilon\), \(k_0\ge1\). Define
\[
\mathcal Y_S^\zeta
:=
\operatorname{span}\{\varphi_0,\ldots,\varphi_{k_0-1}\},
\]
and
\[
\mathcal Y_F^\zeta
:=
\overline{\operatorname{span}\{\varphi_k:k\ge k_0\}}^{\mathcal Y}.
\]
The projections onto these spaces commute with \(B_g\). The space
\(\mathcal Y_S^\zeta\) is finite-dimensional and therefore supports a backward
group, while
\[
0\in\rho(B_g|_{\mathcal Y_F^\zeta})
\]
because all eigenvalues of \(B_g\) on \(\mathcal Y_F^\zeta\) are bounded away
from zero.

\begin{lemma}[Slow spectral splitting estimates]
\label{lem:slow-spectral-splitting-estimates}
With the above definitions, Assumption~4.6 holds for the slow operator \(B_g\)
with
\[
N_F^\zeta:=\alpha_\zeta-\lambda_{k_0},
\qquad
N_S^\zeta:=\alpha_\zeta-\lambda_{k_0-1}.
\]
In particular,
\[
0\le N_F^\zeta<N_S^\zeta\le\alpha_\zeta,
\]
and
\[
N_S^\zeta-N_F^\zeta
=
\lambda_{k_0}-\lambda_{k_0-1}.
\]
\end{lemma}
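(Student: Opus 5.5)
The plan is to reduce everything to scalar inequalities through the spectral calculus of the self-adjoint Sturm--Liouville operator $B_g$ on $\mathcal Y=L^2(\Omega,g\,dx)$, using the eigenbasis $\{\varphi_k\}$ and the spectral characterization $\|y\|_{\mathcal Y_\rho}^2\sim\sum_k(1+\lambda_k)^{2\rho}|y_k|^2$ recorded just before the statement. The argument has two parts: the structural items of Assumption~\ref{ass: splitting slow comp}, and the two exponential estimates with the exact rates $N_F^\zeta=\alpha_\zeta-\lambda_{k_0}$ and $N_S^\zeta=\alpha_\zeta-\lambda_{k_0-1}$.

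\emph{Structural items and the rate relations.} The spaces $\mathcal Y_S^\zeta$ and $\mathcal Y_F^\zeta$ are spanned by eigenfunctions, so they are closed and the orthogonal projections commute with $B_g$ on $\mathcal Y_1$. Their intersections with $\mathcal Y_1$ are closed in $\mathcal Y_1$, because the $\mathcal Y_1$-norm is the diagonal weighted $\ell^2$-norm. The realization $B_F^\zeta$ has spectrum $\{-\lambda_k:k\ge k_0\}$. Since $k_0\ge1$, this spectrum is contained in $(-\infty,-\lambda_1]$, so $0\in\rho(B_F^\zeta)$. The space $\mathcal Y_S^\zeta$ is finite-dimensional, so $B_S^\zeta$ is bounded there and generates the group $e^{tB_S^\zeta}y=\sum_{k<k_0}e^{-\lambda_k t}y_k\varphi_k$, which agrees with $e^{tB_g}$ for $t\ge0$. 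The relations $0\le N_F^\zeta<N_S^\zeta\le\alpha_\zeta$ follow directly from $\lambda_{k_0}<\alpha_\zeta$, from strict monotonicity $\lambda_{k_0-1}<\lambda_{k_0}$, and from $\lambda_{k_0-1}\ge\lambda_0=0$. The identity $N_S^\zeta-N_F^\zeta=\lambda_{k_0}-\lambda_{k_0-1}$ is immediate from the definitions.

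\emph{Reduction of the two estimates to scalar bounds.} Note that $\zeta^{-1}\omega_A=-\alpha_\zeta$. Hence the required rates are
\[
N_F^\zeta+\zeta^{-1}\omega_A=-\lambda_{k_0},
\qquad
-(N_S^\zeta+\zeta^{-1}\omega_A)=\lambda_{k_0-1}.
\]
By the spectral characterization of the interpolation norms, the two estimates are equivalent to the uniform scalar bounds
\[
(1+\lambda_k)^{1-\delta}e^{-\lambda_k t}\le C_Bt^{\delta-1}e^{-\lambda_{k_0}t}\quad(k\ge k_0),
\qquad
(1+\lambda_k)^{1-\delta}e^{\lambda_k t}\le C_Bt^{\delta-1}e^{\lambda_{k_0-1}t}\quad(k<k_0).
\]

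For the fast block I will write $\lambda_k=\lambda_{k_0}+d_k$ with $d_k\ge0$. Then I use
\[
(1+\lambda_k)^{1-\delta}\le(1+\lambda_{k_0})^{1-\delta}+d_k^{1-\delta}
\]
together with $\sup_{d\ge0}d^{1-\delta}e^{-dt}\le Ct^{\delta-1}$. For the slow block I use $e^{(\lambda_k-\lambda_{k_0-1})t}\le1$ for $k<k_0$ and $t\ge0$, which leaves the factor $(1+\lambda_k)^{1-\delta}\le(1+\lambda_{k_0-1})^{1-\delta}$.

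\emph{Main obstacle.} In both blocks the hard part is the boundary mode at $k=k_0$, respectively $k=k_0-1$. There the leftover factor $(1+\lambda_{k_0})^{1-\delta}$ must be dominated by $C_Bt^{\delta-1}$ with $C_B$ independent of $\zeta$. This works for $t\lesssim\lambda_{k_0}^{-1}\sim\alpha_\zeta^{-1}$. It also works on the time window that enters the Lyapunov--Perron integrals, where the weight $e^{\eta t}$ with $|\eta|\sim\alpha_\zeta$ confines the effective range of $t$ to this scale. To keep the exact rates rather than shifted ones, I would first try to split the time range at $t=\alpha_\zeta^{-1}$. On $t\le\alpha_\zeta^{-1}$ the bound $(1+\lambda_{k_0})^{1-\delta}\le C\alpha_\zeta^{1-\delta}\le Ct^{\delta-1}$ holds, using $\lambda_{k_0}<\alpha_\zeta$. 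On $t\ge\alpha_\zeta^{-1}$ the surplus factor $\alpha_\zeta^{1-\delta}t^{1-\delta}$ is exactly what has to be absorbed. I expect this step to be the main difficulty, and I would examine it carefully against how the constant $C_B$ is used in \eqref{spectral gap condition}.
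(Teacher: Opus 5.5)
Your route is the paper's route. You diagonalize $B_g$ in $\{\varphi_k\}$, dispose of the structural items of \Cref{ass: splitting slow comp} and of the relations $0\le N_F^\zeta<N_S^\zeta\le\alpha_\zeta$ and $N_S^\zeta-N_F^\zeta=\lambda_{k_0}-\lambda_{k_0-1}$, all of which you do correctly. You then reduce the two exponential estimates to the scalar bounds you display.

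The obstacle you flag at the boundary modes is not a technicality that a sharper computation will remove. With the exact rates $-\lambda_{k_0}$ and $\lambda_{k_0-1}$, the estimates are false for large $t$ for every constant $C_B$, even one allowed to depend on $\zeta$. For $y_F=\varphi_{k_0}$ one has $\|e^{tB_g}y_F\|_{\mathcal Y_1}\simeq(1+\lambda_{k_0})^{1-\delta}e^{-\lambda_{k_0}t}\|y_F\|_{\mathcal Y_\delta}$. The forward estimate would therefore require $\bigl((1+\lambda_{k_0})t\bigr)^{1-\delta}\le C$ for all $t>0$. The backward estimate fails in the same way for $y_S=\varphi_{k_0-1}$.

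The paper's proof does not resolve this either. It calls $(1+\lambda)^{1-\delta}e^{-\lambda t}\le Ct^{\delta-1}e^{-\lambda_{k_0}t}$ for all $\lambda\ge\lambda_{k_0}$, $t>0$ (and the analogous backward bound) elementary. At $\lambda=\lambda_{k_0}$ this is exactly the inequality that fails as $t\to\infty$. So the step you left open cannot be closed, and the lemma as stated, with estimates for all $t\ge0$ at these rates, does not hold.

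Your proposed way out through the Lyapunov--Perron weight fails as well. Since $\eta=\zeta^{-1}\omega_A+\frac12(N_S^\zeta+N_F^\zeta)=-\frac12(\lambda_{k_0}+\lambda_{k_0-1})$, the weight $e^{-\eta\tau}$ works against the semigroup decay rather than with it. Up to norm-equivalence constants, the weighted kernel on the boundary mode is $(1+\lambda_{k_0})^{1-\delta}e^{-\frac12(\lambda_{k_0}-\lambda_{k_0-1})\tau}$. The effective window is therefore $\tau\lesssim(\lambda_{k_0}-\lambda_{k_0-1})^{-1}\sim\varepsilon$, not $\alpha_\zeta^{-1}\sim\varepsilon^2$. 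Integrating the true kernel gives a contribution of order $\lambda_{k_0}^{1-\delta}/(\lambda_{k_0}-\lambda_{k_0-1})\sim\varepsilon^{2\delta-1}$, and the $v_S$-block behaves the same way with $\lambda_{k_0-1}$.

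Equivalently, suppose you shift the rates by some $\gamma$, no larger than a fixed fraction of the gap, so that the estimates hold for all $t$. Then you must take $C_B\gtrsim(\lambda_{k_0}/\gamma)^{1-\delta}\gtrsim\varepsilon^{-(1-\delta)}$. The second term of $L_{\mathrm{spec}}$ then becomes of order $\varepsilon^{2\delta-1}$, which diverges because $\delta<\theta-\frac34<\frac14$ here. It would be small only for $\delta>\frac12$, which the constraint coming from the trace term $R_g$ excludes.

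What your argument does prove is:
\begin{itemize}
\item the structural part of the assumption;
\item the relations between $N_F^\zeta$ and $N_S^\zeta$;
\item the two exponential bounds on the short range $0<t\lesssim\lambda_{k_0}^{-1}$.
\end{itemize}
The uniform-in-$t$ estimates, and with them the paper's subsequent conclusion $L_{\mathrm{spec}}\to0$, need a genuinely different argument.
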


\begin{proof}
For \(y_F=\sum_{k\ge k_0}y_k\varphi_k\), the forward semigroup satisfies
\[
e^{tB_g}y_F
=
\sum_{k\ge k_0}e^{-\lambda_kt}y_k\varphi_k.
\]
Using the spectral characterization of \(\mathcal Y_1\) and
\(\mathcal Y_\delta\), and the elementary bound
\[
(1+\lambda)^{1-\delta}e^{-\lambda t}
\le
Ct^{\delta-1}e^{-\lambda_{k_0}t}
\qquad
\text{for }\lambda\ge\lambda_{k_0},\ t>0,
\]
we obtain
\[
\|e^{tB_g}y_F\|_{\mathcal Y_1}
\le
Ct^{\delta-1}e^{-\lambda_{k_0}t}\|y_F\|_{\mathcal Y_\delta}.
\]
Since
\[
-\lambda_{k_0}
=
N_F^\zeta+\zeta^{-1}\omega_A,
\]
this is the forward estimate required for the fast slow-variable block.

For \(y_S=\sum_{k=0}^{k_0-1}y_k\varphi_k\), the backward group satisfies
\[
e^{-tB_g}y_S
=
\sum_{k=0}^{k_0-1}e^{\lambda_kt}y_k\varphi_k.
\]
Similarly,
\[
(1+\lambda)^{1-\delta}e^{\lambda t}
\le
Ct^{\delta-1}e^{\lambda_{k_0-1}t}
\qquad
\text{for }0\le\lambda\le\lambda_{k_0-1},\ t>0.
\]
Hence
\[
\|e^{-tB_g}y_S\|_{\mathcal Y_1}
\le
Ct^{\delta-1}e^{\lambda_{k_0-1}t}\|y_S\|_{\mathcal Y_\delta}.
\]
Since
\[
\lambda_{k_0-1}
=
-\bigl(N_S^\zeta+\zeta^{-1}\omega_A\bigr),
\]
this is the backward estimate required for the retained slow block.

The inequalities
\[
0\le N_F^\zeta<N_S^\zeta\le\alpha_\zeta
\]
follow from
\[
\lambda_{k_0-1}<\lambda_{k_0}<\alpha_\zeta.
\]
Finally,
\[
N_S^\zeta-N_F^\zeta
=
(\alpha_\zeta-\lambda_{k_0-1})
-
(\alpha_\zeta-\lambda_{k_0})
=
\lambda_{k_0}-\lambda_{k_0-1}.
\]
\end{proof}
Since
\[
\lambda_k\sim Ck^2,
\qquad
\lambda_k-\lambda_{k-1}\sim Ck,
\]
and
\[
\alpha_\zeta=O(\zeta^{-1})=O(\varepsilon^{-2}),
\]
we have
\[
k_0\sim C\zeta^{-1/2}=C\varepsilon^{-1}.
\]
Moreover,
\[
0\le N_F^\zeta
=
\alpha_\zeta-\lambda_{k_0}
\le
\lambda_{k_0+1}-\lambda_{k_0}
=
O(k_0)
=
O(\varepsilon^{-1}),
\]
and
\[
N_S^\zeta
=
N_F^\zeta+\lambda_{k_0}-\lambda_{k_0-1}
=
O(\varepsilon^{-1}).
\]
Therefore
\[
N_S^\zeta-N_F^\zeta
=
\lambda_{k_0}-\lambda_{k_0-1}
\sim C\varepsilon^{-1},
\qquad
N_S^\zeta+N_F^\zeta=O(\varepsilon^{-1}).
\]
For \(\zeta=\varepsilon^2\), the first denominator in the spectral-gap
condition becomes
\[
\bigl(\varepsilon^2(N_S^\zeta+N_F^\zeta)\bigr)^{1-\theta}.
\]
Consequently the first term in \(L_{\mathrm{spec}}\) satisfies
\[
O\!\left(
\frac{\varepsilon^2}
     {(\varepsilon^2\varepsilon^{-1})^{1-\theta}}
\right)
=
O(\varepsilon^{1+\theta}),
\]
while the second term satisfies
\[
(N_S^\zeta-N_F^\zeta)^{-\delta}=O(\varepsilon^\delta).
\]
Thus
\[
L_{\mathrm{spec}}
=
O(\varepsilon^{1+\theta})+O(\varepsilon^\delta)
\longrightarrow0
\qquad\text{as }\varepsilon\to0.
\]

\subsubsection*{Lyapunov-Perron conclusion}
\label{subsubsec:thin-domain-LP-conclusion}

\begin{proposition}[Trace-compatible Lyapunov-Perron variant]
\label{prop:trace-compatible-LP-replacement}
Assume that the abstract hypotheses of \Cref{Sec:assumptions} hold with
\(\mc X_\theta^\varepsilon\) replaced everywhere by a Banach space
\(\mc Z_\theta^\varepsilon\subset\mc X\). In particular, assume that
\[
\|e^{t\varepsilon^{-2}A_\varepsilon}\|_{\mathcal B(\mc X,\mc Z_\theta^\varepsilon)}
\le
C t^{-\theta}e^{-ct/\varepsilon^2},
\qquad t>0,
\]
and that \(F_\varepsilon\) and \(G_\varepsilon\) are Lipschitz with respect to
the \(\mc Z_\theta^\varepsilon\times\mc Y_1\)-topology with constants compatible
with the spectral-gap condition. Then the conclusion of
\Cref{lemma: lyapunov perron} remains valid with
\(\mc Z_\theta^\varepsilon\) replacing \(\mc X_\theta^\varepsilon\).
\end{proposition}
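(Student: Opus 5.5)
The plan is to rerun the contraction argument behind \Cref{lemma: lyapunov perron} (that is, \cite[Lemma 3.3]{kuehn2026approximate}) with the fast component measured in \(\mc Z_\theta^\varepsilon\) instead of \(\mc X_\theta^\varepsilon\). The key point is that this argument uses the fast space only through three properties. First, the smoothing estimate of \(e^{t\varepsilon^{-2}A_\varepsilon}\) from \(\mc X\) into the fast topology. Second, the Lipschitz bounds of \(F,G\) measured in that topology. Third, completeness of the weighted space. I would therefore define \(C_\eta^{\varepsilon,\theta}\) with \(\|w(t)\|_{\mc Z_\theta^\varepsilon}\) in place of \(\|w(t)\|_{\mc X_\theta^\varepsilon}\), keep the same \(\eta=\zeta^{-1}\omega_A+\tfrac12(N_S^\zeta+N_F^\zeta)\), and check that \(\mathcal L_{v_{S,0},\varepsilon}\) maps this space into itself and is a contraction.

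\emph{Completeness.} The space \(\mc Z_\theta^\varepsilon\) is Banach as an intersection of Banach spaces continuously embedded in \(\mc X\), with the sum norm. Hence the weighted sup space of continuous functions on \((-\infty,0]\) is complete.

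\emph{Fast component.} For \(t\le0\) I would estimate
\[
e^{-\eta t}\Bigl\|\int_{-\infty}^t e^{\varepsilon^{-2}A_\varepsilon(t-s)}\bigl(F(U_1(s))-F(U_2(s))\bigr)\,ds\Bigr\|_{\mc Z_\theta^\varepsilon}
\le C L_F\int_{-\infty}^t (t-s)^{-\theta}e^{(-c\varepsilon^{-2}-\eta)(t-s)}\,ds\,\|U_1-U_2\|_{C_\eta},
\]
using the assumed bound \(\|e^{t\varepsilon^{-2}A_\varepsilon}\|_{\mathcal B(\mc X,\mc Z_\theta^\varepsilon)}\le Ct^{-\theta}e^{-ct/\varepsilon^2}\) and the Lipschitz bound of \(F\) in \(\mc Z_\theta^\varepsilon\times\mc Y_1\). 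This is exactly the Gamma-function computation producing the first term of \eqref{spectral gap condition}, with \(C_A\) and \(\omega_A\) replaced by \(C\) and \(-c\). The integral converges because \(-c\varepsilon^{-2}-\eta<0\), which follows from \(\varepsilon^2\zeta^{-1}\le1\) together with the choice of \(\eta\).

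\emph{Slow components.} The \(v_F\)- and \(v_S\)-components are untouched, since they only involve the \(\mc Y\)-semigroups. Their estimates use the bound of \(\|G(U_1)-G(U_2)\|_{\mc Y_\delta}\) by the \(\mc Z_\theta^\varepsilon\times\mc Y_1\) distance, which is exactly the hypothesis. Continuity in time of each component follows as in the original lemma from strong continuity of the semigroups and dominated convergence.

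\emph{Conclusion.} Summing the three estimates gives the contraction constant \(L_{\mathrm{spec}}<1\), and the Banach fixed point theorem yields the unique fixed point.

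The main obstacle I expect is bookkeeping rather than analysis. One must ensure that no step of the cited proof secretly used the specific interpolation structure of \(\mc X_\theta^\varepsilon\). The candidates are the moment inequality, and the fact that \(e^{tA_\varepsilon}\) maps \(\mc X_\theta^\varepsilon\) into itself, which is needed for the fixed point restricted to \(t=0\) to define a graph into the fast space. I would first check that only the \(\mathcal B(\mc X,\mc Z_\theta^\varepsilon)\) bound enters the fixed-point estimate. This should hold, because the fast component is a pure convolution with no initial-value term. Any invariance statements beyond the fixed point itself are not claimed in the statement, so they need not be addressed.
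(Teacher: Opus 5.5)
Your proposal is correct and follows the paper's route. The paper's proof is just the observation that the contraction estimates behind \Cref{lemma: lyapunov perron} see the fast topology only through the smoothing bound from \(\mc X\) and the Lipschitz bounds of \(F_\varepsilon,G_\varepsilon\), which is what you verify component by component (with \(-c\) playing the role of \(\omega_A\), also in \(\eta\), so that the convolution integral converges). One bookkeeping correction: the factor \(\varepsilon^2\) in the first term of \eqref{spectral gap condition} is \(\varepsilon^{2\theta}\cdot\varepsilon^{2(1-\theta)}\), where \(\varepsilon^{2\theta}\) comes from rescaling the \(\mc X_\theta^\varepsilon\)-smoothing bound of \(e^{tA_\varepsilon}\), whereas the assumed \(\mc Z_\theta^\varepsilon\)-bound for \(e^{t\varepsilon^{-2}A_\varepsilon}\) carries no such factor. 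The correct substitution is therefore \(C_A\mapsto C\varepsilon^{-2\theta}\), not \(C_A\mapsto C\); in the thin-domain application this makes the first term \(O(\varepsilon^{1-\theta})\) rather than \(O(\varepsilon^{1+\theta})\), which still tends to zero.
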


\begin{proof}
The proof is the same Lyapunov-Perron contraction argument as in
\Cref{lemma: lyapunov perron}. The only estimates in the fast component used
there are the smoothing bound from \(\mc X\) into the chosen fast topology and
the Lipschitz estimates for the nonlinearities in that topology. Replacing
\(\mc X_\theta^\varepsilon\) by \(\mc Z_\theta^\varepsilon\) therefore leaves the
contraction estimates unchanged.
\end{proof}

We can now state the precise conclusion for the thin-domain problem.

\begin{theorem}[Applicability of the splitting-based slow-manifold theorem]
\label{thm:thin-domain-slow-manifold-application}
Assume
\[
\Omega=(0,L),
\qquad
g\in C^3(\overline\Omega),
\qquad
0<g_*\le g\le g^*,
\qquad
g'(0)=g'(L)=0.
\]
Let
\[
\theta\in(3/4,1),
\qquad
0<\delta<\theta-\frac34,
\]
and let \(f\in C_b^2(\mathbb R)\), or let \(f\) be smoothly cut off on a fixed
bounded absorbing set. 
By \Cref{prop:uniform-H2-projected-fast-operator}, the
uniform parameter-elliptic estimate
\[
\|u\|_{H^2(Q)}
\le
C\bigl(\|L_\varepsilon u\|_{\mc X}+\|u\|_{\mc X}\bigr),
\qquad
u\in D(L_\varepsilon),
\]
holds for sufficiently small \(\varepsilon>0\).
Then the lifted mild system
\[
z^\varepsilon(t)
=
e^{t\varepsilon^{-2}A_\varepsilon}z_0^\varepsilon
+
\int_0^t
e^{(t-s)\varepsilon^{-2}A_\varepsilon}
F_\varepsilon(z^\varepsilon(s),v^\varepsilon(s))\,ds,
\]
\[
v^\varepsilon(t)
=
e^{tB_g}v_0^\varepsilon
+
\int_0^t
e^{(t-s)B_g}
G_\varepsilon(z^\varepsilon(s),v^\varepsilon(s))\,ds,
\]
satisfies the trace-compatible version of the abstract assumptions from
Section~4.1, with fast topology \(\mc Z_\theta^\varepsilon\) in place of
\(\mathcal X_\theta^\varepsilon\). Therefore, for all sufficiently small
\(\varepsilon>0\), the splitting-based Lyapunov-Perron construction applies to
the variables
\[
(z^\varepsilon,v^\varepsilon)
=
(w^\varepsilon-\Lambda_\varepsilon^1v^\varepsilon,\ v^\varepsilon).
\]
Equivalently, after the spectral splitting
\[
\mathcal Y
=
\mathcal Y_F^\zeta\oplus\mathcal Y_S^\zeta,
\qquad
\zeta=\varepsilon^2,
\]
there exists a locally exponentially attracting invariant slow manifold of the
form
\[
\mathcal S^{\varepsilon,\zeta}
=
\left\{
\bigl(
h_z^{\varepsilon,\zeta}(v_S),
h_{v_F}^{\varepsilon,\zeta}(v_S),
v_S
\bigr):
v_S\in \mathcal Y_S^\zeta\cap\mathcal Y_1
\right\}.
\]
The reduced dynamics on this manifold are
\[
\partial_tv_S^\varepsilon
=
B_gv_S^\varepsilon
+
\operatorname{pr}_{\mathcal Y_S^\zeta}
G_\varepsilon
\bigl(
h_z^{\varepsilon,\zeta}(v_S^\varepsilon),
h_{v_F}^{\varepsilon,\zeta}(v_S^\varepsilon)+v_S^\varepsilon
\bigr),
\]
understood in the finite-dimensional classical sense on
\(\mathcal Y_S^\zeta\).

The critical manifold is
\[
\mathcal S^0
=
\{(0,v):v\in\mathcal Y_1\}.
\]
The slow manifold \(\mathcal S^{\varepsilon,\zeta}\) has the same qualitative
properties as in Theorem~4.9: Lipschitz regularity, local exponential
attraction, and convergence of the reduced semiflow to the limiting semiflow on
\(\mathcal S^0\), with constants understood in the trace-compatible topology.
\end{theorem}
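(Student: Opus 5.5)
The proof is an assembly argument: I will check the trace-compatible version of each hypothesis in \Cref{Sec:assumptions} using the lemmas of this subsection. Then I will invoke \Cref{prop:trace-compatible-LP-replacement} to get the Lyapunov-Perron fixed point, and \Cref{thm: Fenichel abstract} for the qualitative properties.

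\textbf{Step 1: operators (\Cref{ass: operators}).} \Cref{lem:fast-operator-thin-domain} gives that $A_\varepsilon$ is self-adjoint on $\mathcal X_g$. It also gives the uniform spectral bound $\sigma(A_\varepsilon)\subset(-\infty,-\mu]$, so $\omega_A=-\mu$ can be taken independent of $\varepsilon$. The slow operator $B_g$ is a regular Neumann Sturm--Liouville operator on $\mathcal Y$, so it is self-adjoint, nonpositive and analytic. The required smoothing into the fast topology is supplied by \Cref{lem:trace-compatible-fast-smoothing}. That lemma uses the uniform parameter-elliptic estimate of \Cref{prop:uniform-H2-projected-fast-operator}, and it yields $\|e^{t\varepsilon^{-2}A_\varepsilon}\|_{\mathcal B(\mathcal X,\mathcal Z_\theta^\varepsilon)}\le C_\theta t^{-\theta}e^{-ct/\varepsilon^2}$. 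This is exactly the estimate demanded in \Cref{prop:trace-compatible-LP-replacement}.

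\textbf{Step 2: nonlinearities (\Cref{ass: nonlinearities}).} By \Cref{prop:mild-lifted-fast-slow-system}, the lifted variables $(z^\varepsilon,v^\varepsilon)$ solve the displayed mild system. The mean-zero consistency lemma shows that $F_\varepsilon$ takes values in $\mathcal X$. \Cref{lem:thin-domain-nonlinear-estimates} gives $\varepsilon$-uniform Lipschitz bounds for $F_\varepsilon:\mathcal Z_\theta^\varepsilon\times\mathcal Y_1\to\mathcal X$ and $G_\varepsilon:\mathcal Z_\theta^\varepsilon\times\mathcal Y_1\to\mathcal Y_\delta$, with $F_\varepsilon(0,0)=G_\varepsilon(0,0)=0$ after translating so that $f(0)=0$.

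The one point needing care is that the Lipschitz bounds are only on bounded sets, while the abstract theory requires global bounds. I would handle this in the standard way. Because $f\in C_b^2$, or is cut off on an absorbing set, the Nemytskii part is globally Lipschitz from $H^{2\theta}$ into $L^2$. If the $H^{2\delta}$ estimate is only local, I would further compose with a smooth cutoff in the $\mathcal Z_\theta^\varepsilon\times\mathcal Y_1$ norm, which leaves the dynamics unchanged on a fixed ball. This is why the manifold is asserted only to be locally attracting.

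\textbf{Step 3: splitting and spectral gap (\Cref{ass: splitting slow comp}).} Take $\zeta=\varepsilon^2$, which satisfies $\varepsilon^2\zeta^{-1}\le1$, and use the finite-dimensional eigenspace splitting above. The projections commute with $B_g$. The restriction to $\mathcal Y_S^\zeta$ is finite dimensional and hence generates a group. The restriction to $\mathcal Y_F^\zeta$ is invertible. \Cref{lem:slow-spectral-splitting-estimates} supplies the forward and backward estimates with the stated $N_F^\zeta,N_S^\zeta$.

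The asymptotics computed after that lemma give $L_{\mathrm{spec}}=O(\varepsilon^{1+\theta})+O(\varepsilon^{\delta})$. All constants $L_F,L_G,C_A,C_B$ are $\varepsilon$-independent by Steps 1 and 2, so $L_{\mathrm{spec}}<1$ for all sufficiently small $\varepsilon$. With $\zeta=\varepsilon^2$ the term $2(\varepsilon^2\zeta^{-1}-1)\omega_A$ vanishes, so I must check that the denominator $\varepsilon^2(N_S^\zeta+N_F^\zeta)$ is positive. This holds because $N_S^\zeta>0$.

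\textbf{Step 4: conclusion.} \Cref{prop:trace-compatible-LP-replacement} now gives a unique fixed point of the Lyapunov-Perron operator in the weighted space built on $\mathcal Z_\theta^\varepsilon$. This defines $h_z^{\varepsilon,\zeta}$, $h_{v_F}^{\varepsilon,\zeta}$ and the invariant graph $\mathcal S^{\varepsilon,\zeta}$. Projecting the slow equation onto $\mathcal Y_S^\zeta$ along the graph gives the reduced ODE, which is classical because $\mathcal Y_S^\zeta$ is finite dimensional.

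The critical manifold is identified from the limit form: $A_0$ is invertible on $\ker M$, so $S_0=\{(0,v)\}$. The properties (i)--(iv) follow from \Cref{thm: Fenichel abstract}, whose proof uses only the same smoothing and Lipschitz estimates, now read in $\mathcal Z_\theta^\varepsilon$. For the distance statement, the fixed point satisfies $\|h_z\|\lesssim \varepsilon^{2(1-\theta)}$ from the fast integral.

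The main obstacle is Step 1's reliance on the uniform $H^2$ estimate for the projected conormal operator, which includes the nonlocal term $-\mathcal E(R_gz)$. I take this estimate from \Cref{prop:uniform-H2-projected-fast-operator}. The remaining work is bookkeeping that all constants stay $\varepsilon$-uniform.
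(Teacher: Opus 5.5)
Your proposal is correct and follows the paper's proof essentially step for step. You assemble \Cref{lem:fast-operator-thin-domain}, \Cref{lem:trace-compatible-fast-smoothing}, \Cref{prop:mild-lifted-fast-slow-system}, \Cref{lem:thin-domain-nonlinear-estimates} and \Cref{lem:slow-spectral-splitting-estimates}, verify $L_{\mathrm{spec}}\to0$ for $\zeta=\varepsilon^2$, and conclude via \Cref{prop:trace-compatible-LP-replacement}; your cutoff remark usefully makes explicit the passage from Lipschitz bounds on bounded sets to the global bounds of \Cref{ass: nonlinearities}, which the paper leaves implicit. Two harmless bookkeeping corrections, neither affecting the conclusion: $\omega_A$ (and with it $\alpha_\zeta$, $k_0$ and the weight $\eta$) should be matched to the rate $-c$ actually delivered by \Cref{lem:trace-compatible-fast-smoothing} rather than simply set to $-\mu$, and since that $\mathcal Z_\theta^\varepsilon$-smoothing bound carries no factor $\varepsilon^{2\theta}$, the first term of $L_{\mathrm{spec}}$ is of order $\varepsilon^{1-\theta}$ rather than $\varepsilon^{1+\theta}$, which still tends to zero.
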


\begin{proof}
Lemma~\ref{lem:fast-operator-thin-domain} gives the sectoriality, analytic
semigroup generation, and uniform exponential stability of \(A_\varepsilon\) on
the mean-zero fast space. Lemma~\ref{lem:trace-compatible-fast-smoothing}
upgrades the fast semigroup estimates to the trace-compatible space
\(\mc Z_\theta^\varepsilon\). The operator \(B_g\) is a regular self-adjoint
Sturm-Liouville operator on \(\mathcal Y=L^2(\Omega,g\,dx)\), hence it is
sectorial and generates an analytic \(C_0\)-semigroup.

Lemma~\ref{lem:strong-weak-conormal-lifting} supplies the strong and weak
liftings needed to define the homogeneous fast variable and the mild lifted
system. Proposition~\ref{prop:mild-lifted-fast-slow-system} shows that the
lifted equations are well-defined in mild form, so no pointwise-in-time
interpretation of
\[
\Lambda_\varepsilon^0(\partial_tv^\varepsilon)
\]
is required. Lemma~\ref{lem:thin-domain-nonlinear-estimates} verifies the
uniform Lipschitz estimates for \(F_\varepsilon\) and \(G_\varepsilon\) in the
trace-compatible topology. Finally,
Lemma~\ref{lem:slow-spectral-splitting-estimates} verifies the slow spectral
splitting, and the calculation above shows that
\[
L_{\mathrm{spec}}\to0
\qquad\text{as }\varepsilon\to0.
\]
Thus the Lyapunov-Perron map is a contraction for all sufficiently small
\(\varepsilon>0\), and the conclusion follows from the trace-compatible version
of the abstract slow-manifold theorem.
\end{proof}

\begin{remark}[Higher-dimensional base domains]
The splitting-based Lyapunov-Perron construction above uses a growing spectral
gap in the tail of the slow operator. In one spatial base dimension, the
Sturm-Liouville eigenvalue gaps grow linearly with the mode number, which is
what was used above. For bounded base domains
\(\Omega\subset\mathbb R^2\), such a growing ordered gap is not available in
general. In special arithmetic geometries, for example rectangles with rational
squared aspect ratio, one may obtain growing gaps between distinct eigenvalue
clusters. For generic planar domains, however, one should not expect the
ordered spectral gaps of the Laplacian to grow in the way required by the
spectral gap condition. Thus the exact splitting-based construction above is
primarily a one-dimensional-base result unless additional spectral structure is
available.
\end{remark}

\section{Reduced dynamics and asymptotic expansion}\label{sec:reduced}

Having constructed the slow manifold, we next describe it asymptotically and derive the induced reduced dynamics. 
This is the point where the geometric information from \Cref{sec:general-thin} is combined with the dynamical information from \Cref{sec:manifolds}.

Throughout this section we use the split form of the system
\[
\begin{aligned}
\partial_t w^\varepsilon
&=
\varepsilon^{-2}A_\varepsilon w^\varepsilon
+
F(w^\varepsilon,v_F^\varepsilon,v_S^\varepsilon),\\
\partial_t v_F^\varepsilon
&=
Bv_F^\varepsilon
+
\pr_{\mc Y_F^\zeta}G(w^\varepsilon,v_F^\varepsilon,v_S^\varepsilon),\\
\partial_t v_S^\varepsilon
&=
Bv_S^\varepsilon
+
\pr_{\mc Y_S^\zeta}G(w^\varepsilon,v_F^\varepsilon,v_S^\varepsilon).
\end{aligned}
\]
Here, by a slight abuse of notation, the split nonlinearities are defined by
\[
F(w,v_F,v_S):=F(w,v_F+v_S),
\qquad
G(w,v_F,v_S):=G(w,v_F+v_S).
\]
We set $\zeta=\varepsilon^2$ from now on. 
Accordingly, the splitting spaces and projections depend on
\(\varepsilon\). To keep the notation readable we write
\[
\mc Y_F^\varepsilon:=\mc Y_F^{\varepsilon^2},
\qquad
\mc Y_S^\varepsilon:=\mc Y_S^{\varepsilon^2},
\qquad
\pr_{\mc Y_F}:=\pr_{\mc Y_F^\varepsilon},
\qquad
\pr_{\mc Y_S}:=\pr_{\mc Y_S^\varepsilon}.
\]
The dependence on \(\varepsilon\) should be kept in mind throughout this
section.

On the fast tail \(\mc Y_F^\varepsilon\) the operator \(B\) has spectrum of
size \(O(\varepsilon^{-2})\). We therefore write
\[
B|_{\mc Y_F^\varepsilon}
=
\varepsilon^{-2}\widetilde B_\varepsilon,
\qquad
\widetilde B_\varepsilon
:=
\varepsilon^2 B|_{\mc Y_F^\varepsilon}.
\]
Thus the \(v_F\)-equation takes the rescaled form
\begin{equation}\label{eq:rescaled-fast-tail}
\partial_t v_F^\varepsilon
=
\varepsilon^{-2}\widetilde B_\varepsilon v_F^\varepsilon
+
\pr_{\mc Y_F}G(w^\varepsilon,v_F^\varepsilon,v_S^\varepsilon).
\end{equation}
In the spectral splitting used in \Cref{subsec:thin-domain-application},
\(\widetilde B_\varepsilon\) has eigenvalues of order one on
\(\mc Y_F^\varepsilon\). We assume below that
\[
0\in \rho(\widetilde B_\varepsilon)
\quad\text{and}\quad
\sup_{0<\varepsilon\le \varepsilon_0}
\|\widetilde B_\varepsilon^{-1}\|_
{\mathcal B(\mc Y_F^\varepsilon\cap\mc Y_\delta,
             \mc Y_F^\varepsilon\cap\mc Y_1)}
<\infty .
\]
This is the natural rescaled version of invertibility of \(B\) on the fast
tail.

\begin{remark}[Scaling example]
    This scaling becomes particularly transparent in the thin-domain example for a flat boundary $g=g_0$.
Here, \(\mc Y_F^\varepsilon := \overline{\operatorname{span}\{\phi_k:k\ge k_0\}}^{\,L^2(\Omega)}\), \(\phi_k(x)=\cos\!\left(\frac{k\pi x}{L}\right) \), where \(k_0\) is chosen so that \(\left(\frac{k_0\pi}{L}\right)^2\sim \varepsilon^{-2}\). Thus \(k_0\sim C\varepsilon^{-1}\), and \(\partial_x^2\phi_{k_0}\sim -\varepsilon^{-2}\phi_{k_0}\).
\end{remark}

\begin{remark}[Meaning of the operator expansion]
\label{rem:formal-operator-expansion}
In the asymptotic calculations in this section the expression
\[
A_\varepsilon=A_0+\varepsilon^2A_1
\]
should not be understood as an identity between closed operators on a common \(\varepsilon\)-independent domain. 
Rather, it is a compact notation for the scaling decomposition of the fast generator
\[
\varepsilon^{-2}A_\varepsilon
=
\varepsilon^{-2}A_0+ A_{1,\varepsilon}.
\]
Here \(A_0\) denotes the principal transverse operator, while \(A_{1,\varepsilon}\) collects the order-one tangential, geometric, and projection terms. 

\end{remark}

\subsection{Asymptotics}

Using the definition of the slow manifold
\[
S^\varepsilon
=
\{
(h_w^\varepsilon(v_S),h_{v_F}^\varepsilon(v_S),v_S):
v_S\in \mc Y_1\cap \mc Y_S^\varepsilon
\},
\]
together with \eqref{eq: slow dynamics} and
\eqref{eq:rescaled-fast-tail}, we obtain the invariance equations
\begin{align}
\label{eq:invar-w-corrected}
Dh_w^\varepsilon(v_S)
\Big[
Bv_S
+
\pr_{\mc Y_S}G\bigl(
h_w^\varepsilon(v_S),
h_{v_F}^\varepsilon(v_S),
v_S
\bigr)
\Big]
&=
\varepsilon^{-2}A_\varepsilon h_w^\varepsilon(v_S)
+
F\bigl(
h_w^\varepsilon(v_S),
h_{v_F}^\varepsilon(v_S),
v_S
\bigr),
\\
\label{eq:invar-vF-corrected}
Dh_{v_F}^\varepsilon(v_S)
\Big[
Bv_S
+
\pr_{\mc Y_S}G\bigl(
h_w^\varepsilon(v_S),
h_{v_F}^\varepsilon(v_S),
v_S
\bigr)
\Big]
&=
\varepsilon^{-2}\widetilde B_\varepsilon h_{v_F}^\varepsilon(v_S)
+
\pr_{\mc Y_F}G\bigl(
h_w^\varepsilon(v_S),
h_{v_F}^\varepsilon(v_S),
v_S
\bigr).
\end{align}
These identities are the starting point for the asymptotic expansion.

\begin{proposition}[Formal asymptotic expansion of the slow manifold]
\label{prop: asymptotic exp}
Assume \Cref{ass: operators}, \Cref{ass: nonlinearities} and \Cref{ass: splitting slow comp}.
Assume further that the fast generator admits the formal scaling
decomposition
\[
\varepsilon^{-2}A_\varepsilon
=
\varepsilon^{-2}A_0+ A_{1,\varepsilon},
\]
in the sense described in \Cref{rem:formal-operator-expansion}, where
\(A_0\) is invertible on the fast space. Assume also that the rescaled
fast-tail operator
\[
\widetilde B_\varepsilon
:=
\varepsilon^2B|_{\mathcal Y_F^\varepsilon}
\]
is invertible on \(\mathcal Y_F^\varepsilon\). Finally, assume that \(F\)
and \(G\) are of class \(C^2\) near \((0,0,v_S)\).

We seek formal asymptotic expansions of the graph components in the form
\begin{align}
h_w^\varepsilon(v_S)
&\sim
\varepsilon^2h_{w,1}^\varepsilon(v_S)
+
\varepsilon^4h_{w,2}^\varepsilon(v_S)
+\cdots,
\\
h_{v_F}^\varepsilon(v_S)
&\sim
\varepsilon^2h_{v_F,1}^\varepsilon(v_S)
+
\varepsilon^4h_{v_F,2}^\varepsilon(v_S)
+\cdots .
\end{align}
The coefficients are allowed to depend on \(\varepsilon\) through the
\(\varepsilon\)-dependent splitting
\(\mc Y=\mc Y_F^\varepsilon\oplus\mc Y_S^\varepsilon\).

Set
\[
F_0(v_S):=F(0,0,v_S),
\qquad
G_0(v_S):=G(0,0,v_S).
\]
Then the coefficients satisfy the following recursive relations.

\begin{enumerate}[label=\textup{(\roman*)},leftmargin=2.5em]

\item The first nontrivial terms are determined by
\begin{align}
\label{eq:first-order-hw-corrected}
A_0h_{w,1}^\varepsilon(v_S)+F_0(v_S)&=0,
\\
\label{eq:first-order-hF-corrected}
\widetilde B_\varepsilon h_{v_F,1}^\varepsilon(v_S)
+
\pr_{\mc Y_F}G_0(v_S)&=0.
\end{align}

\item Let \(D_1\) and \(D_2\) denote derivatives with respect to the first
and second graph variables, evaluated at \((0,0,v_S)\). Then the second-order
terms are determined by
\begin{align}
\label{eq:second-order-hw-corrected}
\begin{split}
0={}&
A_0h_{w,2}^\varepsilon(v_S)
+
A_1h_{w,1}^\varepsilon(v_S)
+
D_1F_0(v_S)h_{w,1}^\varepsilon(v_S)
+
D_2F_0(v_S)h_{v_F,1}^\varepsilon(v_S)
\\
&\quad
-
Dh_{w,1}^\varepsilon(v_S)
\bigl[
Bv_S+\pr_{\mc Y_S}G_0(v_S)
\bigr],
\end{split}
\\
\label{eq:second-order-hF-corrected}
\begin{split}
0={}&
\widetilde B_\varepsilon h_{v_F,2}^\varepsilon(v_S)
+
\pr_{\mc Y_F}
\Big[
D_1G_0(v_S)h_{w,1}^\varepsilon(v_S)
+
D_2G_0(v_S)h_{v_F,1}^\varepsilon(v_S)
\Big]
\\
&\quad
-
Dh_{v_F,1}^\varepsilon(v_S)
\bigl[
Bv_S+\pr_{\mc Y_S}G_0(v_S)
\bigr].
\end{split}
\end{align}

\end{enumerate}
\end{proposition}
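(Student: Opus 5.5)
The plan is to substitute the two ansatz series into the invariance equations \eqref{eq:invar-w-corrected}--\eqref{eq:invar-vF-corrected}, Taylor-expand the nonlinearities around $(0,0,v_S)$, and match powers of $\varepsilon^2$. Since this is a formal expansion, no convergence questions arise. The only points needing care are, first, the bookkeeping of the powers of $\varepsilon$ coming from the singular prefactors $\varepsilon^{-2}A_\varepsilon$ and $\varepsilon^{-2}\widetilde B_\varepsilon$. Second, the coefficients may depend on $\varepsilon$ through the splitting; we do not expand these $\varepsilon$-dependent objects further and treat $\widetilde B_\varepsilon$, $\pr_{\mc Y_F}$, $\pr_{\mc Y_S}$ and $h^\varepsilon_{\cdot,j}$ as order-one quantities in the matching.

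\emph{Expanding the right-hand sides.} Using the scaling decomposition $\varepsilon^{-2}A_\varepsilon=\varepsilon^{-2}A_0+A_{1,\varepsilon}$ from \Cref{rem:formal-operator-expansion} (written $A_1$ in the statement), we get
\[
\varepsilon^{-2}A_\varepsilon h_w^\varepsilon
\sim A_0h_{w,1}^\varepsilon+\varepsilon^2\bigl(A_0h_{w,2}^\varepsilon+A_1h_{w,1}^\varepsilon\bigr)+O(\varepsilon^4).
\]
Similarly,
\[
\varepsilon^{-2}\widetilde B_\varepsilon h_{v_F}^\varepsilon\sim \widetilde B_\varepsilon h_{v_F,1}^\varepsilon+\varepsilon^2\widetilde B_\varepsilon h_{v_F,2}^\varepsilon+O(\varepsilon^4).
\]
Since $F,G\in C^2$ near $(0,0,v_S)$ and both graph components are $O(\varepsilon^2)$, Taylor's formula gives
\[
F(h_w,h_{v_F},v_S)=F_0+\varepsilon^2\bigl(D_1F_0\,h_{w,1}^\varepsilon+D_2F_0\,h_{v_F,1}^\varepsilon\bigr)+O(\varepsilon^4),
\]
and the analogous expansion holds for $G$, with the quadratic remainder of order $\varepsilon^4$.

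\emph{Expanding the left-hand sides.} We have $Dh_w^\varepsilon(v_S)\sim\varepsilon^2Dh_{w,1}^\varepsilon+\dots$. The bracket equals
\[
Bv_S+\pr_{\mc Y_S}G_0(v_S)+O(\varepsilon^2),
\]
where $Bv_S$ is the bounded finite-dimensional action on $\mc Y_S^\varepsilon$. Hence the left side of \eqref{eq:invar-w-corrected} is
\[
\varepsilon^2 Dh_{w,1}^\varepsilon[Bv_S+\pr_{\mc Y_S}G_0]+O(\varepsilon^4),
\]
and the left side of \eqref{eq:invar-vF-corrected} has the same structure with $h_{v_F,1}^\varepsilon$.

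\emph{Matching orders.} At order $\varepsilon^0$ the left-hand sides vanish, which yields \eqref{eq:first-order-hw-corrected} and \eqref{eq:first-order-hF-corrected}. These are uniquely solvable, $h_{w,1}^\varepsilon=-A_0^{-1}F_0$ and $h_{v_F,1}^\varepsilon=-\widetilde B_\varepsilon^{-1}\pr_{\mc Y_F}G_0$, by the assumed invertibility of $A_0$ and $\widetilde B_\varepsilon$. At order $\varepsilon^2$, collecting terms gives exactly \eqref{eq:second-order-hw-corrected} and \eqref{eq:second-order-hF-corrected}. In the second of these, $\pr_{\mc Y_F}$ is applied to the linearized $G$-terms and $\pr_{\mc Y_F}G_0$ has already been absorbed at the previous order. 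These equations are again solvable recursively through $A_0^{-1}$ and $\widetilde B_\varepsilon^{-1}$.

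\emph{Main obstacle.} The delicate step is justifying that $\varepsilon$-dependent quantities do not contaminate the order counting. Two items need checking. First, $Bv_S$ on $\mc Y_S^\varepsilon$ can itself be as large as $\lambda_{k_0-1}\sim\varepsilon^{-2}$. Second, $Dh_{w,1}^\varepsilon$ depends on $\varepsilon$. I would handle both by declaring that the expansion is organized with respect to the explicit powers of $\varepsilon^2$ multiplying the coefficient families, keeping $Bv_S$ unexpanded inside the bracket. This matches exactly how the statement writes the recursion, with $Bv_S$ appearing without rescaling. I would state this convention explicitly rather than claim uniform bounds, since the proposition is formal.
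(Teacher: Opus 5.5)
Your proposal is correct and follows the paper's own argument essentially step for step. The paper inserts the ansatz into the invariance equations and uses the scaling convention $\varepsilon^{-2}A_\varepsilon h_w^\varepsilon=A_0h_{w,1}^\varepsilon+\varepsilon^2\bigl(A_0h_{w,2}^\varepsilon+A_1h_{w,1}^\varepsilon\bigr)+O(\varepsilon^4)$ together with a Taylor expansion of $F$ and $G$ about $(0,0,v_S)$. It then notes that the left-hand sides start at order $\varepsilon^2$ and matches the $O(1)$ and $O(\varepsilon^2)$ terms, exactly as you do.

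Your caveat about $Bv_S$ is a bookkeeping convention the paper's formal proof leaves implicit. On $\mathcal Y_S^\varepsilon$, $Bv_S$ can be of size $\varepsilon^{-2}$, so it must be kept unexpanded inside the bracket and only explicit powers of $\varepsilon^2$ matched. This is consistent with how the recursion is stated.
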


\begin{proof}
The proof is a formal coefficient comparison.
Insert the expansions of \(h_w^\varepsilon\) and
\(h_{v_F}^\varepsilon\) into the invariance equations
\eqref{eq:invar-w-corrected}-\eqref{eq:invar-vF-corrected}. By the scaling convention in \Cref{rem:formal-operator-expansion},
we have
\[
\varepsilon^{-2}A_\varepsilon h_w^\varepsilon
=
A_0h_{w,1}^\varepsilon
+
\varepsilon^2
\bigl(
A_0h_{w,2}^\varepsilon+A_1h_{w,1}^\varepsilon
\bigr)
+
O(\varepsilon^4).
\]
Similarly,
\[
\varepsilon^{-2}\widetilde B_\varepsilon h_{v_F}^\varepsilon
=
\widetilde B_\varepsilon h_{v_F,1}^\varepsilon
+
\varepsilon^2\widetilde B_\varepsilon h_{v_F,2}^\varepsilon
+
O(\varepsilon^4).
\]
Taylor expansion of \(F\) and \(G\) around \((0,0,v_S)\) gives
\[
F(h_w^\varepsilon,h_{v_F}^\varepsilon,v_S)
=
F_0(v_S)
+
\varepsilon^2
\Big[
D_1F_0(v_S)h_{w,1}^\varepsilon(v_S)
+
D_2F_0(v_S)h_{v_F,1}^\varepsilon(v_S)
\Big]
+
O(\varepsilon^4),
\]
and the analogous formula for \(G\). The left-hand sides of the invariance
equations start at order \(\varepsilon^2\), because
\(h_w^\varepsilon\) and \(h_{v_F}^\varepsilon\) start at order
\(\varepsilon^2\).

Equating the terms of order \(O(1)\) gives
\eqref{eq:first-order-hw-corrected} and
\eqref{eq:first-order-hF-corrected}. Equating the terms of order
\(O(\varepsilon^2)\) gives
\eqref{eq:second-order-hw-corrected} and
\eqref{eq:second-order-hF-corrected}.
\end{proof}

\begin{corollary}[Asymptotic reduced vector field on the split slow manifold]\label{cor: formal reduced field}
Under the assumptions of \Cref{prop: asymptotic exp},  the reduced dynamics on
\(S^\varepsilon\) formally satisfy
\begin{equation}
\label{eq:asymp-slow-dynamics-corrected}
\partial_t v_S
=
Bv_S
+
\pr_{\mc Y_S}G_0(v_S)
+
\varepsilon^2
\pr_{\mc Y_S}
\Big[
D_1G_0(v_S)h_{w,1}^\varepsilon(v_S)
+
D_2G_0(v_S)h_{v_F,1}^\varepsilon(v_S)
\Big]
+
O(\varepsilon^4).
\end{equation}
\end{corollary}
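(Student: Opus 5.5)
The plan is to substitute the formal graph expansions from \Cref{prop: asymptotic exp} directly into the reduced dynamics \eqref{eq: slow dynamics}, with $\zeta=\varepsilon^2$. On the slow manifold the $v_S$-component evolves by
\[
\partial_t v_S=Bv_S+\pr_{\mc Y_S}G\bigl(h_w^\varepsilon(v_S),h_{v_F}^\varepsilon(v_S),v_S\bigr),
\]
so only the $G$-term has to be expanded. The linear part $Bv_S$ is exact and needs no expansion. The projection $\pr_{\mc Y_S}$ is linear and bounded, so it commutes with the expansion.

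The key step is a second-order Taylor expansion of $G$ around $(0,0,v_S)$, using the $C^2$ assumption from \Cref{prop: asymptotic exp}. I write
\[
G(h_w^\varepsilon,h_{v_F}^\varepsilon,v_S)=G_0(v_S)+D_1G_0(v_S)h_w^\varepsilon+D_2G_0(v_S)h_{v_F}^\varepsilon+\mathcal R(v_S),
\]
where the remainder satisfies $\|\mathcal R\|\lesssim \|h_w^\varepsilon\|^2+\|h_{v_F}^\varepsilon\|^2$. By construction the expansions of $h_w^\varepsilon$ and $h_{v_F}^\varepsilon$ start at order $\varepsilon^2$, so the remainder is $O(\varepsilon^4)$.

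Next I insert $h_w^\varepsilon=\varepsilon^2h_{w,1}^\varepsilon+\varepsilon^4h_{w,2}^\varepsilon+\cdots$ and the analogous series for $h_{v_F}^\varepsilon$ into the linear terms. The $\varepsilon^4$ coefficients are absorbed into the $O(\varepsilon^4)$ remainder. Applying $\pr_{\mc Y_S}$ then gives exactly \eqref{eq:asymp-slow-dynamics-corrected}. The first-order coefficients $h_{w,1}^\varepsilon=-A_0^{-1}F_0(v_S)$ and $h_{v_F,1}^\varepsilon=-\widetilde B_\varepsilon^{-1}\pr_{\mc Y_F}G_0(v_S)$ are well defined by the invertibility assumptions in \Cref{prop: asymptotic exp}, through \eqref{eq:first-order-hw-corrected}–\eqref{eq:first-order-hF-corrected}.

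The main point needing care is the uniformity of the $O(\varepsilon^4)$ term, because the coefficients depend on $\varepsilon$ through the splitting. To make the statement meaningful, I would show that $h_{w,1}^\varepsilon$ and $h_{v_F,1}^\varepsilon$ are bounded uniformly in $\varepsilon$, in $\mc X_\theta^\varepsilon$ and in $\mc Y_1$ respectively. For $h_{v_F,1}^\varepsilon$ this uses the assumed uniform bound on $\widetilde B_\varepsilon^{-1}$ from $\mc Y_\delta$ to $\mc Y_1$. For $h_{w,1}^\varepsilon$ it uses the uniform invertibility of $A_0$. With these bounds and the $C^2$ bounds on $G$, the quadratic remainder is $O(\varepsilon^4)$ uniformly on bounded sets of $v_S$. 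Since the corollary is stated as a formal result, like \Cref{prop: asymptotic exp}, this consistency check is enough. Note that one does not need the second-order coefficients, so \eqref{eq:second-order-hw-corrected}–\eqref{eq:second-order-hF-corrected} enter only in that they guarantee the $\varepsilon^4$ terms exist.
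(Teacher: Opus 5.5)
Your proposal is correct and matches the paper's argument: write the $v_S$-equation on $S^\varepsilon$, Taylor expand $G$ about $(0,0,v_S)$, and use that $h_w^\varepsilon$ and $h_{v_F}^\varepsilon$ start at order $\varepsilon^2$. This makes both the quadratic Taylor remainder and the $\varepsilon^4$ coefficients of the linear terms $O(\varepsilon^4)$. Your extra uniformity remark goes beyond what the paper does and is not needed for the formal statement; note that invertibility of $A_0$ alone bounds $h_{w,1}^\varepsilon$ in $D(A_0)$, not automatically uniformly in $\mc X_\theta^\varepsilon$.
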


\begin{proof}
On the slow manifold,
\[
\partial_t v_S
=
Bv_S
+
\pr_{\mc Y_S}
G\bigl(
h_w^\varepsilon(v_S),
h_{v_F}^\varepsilon(v_S),
v_S
\bigr).
\]
Inserting the expansions from \Cref{prop: asymptotic exp} and Taylor expanding
\(G\) around \((0,0,v_S)\) gives the displayed formula.
\end{proof}

\begin{remark}
The zeroth-order reduced dynamics are obtained by restricting the full system
to the critical manifold. The first nontrivial correction records how the fast
variables are slaved to the retained slow modes. In examples with transverse
heterogeneity, this correction is essential for reconstructing the leading
mean-zero transverse profile.
\end{remark}

\subsection{Approximation result}

The preceding expansion is formal. 
It becomes a rigorous approximate slow-manifold construction under a fixed-domain and a uniform-invertibility assumption outlined below.

\begin{theorem}[Construction of an approximate slow manifold]
\label{thm:approximate-slow-manifold}
Assume \Cref{ass: operators}, \Cref{ass: nonlinearities} and \Cref{ass: splitting slow comp}.
Assume, in addition, that \(0<\theta<1\), that
\[
A_\varepsilon=A_0+\varepsilon^2A_1
\quad\text{as operators } \mc X_1\to\mc X,
\]
with
\[
A_0^{-1}\in\mathcal B(\mc X,\mc X_1),
\]
and that
\[
\widetilde B_\varepsilon:=\varepsilon^2B|_{\mc Y_F^\varepsilon}
\]
satisfies
\[
0\in\rho(\widetilde B_\varepsilon),
\qquad
\sup_{0<\varepsilon\le\varepsilon_0}
\|\widetilde B_\varepsilon^{-1}\|_
{\mathcal B(\mc Y_F^\varepsilon\cap\mc Y_\delta,
             \mc Y_F^\varepsilon\cap\mc Y_1)}
<\infty .
\]
Suppose further that \(F\) and \(G\) are of class \(C^{N+1}\), uniformly in a
neighborhood of the origin in
\[
\mc X_\theta^\varepsilon
\times
(\mc Y_F^\varepsilon\cap\mc Y_1)
\times
(\mc Y_S^\varepsilon\cap\mc Y_1),
\]
for some \(N\in\mathbb N\).

Then there are \(r>0\), \(\varepsilon_0>0\), and maps
\[
h_{j,w}^\varepsilon
\in
C^1\bigl(
B_{\mc Y_S^\varepsilon\cap\mc Y_1}(0,r),\mc X_1
\bigr),
\qquad
h_{j,w}^\varepsilon(0)=0,
\qquad
j=1,\dots,N,
\]
and
\[
h_{j,v_F}^\varepsilon
\in
C^1\bigl(
B_{\mc Y_S^\varepsilon\cap\mc Y_1}(0,r),
\mc Y_F^\varepsilon\cap\mc Y_1
\bigr),
\qquad
h_{j,v_F}^\varepsilon(0)=0,
\qquad
j=1,\dots,N,
\]
such that, with
\[
h_{\varepsilon,N,w}^{\rm app}(v)
:=
\sum_{j=1}^N\varepsilon^{2j}h_{j,w}^\varepsilon(v),
\qquad
h_{\varepsilon,N,v_F}^{\rm app}(v)
:=
\sum_{j=1}^N\varepsilon^{2j}h_{j,v_F}^\varepsilon(v),
\]
the graph
\[
\mathcal M_{\varepsilon,N}^{\rm app}
:=
\left\{
\bigl(
h_{\varepsilon,N,w}^{\rm app}(v),
h_{\varepsilon,N,v_F}^{\rm app}(v),
v
\bigr):
v\in B_{\mc Y_S^\varepsilon\cap\mc Y_1}(0,r)
\right\}
\]
has invariance defect of order \(O(\varepsilon^{2N})\).

More precisely, define the two-component defect
\[
\mathcal I_\varepsilon(h_w,h_{v_F})(v)
=
\begin{pmatrix}
\mathcal I_{\varepsilon,w}(h_w,h_{v_F})(v)\\[1mm]
\mathcal I_{\varepsilon,F}(h_w,h_{v_F})(v)
\end{pmatrix},
\]
where
\[
\begin{aligned}
\mathcal I_{\varepsilon,w}(h_w,h_{v_F})(v)
:={}&
Dh_w(v)
\big[
Bv+\pr_{\mc Y_S}G(h_w(v),h_{v_F}(v),v)
\big]
\\
&\quad
-\varepsilon^{-2}A_\varepsilon h_w(v)
-
F(h_w(v),h_{v_F}(v),v),
\end{aligned}
\]
and
\[
\begin{aligned}
\mathcal I_{\varepsilon,F}(h_w,h_{v_F})(v)
:={}&
Dh_{v_F}(v)
\big[
Bv+\pr_{\mc Y_S}G(h_w(v),h_{v_F}(v),v)
\big]
\\
&\quad
-\varepsilon^{-2}\widetilde B_\varepsilon h_{v_F}(v)
-
\pr_{\mc Y_F}G(h_w(v),h_{v_F}(v),v).
\end{aligned}
\]
Then there is a constant \(C>0\), independent of
\(\varepsilon\in(0,\varepsilon_0]\), such that
\[
\sup_{\|v\|_{\mc Y_1}\le r}
\left(
\|\mathcal I_{\varepsilon,w}
(h_{\varepsilon,N,w}^{\rm app},h_{\varepsilon,N,v_F}^{\rm app})(v)\|_{\mc X}
+
\|\mathcal I_{\varepsilon,F}
(h_{\varepsilon,N,w}^{\rm app},h_{\varepsilon,N,v_F}^{\rm app})(v)\|_{\mc Y_\delta}
\right)
\le
C\varepsilon^{2N}.
\]

Let \(\bar v^\varepsilon\) solve the approximate reduced equation
\[
\partial_t\bar v^\varepsilon
=
B\bar v^\varepsilon
+
\pr_{\mc Y_S}
G\bigl(
h_{\varepsilon,N,w}^{\rm app}(\bar v^\varepsilon),
h_{\varepsilon,N,v_F}^{\rm app}(\bar v^\varepsilon),
\bar v^\varepsilon
\bigr),
\qquad
\bar v^\varepsilon(0)=v_{S,0}.
\]
Let
\[
(w^\varepsilon,v_F^\varepsilon,v_S^\varepsilon)
\]
be the solution of the split full system with initial data
\[
w^\varepsilon(0)=w_0,
\qquad
v_F^\varepsilon(0)=v_{F,0},
\qquad
v_S^\varepsilon(0)=v_{S,0}.
\]
Assume that, on \(0\le t\le T\), both the full solution and the reduced
solution remain in the neighborhood on which the above estimates hold. Then
there exists \(C_T>0\), independent of \(\varepsilon\), such that
\[
\begin{aligned}
\sup_{0\le t\le T}
\Big(
&
\|w^\varepsilon(t)
-
h_{\varepsilon,N,w}^{\rm app}(\bar v^\varepsilon(t))\|_{\mc X_\theta^\varepsilon}
\\
&+
\|v_F^\varepsilon(t)
-
h_{\varepsilon,N,v_F}^{\rm app}(\bar v^\varepsilon(t))\|_{\mc Y_1}
+
\|v_S^\varepsilon(t)-\bar v^\varepsilon(t)\|_{\mc Y_1}
\Big)
\\
&\le
C_T
\Big(
\|w_0-h_{\varepsilon,N,w}^{\rm app}(v_{S,0})\|_{\mc X_\theta^\varepsilon}
+
\|v_{F,0}-h_{\varepsilon,N,v_F}^{\rm app}(v_{S,0})\|_{\mc Y_1}
+
\varepsilon^{2N}
\Big).
\end{aligned}
\]
In particular, if the initial data lie on \(\mathcal M_{\varepsilon,N}^{\rm app}\),
then
\[
\operatorname{dist}_{\mc X_\theta^\varepsilon\times\mc Y_1\times\mc Y_1}
\Big(
(w^\varepsilon(t),v_F^\varepsilon(t),v_S^\varepsilon(t)),
\mathcal M_{\varepsilon,N}^{\rm app}
\Big)
\le
C_T\varepsilon^{2N},
\qquad
0\le t\le T.
\]
Thus \(\mathcal M_{\varepsilon,N}^{\rm app}\) is approximately positively
invariant on every fixed finite time interval.
\end{theorem}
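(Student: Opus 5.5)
The proof will have two parts: first construct the coefficients recursively so that the invariance defect vanishes through order $\varepsilon^{2N-2}$, then turn that defect bound into a finite-time tracking estimate by a Gronwall argument.

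\textbf{Step 1: recursive coefficients.} I insert the ansatz $h^{\rm app}_{\varepsilon,N,w}=\sum_{j=1}^N\varepsilon^{2j}h^\varepsilon_{j,w}$ and $h^{\rm app}_{\varepsilon,N,v_F}=\sum_{j=1}^N\varepsilon^{2j}h^\varepsilon_{j,v_F}$ into $\mathcal I_\varepsilon$. I use $A_\varepsilon=A_0+\varepsilon^2A_1$ on $\mc X_1$, so that $\varepsilon^{-2}A_\varepsilon h_w=\sum_j\varepsilon^{2j-2}(A_0h_{j,w}+\varepsilon^2A_1h_{j,w})$. Then $F$ and $G$ are Taylor expanded to order $N$ around $(0,0,v)$, which is allowed by the $C^{N+1}$ hypothesis.

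Collecting powers $\varepsilon^{2k}$ for $k=0,\dots,N-1$ gives the hierarchy
\[
A_0h_{k+1,w}=-A_1h_{k,w}-\Phi_k,\qquad \widetilde B_\varepsilon h_{k+1,v_F}=-\pr_{\mc Y_F}\Psi_k .
\]
Here $\Phi_k$ and $\Psi_k$ are polynomial expressions in $h_{1},\dots,h_{k}$ and their derivatives $Dh_j[Bv+\pr_{\mc Y_S}G_0(v)]$, with Taylor coefficients of $F$ and $G$. For $k=0,1$ this reproduces \Cref{prop: asymptotic exp}.

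Each equation is solved with $A_0^{-1}\in\mathcal B(\mc X,\mc X_1)$ and the uniform bound on $\widetilde B_\varepsilon^{-1}$ from $\mc Y_F\cap\mc Y_\delta$ to $\mc Y_F\cap\mc Y_1$. The $C^1$ regularity of $h_{j}$ and $h_j(0)=0$ then follow by induction, using $F(0,0)=G(0,0)=0$.

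One point needs care here. $Bv$ is a priori only in $\mc Y$, but $v\in\mc Y_S^\varepsilon$ lies in a finite-dimensional spectral space, so $Bv\in\mc Y_1\cap\mc Y_S$ with $\|Bv\|\le\lambda_{k_0}\|v\|$. This norm grows like $\varepsilon^{-2}$, so I must check that every term $Dh_j[Bv]$ is bounded uniformly in $\varepsilon$. I expect this to be the main obstacle.

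My plan is to exploit that $\varepsilon^2 B|_{\mc Y_S}$ is uniformly bounded, since its spectrum is contained in $[0,\alpha_\zeta\varepsilon^2]=[0,|\omega_A|]$. The term $\varepsilon^{2j}Dh_j[Bv]$ can then be rebookkept as $\varepsilon^{2j-2}Dh_j[\varepsilon^2Bv]$. Equivalently, the coefficients are allowed to depend on $\varepsilon$ through the operator $\varepsilon^2B_S$, exactly as the statement permits $\varepsilon$-dependent coefficients. All bounds are then uniform on the ball $B_{\mc Y_S^\varepsilon\cap\mc Y_1}(0,r)$.

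\textbf{Step 2: defect estimate.} The residual collects only the terms of order $\varepsilon^{2N}$ and higher. These are: the Taylor remainder of order $N+1$ in $F$ and $G$, the term $\varepsilon^{2N}A_1h_{N,w}$, and the top-order chain-rule terms. Each is bounded uniformly using the $C^{N+1}$ bounds and Step 1, which gives $\|\mathcal I_{\varepsilon,w}\|_{\mc X}+\|\mathcal I_{\varepsilon,F}\|_{\mc Y_\delta}\le C\varepsilon^{2N}$.

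\textbf{Step 3: tracking.} I set $\xi=w^\varepsilon-h^{\rm app}_{\varepsilon,N,w}(\bar v)$, $\phi=v_F^\varepsilon-h^{\rm app}_{\varepsilon,N,v_F}(\bar v)$ and $\psi=v_S^\varepsilon-\bar v$. Differentiating along the reduced flow shows that these satisfy
\[
\partial_t\xi=\varepsilon^{-2}A_\varepsilon\xi+[F(\text{full})-F(\text{app})]+\mathcal I_{\varepsilon,w}+Dh_w[\pr_{\mc Y_S}(G(\text{app})-G(\text{full}))],
\]
together with the analogous equations for $\phi$ and $\psi$. The differences are Lipschitz-controlled by $\|\xi\|_{\mc X_\theta}+\|\phi\|_{\mc Y_1}+\|\psi\|_{\mc Y_1}$, by \Cref{ass: nonlinearities}.

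I write the Duhamel formulas using the smoothing estimates \eqref{eq: semigroup estimate}, the bound $\int_0^t(t-s)^{-\theta}e^{\omega_A(t-s)/\varepsilon^2}ds\le C\varepsilon^{2-2\theta}$, and for $\psi$ the $(t-s)^{\delta-1}$ smoothing. Let $m(t)$ denote the sum of the three error norms. This gives the singular Gronwall inequality
\[
m(t)\le C\,m(0)+C\varepsilon^{2N}+C\int_0^t(t-s)^{-\max(\theta,1-\delta)}m(s)\,ds,
\]
and the Henry--Gronwall lemma then yields the stated bound with $C_T$. For the final distance claim I take data on $\mathcal M^{\rm app}_{\varepsilon,N}$, so that $m(0)=0$.
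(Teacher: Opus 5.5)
Your architecture is the same as the paper's: solve the hierarchy with $A_0^{-1}$ and $\widetilde B_\varepsilon^{-1}$, bound the Taylor remainder, then run the mild formulation through the Henry--Gronwall lemma. Your tracking step is fine. (Since you differentiate $h^{\rm app}_{\varepsilon,N,w}(\bar v)$ along the \emph{reduced} flow, the term $Dh_w[\pr_{\mathcal Y_S}(G(\mathrm{app})-G(\mathrm{full}))]$ does not actually arise; only $F(\mathrm{full})-F(\mathrm{app})$ and the defect remain.) The gap is in Step~1, exactly where you flagged it, and your repair does not close it. Writing $\varepsilon^{2j}Dh_j[Bv]=\varepsilon^{2j-2}Dh_j[\varepsilon^2Bv]$ moves the term to order $\varepsilon^{2j-2}$, which is the order at which $h_j$ itself is determined. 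The $j$-th equation then becomes $A_0h_{j,w}-Dh_{j,w}(v)[\varepsilon^2Bv]=\dots$, a homological equation in $v$ that is \emph{not} solved by applying $A_0^{-1}$, contrary to your hierarchy. If instead you leave the term in $\Phi_j$, each $h_{j+1}$ is larger than $h_j$ by a factor $\varepsilon^{-2}$. Then all terms $\varepsilon^{2j}h_j$ have the same size and the expansion collapses. This is not a bookkeeping artifact. Take $A_\varepsilon\equiv-\mu$, $G\equiv0$, and $F(w,v_F,v_S)=K(v_F+v_S)$ with $K\in\mathcal B(\mathcal Y_1,\mathcal X)$ non-smoothing (say $\|Kv\|_{\mathcal X}\simeq\|v\|_{\mathcal Y_1}$). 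Your hierarchy gives $h_{j,w}=\mu^{-j}K(-B)^{j-1}$. On a retained eigenvector $Bv=-\lambda v$, the defect of the $N$-term truncation is exactly $-(\varepsilon^2\lambda/\mu)^NKv$. This is of order one, not $O(\varepsilon^{2N})$, on the top retained modes, where $\varepsilon^2\lambda$ is comparable to $|\omega_A|$. The same problem affects $Dh_j(v)[\pr_{\mathcal Y_S}G_0(v)]$. Here $\pr_{\mathcal Y_S}G_0(v)$ is only uniformly bounded in $\mathcal Y_\delta$, whereas $Dh_j(v)$ is bounded only on $\mathcal Y_1$. On $\mathcal Y_S$ the passage from $\mathcal Y_\delta$ to $\mathcal Y_1$ costs up to $(1+\lambda_{k_0-1})^{1-\delta}\sim\varepsilon^{-2(1-\delta)}$.

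The paper's proof of this theorem does not address the point at all; it tacitly treats $Dh_j[Bv+\pr_{\mathcal Y_S}G_0(v)]$ as $O(1)$. So you have found a real weak spot, but rebookkeeping does not repair it. There are two genuine remedies.
\begin{itemize}
\item[(i)] Measure the retained variable in a stronger norm. Work on a ball in $\mathcal Y_S^\varepsilon\cap\mathcal Y_{1+N}$, with $F,G$ smooth on correspondingly smoother spaces and $G$ mapping into them. Then $\|B^kv\|_{\mathcal Y_1}\lesssim\|v\|_{\mathcal Y_{1+k}}$ uniformly in $\varepsilon$. This is what the paper does in its thin-domain version with balls in $\mathcal Y_{s_N}$, at the price of a persistence argument for $\bar v^\varepsilon$ in the strong norm.
\item[(ii)] Solve the homological equations exactly, for instance by integrating along the linear flow $e^{s\varepsilon^2B}$ on $\mathcal Y_S$ as $s\to-\infty$. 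This uses the gap between the decay rate of $A_0$ and $\varepsilon^2\lambda_{k_0-1}$, and it yields $\varepsilon$-dependent, non-explicit coefficients.
\end{itemize}
Note also that the stated hypothesis $\sup_\varepsilon\|\widetilde B_\varepsilon^{-1}\|_{\mathcal B(\mathcal Y_F\cap\mathcal Y_\delta,\mathcal Y_F\cap\mathcal Y_1)}<\infty$ with $\delta<1$ already fails on a tail mode with $\lambda\sim\varepsilon^{-2}$, where the ratio is $\sim\varepsilon^{-2(1-\delta)}$. Read literally, these hypotheses essentially exclude the regime in which your obstacle bites. In the regime the paper actually has in mind, the thin-domain splitting, the obstacle is real, and neither your argument nor the paper's proof of this theorem handles it.
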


\begin{proof}
We construct the graph coefficients recursively by inserting the ansatz
\[
h_{\varepsilon,N,i}^{\rm app}(v)
=
\sum_{j=1}^N\varepsilon^{2j}h_{j,i}^\varepsilon(v),
\qquad
i=w,v_F,
\]
into the defect equation
\[
\mathcal I_\varepsilon(h_w,h_{v_F})=0.
\]
Collecting equal powers of \(\varepsilon^2\) gives, at the first nontrivial
order,
\[
A_0h_{1,w}^\varepsilon(v)+F(0,0,v)=0,
\]
and
\[
\widetilde B_\varepsilon h_{1,v_F}^\varepsilon(v)
+
\pr_{\mc Y_F}G(0,0,v)=0.
\]
These equations have unique solutions because \(A_0\) and
\(\widetilde B_\varepsilon\) are invertible with the stated bounds.

At the next orders one obtains recursive equations of the form
\[
A_0h_{j+1,w}^\varepsilon(v)=R_{j,w}^\varepsilon(v),
\qquad
\widetilde B_\varepsilon h_{j+1,v_F}^\varepsilon(v)
=
R_{j,F}^\varepsilon(v),
\]
where the right-hand sides depend only on the previously constructed
coefficients
\[
h_{1,i}^\varepsilon,\dots,h_{j,i}^\varepsilon,
\qquad i=w,v_F,
\]
and on derivatives of \(F\) and \(G\) of order at most \(j+1\). The uniform
\(C^{N+1}\)-bounds on \(F\) and \(G\), together with the uniform inverse bounds
for \(A_0\) and \(\widetilde B_\varepsilon\), give the stated \(C^1\)-regularity
of the coefficients after shrinking \(r\), if necessary.

By construction, all terms in
\[
\mathcal I_\varepsilon
(h_{\varepsilon,N,w}^{\rm app},
 h_{\varepsilon,N,v_F}^{\rm app})
\]
up to order \(\varepsilon^{2N-2}\) cancel. The Taylor remainder is of order
\(\varepsilon^{2N}\), uniformly for \(\|v\|_{\mc Y_1}\le r\). Hence the
defect estimate follows.

It remains to prove the tracking estimate. Set
\[
z_X^\varepsilon(t)
:=
w^\varepsilon(t)
-
h_{\varepsilon,N,w}^{\rm app}(\bar v^\varepsilon(t)),
\]
\[
z_Y^\varepsilon(t)
:=
v_F^\varepsilon(t)
-
h_{\varepsilon,N,v_F}^{\rm app}(\bar v^\varepsilon(t)),
\]
and
\[
y^\varepsilon(t)
:=
v_S^\varepsilon(t)-\bar v^\varepsilon(t).
\]
Differentiating and using the full split system, the reduced equation, and the
definition of the defect gives
\[
\begin{aligned}
\partial_t z_X^\varepsilon
&=
\varepsilon^{-2}A_\varepsilon z_X^\varepsilon
+
\mathcal R_1^\varepsilon(z_X^\varepsilon,z_Y^\varepsilon,y^\varepsilon,t)
+
\mathcal I_{\varepsilon,w}
(h_{\varepsilon,N,w}^{\rm app},
 h_{\varepsilon,N,v_F}^{\rm app})
(\bar v^\varepsilon(t)),
\\
\partial_t z_Y^\varepsilon
&=
\varepsilon^{-2}\widetilde B_\varepsilon z_Y^\varepsilon
+
\mathcal R_2^\varepsilon(z_X^\varepsilon,z_Y^\varepsilon,y^\varepsilon,t)
+
\mathcal I_{\varepsilon,F}
(h_{\varepsilon,N,w}^{\rm app},
 h_{\varepsilon,N,v_F}^{\rm app})
(\bar v^\varepsilon(t)),
\\
\partial_t y^\varepsilon
&=
By^\varepsilon
+
\mathcal R_3^\varepsilon(z_X^\varepsilon,z_Y^\varepsilon,y^\varepsilon,t).
\end{aligned}
\]
The remainder terms are locally Lipschitz and satisfy
\[
\begin{aligned}
&
\|\mathcal R_1^\varepsilon(z_X,z_Y,y,t)\|_{\mc X}
+
\|\mathcal R_2^\varepsilon(z_X,z_Y,y,t)\|_{\mc Y_\delta}
+
\|\mathcal R_3^\varepsilon(z_X,z_Y,y,t)\|_{\mc Y_\delta}
\\
&\qquad\le
C\Big(
\|z_X\|_{\mc X_\theta^\varepsilon}
+
\|z_Y\|_{\mc Y_1}
+
\|y\|_{\mc Y_1}
\Big)
\end{aligned}
\]
as long as the arguments remain in the chosen local neighborhood.

Writing the mild form and using the analytic semigroup estimates gives
\[
\begin{aligned}
E(t)
\le{}&
C_T
\Big(
E(0)+\varepsilon^{2N}
\Big)
+
C_T
\int_0^t
\Big(
(t-s)^{-\theta}+(t-s)^{\delta-1}+1
\Big)
E(s)\,ds,
\end{aligned}
\]
where
\[
E(t)
:=
\|z_X^\varepsilon(t)\|_{\mc X_\theta^\varepsilon}
+
\|z_Y^\varepsilon(t)\|_{\mc Y_1}
+
\|y^\varepsilon(t)\|_{\mc Y_1}.
\]
Since \(0<\theta<1\) and \(\delta>0\), the kernel is integrable on every finite
interval \([0,T]\). The singular Gronwall inequality therefore yields
\[
\sup_{0\le t\le T}E(t)
\le
C_T
\Big(
E(0)+\varepsilon^{2N}
\Big).
\]
Finally,
\[
E(0)
=
\|w_0-h_{\varepsilon,N,w}^{\rm app}(v_{S,0})\|_{\mc X_\theta^\varepsilon}
+
\|v_{F,0}-h_{\varepsilon,N,v_F}^{\rm app}(v_{S,0})\|_{\mc Y_1},
\]
which gives the claimed estimate. If the initial data lie on the approximate
graph, then \(E(0)=0\), and the approximate positive invariance follows.
\end{proof}

\begin{remark}
It is instructive to compare this construction with the Galerkin approximation in \cite{engel2021connecting}. 
In this work here the approximate slow manifold is obtained directly from the invariance equation by expanding the graph in powers of \(\varepsilon^2\). 
This leads to a recursive family of elliptic or parabolic linear problems for the graph coefficients. 
In the Galerkin approach one first projects the PDE onto finitely many spatial eigenfunctions, constructs a slow manifold for the resulting ODE system, and then lifts the approximation back to the PDE setting.
\end{remark}

\begin{remark}
An approximate slow-manifold construction can also be formulated without
splitting the slow variable. In that case, however, the estimate generally does
not improve beyond order \(O(\varepsilon^2)\), because the unbounded slow
operator \(B\) cannot be inverted backward on the full space \(\mc Y_1\). Thus
the splitting is not merely a technical device for the exact
Lyapunov-Perron construction; it is also what allows higher-order correction
terms to be organized in a stable recursive hierarchy.
\end{remark}

\subsection{Application to the thin-domain system}
\label{subsec:thin-domain-asymptotic-expansion}

We now specialize the asymptotic construction from \Cref{sec:reduced} to the
thin-domain system from \Cref{sec:general-thin} and \Cref{subsec:thin-domain-application}. The purpose of
this subsection is to identify the first formal correction in the variables
that are natural for the thin-domain problem. The rigorous finite-order
tracking theorem is then proved in \Cref{subsec:thin-domain-approximation-theorem}.

The main point is that the expansion must be formulated in the lifted
homogeneous fast variable
\[
z^\varepsilon
=
w^\varepsilon-\Lambda_\varepsilon^1v^\varepsilon,
\]
rather than directly in the fluctuation \(w^\varepsilon=u^\varepsilon-v^\varepsilon\).
Indeed, \(z^\varepsilon\) satisfies the homogeneous conormal boundary
conditions and therefore belongs to the domain of the fast operator
\(A_\varepsilon\). The original fluctuation is recovered afterwards by setting
\[
w^\varepsilon
=
z^\varepsilon+\Lambda_\varepsilon^1v^\varepsilon.
\]

\subsubsection*{Lifted split variables}

We use the notation of \Cref{prop:mild-lifted-fast-slow-system}. Thus
\[
G_\varepsilon(z,v)
=
R_g(z+\Lambda_\varepsilon^1v)
+
Mf(v+z+\Lambda_\varepsilon^1v),
\]
and
\begin{align*}
F_\varepsilon(z,v)
={}&
(I-M)f(v+z+\Lambda_\varepsilon^1v)
+
\frac1{\varepsilon^2g^2}\partial_Y^2\Lambda_\varepsilon^1v
+
\mathcal D^2\Lambda_\varepsilon^1v
\\
&-
\mathcal E(R_g\Lambda_\varepsilon^1v)
-
\mathcal E\!\left(\frac{g'}g\partial_xv\right)
-
\Lambda_\varepsilon^0
\bigl(B_gv+G_\varepsilon(z,v)\bigr).
\end{align*}
After the slow spectral splitting
\[
\mathcal Y
=
\mathcal Y_F^\varepsilon\oplus\mathcal Y_S^\varepsilon,
\qquad
\zeta=\varepsilon^2,
\]
we write
\[
v=v_F+v_S.
\]
Let
\[
\Pi_F^\varepsilon:=\pr_{\mathcal Y_F^\varepsilon},
\qquad
\Pi_S^\varepsilon:=\pr_{\mathcal Y_S^\varepsilon},
\]
and define the rescaled fast operator on the slow-variable tail by
\[
\widetilde B_{g,\varepsilon}
:=
\varepsilon^2B_g|_{\mathcal Y_F^\varepsilon}.
\]
Then the lifted split system is
\begin{align}
\label{eq:thin-lifted-split-system}
\partial_t z^\varepsilon
&=
\varepsilon^{-2}A_\varepsilon z^\varepsilon
+
F_\varepsilon(z^\varepsilon,v_F^\varepsilon+v_S^\varepsilon),
\\
\partial_t v_F^\varepsilon
&=
\varepsilon^{-2}\widetilde B_{g,\varepsilon}v_F^\varepsilon
+
\Pi_F^\varepsilon
G_\varepsilon(z^\varepsilon,v_F^\varepsilon+v_S^\varepsilon),
\\
\partial_t v_S^\varepsilon
&=
B_gv_S^\varepsilon
+
\Pi_S^\varepsilon
G_\varepsilon(z^\varepsilon,v_F^\varepsilon+v_S^\varepsilon).
\end{align}
This is the thin-domain analogue of the abstract split system used in
\Cref{prop: asymptotic exp}. The only difference is that the
fast operator and the nonlinearities still depend on \(\varepsilon\) through
the conormal boundary condition and the liftings.

\subsubsection*{First formal coefficients}

For \(v_S\in\mathcal Y_S^\varepsilon\cap\mathcal Y_1\), set
\[
F_{\varepsilon,0}(v_S)
:=
F_\varepsilon(0,v_S),
\qquad
G_{\varepsilon,0}(v_S)
:=
G_\varepsilon(0,v_S).
\]
Explicitly,
\[
G_{\varepsilon,0}(v_S)
=
R_g\Lambda_\varepsilon^1v_S
+
Mf(v_S+\Lambda_\varepsilon^1v_S),
\]
and
\begin{align}
\label{eq:thin-F-eps-zero}
F_{\varepsilon,0}(v_S)
={}&
(I-M)f(v_S+\Lambda_\varepsilon^1v_S)
+
\frac1{\varepsilon^2g^2}\partial_Y^2\Lambda_\varepsilon^1v_S
+
\mathcal D^2\Lambda_\varepsilon^1v_S
\nonumber\\
&-
\mathcal E(R_g\Lambda_\varepsilon^1v_S)
-
\mathcal E\!\left(\frac{g'}g\partial_xv_S\right)
-
\Lambda_\varepsilon^0
\bigl(B_gv_S+G_{\varepsilon,0}(v_S)\bigr).
\end{align}
By the mean-zero consistency result in
\Cref{subsec:thin-domain-application},
\[
F_{\varepsilon,0}(v_S)\in\mathcal X.
\]
Thus the first transverse coefficient can be obtained by inverting the
vertical limit operator \(A_0\) on the mean-zero space.

\begin{proposition}[First formal thin-domain correction]
\label{prop:first-thin-domain-correction}
Assume the hypotheses of
\Cref{thm:thin-domain-slow-manifold-application}. Suppose, in addition, that
the first-order coefficient expansion of \Cref{prop: asymptotic exp} is
applicable to the lifted system \eqref{eq:thin-lifted-split-system} in the
trace-compatible topology \(\mathcal Z_\theta^\varepsilon\). Then the graph
components have the formal expansions
\[
z
\sim
\varepsilon^2h_{z,1}^\varepsilon(v_S)
+
\varepsilon^4h_{z,2}^\varepsilon(v_S)
+
\cdots,
\]
and
\[
v_F
\sim
\varepsilon^2h_{F,1}^\varepsilon(v_S)
+
\varepsilon^4h_{F,2}^\varepsilon(v_S)
+
\cdots .
\]
The first coefficients are determined by
\begin{equation}
\label{eq:first-thin-z-correction}
A_0h_{z,1}^\varepsilon(v_S)
+
F_{\varepsilon,0}(v_S)
=
0,
\end{equation}
and
\begin{equation}
\label{eq:first-thin-vF-correction}
\widetilde B_{g,\varepsilon}h_{F,1}^\varepsilon(v_S)
+
\Pi_F^\varepsilon G_{\varepsilon,0}(v_S)
=
0.
\end{equation}
Equivalently,
\[
h_{z,1}^\varepsilon(v_S)
=
-A_0^{-1}F_{\varepsilon,0}(v_S),
\qquad
h_{F,1}^\varepsilon(v_S)
=
-
\widetilde B_{g,\varepsilon}^{-1}
\Pi_F^\varepsilon G_{\varepsilon,0}(v_S).
\]
Consequently, the first-order approximate graph in lifted variables is
\[
\mathcal M_{\varepsilon,1}^{\rm td,app}
=
\left\{
\bigl(
\varepsilon^2h_{z,1}^\varepsilon(v_S),
\varepsilon^2h_{F,1}^\varepsilon(v_S),
v_S
\bigr):
v_S\in \mathcal Y_S^\varepsilon\cap\mathcal Y_1
\right\}.
\]
\end{proposition}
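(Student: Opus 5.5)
The proof is a direct specialization of \Cref{prop: asymptotic exp} to the lifted split system \eqref{eq:thin-lifted-split-system}. I will match the thin-domain data to the abstract data: fast variable \(w\leftrightarrow z\), fast generator \(\varepsilon^{-2}A_\varepsilon\) (the projected conormal operator of \Cref{lem:fast-operator-thin-domain}), nonlinearities \(F\leftrightarrow F_\varepsilon\) and \(G\leftrightarrow G_\varepsilon\), and tail operator \(\widetilde B_\varepsilon\leftrightarrow\widetilde B_{g,\varepsilon}\). The formal scaling decomposition \(\varepsilon^{-2}A_\varepsilon=\varepsilon^{-2}A_0+A_{1,\varepsilon}\) of \Cref{rem:formal-operator-expansion} is read off from the form splitting \(\mathfrak a_\varepsilon=\mathfrak a_0+\varepsilon^2\mathfrak a_1\). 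Here \(A_0=g^{-2}\partial_Y^2\), with Neumann conditions in \(Y\), acting on \(\ker M\). The hypothesis that the expansion is applicable in \(\mathcal Z_\theta^\varepsilon\) is used exactly to license the Taylor expansion of \(F_\varepsilon,G_\varepsilon\) around \((0,0,v_S)\) in that topology. The Lipschitz and differentiability bounds come from \Cref{lem:thin-domain-nonlinear-estimates}.

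Next I substitute the ansätze \(z\sim\varepsilon^2h_{z,1}^\varepsilon+\dots\) and \(v_F\sim\varepsilon^2h_{F,1}^\varepsilon+\dots\) into the invariance equations \eqref{eq:invar-w-corrected}–\eqref{eq:invar-vF-corrected}, written for the lifted system. Both left-hand sides are \(O(\varepsilon^2)\), since each graph starts at order \(\varepsilon^2\). On the right-hand sides, \(\varepsilon^{-2}A_\varepsilon(\varepsilon^2h_{z,1})=A_0h_{z,1}+O(\varepsilon^2)\) and \(\varepsilon^{-2}\widetilde B_{g,\varepsilon}(\varepsilon^2h_{F,1})=\widetilde B_{g,\varepsilon}h_{F,1}\). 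Taylor expansion gives \(F_\varepsilon=F_{\varepsilon,0}(v_S)+O(\varepsilon^2)\), and likewise for \(G_\varepsilon\). Collecting the \(O(1)\) terms yields \eqref{eq:first-thin-z-correction} and \eqref{eq:first-thin-vF-correction}, exactly as in part (i) of \Cref{prop: asymptotic exp}. Note that \(F_{\varepsilon,0}\) and \(G_{\varepsilon,0}\) still depend on \(\varepsilon\) through \(\Lambda^1_\varepsilon,\Lambda^0_\varepsilon\). This is allowed, since the coefficients may be \(\varepsilon\)-dependent.

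The main point to check is solvability of the two equations, so that the explicit formulas make sense. For the \(z\)-equation I need \(F_{\varepsilon,0}(v_S)\in\mathcal X=\ker M\); this is the mean-zero consistency lemma of \Cref{subsec:thin-domain-application}. I also need \(A_0\) to be invertible on \(\ker M\). This holds because the vertical Neumann operator \(g^{-2}\partial_Y^2\) restricted to fibrewise mean-zero functions has spectrum in \((-\infty,-\pi^2/(g^*)^2]\). This follows from the fibrewise Poincaré argument in the proof of \Cref{lem:fast-operator-thin-domain}, applied to \(\mathfrak a_0\) (equivalently, \Cref{lem:transverse-spectra} on each fibre). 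Hence \(h_{z,1}^\varepsilon=-A_0^{-1}F_{\varepsilon,0}(v_S)\).

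The main obstacle is the boundedness of \(F_{\varepsilon,0}\) uniformly in \(\varepsilon\). It contains the singular-looking term \(\varepsilon^{-2}g^{-2}\partial_Y^2\Lambda^1_\varepsilon v_S\). I will control it by the \(O(\varepsilon^2)\) \(H^2\)-bound of \Cref{lem:strong-weak-conormal-lifting}, which gives \(\|F_{\varepsilon,0}(v_S)\|_{\mathcal X}\le C\|v_S\|_{\mathcal Y_1}\). For the tail equation, \(G_{\varepsilon,0}(v_S)\in\mathcal Y_\delta\) by the trace estimate for \(R_g\). Moreover, \(\widetilde B_{g,\varepsilon}\) is invertible on \(\mathcal Y_F^\varepsilon\) with a uniform bound, because its eigenvalues \(\varepsilon^2\lambda_k\), \(k\ge k_0\), are bounded below by \(\varepsilon^2\lambda_{k_0}\sim|\omega_A|\) (\Cref{lem:slow-spectral-splitting-estimates}). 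This gives \(h_{F,1}^\varepsilon=-\widetilde B_{g,\varepsilon}^{-1}\Pi_F^\varepsilon G_{\varepsilon,0}(v_S)\). Inserting the two coefficients into the graph gives \(\mathcal M^{\rm td,app}_{\varepsilon,1}\).
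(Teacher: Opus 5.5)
Your proposal is correct and follows essentially the same route as the paper: you identify the lifted system with the abstract split system of the formal expansion proposition (\(w\to z\), \(B\to B_g\), \(F,G\to F_\varepsilon,G_\varepsilon\), \(\widetilde B_\varepsilon\to\widetilde B_{g,\varepsilon}\)), collect the \(O(1)\) terms of the two invariance equations, and then invert \(A_0\) on \(\ker M\) (using the mean-zero consistency of \(F_{\varepsilon,0}\)) and \(\widetilde B_{g,\varepsilon}\) on \(\mathcal Y_F^\varepsilon\). Your additional checks (the fibrewise Poincar\'e bound for \(A_0\), the uniform \(O(1)\) bound on \(F_{\varepsilon,0}\) via the \(\varepsilon^2\)-small lifting, and the tail bound \(\varepsilon^2\lambda_{k_0}\to|\omega_A|\), where the tail eigenvalues are really \(-\varepsilon^2\lambda_k\), a harmless sign slip) only make explicit what the paper's short proof takes for granted.
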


\begin{proof}
The lifted system \eqref{eq:thin-lifted-split-system} has the same split
structure as the abstract system in \Cref{prop: asymptotic exp},
with \(w\) replaced by \(z\), \(B\) replaced by \(B_g\), and
\(F,G\) replaced by \(F_\varepsilon,G_\varepsilon\). The rescaled operator on
the fast slow-variable tail is \(\widetilde B_{g,\varepsilon}\). Collecting the
terms of order \(O(1)\) in the two fast invariance equations therefore gives
\eqref{eq:first-thin-z-correction} and
\eqref{eq:first-thin-vF-correction}. The displayed formulas follow from
invertibility of \(A_0\) on \(\mathcal X=\ker M\) and of
\(\widetilde B_{g,\varepsilon}\) on \(\mathcal Y_F^\varepsilon\).
\end{proof}

\subsubsection*{Corrected reduced dynamics and reconstruction}

The corresponding first-order reduced equation on the retained slow modes is
obtained by inserting the first-order approximate graph into the \(v_S\)-equation. For
\(\xi\in\mathcal Z_\theta^\varepsilon\) and \(\eta\in\mathcal Y_1\), the
linearizations of \(G_\varepsilon\) at \((0,v_S)\) are
\[
D_zG_\varepsilon(0,v_S)\xi
=
R_g\xi
+
M\bigl[
f'(v_S+\Lambda_\varepsilon^1v_S)\xi
\bigr],
\]
and
\[
D_vG_\varepsilon(0,v_S)\eta
=
R_g\Lambda_\varepsilon^1\eta
+
M\bigl[
f'(v_S+\Lambda_\varepsilon^1v_S)
(\eta+\Lambda_\varepsilon^1\eta)
\bigr].
\]
Hence the first-order corrected reduced equation is
\begin{align}
\label{eq:thin-corrected-reduced-dynamics}
\partial_tv_S
={}&
B_gv_S
+
\Pi_S^\varepsilon G_{\varepsilon,0}(v_S)
\nonumber\\
&+
\varepsilon^2
\Pi_S^\varepsilon
\Bigl[
D_zG_\varepsilon(0,v_S)h_{z,1}^\varepsilon(v_S)
+
D_vG_\varepsilon(0,v_S)h_{F,1}^\varepsilon(v_S)
\Bigr]
+
O(\varepsilon^4).
\end{align}
Here the remainder is understood in the finite-dimensional space
\(\mathcal Y_S^\varepsilon\), uniformly for \(v_S\) in bounded subsets of
\(\mathcal Y_S^\varepsilon\cap\mathcal Y_1\).

To translate the graph back to the original thin-domain variable, use
\[
w=z+\Lambda_\varepsilon^1(v_S+v_F).
\]
Since
\[
v_F=\varepsilon^2h_{F,1}^\varepsilon(v_S)+O(\varepsilon^4)
\]
and
\[
\|\Lambda_\varepsilon^1\eta\|_{H^2(Q)}
\le
C\varepsilon^2\|\eta\|_{\mathcal Y_1},
\]
the term \(\Lambda_\varepsilon^1v_F\) contributes only at order
\(O(\varepsilon^4)\). Thus the first-order physical reconstruction is
\begin{equation}
\label{eq:thin-domain-first-reconstruction}
u_{\varepsilon}^{\rm app}(x,Y)
=
v_S(x)
+
\varepsilon^2h_{F,1}^\varepsilon(v_S)(x)
+
\Lambda_\varepsilon^1v_S(x,Y)
+
\varepsilon^2h_{z,1}^\varepsilon(v_S)(x,Y)
+
O(\varepsilon^4).
\end{equation}
The two leading transverse contributions are
\[
\Lambda_\varepsilon^1v_S
\qquad\text{and}\qquad
\varepsilon^2h_{z,1}^\varepsilon(v_S).
\]
The first is forced by the conormal boundary mismatch of the fluctuation
\(w=u-v\) and the second is the interior slow-manifold correction generated by the
invariance equation.

\begin{remark}[Flat reference case]
If \(g\) is constant, then \(R_g=0\) and \(\Lambda_\varepsilon^1=0\). Hence
\[
F_{\varepsilon,0}(v_S)
=
(I-M)f(v_S).
\]
If, in addition, the reaction term is independent of the transverse variable
and \(v_S\) is extended constantly in \(Y\), then
\[
(I-M)f(v_S)=0.
\]
Therefore
\[
h_{z,1}^\varepsilon(v_S)=0.
\]
A flat thin domain with a transversely homogeneous reaction term has no first
transverse slow-manifold correction. Nontrivial transverse corrections arise
from variable geometry, encoded by \(R_g\) and \(\Lambda_\varepsilon^1\), or
from a reaction term with genuine \(Y\)-dependence, as in the heterogeneous
Schnakenberg example of \Cref{subsec:transverse-schnakenberg}.
\end{remark}

\subsubsection*{Transition to the finite-order theorem}

The preceding discussion identifies the first correction and explains how the
abstract invariance-equation expansion appears in the thin-domain variables.
It does not by itself prove a finite-time approximation estimate, because the
fast operator is not expanded as a fixed strong operator identity on a common
high-regularity domain. Instead, the thin-domain problem carries an
\(\varepsilon\)-dependent conormal boundary condition.

\Cref{subsec:thin-domain-approximation-theorem} supplies the missing step. It
uses the variational identity
\[
\mathfrak a_\varepsilon
=
\mathfrak a_0+\varepsilon^2\mathfrak a_1
\]
and a finite-order conormal boundary hierarchy to construct corrected graphs
\[
h_{\varepsilon,N,z}^{\rm app}(v_S),
\qquad
h_{\varepsilon,N,F}^{\rm app}(v_S),
\]
with an invariance defect of order \(O(\varepsilon^{2N})\). The theorem then
applies the mild tracking argument in
\(\mathcal Z_\theta^\varepsilon\times\mathcal Y_1\times\mathcal Y_1\) and
finally transfers the estimate back from \(z^\varepsilon\) to the original
fluctuation \(w^\varepsilon\) through
\[
w^\varepsilon
=
z^\varepsilon+\Lambda_\varepsilon^1(v_S^\varepsilon+v_F^\varepsilon).
\]
Thus \Cref{subsec:thin-domain-approximation-theorem} should be read as the
rigorous finite-order version of the formal first-order calculation above.

\subsection{A finite-order thin-domain approximation theorem}
\label{subsec:thin-domain-approximation-theorem}

We now turn the formal expansion from the previous subsection into a finite-time
approximation theorem for the thin-domain system. The key point is that the
expansion of the singular fast operator must be understood through the
variational form introduced in \Cref{subsec:thin-domain-application}. We do not
use a strong decomposition of the conormal operator. Instead, we use the form
identity
\[
\mathfrak a_\varepsilon
=
\mathfrak a_0+\varepsilon^2\mathfrak a_1
\]
and the conormal boundary hierarchy induced by the full
\(\varepsilon\)-dependent boundary condition.

Throughout this subsection we work in the one-dimensional thin-strip setting of
\Cref{subsec:thin-domain-application}. Thus
\[
\Omega=(0,L),
\qquad
Q=\Omega\times(0,1),
\qquad
\mathcal X=\ker M,
\qquad
\mathcal V=H^1(Q)\cap\mathcal X.
\]
The weighted inner product on \(\mathcal X\) is denoted by
\[
(u,\varphi)_g
:=
\int_Q u(x,Y)\varphi(x,Y)g(x)\,dx\,dY.
\]

\subsubsection*{The form expansion and the conormal hierarchy}

Recall that the fast form is
\[
\mathfrak a_\varepsilon(u,\varphi)
=
\int_Q
\frac1{g(x)^2}
\partial_Yu\,\partial_Y\varphi\,g(x)\,dx\,dY
+
\varepsilon^2
\int_Q
\mathcal Du\,\mathcal D\varphi\,g(x)\,dx\,dY.
\]
We set
\[
\mathfrak a_0(u,\varphi)
:=
\int_Q
\frac1{g(x)^2}
\partial_Yu\,\partial_Y\varphi\,g(x)\,dx\,dY
\]
and
\[
\mathfrak a_1(u,\varphi)
:=
\int_Q
\mathcal Du\,\mathcal D\varphi\,g(x)\,dx\,dY.
\]
Let \(A_0\) be the operator associated with \(-\mathfrak a_0\) on the
mean-zero space \(\mathcal X\). Equivalently, \(A_0\) is the vertical Neumann
operator
\[
A_0u=g(x)^{-2}\partial_Y^2u
\]
on each fiber, restricted to the mean-zero subspace. Hence \(A_0\) is
invertible on \(\mathcal X\).

For \(N\in\mathbb N\), set
\[
s_N:=2N+6.
\]
For \(0\le j\le N\), define the anisotropic coefficient spaces
\[
\mathcal X_{N,j}
:=
H^{s_N-2j}\bigl(\Omega;L^2(0,1)\bigr)\cap\mathcal X
\]
and
\[
\mathcal W_{N,j}
:=
H^{s_N-2j}\bigl(\Omega;H^2(0,1)\bigr)\cap\mathcal X.
\]
The loss of two tangential derivatives at each step reflects the fact that the
tangential form \(\mathfrak a_1\) contains the first-order operator
\(\mathcal D\), and its associated distribution contains two tangential
derivatives.

We shall use the following finite-order trace lifting, recorded in
Appendix~A.4. If
\[
\beta\in H^{s_N-2j}(\Omega)
\]
satisfies the endpoint compatibility conditions imposed by the lateral
conormal boundary condition, let
\[
\mathscr B_j\beta\in\mathcal W_{N,j}
\]
denote the chosen mean-zero lifting satisfying
\[
\partial_Y\mathscr B_j\beta=0
\qquad\text{on }Y=0,
\]
\[
\partial_Y\mathscr B_j\beta=\beta
\qquad\text{on }Y=1,
\]
and
\[
\mathcal D\mathscr B_j\beta=0
\qquad\text{on }\partial\Omega\times(0,1).
\]
Moreover,
\[
\|\mathscr B_j\beta\|_{\mathcal W_{N,j}}
\le
C\|\beta\|_{H^{s_N-2j}(\Omega)}.
\]
The endpoint condition \(g'(0)=g'(L)=0\) is used to ensure compatibility at the
corners.

\begin{lemma}[Form expansion with conormal boundary hierarchy]
\label{lem:form-expansion-conormal-hierarchy}
Let \(p_1,\dots,p_N\) be coefficient functions such that
\[
p_j\in\mathcal W_{N,j-1},
\qquad
j=1,\dots,N,
\]
and suppose that they satisfy the conormal hierarchy
\[
\partial_Yp_1=0
\qquad\text{on }Y=\{0,1\},
\]
\[
\partial_Yp_{j+1}=0
\qquad\text{on }Y=0,
\]
\[
\partial_Yp_{j+1}=gg'\mathcal Dp_j
\qquad\text{on }Y=1,
\qquad
j=1,\dots,N-1,
\]
and
\[
\mathcal Dp_j=0
\qquad\text{on }\partial\Omega\times(0,1),
\qquad
j=1,\dots,N.
\]
Set
\[
P_N^\varepsilon
:=
\sum_{j=1}^N\varepsilon^{2j} p_j.
\]
Then the full conormal boundary residual of \(P_N^\varepsilon\) satisfies
\[
\partial_YP_N^\varepsilon-\varepsilon^2 gg'\mathcal DP_N^\varepsilon
=
-\varepsilon^{2(N+1)}gg'\mathcal Dp_N
\qquad\text{on }Y=1,
\]
while
\[
\partial_YP_N^\varepsilon=0
\qquad\text{on }Y=0,
\]
and
\[
\mathcal DP_N^\varepsilon=0
\qquad\text{on }\partial\Omega\times(0,1).
\]
Consequently, there exists a correction
\[
C_N^\varepsilon(P_N^\varepsilon)\in\mathcal W_{N,N}
\]
such that
\[
\widehat P_N^\varepsilon
:=
P_N^\varepsilon+C_N^\varepsilon(P_N^\varepsilon)
\in D(A_\varepsilon),
\]
and
\[
\|C_N^\varepsilon(P_N^\varepsilon)\|_{\mathcal W_{N,N}}
\le
C\varepsilon^{2(N+1)}\|p_N\|_{\mathcal W_{N,N-1}}.
\]
Moreover,
\[
\varepsilon^{-2}A_\varepsilon\widehat P_N^\varepsilon
=
\sum_{m=0}^{N-1}\varepsilon^{2m}\mathcal L_m(p_1,\dots,p_{m+1})
+
\varepsilon^{2N}\mathcal R_{N,A}^\varepsilon,
\]
where
\[
\mathcal L_m(p_1,\dots,p_{m+1})
\in\mathcal X_{N,m+1}
\]
depends only on \(p_1,\dots,p_{m+1}\), and
\[
\|\mathcal R_{N,A}^\varepsilon\|_{\mathcal X}
\le
C\sum_{j=1}^N\|p_j\|_{\mathcal W_{N,j-1}}.
\]
\end{lemma}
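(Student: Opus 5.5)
The proof has three parts: a telescoping computation of the boundary residual, a boundary correction by the lifting $\mathscr B_N$, and an identification of $\varepsilon^{-2}A_\varepsilon\widehat P_N^\varepsilon$ through the variational form.

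\emph{Boundary residual.} Everything here is linear. On $Y=0$ and on $\partial\Omega\times(0,1)$ the homogeneous conditions hold termwise, since each $p_j$ satisfies them. On $Y=1$ we expand
\[
\partial_YP_N^\varepsilon-\varepsilon^2gg'\mathcal DP_N^\varepsilon
=\varepsilon^2\partial_Yp_1+\sum_{j=1}^{N-1}\varepsilon^{2(j+1)}\bigl(\partial_Yp_{j+1}-gg'\mathcal Dp_j\bigr)-\varepsilon^{2(N+1)}gg'\mathcal Dp_N .
\]
The first term vanishes because $\partial_Yp_1=0$ on $Y=1$. Each bracket vanishes by the hierarchy. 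Only the last term survives, which is the claimed identity.

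\emph{Boundary correction.} Set
\[
C_N^\varepsilon(P_N^\varepsilon):=\varepsilon^{2(N+1)}\mathscr B_N\beta_N,
\]
where $\beta_N$ solves the perturbed trace problem $\partial_Y\xi-\varepsilon^2gg'\mathcal D\xi=gg'\mathcal Dp_N$ on $Y=1$. To build it, take $\beta_N^{(0)}=gg'\mathcal Dp_N|_{Y=1}$ and close the $O(\varepsilon^2)$ perturbation $\varepsilon^2gg'\mathcal D\,\mathscr B_N$ by a Neumann series, as in \Cref{lem:strong-weak-conormal-lifting}. Alternatively, work with the $\mathcal T_\varepsilon$-right inverse directly at regularity $\mathcal W_{N,N}$.

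The datum has the right regularity. Since $p_N\in\mathcal W_{N,N-1}=H^{s_N-2N+2}(\Omega;H^2)$, the trace of $\mathcal Dp_N$ lies in $H^{s_N-2N+1}(\Omega)\subset H^{s_N-2N}(\Omega)$. The corner compatibility condition follows from $g'(0)=g'(L)=0$, which makes $\beta_N$ vanish at the endpoints. This gives $\widehat P_N^\varepsilon$ homogeneous conormal data together with the stated bound on $C_N^\varepsilon$.

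Because $\widehat P_N^\varepsilon\in H^2(Q)\cap\mathcal X$ satisfies all natural boundary conditions, \Cref{rem:strong-form-fast-operator} shows that $\widehat P_N^\varepsilon\in D(A_\varepsilon)$. The integration by parts there is exactly the verification that the form identity $\mathfrak a_\varepsilon(\widehat P,\varphi)=-(h,\varphi)_g$ holds with $h$ given by the strong expression.

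\emph{Identification of $\varepsilon^{-2}A_\varepsilon\widehat P_N^\varepsilon$.} The strong expression splits as
\[
\varepsilon^{-2}A_\varepsilon\widehat P=\varepsilon^{-2}g^{-2}\partial_Y^2\widehat P+\bigl(\mathcal D^2\widehat P-\mathcal E R_g\widehat P\bigr).
\]
Insert $\widehat P=\sum_j\varepsilon^{2j}p_j+C_N^\varepsilon$ and collect powers of $\varepsilon^2$. This gives
\[
\mathcal L_m=g^{-2}\partial_Y^2p_{m+1}+\mathcal D^2p_m-\mathcal E R_gp_m,\qquad p_0:=0 .
\]
For regularity, $\partial_Y^2p_{m+1}$ costs no tangential derivatives, while $\mathcal D^2p_m$ and $R_gp_m$ cost two. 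Hence $\mathcal L_m\in\mathcal X_{N,m+1}$, given the coefficient regularity $g\in C^{s_N}$, which I would assume, or else truncate derivative counts. The mean-zero property of $\mathcal L_m$ follows order by order from the cancellation argument of \Cref{prop:general-splitting}, now using the hierarchy boundary data.

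The remainder $\varepsilon^{2N}\mathcal R^\varepsilon_{N,A}$ collects $\varepsilon^{2N}(\mathcal D^2p_N-\mathcal ER_gp_N)$ and $\varepsilon^{-2}A_\varepsilon C_N^\varepsilon$. The latter is bounded by $\varepsilon^{2N}\|C\|_{H^2}$ up to the $\varepsilon^{-2}$ factor, and the power $\varepsilon^{2(N+1)}$ in $C_N^\varepsilon$ exactly compensates that factor.

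\emph{Main obstacle.} The step I expect to be hardest is justifying the strong representation of $A_\varepsilon$ on $\widehat P$, since the paper defines $A_\varepsilon$ only variationally. I would first verify the form identity directly for $H^2$ functions with homogeneous conormal data; this is the computation in \Cref{rem:strong-form-fast-operator}. Only after that would I handle the uniform corner compatibility of the lifting.
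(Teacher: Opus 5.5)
Your argument is correct in outline and has the same three steps as the paper: the telescoping residual, the boundary correction, and collecting powers of $\varepsilon^2$. Two of these steps are carried out differently.

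At the correction step, the paper sets $C_N^\varepsilon(P_N^\varepsilon)=\mathscr B_N\beta_N^\varepsilon$ with $\beta_N^\varepsilon=\varepsilon^{2(N+1)}gg'\mathcal Dp_N(\cdot,1)$. It uses the pure Neumann lifting $\partial_Y\mathscr B_N\beta=\beta$. Strictly, this leaves the conormal residual $-\varepsilon^2gg'\mathcal D\mathscr B_N\beta_N^\varepsilon|_{Y=1}=O(\varepsilon^{2N+4})$ on $Y=1$. So your exact solution of the $\varepsilon$-dependent conormal trace problem is what actually places $\widehat P_N^\varepsilon$ in $D(A_\varepsilon)$.

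At the expansion step, the paper tests $\mathfrak a_\varepsilon(P_N^\varepsilon,\varphi)$ and shows that the upper-boundary terms of $\mathfrak a_0(p_m,\cdot)$ and $\mathfrak a_1(p_{m-1},\cdot)$ cancel order by order, but it leaves the bulk terms implicit. You instead use the strong representation of \Cref{rem:strong-form-fast-operator}. This is legitimate, since you only need the easy direction: $H^2$ plus homogeneous conormal data implies membership in the form domain. It gives you the explicit formula $\mathcal L_m=g^{-2}\partial_Y^2p_{m+1}+\mathcal D^2p_m-\mathcal E R_gp_m$. You also check $M\mathcal L_m=0$ from the averaging identity and the hierarchy, which the paper does not do explicitly.

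One step needs repair. The Neumann series built on the anisotropic lifting $\mathscr B_N$ does not close. The map $\beta\mapsto gg'\mathcal D\mathscr B_N\beta|_{Y=1}$ contains $\partial_x\mathscr B_N\beta$, so it sends $H^{s_N-2N}(\Omega)$ only into $H^{s_N-2N-1}(\Omega)$, and every iteration loses a tangential derivative.

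Your alternative is the one that works. In isotropic spaces the traces of $\partial_x\xi$ and $\partial_Y\xi$ have the same order. So the perturbed right inverse of $\mathcal T_\varepsilon$, taken at higher Sobolev order, lifts $gg'\mathcal Dp_N|_{Y=1}\in H^{s_N-2N+1}(\Omega)$ into $H^{s_N-2N+5/2}(Q)\hookrightarrow H^{s_N-2N+1/2}(\Omega;H^2(0,1))$. This lands in $\mathcal W_{N,N}$ with the stated bound, modulo the endpoint compatibility conditions that \Cref{prop:finite-order-conormal-trace-lifting} also presupposes. It uses exactly the spare tangential derivative you discarded when you wrote $H^{s_N-2N+1}\subset H^{s_N-2N}$.

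Finally, $C_N^\varepsilon\notin D(A_\varepsilon)$. What you call $\varepsilon^{-2}A_\varepsilon C_N^\varepsilon$ should therefore be the strong expression applied to $C_N^\varepsilon$. That term is not mean-zero on its own; only the full sum is. This does not affect the $O(\varepsilon^{2N})$ bound on the remainder.
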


\begin{proof}
The boundary residual follows by direct substitution. Indeed,
\[
\begin{aligned}
\partial_YP_N^\varepsilon-\varepsilon^2 gg'\mathcal DP_N^\varepsilon
&=
\sum_{j=1}^N \varepsilon^{2j}\partial_Yp_j
-
\sum_{j=1}^N \varepsilon^{2(j+1)}gg'\mathcal Dp_j .
\end{aligned}
\]
On the boundary at \(Y=1\), the hierarchy gives
\[
\partial_Yp_1=0,
\qquad
\partial_Yp_{j+1}=gg'\mathcal Dp_j.
\]
Hence all intermediate terms cancel telescopically and only the final term
remains:
\[
\partial_YP_N^\varepsilon-\varepsilon^2 gg'\mathcal DP_N^\varepsilon
=
-\varepsilon^{2(N+1)}gg'\mathcal Dp_N.
\]
The lower and lateral boundary conditions follow directly from the assumed
hierarchy.

Let
\[
\beta_N^\varepsilon
:=
\varepsilon^{2(N+1)}gg'\mathcal Dp_N(\cdot,1).
\]
By the trace theorem in the transverse variable, the tangential regularity in
\(\mathcal W_{N,N-1}\), and the fact that \(\mathcal D\) contains at most one
tangential derivative, yields
\[
\|\beta_N^\varepsilon\|_{H^{s_N-2N}(\Omega)}
\le
C\varepsilon^{2(N+1)}\|p_N\|_{\mathcal W_{N,N-1}}.
\]
Define
\[
C_N^\varepsilon(P_N^\varepsilon)
:=
\mathscr B_N\beta_N^\varepsilon.
\]
Then
\[
\widehat P_N^\varepsilon
=
P_N^\varepsilon+C_N^\varepsilon(P_N^\varepsilon)
\]
satisfies the full homogeneous conormal boundary conditions of the fast
operator. Therefore
\[
\widehat P_N^\varepsilon\in D(A_\varepsilon)
\]
by the variational domain identification from
\Cref{subsec:thin-domain-application}.
It remains to identify the expansion. We verify the boundary cancellation
explicitly. For \(\varphi\in\mathcal V\), we compute
\[
\mathfrak a_\varepsilon(P_N^\varepsilon,\varphi)
=
\sum_{j=1}^N
\varepsilon^{2j}
\Bigl[
\mathfrak a_0(p_j,\varphi)
+
\varepsilon^2\,\mathfrak a_1(p_j,\varphi)
\Bigr].
\]
First, integrating \(\mathfrak a_0(p_j,\varphi)\) by parts in \(Y\), and
using
\[
\partial_Yp_j=0
\qquad\text{on }Y=0,
\]
gives
\[
\mathfrak a_0(p_j,\varphi)
=
-
\int_Q
\frac1{g^2}\partial_Y^2p_j\,\varphi\,g\,dx\,dY
+
\int_\Omega
\frac1{g^2}\partial_Yp_j\big|_{Y=1}\,
\varphi(\cdot,1)\,g\,dx.
\]
Thus the upper-boundary contribution from
\(\mathfrak a_0(p_j,\varphi)\), appearing at order \(\varepsilon^{2j}\), is
\[
\int_\Omega
\frac{\partial_Yp_j|_{Y=1}}{g}\,
\varphi(\cdot,1)\,dx.
\]

Next, consider
\[
\mathfrak a_1(p_j,\varphi)
=
\int_Q
\mathcal Dp_j\,\mathcal D\varphi\,g\,dx\,dY,
\qquad
\mathcal D=\partial_x-Yq\partial_Y,
\qquad
q=\frac{g'}{g}.
\]
Using
\[
\mathcal Dp_j=0
\qquad\text{on }\partial\Omega\times(0,1),
\]
and integrating by parts in \(x\) and \(Y\), the upper-boundary contribution
at \(Y=1\) is
\[
-
\int_\Omega
g'(x)\,\mathcal Dp_j\big|_{Y=1}\,
\varphi(\cdot,1)\,dx.
\]
This term is multiplied by \(\varepsilon^{2(j+1)}\) in
\(\mathfrak a_\varepsilon(P_N^\varepsilon,\varphi)\).

We now collect the upper-boundary terms at a fixed power
\(\varepsilon^{2m}\), with \(2\le m\le N\). They are
\[
\int_\Omega
\left[
\frac{\partial_Yp_m|_{Y=1}}{g}
-
g'\,\mathcal Dp_{m-1}\big|_{Y=1}
\right]
\varphi(\cdot,1)\,dx.
\]
By the conormal hierarchy,
\[
\partial_Yp_m\big|_{Y=1}
=
gg'\,\mathcal Dp_{m-1}\big|_{Y=1}.
\]
Therefore
\[
\frac{\partial_Yp_m|_{Y=1}}{g}
=
g'\,\mathcal Dp_{m-1}\big|_{Y=1},
\]
and the bracket vanishes. At order \(\varepsilon^2\), only the boundary term from
\(\mathfrak a_0(p_1,\varphi)\) is present, and this vanishes because
\[
\partial_Yp_1=0
\qquad\text{on }Y=1.
\]
Hence all intermediate upper-boundary contributions cancel telescopically.

The only surviving upper-boundary term is the order-\(\varepsilon^{2(N+1)}\) term coming
from
\[
-\mathfrak a_1(p_N,\varphi),
\]
namely
\[
-\varepsilon^{2(N+1)}
\int_\Omega
g'\,\mathcal Dp_N\big|_{Y=1}\,
\varphi(\cdot,1)\,dx.
\]
This is precisely the boundary residual removed by the correction
\[
C_N^\varepsilon(P_N^\varepsilon).
\]

Consequently, the bulk terms at each order \(\varepsilon^{2(m+1)}\),
\(m=0,\dots,N-1\), are represented by
\[
\mathcal L_m(p_1,\dots,p_{m+1})\in\mathcal X_{N,m+1}.
\]
The operator \(\mathcal L_m\) collects the vertical fiber operator applied to
\(p_{m+1}\), the tangential form contribution of \(p_m\), and lower-order
coefficient terms depending on \(g\) and \(q=g'/g\). Each occurrence of the
tangential form contribution costs two tangential derivatives.

The only remaining boundary contribution is, as shown above, the final top-boundary residual of
order \(\varepsilon^{2(N+1)}\), which is removed by \(C_N^\varepsilon(P_N^\varepsilon)\). 
Since this correction is \(O(\varepsilon^{2(N+1)})\) in \(\mathcal W_{N,N}\), applying
\(\varepsilon^{-2}A_\varepsilon\)
to it contributes \(O(\varepsilon^{2N})\) in \(\mathcal X\). The same order is obtained
from the bulk terms beyond the retained order. This gives the claimed
remainder estimate.
\end{proof}

\subsubsection*{Finite-order regularity closure}

We next show that the coefficient recursion closes in the above scale. This is
the point where higher regularity of the retained slow variable is needed. The
tracking theorem below will still be measured in the lower regularity topology
\[
\mathcal Z_\theta^\varepsilon\times\mathcal Y_1.
\]

\begin{lemma}[Uniform smoothness of the lifted nonlinearities in the high-order scale]
\label{lem:thin-domain-uniform-smoothness}
Let \(N\in\mathbb N\), \(s_N=2N+6\), and assume
\[
g\in C^{s_N+2}([0,L]),
\qquad
0<g_*\le g\le g^*,
\qquad
g'(0)=g'(L)=0.
\]
Assume also that
\[
f\in C_b^{s_N+N+2}(\mathbb R),
\]
or that \(f\) has been smoothly cut off on a bounded absorbing set. Then the
lifted nonlinearities
\[
F_\varepsilon,
\qquad
G_\varepsilon
\]
are \(C^{N+1}\) on bounded subsets of the high-order coefficient spaces
generated below, with bounds uniform in
\(0<\varepsilon\le\varepsilon_0\). In particular, every coefficient obtained by
applying derivatives of \(F_\varepsilon\) or \(G_\varepsilon\) up to order
\(N+1\) to elements bounded in
\[
\mathcal W_{N,j}
\quad\text{and}\quad
\mathcal Y_{s_N-2j}
\]
belongs to the corresponding space with the expected loss of derivatives.
\end{lemma}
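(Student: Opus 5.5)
The plan is to decompose \(F_\varepsilon\) and \(G_\varepsilon\) into pieces of three kinds, prove \(C^{N+1}\)-bounds for each piece on the anisotropic scales \(\mathcal W_{N,j}\), \(\mathcal X_{N,j}\), \(\mathcal Y_{s_N-2j}\), and then recombine. The three kinds are:
\begin{itemize}
\item \emph{linear geometric terms:} \(R_g\), \(\mathcal E(\tfrac{g'}{g}\partial_x\,\cdot)\), \(\varepsilon^{-2}g^{-2}\partial_Y^2\Lambda_\varepsilon^1\), \(\mathcal D^2\Lambda_\varepsilon^1\), \(\mathcal E R_g\Lambda_\varepsilon^1\);
\item \emph{lifting terms:} \(\Lambda_\varepsilon^1\) and \(\Lambda_\varepsilon^0\);
\item \emph{Nemytskii terms:} \(f(v+z+\Lambda_\varepsilon^1 v)\), followed by \(M\) or \(I-M\).
\end{itemize}
Linear bounded maps are automatically \(C^\infty\) with derivative bounds equal to their operator norms. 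The smoothness content of the lemma therefore sits in the Nemytskii part, and the uniformity in \(\varepsilon\) sits in the liftings.

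\textbf{Step 1: linear terms.} I would first show that the lifting operators are bounded in the high-order scale, uniformly in \(\varepsilon\):
\[
\Lambda_\varepsilon^1:\mathcal Y_{s_N-2j}\to\mathcal W_{N,j}
\quad\text{with norm } O(\varepsilon^2),
\qquad
\Lambda_\varepsilon^0:\mathcal Y_{s_N-2j-2}\to\mathcal W_{N,j+1}.
\]
Here \(\mathcal Y_{s}\) is read as the Neumann-compatible \(H^{s}(\Omega)\) scale of \(B_g\). To prove this I would rebuild \(\Lambda_\varepsilon^1\) in the explicit form given by the finite-order trace liftings \(\mathscr B_j\) of Appendix~A.4, namely \(\Lambda^1_\varepsilon v=\mathscr B_j(\varepsilon^2 gg'\partial_x v)\), plus a Neumann-series correction for the \(\varepsilon^2 gg'\mathcal D\)-part of the conormal trace. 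This works because:
\begin{itemize}
\item \(\partial_x\) costs one tangential derivative;
\item the trace costs no tangential regularity in the anisotropic spaces \(H^{s}(\Omega;H^2(0,1))\);
\item \(g\in C^{s_N+2}\) makes multiplication by \(g,g',q\) bounded on \(H^{s_N}\);
\item \(g'(0)=g'(L)=0\) supplies the corner compatibility.
\end{itemize}
The perturbation \(\varepsilon^2gg'\mathcal D\) is \(O(\varepsilon^2)\) from \(\mathcal W_{N,j}\) to the trace data space, so the same Neumann-series argument as in Lemma~\ref{lem:strong-weak-conormal-lifting} gives uniform bounds. The remaining operators are then handled directly:
\begin{itemize}
\item \(\varepsilon^{-2}\partial_Y^2\Lambda_\varepsilon^1\) is bounded \(\mathcal Y_{s_N-2j}\to\mathcal X_{N,j}\), since the \(\varepsilon^{2}\) gain cancels the \(\varepsilon^{-2}\);
\item \(\mathcal D^2\) loses two tangential derivatives, consistent with the shift \(j\mapsto j+1\);
\item \(R_g\) maps \(\mathcal W_{N,j}\) into \(H^{s_N-2j-1}(\Omega)\) by the fibrewise \(H^2(0,1)\)-trace.
\end{itemize}

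\textbf{Step 2: Nemytskii terms.} I would use that \(H^{s}(\Omega;H^2(0,1))\) is a Banach algebra embedded in \(C^0(\overline Q)\) for \(s\ge1\), and likewise \(H^{s}(\Omega)\) with \(s\ge 1\). The iterated derivatives are
\[
D^k[f(U)](\xi_1,\dots,\xi_k)=f^{(k)}(U)\,\xi_1\cdots\xi_k .
\]
Composition estimates of Moser type (as in \cite{runst2011sobolev}) give \(\|f^{(k)}(U)\|_{H^s}\le C(\|U\|_{H^s})\), provided \(f\in C_b^{s+k}\). The hypothesis \(f\in C_b^{s_N+N+2}\) is exactly what allows \(k\le N+1\) derivatives at top regularity \(s_N\). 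Continuity of \(U\mapsto f^{(k)}(U)\), which upgrades bounded multilinear derivatives to genuine \(C^{N+1}\), costs one extra derivative of \(f\) and is covered by the \(+2\) in the hypothesis. The operators \(M\) and \(I-M\) are bounded on \(H^{s}(\Omega;L^2(0,1))\) and map into \(H^s(\Omega)\), respectively into \(\mathcal X_{N,j}\). Finally I would compose these with the linear maps from Step 1 via the chain rule, using that \(v\mapsto\mathcal Ev\) is bounded from \(H^s(\Omega)\) into \(H^s(\Omega;H^2(0,1))\).

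\textbf{Main obstacle.} I expect the hardest step to be the \(\Lambda_\varepsilon^0(B_gv+G_\varepsilon(z,v))\) term inside \(F_\varepsilon\). It contains \(B_g v\), which loses two derivatives, and \(R_g z\), which loses one and a half. I would first check that the combined loss still fits the index shift \(j\mapsto j+1\); the argument \(B_gv+G_\varepsilon\) lies in \(\mathcal Y_{s_N-2j-2}\), matching the mapping property of \(\Lambda_\varepsilon^0\) in Step 1. The second difficulty is the Neumann compatibility at \(\partial\Omega\) needed to remain in \(\mathcal Y_{s_N-2j-2}\) at high order. If it fails, I would fall back to plain \(H^{s}(\Omega)\) norms for the arguments of \(\Lambda^0_\varepsilon\): \(\Lambda^0_\varepsilon\) only requires trace data \(\varepsilon^2gg'\partial_x\eta\), and this vanishes at the endpoints because \(g'(0)=g'(L)=0\), so no higher compatibility is needed.
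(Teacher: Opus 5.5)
Your decomposition and your Step~2 match the paper's proof. The paper likewise writes $F_\varepsilon,G_\varepsilon$ as compositions of the Nemytskii map of $f$ with $M$, $\mathcal E$, the trace in $R_g$ and the liftings. It uses that $\mathcal Y_{s_N-2j}$ and $\mathcal W_{N,j}$ are closed under the relevant products, and it invokes Sobolev--Moser bounds for $f\in C_b^{s_N+N+2}$. The paper only cites uniform high-order bounds for $\Lambda_\varepsilon^1,\Lambda_\varepsilon^0$. Your Step~1, which tries to derive them, has a genuine gap.

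\textbf{The Neumann series does not close.} In $\mathcal W_{N,j}=H^{s}(\Omega;H^2(0,1))$, $s=s_N-2j$, the trace $\partial_Y\xi|_{Y=1}$ lies in $H^{s}(\Omega)$. The perturbation $-\varepsilon^2gg'\mathcal D\xi|_{Y=1}$, however, contains $\partial_x\xi|_{Y=1}$, which lies only in $H^{s-1}(\Omega)$. So $(\mathcal T_\varepsilon-\mathcal T_0)\mathcal R_0$ carries the factor $\varepsilon^2$ but costs one tangential derivative. It is not a bounded, let alone small, operator on any fixed trace space, and each term of the series loses a further derivative. The $H^2$ argument of \Cref{lem:strong-weak-conormal-lifting} closes only because there both $\partial_Y\xi|_{Y=1}$ and $\partial_x\xi|_{Y=1}$ lie in $H^{1/2}(\Omega)$. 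The paper's hierarchy in \Cref{lem:form-expansion-conormal-hierarchy} sidesteps the loss by moving $gg'\mathcal Dp_j$ into the boundary datum of $p_{j+1}$.

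\textbf{Your mapping claims are off by one.} The datum $\varepsilon^2gg'\partial_xv$ lies only in $H^{s_N-2j-1}(\Omega)$. So for nonconstant $g$ the lifting lands in $H^{s_N-2j-1}(\Omega;H^2(0,1))$, not in $\mathcal W_{N,j}$, and similarly for $\Lambda_\varepsilon^0$. Also, $\varepsilon^{-2}g^{-2}\partial_Y^2\Lambda_\varepsilon^1v$ is an $O(1)$ term that already loses a derivative. The two-derivative budget per order can absorb this, but it has to be accounted for.

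\textbf{A workable replacement.} Use the explicit ansatz $\Lambda=a(x)\phi(Y)$ with $\phi'(0)=0$, $\phi'(1)=1$, $\int_0^1\phi\,dY=0$. The exact upper condition becomes the first-order equation $(1+\varepsilon^2g'^2)a-\varepsilon^2\phi(1)gg'a'=h$. Since $gg'$ vanishes at $x=0,L$, energy estimates produce no boundary terms, so this transport-type operator should be invertible with $H^k(\Omega)$-bounds uniform in small $\varepsilon$; existence needs a standard argument on top of the energy bound. The lateral condition $a'(0)=a'(L)=0$ then follows from $h'(0)=h'(L)=0$, which holds for $h=\varepsilon^2gg'\partial_xv$ with $v\in\mathcal Y_1$.

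\textbf{Your compatibility fallback fails.} Vanishing of $\varepsilon^2gg'\partial_x\eta$ at the endpoints is enough at the $H^2(Q)$ level. Tangential regularity $\ge2$, however, forces the corner condition $\partial_x(\varepsilon^2gg'\partial_x\eta)(0)=\varepsilon^2g(0)g''(0)\eta'(0)=0$, and its analogue at $x=L$. This is Neumann compatibility of $\eta$ whenever $g''\neq0$ at the endpoints. The argument $\eta=B_gv+G_\varepsilon(z,v)$ need not have it: for instance $\partial_xR_gz(0)=-g'''(0)z(0,1)/g(0)$ whenever $\mathcal Dz=0$ laterally. So compatibility of the arguments of $\Lambda_\varepsilon^0$ along the recursion must be proved or built into the construction, not dismissed.
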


\begin{proof}
The lifted nonlinearities are compositions of the Nemytskii operator generated
by \(f\), the averaging map \(M\), the extension map \(\mathcal E\), the trace
operator entering \(R_g\), and the liftings
\[
\Lambda_\varepsilon^1,
\qquad
\Lambda_\varepsilon^0.
\]
The high-order trace and lifting estimates from
\Cref{subsec:thin-domain-application} give uniform bounds for these linear
operators in the Sobolev scale used here. Since \(\Omega\subset\mathbb R\),
the spaces
\[
\mathcal Y_{s_N-2j}
\]
are algebras for all \(0\le j\le N\), and the anisotropic spaces
\[
\mathcal W_{N,j}
=
H^{s_N-2j}(\Omega;H^2(0,1))\cap\mathcal X
\]
are closed under the products appearing in the Nemytskii expansion. The choice
\[
s_N=2N+6
\]
ensures that after \(N\) losses of two tangential derivatives, at least six
tangential derivatives remain. Thus all traces, products, and compositions
which occur in the recursion are controlled.

The standard Sobolev-Moser estimates for Nemytskii maps generated by
\(C_b^{s_N+N+2}\)-functions imply uniform \(C^{N+1}\)-bounds on bounded sets (see \cite{han1997elliptic}).
The coefficients \(g\), \(g'\), and \(q=g'/g\) have the required regularity
because \(g\in C^{s_N+2}\) and \(g\) is bounded away from zero. Combining these
bounds with the uniform estimates for the liftings gives the claim.
\end{proof}

\begin{lemma}[Uniform inverse on the slow fast-tail in the high-order scale]
\label{lem:uniform-slow-tail-inverse-high}
Let
\[
\mathcal Y=\mathcal Y_F^\varepsilon\oplus\mathcal Y_S^\varepsilon
\]
be the spectral splitting from \Cref{subsec:thin-domain-application} with
\[
\zeta=\varepsilon^2.
\]
Set
\[
\widetilde B_{g,\varepsilon}
:=
\varepsilon^2B_g|_{\mathcal Y_F^\varepsilon}.
\]
Then, for every Sobolev index \(\sigma\ge0\) in the range used below,
\[
\widetilde B_{g,\varepsilon}^{-1}:
\mathcal Y_F^\varepsilon\cap\mathcal Y_\sigma
\to
\mathcal Y_F^\varepsilon\cap\mathcal Y_{\sigma+2}
\]
is bounded uniformly in \(0<\varepsilon\le\varepsilon_0\).
\end{lemma}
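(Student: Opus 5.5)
The proof is a direct spectral computation in the orthonormal eigenbasis \(\{\varphi_k\}_{k\ge0}\) of \(B_g\) on \(\mathcal Y=L^2(\Omega,g\,dx)\), the same basis used in \Cref{lem:slow-spectral-splitting-estimates}. We first record the scale. For the integer and half-integer Sobolev indices used in the recursion, the interpolation--extrapolation spaces restricted to the tail are characterized by
\[
\|y\|_{\mathcal Y_\sigma}^2\sim\sum_{k}(1+\lambda_k)^{\sigma}|y_k|^2 .
\]
Here the index \(\sigma\) is measured in Sobolev units, i.e.\ \(\mathcal Y_\sigma\simeq H^\sigma(\Omega)\) with the Neumann-type compatibility conditions. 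This is the convention implicit in the statement, where \(\mathcal Y_{\sigma+2}\) gains two derivatives. The characterization follows from self-adjointness of \(B_g\): the fractional powers \((1-B_g)^{\sigma/2}\) are diagonal in \(\{\varphi_k\}\), and their domains coincide with the compatible Sobolev spaces by standard Sturm--Liouville elliptic regularity. This uses \(g\in C^{s_N+2}\), which guarantees that the identification holds up to order \(s_N+2\).

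On \(\mathcal Y_F^\varepsilon\) the operator acts as
\[
\widetilde B_{g,\varepsilon}^{-1}y_F=-\sum_{k\ge k_0}\frac{1}{\varepsilon^2\lambda_k}\,y_k\varphi_k .
\]
This is well defined because \(\lambda_k\ge\lambda_{k_0}>0\) for \(k\ge k_0\). The mapping bound reduces to the uniform multiplier estimate
\[
\sup_{k\ge k_0}\frac{(1+\lambda_k)^{(\sigma+2)/2}}{\varepsilon^2\lambda_k\,(1+\lambda_k)^{\sigma/2}}
=\sup_{k\ge k_0}\frac{1+\lambda_k}{\varepsilon^2\lambda_k}\le C .
\]
Since \(\lambda_k\ge\lambda_{k_0}\) and the map \(\lambda\mapsto(1+\lambda)/\lambda\) is decreasing, the supremum is attained at \(k=k_0\). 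It therefore suffices to show \(\varepsilon^2\lambda_{k_0}\ge c>0\) uniformly in \(\varepsilon\). The bound is independent of \(\sigma\) because \(\sigma\) cancels in the multiplier, so one estimate covers the whole range.

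The main step, and the only delicate one, is this lower bound on \(\varepsilon^2\lambda_{k_0}\). By construction \(\lambda_{k_0}<\alpha_\zeta\le\lambda_{k_0+1}\) with \(\alpha_\zeta=|\omega_A|\varepsilon^{-2}\), so \(\varepsilon^2\lambda_{k_0+1}\ge|\omega_A|\). We combine this with the Weyl asymptotics \(\lambda_k\sim(k\pi/L_g)^2\) and the gap bound \(\lambda_{k_0+1}-\lambda_{k_0}\le Ck_0=O(\varepsilon^{-1})\), both stated before \Cref{lem:slow-spectral-splitting-estimates}. These give
\[
\varepsilon^2\lambda_{k_0}\ge|\omega_A|-C\varepsilon\ge|\omega_A|/2
\]
for \(\varepsilon\le\varepsilon_0\). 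Equivalently, the ratio \(\lambda_{k_0+1}/\lambda_{k_0}\to1\) as \(k_0\to\infty\).

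The remaining points are routine. Weyl's law itself is uniform because \(g\) is fixed, so the only \(\varepsilon\)-dependence enters through \(k_0\). We also confirm that the projections \(\Pi_F^\varepsilon\) are contractive in every \(\mathcal Y_\sigma\) norm; this holds because they are spectral projections, so the restriction of \(\mathcal Y_\sigma\) to the tail is well defined and the estimate is uniform in \(\varepsilon\).
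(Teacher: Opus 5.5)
\textbf{The gap.} The step that fails is the reduction ``it therefore suffices to show \(\varepsilon^2\lambda_{k_0}\ge c\)''. Your multiplier splits as
\[
\frac{1+\lambda_k}{\varepsilon^2\lambda_k}=\frac{1}{\varepsilon^2\lambda_k}+\frac{1}{\varepsilon^2}.
\]
A lower bound on \(\varepsilon^2\lambda_{k_0}\) controls only the first summand. The second equals \(\varepsilon^{-2}\) on every mode.

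So the supremum you computed is \(\varepsilon^{-2}(1+\lambda_{k_0}^{-1})\ge\varepsilon^{-2}\), and it is attained. For \(y=\varphi_{k_0}\in\mathcal Y_F^\varepsilon\) one has \(\|\widetilde B_{g,\varepsilon}^{-1}\varphi_{k_0}\|_{\mathcal Y_{\sigma+2}}/\|\varphi_{k_0}\|_{\mathcal Y_\sigma}=(1+\lambda_{k_0})/(\varepsilon^2\lambda_{k_0})\).

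Structurally, \(\widetilde B_{g,\varepsilon}^{-1}=\varepsilon^{-2}B_g^{-1}\) on the tail. \(B_g^{-1}\) gains two derivatives with an \(O(1)\) constant, but nothing compensates the prefactor \(\varepsilon^{-2}\): the eigenvalues \(\varepsilon^2\lambda_k\) of \(\widetilde B_{g,\varepsilon}\) are bounded below on the tail, not above.

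Hence the statement is false with the standard \(\varepsilon\)-independent norms on \(\mathcal Y_\sigma\) and \(\mathcal Y_{\sigma+2}\), and no argument can close this step. Your Sobolev-unit convention is already the favourable one. With \(\|y\|_{\mathcal Y_\rho}^2\sim\sum_k(1+\lambda_k)^{2\rho}|y_k|^2\), as in the splitting subsection, the map would not even be bounded.

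The paper's own proof makes the identical move. It asserts \((1+\lambda_n)^{(\sigma+2)/2}|\varepsilon^2\lambda_n|^{-1}\le C(1+\lambda_n)^{\sigma/2}\) ``since \(\varepsilon^2\lambda_n\ge c\)''. So you are following its route and inheriting its error.

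\textbf{What survives.} Your lower bound \(\varepsilon^2\lambda_{k_0}\ge|\omega_A|-C\varepsilon\ge|\omega_A|/2\), derived from \(\lambda_{k_0}<\alpha_\zeta\le\lambda_{k_0+1}\) and the gap bound, is correct. It is more careful than the paper's ``\(\lambda_n\gtrsim\varepsilon^{-2}\)''.

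It yields uniform boundedness of \(\widetilde B_{g,\varepsilon}^{-1}\) on each \(\mathcal Y_F^\varepsilon\cap\mathcal Y_\sigma\) \emph{without} derivative gain, with norm \(1/(\varepsilon^2\lambda_{k_0})\le 2/|\omega_A|\). A gain of \(2\vartheta\) derivatives, \(0\le\vartheta\le1\), costs a factor of exact order \(\varepsilon^{-2\vartheta}\). The full gain is uniform only in the \(\varepsilon\)-weighted form
\[
\|\widetilde B_{g,\varepsilon}^{-1}y\|_{\mathcal Y_\sigma}+\varepsilon^2\|\widetilde B_{g,\varepsilon}^{-1}y\|_{\mathcal Y_{\sigma+2}}\le\Bigl(\frac{2}{|\omega_A|}+2\Bigr)\|y\|_{\mathcal Y_\sigma},
\]
where the two multipliers are \(\frac{1}{\varepsilon^2\lambda_k}\) and \(\frac{1+\lambda_k}{\lambda_k}\).

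For the downstream coefficient recursion, the no-gain bound already gives \(q_{m+1}\in\mathcal Y_F^\varepsilon\cap\mathcal Y_{s_N-2m-2}\) uniformly. That is the regularity actually asserted in the finite-order closure proposition. The additional uniform bound in \(\mathcal Y_{s_N-2m}\) claimed in its proof has to be dropped or paid for with a factor \(\varepsilon^{-2}\).
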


\begin{proof}
Let
\[
0=\lambda_0<\lambda_1\le\lambda_2\le\cdots
\]
be the eigenvalues of \(-B_g\), with corresponding eigenfunctions \(e_n\).
Since \(B_g\) is a regular one-dimensional Sturm--Liouville operator,
\[
\lambda_n\sim Cn^2.
\]
The fast tail \(\mathcal Y_F^\varepsilon\) consists of modes satisfying
\[
\lambda_n\gtrsim \varepsilon^{-2}.
\]
On such a mode,
\[
\widetilde B_{g,\varepsilon}e_n
=
-\varepsilon^2\lambda_n e_n.
\]
Hence
\[
|\varepsilon^2\lambda_n|\ge c>0
\]
uniformly on the tail. Moreover,
\[
(1+\lambda_n)^{(\sigma+2)/2}
|\varepsilon^2\lambda_n|^{-1}
\le
C(1+\lambda_n)^{\sigma/2}
\]
on the tail, since
\[
\varepsilon^2\lambda_n\ge c.
\]
Therefore
\[
\|\widetilde B_{g,\varepsilon}^{-1}y\|_{\mathcal Y_{\sigma+2}}
\le
C\|y\|_{\mathcal Y_\sigma},
\]
with \(C\) independent of \(\varepsilon\).
\end{proof}

\begin{proposition}[Finite-order closure of the coefficient recursion]
\label{prop:finite-order-coefficient-closure}
Fix \(N\in\mathbb N\), let
\[
s_N=2N+6,
\]
and assume
\[
g\in C^{s_N+2}([0,L]),
\qquad
0<g_*\le g\le g^*,
\qquad
g'(0)=g'(L)=0.
\]
Assume also that
\[
f\in C_b^{s_N+N+2}(\mathbb R),
\]
or that \(f\) has been smoothly cut off on a bounded absorbing set. Let the
retained slow variable satisfy
\[
v_S\in
B_{\mathcal Y_S^\varepsilon\cap\mathcal Y_{s_N}}(0,r).
\]
Suppose that coefficients
\[
p_j:
B_{\mathcal Y_S^\varepsilon\cap\mathcal Y_{s_N}}(0,r)
\to
\mathcal W_{N,j-1},
\]
and
\[
q_j:
B_{\mathcal Y_S^\varepsilon\cap\mathcal Y_{s_N}}(0,r)
\to
\mathcal Y_F^\varepsilon\cap\mathcal Y_{s_N-2j}
\]
have been constructed for \(j=1,\dots,m\), satisfy the conormal hierarchy, and
are \(C^1\)-bounded uniformly in these spaces. Then the next coefficient source terms generated by the invariance equation satisfy
\[
Q_{m,z}^\varepsilon(v_S)\in \mathcal X_{N,m+1},
\]
and
\[
Q_{m,F}^\varepsilon(v_S)
\in
\mathcal Y_F^\varepsilon\cap\mathcal Y_{s_N-2m-2}.
\]
Moreover, the maps
\[
v_S\mapsto Q_{m,z}^\varepsilon(v_S),
\qquad
v_S\mapsto Q_{m,F}^\varepsilon(v_S)
\]
are \(C^1\)-bounded uniformly on
\[
B_{\mathcal Y_S^\varepsilon\cap\mathcal Y_{s_N}}(0,r).
\]
Finally, the equations defining the next coefficients admit solutions
\[
p_{m+1}\in\mathcal W_{N,m},
\qquad
q_{m+1}\in
\mathcal Y_F^\varepsilon\cap\mathcal Y_{s_N-2m-2},
\]
with the required conormal hierarchy.
\end{proposition}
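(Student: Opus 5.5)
We argue by induction on \(m\), writing out the source terms generated at order \(\varepsilon^{2(m+1)}\) by the invariance equations and tracking tangential regularity at each step. Inserting \(\widehat P_m^\varepsilon=\sum_{j\le m}\varepsilon^{2j}p_j\) plus its correction, and \(\sum_{j\le m}\varepsilon^{2j}q_j\), into the lifted invariance equations \eqref{eq:invar-w-corrected}--\eqref{eq:invar-vF-corrected} (with \(F,G\) replaced by \(F_\varepsilon,G_\varepsilon\)), we collect the coefficient of \(\varepsilon^{2m}\). By \Cref{lem:form-expansion-conormal-hierarchy}, the fast-operator part of this coefficient is \(\mathcal L_m(p_1,\dots,p_{m+1})\). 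It equals \(A_0p_{m+1}\) plus a term depending only on \(p_1,\dots,p_m\); that term is the tangential form contribution of \(p_m\) together with lower-order coefficient terms in \(g,q\), and it lies in \(\mathcal X_{N,m+1}\) since \(p_m\in\mathcal W_{N,m-1}\) and two tangential derivatives are lost. The remaining terms are of three kinds. First, Taylor coefficients of \(F_\varepsilon,G_\varepsilon\) at \((0,v_S)\), applied to multilinear combinations of \(p_1,\dots,p_m\) and \(q_1,\dots,q_m\). Second, the transport terms \(Dp_j(v_S)[B_gv_S+\Pi_S^\varepsilon G_\varepsilon(\cdot)]\) and the analogous ones for \(q_j\). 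Third, contributions from the lifting. We define \(Q^\varepsilon_{m,z}\) and \(Q^\varepsilon_{m,F}\) as the sum of all these terms, so that the next coefficients are determined by \(A_0p_{m+1}+Q_{m,z}^\varepsilon=0\) and \(\widetilde B_{g,\varepsilon}q_{m+1}+Q_{m,F}^\varepsilon=0\).

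For the regularity claims, \Cref{lem:thin-domain-uniform-smoothness} gives that the Taylor terms lie in \(\mathcal W_{N,m}\)- and \(\mathcal Y_{s_N-2m}\)-type spaces, uniformly \(C^1\) in \(v_S\). The transport terms require \(B_gv_S\). Since \(v_S\) lies in the finite-dimensional space \(\mathcal Y_S^\varepsilon\), multiplication by \(B_g\) loses two Sobolev orders, so \(\|B_gv_S\|_{\mathcal Y_{s_N-2}}\lesssim\|v_S\|_{\mathcal Y_{s_N}}\). Combining this with the assumed uniform \(C^1\)-bounds on \(p_j,q_j\), we obtain membership in \(\mathcal X_{N,m+1}\) and \(\mathcal Y_{s_N-2m-2}\); this is where the loss of two tangential derivatives per step is absorbed. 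We then project onto \(\mathcal Y_F^\varepsilon\), using that \(\Pi_F^\varepsilon\) is a spectral projection and is therefore bounded uniformly on every \(\mathcal Y_\sigma\). The lemma on mean-zero consistency gives \(Q_{m,z}^\varepsilon\in\mathcal X\).

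The solvability step splits into two parts. For the slow tail, \Cref{lem:uniform-slow-tail-inverse-high} directly gives \(q_{m+1}=-\widetilde B_{g,\varepsilon}^{-1}Q_{m,F}^\varepsilon\in\mathcal Y_F^\varepsilon\cap\mathcal Y_{s_N-2m}\subset\mathcal Y_{s_N-2m-2}\), with uniform \(C^1\) dependence on \(v_S\). For the fast part, the equation \(A_0p=-Q\) must hold together with the inhomogeneous Neumann data \(\partial_Yp=0\) at \(Y=0\) and \(\partial_Yp=gg'\mathcal Dp_m\) at \(Y=1\). We solve it in two steps. First subtract the lifting \(\mathscr B_m(gg'\mathcal Dp_m(\cdot,1))\in\mathcal W_{N,m}\) from Appendix A.4; its datum has regularity \(H^{s_N-2m}\) because \(p_m\in\mathcal W_{N,m-1}\). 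Then solve the homogeneous fiberwise Neumann problem \(g^{-2}\partial_Y^2\tilde p=\tilde Q\) with mean zero on each fiber. The solvability condition for this problem is \(\int_0^1\tilde Q\,dY=\) the boundary flux term, and it holds exactly because of the boundary-cancellation identity used in \Cref{lem:form-expansion-conormal-hierarchy}. The inverse is explicit fiberwise, given by a double integral in \(Y\) times \(g^2\), so it commutes with tangential regularity and maps \(H^{s}(\Omega;L^2)\) to \(H^{s}(\Omega;H^2)\).

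The main obstacle is making this compatibility precise and fixing the lateral condition \(\mathcal Dp_{m+1}=0\) on \(\partial\Omega\times(0,1)\). The fiberwise inverse does not automatically produce this condition. We would first try to exploit \(g'(0)=g'(L)=0\): at the endpoints \(\mathcal D=\partial_x\), so the condition reduces to \(\partial_xp_{m+1}=0\). This should follow from the sources being tangentially even, that is, from the odd-order \(x\)-derivatives of \(Q\) vanishing at \(\partial\Omega\), a property inherited from the Neumann eigenfunctions of \(B_g\) and from the hierarchy. Failing that, we would add a further small lateral lifting and absorb its contribution into the next source term.
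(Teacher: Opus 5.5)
Your route matches the paper's. You argue by induction on \(m\), assembling the sources from \(\mathcal L_m\) and the Taylor, transport and lifting terms. Regularity comes from the Nemytskii lemma and uniformly bounded spectral projections, \(q_{m+1}=-\widetilde B_{g,\varepsilon}^{-1}Q_{m,F}^\varepsilon\) from the uniform tail inverse, and \(p_{m+1}\) from subtracting \(\mathscr B_m\bigl(gg'\mathcal Dp_m(\cdot,1)\bigr)\) and inverting the fibrewise Neumann operator.

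Your compatibility remark is correct, and the paper does not state it. The cancellation identity gives \(M[\mathcal D^2p_m-\mathcal E(R_gp_m)]=-q\,\mathcal Dp_m(\cdot,1)\), and this balances \(g^{-2}\) times the prescribed flux \(gg'\mathcal Dp_m(\cdot,1)\). Note, however, that this part of your \(Q_{m,z}^\varepsilon\) is therefore not mean-zero, so the mean-zero consistency lemma does not apply to it.

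The genuine gap is the step you leave open: the lateral condition \(\mathcal Dp_{m+1}=0\) on \(\partial\Omega\times(0,1)\). It is part of the hierarchy, and it is what puts \(\widehat P_N^\varepsilon\) into \(D(A_\varepsilon)\). The fibre problem (two Neumann conditions in \(Y\) plus zero fibre mean) determines \(p_{m+1}\) uniquely, so there is no freedom left to impose it. Since \(q=g'=0\) at \(x\in\{0,L\}\), the lateral condition holds exactly when \(\partial_x\) of the fibrewise right-hand side vanishes there.

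Your evenness heuristic does not give this under the stated hypotheses. Neumann eigenfunctions of \(B_g\) only satisfy \(\varphi'=\varphi'''=0\) at the endpoints, while \(\varphi^{(5)}(0)=3\lambda\,g'''(0)\varphi(0)/g(0)\). Moreover, for any \(u\) with \(\partial_xu=0\) on \(\{0\}\times(0,1)\),
\[
\partial_x\bigl[\mathcal D^2u-\mathcal E(R_gu)\bigr]\big|_{x=0}
=\partial_x^3u(0,Y)-\frac{g'''(0)}{g(0)}\bigl(Y\partial_Yu(0,Y)-u(0,1)\bigr).
\]
This already fails to vanish for \(p_1\), whose source contains this expression with \(u=\Lambda_\varepsilon^1v_S\); the abstract right inverse defining \(\Lambda_\varepsilon^1\) has no reflection symmetry.

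Your fallback fails as well. The mismatch is mean-zero in \(Y\), so any lateral corrector must depend on \(Y\). Then \(\varepsilon^{-2}g^{-2}\partial_Y^2\) returns its contribution at the order that defined \(p_{m+1}\), not at the next order. Removing a lateral mismatch needs exponentially decaying boundary-layer correctors in \(x/\varepsilon\) near \(x=0,L\), or extra hypotheses: odd derivatives of \(g\) vanishing at the endpoints to the required order, and reflection-compatible liftings. The paper's proof takes the same steps and simply asserts the hierarchy, so it does not supply this either.

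Second, the transport step does not close with the induction hypothesis as stated. The \(C^1\)-bounds control \(Dp_j(v_S)\) only on increments measured in \(\mathcal Y_{s_N}\). On \(\mathcal Y_S^\varepsilon\), however, \(\|B_gv_S\|_{\mathcal Y_{s_N}}\le\lambda_{k_0-1}\|v_S\|_{\mathcal Y_{s_N}}\), with equality at \(v_S=\varphi_{k_0-1}\) and \(\lambda_{k_0-1}\sim\varepsilon^{-2}\). Finite dimensionality gives boundedness, not uniformity in \(\varepsilon\).

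Your estimate \(\|B_gv_S\|_{\mathcal Y_{s_N-2}}\le C\|v_S\|_{\mathcal Y_{s_N}}\) only helps if \(Dp_j(v_S)\) is bounded from \(\mathcal Y_{s_N-2}\) into \(\mathcal W_{N,j}\) uniformly in \(\varepsilon\). Otherwise \(\varepsilon^{2m}Dp_m(v_S)[B_gv_S]\) is only \(O(\varepsilon^{2m-2})\), and the resulting \(p_{m+1}\) is of size \(\varepsilon^{-2}\). Such a tame bound, for both \(p_j\) and \(q_j\), must be added to the induction hypothesis and propagated. Also, a \(C^1\)-bound for \(v_S\mapsto Dp_m(v_S)[B_gv_S]\) involves \(D^2p_m\), so the recursion has to carry \(C^{N+1-j}\) bounds rather than \(C^1\) bounds.

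Finally, your count places the source in \(\mathcal X_{N,m+1}\). A fibre inverse that preserves tangential regularity then yields only \(p_{m+1}\in\mathcal W_{N,m+1}\), two tangential derivatives short of the claimed \(\mathcal W_{N,m}\). The slack in \(s_N=2N+6\) lets the recursion close with shifted indices, but not with the indices stated. The paper's proof passes over these points in the same way.
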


\begin{proof}
We argue by induction. The slow space
\[
\mathcal Y_{s_N}
\]
is an algebra, because \(\Omega\subset\mathbb R\) and \(s_N\ge2\). The
Nemytskii estimates from \Cref{lem:thin-domain-uniform-smoothness} imply that
all nonlinear coefficient expressions generated by \(F_\varepsilon\) and
\(G_\varepsilon\) are \(C^1\)-bounded in the stated spaces.

Assume that the coefficients up to order \(m\) have the asserted regularity.
The source \(Q_{m,z}^\varepsilon\) is a finite sum of terms formed from
\[
p_1,\dots,p_m,
\qquad
q_1,\dots,q_m,
\qquad
v_S,
\]
their first Fréchet derivatives with respect to \(v_S\), the operators
\[
\mathcal D,\quad R_g,\quad M,\quad \Lambda_\varepsilon^1,\quad
\Lambda_\varepsilon^0,
\]
and derivatives of the Nemytskii map generated by \(f\) up to order \(m+1\).
Each occurrence of the tangential form contribution costs two tangential
derivatives. Hence after \(m+1\) recursive steps, the source belongs to
\[
\mathcal X_{N,m+1}
=
H^{s_N-2m-2}(\Omega;L^2(0,1))\cap\mathcal X.
\]
This proves
\[
Q_{m,z}^\varepsilon(v_S)\in \mathcal X_{N,m+1}.
\]

The same argument applies to the slow-fast source. Since \(B_g\) is a
self-adjoint Sturm--Liouville operator and the projections
\[
\Pi_F^\varepsilon,\qquad \Pi_S^\varepsilon
\]
are spectral projections, they are bounded on every \(\mathcal Y_k\). Therefore
\[
Q_{m,F}^\varepsilon(v_S)
\in
\mathcal Y_F^\varepsilon\cap\mathcal Y_{s_N-2m-2}.
\]
The \(C^1\)-bounds follow from the \(C^{N+1}\)-bounds on the nonlinearities,
the induction hypothesis, and the boundedness of the liftings.

It remains to solve for the next coefficients. The fast equation at order
\(\varepsilon^{2m}\) is a fiberwise mean-zero Neumann equation with an upper boundary
datum fixed by the conormal hierarchy:
\[
\partial_Yp_{m+1}=0
\qquad\text{on }Y=0,
\]
and
\[
\partial_Yp_{m+1}=gg'\mathcal Dp_m
\qquad\text{on }Y=1,
\]
with the convention that this datum is zero for \(m=0\). We first lift this
boundary datum by \(\mathscr B_m\), subtract the lifting, and solve the
remaining homogeneous mean-zero vertical Neumann problem. Since the vertical
Neumann operator is invertible on the mean-zero fiber and preserves tangential
Sobolev regularity, this yields
\[
p_{m+1}\in\mathcal W_{N,m}.
\]

The slow-fast coefficient is determined by
\[
\widetilde B_{g,\varepsilon}q_{m+1}
=
Q_{m,F}^\varepsilon.
\]
By \Cref{lem:uniform-slow-tail-inverse-high},
\[
q_{m+1}
=
\widetilde B_{g,\varepsilon}^{-1}Q_{m,F}^\varepsilon
\in
\mathcal Y_F^\varepsilon\cap\mathcal Y_{s_N-2m}.
\]
This is stronger than the stated target regularity and closes the induction.
\end{proof}

\subsubsection*{Recursive graph construction and defect estimate}

\begin{proposition}[Recursive thin-domain graph construction with conormal hierarchy]
\label{prop:recursive-thin-domain-graph-construction-form}
Assume the hypotheses of
\Cref{prop:finite-order-coefficient-closure}. Then there exist coefficient
maps
\[
p_j:
B_{\mathcal Y_S^\varepsilon\cap\mathcal Y_{s_N}}(0,r)
\to
\mathcal W_{N,j-1},
\]
and
\[
q_j:
B_{\mathcal Y_S^\varepsilon\cap\mathcal Y_{s_N}}(0,r)
\to
\mathcal Y_F^\varepsilon\cap\mathcal Y_{s_N-2j},
\]
for \(j=1,\dots,N\), such that the \(p_j\) satisfy the conormal hierarchy from
\Cref{lem:form-expansion-conormal-hierarchy}. Define
\[
P_N^\varepsilon(v_S)
:=
\sum_{j=1}^N\varepsilon^{2j} p_j(v_S),
\]
and
\[
Q_N^\varepsilon(v_S)
:=
\sum_{j=1}^N\varepsilon^{2j} q_j(v_S).
\]
Set
\[
h_{\varepsilon,N,z}^{\rm app}(v_S)
:=
P_N^\varepsilon(v_S)
+
C_N^\varepsilon(P_N^\varepsilon(v_S)),
\]
and
\[
h_{\varepsilon,N,F}^{\rm app}(v_S)
:=
Q_N^\varepsilon(v_S).
\]
Then
\[
h_{\varepsilon,N,z}^{\rm app}(v_S)\in D(A_\varepsilon)
\]
for every
\[
v_S\in B_{\mathcal Y_S^\varepsilon\cap\mathcal Y_{s_N}}(0,r).
\]
Moreover, if
\[
\mathcal R_{\varepsilon,N}^{\rm td}(v_S)
:=
B_gv_S
+
\Pi_S^\varepsilon
G_\varepsilon\Big(
h_{\varepsilon,N,z}^{\rm app}(v_S),
h_{\varepsilon,N,F}^{\rm app}(v_S)+v_S
\Big),
\]
and
\[
\begin{aligned}
\mathcal I_{\varepsilon,N,z}^{\rm td}(v_S)
:={}&
Dh_{\varepsilon,N,z}^{\rm app}(v_S)
\mathcal R_{\varepsilon,N}^{\rm td}(v_S)
-
\varepsilon^{-2}A_\varepsilon h_{\varepsilon,N,z}^{\rm app}(v_S)
\\
&-
F_\varepsilon\Big(
h_{\varepsilon,N,z}^{\rm app}(v_S),
h_{\varepsilon,N,F}^{\rm app}(v_S)+v_S
\Big),
\end{aligned}
\]
and
\[
\begin{aligned}
\mathcal I_{\varepsilon,N,F}^{\rm td}(v_S)
:={}&
Dh_{\varepsilon,N,F}^{\rm app}(v_S)
\mathcal R_{\varepsilon,N}^{\rm td}(v_S)
-
\varepsilon^{-2}\widetilde B_{g,\varepsilon}
h_{\varepsilon,N,F}^{\rm app}(v_S)
\\
&-
\Pi_F^\varepsilon
G_\varepsilon\Big(
h_{\varepsilon,N,z}^{\rm app}(v_S),
h_{\varepsilon,N,F}^{\rm app}(v_S)+v_S
\Big),
\end{aligned}
\]
then
\[
\sup_{\|v_S\|_{\mathcal Y_{s_N}}\le r}
\left(
\|\mathcal I_{\varepsilon,N,z}^{\rm td}(v_S)\|_{\mathcal X}
+
\|\mathcal I_{\varepsilon,N,F}^{\rm td}(v_S)\|_{\mathcal Y_\delta}
\right)
\le
C\varepsilon^{2N}.
\]
\end{proposition}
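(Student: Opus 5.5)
We construct the coefficients by induction on \(m=0,\dots,N-1\) using \Cref{prop:finite-order-coefficient-closure}. We insert the ansatz \(P_N^\varepsilon,Q_N^\varepsilon\) into the two defect expressions and expand every term in powers of \(\varepsilon^2\). Three kinds of terms appear, and each is expanded by an earlier result:
\begin{itemize}
\item the fast generator \(\varepsilon^{-2}A_\varepsilon \widehat P_N^\varepsilon\), expanded by \Cref{lem:form-expansion-conormal-hierarchy} as \(\sum_{m=0}^{N-1}\varepsilon^{2m}\mathcal L_m(p_1,\dots,p_{m+1})+\varepsilon^{2N}\mathcal R_{N,A}^\varepsilon\);
\item the nonlinearities \(F_\varepsilon,G_\varepsilon\), Taylor expanded around \((0,v_S)\) to order \(N\) with the \(C^{N+1}\) bounds of \Cref{lem:thin-domain-uniform-smoothness};
\item the transport terms \(Dh^{\rm app}\,\mathcal R^{\rm td}_{\varepsilon,N}\), also expanded in \(\varepsilon^2\). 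Here \(\mathcal R^{\rm td}_{\varepsilon,N}\) itself depends on the graph only through \(G_\varepsilon\) and is expanded likewise.
\end{itemize}
At order \(\varepsilon^{2m}\), the fast defect has the form \(-\mathcal L_m(p_1,\dots,p_{m+1})+(\text{terms in }p_1,\dots,p_m,q_1,\dots,q_m)\). Since \(\mathcal L_m\) contains \(A_0p_{m+1}\) plus lower coefficients, setting the order-\(\varepsilon^{2m}\) coefficient to zero is the fiberwise equation \(A_0 p_{m+1}=Q^\varepsilon_{m,z}(v_S)\). We pose it with the upper Neumann datum \(gg'\mathcal Dp_m\), which is zero for \(m=0\). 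Similarly, the slow-tail defect gives \(\widetilde B_{g,\varepsilon}q_{m+1}=Q^\varepsilon_{m,F}(v_S)\). \Cref{prop:finite-order-coefficient-closure} says exactly that these source terms lie in \(\mathcal X_{N,m+1}\), respectively \(\mathcal Y^\varepsilon_F\cap\mathcal Y_{s_N-2m-2}\). It also gives solutions \(p_{m+1}\in\mathcal W_{N,m}\) and \(q_{m+1}\) with the required hierarchy and uniform \(C^1\) bounds. Iterating this for \(m=0,\dots,N-1\) produces all the coefficients.

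Membership \(h^{\rm app}_{\varepsilon,N,z}(v_S)\in D(A_\varepsilon)\) is the domain part of \Cref{lem:form-expansion-conormal-hierarchy}, applied pointwise in \(v_S\).

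For the defect estimate, all contributions up to order \(\varepsilon^{2N-2}\) cancel by construction. The remaining contributions are of three types:
\begin{itemize}
\item \(\varepsilon^{2N}\mathcal R^\varepsilon_{N,A}\), bounded in \(\mathcal X\) by \(C\sum_j\|p_j\|_{\mathcal W_{N,j-1}}\), which is uniformly bounded on the ball;
\item the Taylor remainders of \(F_\varepsilon,G_\varepsilon\) of order \(\varepsilon^{2N}\), controlled in \(\mathcal X\) and \(\mathcal Y_\delta\) by the uniform \(C^{N+1}\) bounds;
\item the high-order cross terms in the transport part, each carrying at least \(\varepsilon^{2N}\).
\end{itemize}
The boundary correction \(C_N^\varepsilon\) also enters \(F_\varepsilon\), \(G_\varepsilon\) and \(Dh^{\rm app}\). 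Its size \(O(\varepsilon^{2N+2})\) in \(\mathcal W_{N,N}\) makes these contributions negligible, since \(\mathcal W_{N,N}\hookrightarrow \mathcal Z_\theta^\varepsilon\) uniformly (six tangential and two vertical derivatives remain). Its \(v_S\)-derivative is bounded by linearity of \(\mathscr B_N\) in \(\mathcal Dp_N\).

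The main obstacle is the transport term \(Dh^{\rm app}\mathcal R^{\rm td}_{\varepsilon,N}\), because \(B_gv_S\) is unbounded. On \(\mathcal Y_S^\varepsilon\) its norm grows like \(\varepsilon^{-2}\), since the retained modes reach \(\lambda_{k_0}\sim\varepsilon^{-2}\). The plan is to absorb \(B_gv_S\) into the high-order scale: for \(v_S\) in the \(\mathcal Y_{s_N}\)-ball, \(B_gv_S\) is bounded in \(\mathcal Y_{s_N-2}\), uniformly in \(\varepsilon\). We then check that the \(C^1\) bounds in \Cref{prop:finite-order-coefficient-closure} hold for directions in \(\mathcal Y_{s_N-2}\). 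This costs two more tangential derivatives per step, which is where the margin \(s_N=2N+6\) is used. If this fails to close, the fallback is to assign \(Dp_m[B_gv_S]\) to order \(\varepsilon^{2m}\) inside the next source \(Q^\varepsilon_{m,z}\), as in \eqref{eq:second-order-hw-corrected}, rather than treating it as a remainder, so that it is cancelled at the next step; only \(Dp_N[B_gv_S]\) then survives, at order \(\varepsilon^{2N}\). Collecting all three types of terms gives the claimed bound \(C\varepsilon^{2N}\), uniformly over \(\|v_S\|_{\mathcal Y_{s_N}}\le r\).
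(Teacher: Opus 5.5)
This is essentially the paper's own proof. It uses the same induction on $m$ via \Cref{prop:finite-order-coefficient-closure} (solving $A_0p_{m+1}=Q^\varepsilon_{m,z}$ with the hierarchy datum and $\widetilde B_{g,\varepsilon}q_{m+1}=Q^\varepsilon_{m,F}$), the same domain membership and expansion of $\varepsilon^{-2}A_\varepsilon h^{\rm app}_{\varepsilon,N,z}$ from \Cref{lem:form-expansion-conormal-hierarchy}, and the same control of the $O(\varepsilon^{2N})$ Taylor and transport remainders by the uniform bounds of \Cref{lem:thin-domain-uniform-smoothness}. Your observation that on $\mathcal Y_S^\varepsilon$ the term $B_gv_S$ is uniformly bounded only in $\mathcal Y_{s_N-2}$, so that the derivatives $Dp_j(v_S)$ must be controlled on such directions, makes explicit a point the paper passes over with ``the same bounds apply to the reduced vector field''. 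Two small corrections, though: your fallback is not a real alternative, because the transport terms $Dp_m[B_gv_S+\cdots]$ already sit in the sources $Q^\varepsilon_{m,z}$ in your main construction, so that direction bound is needed in either case. And $\mathcal W_{N,N}$ does not embed into $\mathcal Z_\theta^\varepsilon$, since for $\theta>3/4$ the space $\mathcal X_\theta^\varepsilon$ encodes the conormal condition that $C_N^\varepsilon$ alone violates; for the correction you should invoke only the $H^{2\theta}$-based (or $\mathcal W$-scale) bounds on $F_\varepsilon$ and $G_\varepsilon$.
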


\begin{proof}
We construct the coefficients inductively. Suppose that
\[
p_1,\dots,p_m,
\qquad
q_1,\dots,q_m
\]
have already been constructed and satisfy the asserted regularity and boundary
hierarchy. Insert
\[
P_m^\varepsilon
=
\sum_{j=1}^m\varepsilon^{2j}p_j,
\qquad
Q_m^\varepsilon
=
\sum_{j=1}^m \varepsilon^{2j} q_j
\]
into the two invariance equations and collect the coefficient of order
\(\varepsilon^{2m}\). Denote the resulting coefficient sources by
\[
Q_{m,z}^\varepsilon(v_S),
\qquad
Q_{m,F}^\varepsilon(v_S).
\]
By \Cref{prop:finite-order-coefficient-closure},
\[
Q_{m,z}^\varepsilon(v_S)\in\mathcal X_{N,m+1},
\]
and
\[
Q_{m,F}^\varepsilon(v_S)
\in
\mathcal Y_F^\varepsilon\cap\mathcal Y_{s_N-2m-2}.
\]

The next fast coefficient \(p_{m+1}\) is defined as the solution of the
order-\(\varepsilon^{2m}\) fast coefficient equation, with boundary data prescribed by
the conormal hierarchy:
\[
\partial_Yp_{m+1}=0
\qquad\text{on }Y=0,
\]
\[
\partial_Yp_{m+1}=gg'\mathcal Dp_m
\qquad\text{on }Y=1,
\]
where the right-hand side is interpreted as zero for \(m=0\), and
\[
\mathcal Dp_{m+1}=0
\qquad\text{on }\partial\Omega\times(0,1).
\]
The construction in the proof of
\Cref{prop:finite-order-coefficient-closure} gives
\[
p_{m+1}\in\mathcal W_{N,m}.
\]
This choice cancels the order-\(\varepsilon^{2m}\) coefficient in the fast defect.

Similarly, \(q_{m+1}\) is defined by
\[
\widetilde B_{g,\varepsilon}q_{m+1}
=
Q_{m,F}^\varepsilon.
\]
By the uniform tail inverse estimate,
\[
q_{m+1}\in
\mathcal Y_F^\varepsilon\cap\mathcal Y_{s_N-2m}.
\]
This choice cancels again the order-\(\varepsilon^{2m}\) coefficient in the slow-fast defect.

Starting with \(m=0\), this procedure constructs the coefficients up to order
\(N\), and all coefficients of both defects through order \(\varepsilon^{2(N-1)}\)
vanish. The polynomial \(P_N^\varepsilon\) satisfies the conormal boundary
condition up to a boundary residual of order \(\varepsilon^{2(N+1)}\). By
\Cref{lem:form-expansion-conormal-hierarchy}, the correction
\[
C_N^\varepsilon(P_N^\varepsilon)
\]
removes this residual and has size \(O(\varepsilon^{2(N+1)})\). Therefore
\[
h_{\varepsilon,N,z}^{\rm app}
=
P_N^\varepsilon+C_N^\varepsilon(P_N^\varepsilon)
\]
belongs to \(D(A_\varepsilon)\), and the correction contributes only
\(O(\varepsilon^{2N})\) to
\[
\varepsilon^{-2}A_\varepsilon h_{\varepsilon,N,z}^{\rm app}.
\]

It remains to estimate the defect. All terms in both defects through order
\(\varepsilon^{2(N-1)}\) cancel by construction. It remains to bound the order-\(\varepsilon^{2N}\)
contributions.

\medskip
\noindent\emph{Estimate of
\(\|\mathcal I_{\varepsilon,N,z}^{\rm td}(v_S)\|_{\mathcal X}\).}
By \Cref{lem:form-expansion-conormal-hierarchy}, the fast operator term has
the expansion
\[
\varepsilon^{-2}
A_\varepsilon h_{\varepsilon,N,z}^{\rm app}
=
\sum_{m=0}^{N-1}
\varepsilon^{2m}
\mathcal L_m(p_1,\dots,p_{m+1})
+
\varepsilon^{2N}\mathcal R_{N,A}^\varepsilon,
\]
with
\[
\|\mathcal R_{N,A}^\varepsilon\|_{\mathcal X}
\le
C\sum_{j=1}^N
\|p_j\|_{\mathcal W_{N,j-1}}.
\]
The correction
\[
C_N^\varepsilon(P_N^\varepsilon)\in\mathcal W_{N,N}
\]
has size \(O(\varepsilon^{2(N+1)})\), and therefore contributes only \(O(\varepsilon^{2N})\) to
\[
\varepsilon^{-2}A_\varepsilon h_{\varepsilon,N,z}^{\rm app}.
\]
Its contribution to
\[
Dh_{\varepsilon,N,z}^{\rm app}(v_S)\,
\mathcal R_{\varepsilon,N}^{\rm td}(v_S)
\]
and to the nonlinear term is of order \(O(\varepsilon^{2(N+1)})\), hence also harmless
for the \(O(\varepsilon^{2N})\)-estimate.

The remaining terms in
\(\mathcal I_{\varepsilon,N,z}^{\rm td}\) are the order-\(\varepsilon^{2N}\) remainders
from the Taylor expansion of
\[
F_\varepsilon\Big(
h_{\varepsilon,N,z}^{\rm app}(v_S),
h_{\varepsilon,N,F}^{\rm app}(v_S)+v_S
\Big)
\]
and from the product
\[
Dh_{\varepsilon,N,z}^{\rm app}(v_S)
\mathcal R_{\varepsilon,N}^{\rm td}(v_S).
\]
By \Cref{lem:thin-domain-uniform-smoothness}, the lifted nonlinearity
\(F_\varepsilon\) is \(C^{N+1}\) with bounds uniform in \(\varepsilon\) on
bounded subsets of the high-order coefficient spaces. The same bounds apply to
the reduced vector field
\[
\mathcal R_{\varepsilon,N}^{\rm td}.
\]
Therefore these remainders are \(O(\varepsilon^{2N})\) in \(\mathcal X\). Consequently,
\[
\|\mathcal I_{\varepsilon,N,z}^{\rm td}(v_S)\|_{\mathcal X}
\le
C\varepsilon^{2N},
\]
uniformly for
\[
\|v_S\|_{\mathcal Y_{s_N}}\le r.
\]

\medskip
\noindent\emph{Estimate of
\(\|\mathcal I_{\varepsilon,N,F}^{\rm td}(v_S)\|_{\mathcal Y_\delta}\).}
The defect
\(\mathcal I_{\varepsilon,N,F}^{\rm td}\) contains the order-\(\varepsilon^{2N}\)
remainders from the Taylor expansion of
\[
G_\varepsilon\Big(
h_{\varepsilon,N,z}^{\rm app}(v_S),
h_{\varepsilon,N,F}^{\rm app}(v_S)+v_S
\Big)
\]
and from the product
\[
Dh_{\varepsilon,N,F}^{\rm app}(v_S)
\mathcal R_{\varepsilon,N}^{\rm td}(v_S).
\]
The coefficients \(q_j\) were constructed using the inverse
\(\widetilde B_{g,\varepsilon}^{-1}\)
only to cancel all coefficients through order \(\varepsilon^{2(N-1)}\).
After this cancellation, the remaining terms are the Taylor remainders just described.

By \Cref{lem:thin-domain-uniform-smoothness}, the order-\(\varepsilon^{2N}\) remainder
of \(G_\varepsilon\), and the corresponding remainder coming from
\(Dh_{\varepsilon,N,F}^{\rm app}\mathcal R_{\varepsilon,N}^{\rm td}\), are
bounded in the space
\[
\mathcal Y_{s_N-2N}.
\]
Since
\[
s_N-2N=6,
\]
this means the remainder is uniformly bounded in
\[
\mathcal Y_6\simeq H^6(\Omega).
\]
On the other hand,
\[
0<\delta<\theta-\frac34<\frac14,
\]
so
\[
2\delta<\frac12<6.
\]
Hence the Sobolev embedding
\[
H^6(\Omega)\hookrightarrow H^{2\delta}(\Omega)
\simeq \mathcal Y_\delta
\]
gives
\[
\|\mathcal I_{\varepsilon,N,F}^{\rm td}(v_S)\|_{\mathcal Y_\delta}
\le
C\varepsilon^{2N},
\]
uniformly for
\[
\|v_S\|_{\mathcal Y_{s_N}}\le r.
\]

Combining the two estimates yields
\[
\sup_{\|v_S\|_{\mathcal Y_{s_N}}\le r}
\left(
\|\mathcal I_{\varepsilon,N,z}^{\rm td}(v_S)\|_{\mathcal X}
+
\|\mathcal I_{\varepsilon,N,F}^{\rm td}(v_S)\|_{\mathcal Y_\delta}
\right)
\le
C\varepsilon^{2N}.
\]
\end{proof}

\begin{proposition}[Persistence of the high-order slow ball]
\label{prop:high-order-slow-ball-persistence}
Fix \(T>0\) and \(N\in\mathbb N\), and let
\[
s_N:=2N+6.
\]
Assume the hypotheses of
\Cref{prop:recursive-thin-domain-graph-construction-form}. 
Let
\(\bar v_S^\varepsilon \)
solve the approximate reduced equation
\[
\partial_t\bar v_S^\varepsilon
=
\mathcal R_{\varepsilon,N}^{\rm td}(\bar v_S^\varepsilon),
\qquad
\bar v_S^\varepsilon(0)=\bar v_{S,0}^\varepsilon,
\]
where
\[
\mathcal R_{\varepsilon,N}^{\rm td}(\bar v_S^\varepsilon)
=
B_g \bar v_S^\varepsilon
+
\Pi_S^\varepsilon
G_\varepsilon\Big(
h_{\varepsilon,N,z}^{\rm app}(\bar v_S^\varepsilon),
h_{\varepsilon,N,F}^{\rm app}(\bar v_S^\varepsilon)+\bar v_S^\varepsilon
\Big).
\]
Assume that
\[
\|\bar v_{S,0}^\varepsilon\|_{\mathcal Y_{s_N}}\le r_0.
\]
Then there exists a constant
\[
C_T=C(T,r_0,N,g,f)
\]
independent of \(0<\varepsilon\le\varepsilon_0\) such that
\[
\sup_{0\le t\le T}
\|\bar v_S^\varepsilon(t)\|_{\mathcal Y_{s_N}}
\le C_T.
\]
Consequently, if the coefficient construction in
\Cref{prop:recursive-thin-domain-graph-construction-form} is carried out on a
ball
\[
B_{\mathcal Y_S^\varepsilon\cap\mathcal Y_{s_N}}(0,R)
\]
with
\[
R>C_T,
\]
then the reduced trajectory remains in the domain of the approximate graph for
all \(0\le t\le T\).
\end{proposition}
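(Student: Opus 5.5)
The plan is an energy estimate for the finite-dimensional reduced ODE on \(\mathcal Y_S^\varepsilon\), measured in the spectral norm
\[
\|y\|_{\mathcal Y_{s_N}}^2\sim\sum_k(1+\lambda_k)^{2s_N}|y_k|^2 .
\]
This norm is insensitive to \(\dim\mathcal Y_S^\varepsilon\sim\varepsilon^{-1}\), so the constants come out independent of \(\varepsilon\). Since \(\Pi_S^\varepsilon\) is a spectral projection commuting with \(B_g\), differentiating \(\tfrac12\|\bar v_S^\varepsilon\|_{\mathcal Y_{s_N}}^2\) gives
\[
\tfrac12\tfrac{d}{dt}\|\bar v_S^\varepsilon\|_{\mathcal Y_{s_N}}^2
=-\|(-B_g)^{1/2}\bar v_S^\varepsilon\|_{\mathcal Y_{s_N}}^2
+\bigl(\mathcal G_\varepsilon(\bar v_S^\varepsilon),\bar v_S^\varepsilon\bigr)_{\mathcal Y_{s_N}},
\]
where \(\mathcal G_\varepsilon(v):=G_\varepsilon(h_{\varepsilon,N,z}^{\rm app}(v),h_{\varepsilon,N,F}^{\rm app}(v)+v)\). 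The projection disappears because \(\bar v_S^\varepsilon\in\mathcal Y_S^\varepsilon\). The dissipative term absorbs one half-step of derivative loss. Shifting half a power of \((1+\lambda_k)\) onto \(\bar v_S^\varepsilon\) and using Young's inequality gives
\[
|(\mathcal G_\varepsilon,\bar v_S^\varepsilon)_{\mathcal Y_{s_N}}|
\le \tfrac12\|(-B_g)^{1/2}\bar v_S^\varepsilon\|_{\mathcal Y_{s_N}}^2+C\bigl(\|\mathcal G_\varepsilon\|_{\mathcal Y_{s_N-1/2}}^2+\|\bar v_S^\varepsilon\|_{\mathcal Y}^2\bigr).
\]
This is important because \(R_g\) costs a tangential derivative of a trace.

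\textbf{Step 1: low-norm bound.} First I obtain an \(\varepsilon\)-uniform bound for \(\sup_{t\le T}\|\bar v_S^\varepsilon(t)\|_{\mathcal Y_1}\). Since \(\mathcal Y_1\hookrightarrow C^1(\overline\Omega)\), this also controls \(L^\infty\) norms. I use the mild formulation with the analytic bound \(\|e^{tB_g}\|_{\mathcal B(\mathcal Y_\delta,\mathcal Y_1)}\le Ct^{\delta-1}e^{\omega_Bt}\). Because \(f\in C_b^{s_N+N+2}\) (or is cut off), the Lipschitz bounds of \Cref{lem:thin-domain-nonlinear-estimates} give \(\|\mathcal G_\varepsilon(v)\|_{\mathcal Y_\delta}\le C(1+\|v\|_{\mathcal Y_1})\). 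This uses that the graph coefficients are \(O(\varepsilon^2)\) and Lipschitz in the lower topology by \Cref{prop:finite-order-coefficient-closure}, and that \(\|\Lambda_\varepsilon^1v\|_{H^2}\le C\varepsilon^2\|v\|_{\mathcal Y_1}\). The singular Gronwall inequality then yields \(\sup_{t\le T}\|\bar v_S^\varepsilon\|_{\mathcal Y_1}\le C(T,r_0)\).

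\textbf{Step 2: tame estimate in the high norm.} Next I show a tame bound
\[
\|\mathcal G_\varepsilon(v)\|_{\mathcal Y_{s_N-1/2}}\le K\bigl(\|v\|_{\mathcal Y_1}\bigr)\bigl(1+\|v\|_{\mathcal Y_{s_N}}\bigr),
\]
with \(K\) independent of \(\varepsilon\) and, crucially, of the radius \(R\) of the ball on which the graph is built. The ingredients are the Moser estimate \(\|f(U)\|_{H^k}\le C(\|U\|_{L^\infty})(1+\|U\|_{H^k})\), the uniform lifting and trace bounds, and the recursive structure of \(p_j,q_j\). The latter are compositions of \(A_0^{-1}\), \(\widetilde B_{g,\varepsilon}^{-1}\) (uniform by \Cref{lem:uniform-slow-tail-inverse-high}), \(\mathscr B_j\), \(\mathcal D\), and derivatives of \(f\). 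Hence each \(p_j,q_j\) is itself tame: linear in the top norm, with coefficients depending only on low norms. For \(R_g\) applied to the graph, the trace of \(p_1=-A_0^{-1}F_{\varepsilon,0}(v)\) has the full tangential regularity of \(v\), and \(\partial_x\) loses one derivative. The resulting \(-1/2\) loss in the \(\mathcal Y\)-scale is exactly what the dissipation absorbs. Inserting this bound into the energy identity gives \(\tfrac{d}{dt}\|\bar v_S^\varepsilon\|_{\mathcal Y_{s_N}}^2\le C(1+\|\bar v_S^\varepsilon\|_{\mathcal Y_{s_N}}^2)\). Gronwall then gives \(\sup_{t\le T}\|\bar v_S^\varepsilon\|_{\mathcal Y_{s_N}}\le C_T\), with \(C_T\) depending only on \(T,r_0,N,g,f\).

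\textbf{Step 3: closing the argument.} I run a continuation argument. Fix \(R>C_T\) and build the graph on \(B_{\mathcal Y_S^\varepsilon\cap\mathcal Y_{s_N}}(0,R)\). Let \(t^*\) be the exit time of the trajectory from the ball. On \([0,t^*)\) Steps 1 and 2 apply, so the trajectory stays in the closed ball of radius \(C_T<R\). Hence \(t^*>T\), which proves the final claim.

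The main obstacle is Step 2, and specifically making the constant \(K\) independent of \(R\). The smoothness in \Cref{lem:thin-domain-uniform-smoothness} is only stated on bounded sets, so the argument would be circular without it. I would resolve this by tracking the tame (linear-in-top-norm) structure through the induction of \Cref{prop:finite-order-coefficient-closure}. If that proves too delicate, the fallback is to replace the exact half-derivative bookkeeping with a full derivative loss absorbed by \((-B_g)^{1/2}\) applied twice via interpolation. Alternatively, one can accept \(R\)-dependent constants and close the argument using the smallness \(\varepsilon^2\) of the graph contributions, since \(\mathcal G_\varepsilon(v)=Mf(v)+O(\varepsilon^2)\).
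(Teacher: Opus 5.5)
Your skeleton is the paper's. You test the finite-dimensional reduced equation in $\mathcal Y_{s_N}$, use that $\Pi_S^\varepsilon$ is a spectral projection commuting with $B_g\le0$, apply Gronwall, and close by an exit-time argument. The paper discards the dissipative term and asserts $\|G_\varepsilon(h^{\rm app}_{\varepsilon,N,z}(v_S),h^{\rm app}_{\varepsilon,N,F}(v_S)+v_S)\|_{\mathcal Y_{s_N}}\le C_R(1+\|v_S\|_{\mathcal Y_{s_N}})$ on the $R$-ball. It then takes $R$ larger than $(r_0+1)e^{C_RT}-1$. You are right that this is circular as long as $C_R$ grows with $R$, and right that $R_g$ costs a derivative.

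Your repair, however, does not close, because it ignores the derivative loss built into the approximate graph. $F_{\varepsilon,0}(v_S)$ contains $\mathcal E(\tfrac{g'}{g}\partial_xv_S)$ and $\varepsilon^{-2}g^{-2}\partial_Y^2\Lambda_\varepsilon^1v_S$ at order one, and terms such as $\mathcal D^2\Lambda_\varepsilon^1v_S$ and $\Lambda_\varepsilon^0B_gv_S$ at order $\varepsilon^2$. So $p_1=-A_0^{-1}F_{\varepsilon,0}(v_S)$ does not have the tangential regularity of $v_S$, contrary to your Step 2 claim. After the extra derivative in $R_g$, the first graph correction already loses more than the half step the dissipation can absorb. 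Every further order costs two more tangential derivatives; this is why the paper only places $p_j$ in $\mathcal W_{N,j-1}$ and $q_j$ in $\mathcal Y_{s_N-2j}$. Moreover, \Cref{prop:finite-order-coefficient-closure} bounds these maps only on $B_{\mathcal Y_S^\varepsilon\cap\mathcal Y_{s_N}}(0,r)$ in terms of the high norm, never in terms of $\|v\|_{\mathcal Y_1}$. Hence neither the Step 1 bound $\|\mathcal G_\varepsilon(v)\|_{\mathcal Y_\delta}\le C(1+\|v\|_{\mathcal Y_1})$ nor the tame bound in $\mathcal Y_{s_N-1/2}$ follows from the cited results. With $\varepsilon$-independent constants, each requires norms of $v$ above the one you control. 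For the same reason, $\mathcal G_\varepsilon(v)=Mf(v)+O(\varepsilon^2)$ does not hold in the norms you need: the $O(\varepsilon^{2j})$ terms are small only after giving up the derivatives lost at order $j$.

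Your last fallback is nevertheless the right route. What it is missing is the inverse (Bernstein) inequality on the retained block. Since $\mathcal Y_S^\varepsilon$ is spanned by eigenfunctions with $\lambda_k<\alpha_\zeta=|\omega_A|\varepsilon^{-2}$, you have $\|v\|_{\mathcal Y_{\rho+\sigma}}\le C(1+\alpha_\zeta)^{\sigma}\|v\|_{\mathcal Y_\rho}\le C'\varepsilon^{-2\sigma}\|v\|_{\mathcal Y_\rho}$ for $v\in\mathcal Y_S^\varepsilon$ and $\sigma\ge0$. So each prefactor $\varepsilon^2$ pays for exactly the two derivatives lost at its order. Because this exchange is critical, the losses must be counted order by order.

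In $\mathcal Y_\delta$ the exchange leaves a positive power of $\varepsilon$ (about $\varepsilon^{2-2\delta}$) in front of every graph contribution, which gives Step 1. At top order it bounds the graph part of $(\mathcal G_\varepsilon(\bar v_S^\varepsilon),\bar v_S^\varepsilon)_{\mathcal Y_{s_N}}$ by $C_R\varepsilon^2\|\bar v_S^\varepsilon\|_{\mathcal Y_{s_N+1/2}}^2$. The dissipation absorbs this once $C_R\varepsilon^2$ is small. This is where your half-step device is genuinely needed: without it the exchange leaves an $O(1)$, $R$-dependent constant and the circularity returns.

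The remaining term $Mf(\bar v_S^\varepsilon)$ obeys a Moser bound whose constant depends only on $\|\bar v_S^\varepsilon\|_{L^\infty}$, which Step 1 controls. The continuation then closes by fixing $R>C_T$ first and then choosing $\varepsilon_0=\varepsilon_0(R)$.
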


\begin{proof}
Since \(B_g\) is self-adjoint and nonpositive on
\(\mathcal Y=L^2(\Omega,g\,dx)\), the operator
\[
I-B_g
\]
is positive and self-adjoint. We write
\[
\|v\|_{\mathcal Y_{s_N}}
=
\|(I-B_g)^{s_N/2}v\|_{\mathcal Y}.
\]
The spectral projection \(\Pi_S^\varepsilon\) commutes with \(B_g\), and hence
\[
\|\Pi_S^\varepsilon v\|_{\mathcal Y_{s_N}}
\le
\|v\|_{\mathcal Y_{s_N}}
\]
uniformly in \(\varepsilon\).

Set
\[
A_s:=(I-B_g)^{s_N/2}.
\]
Testing the reduced equation against
\[
A_s^2\bar v_S^\varepsilon
\]
gives
\[
\frac12
\frac{d}{dt}
\|\bar v_S^\varepsilon\|_{\mathcal Y_{s_N}}^2
=
(B_g\bar v_S^\varepsilon,\bar v_S^\varepsilon)_{\mathcal Y_{s_N}}
+
\Big(
\Pi_S^\varepsilon
G_\varepsilon(
h_{\varepsilon,N,z}^{\rm app}(\bar v_S^\varepsilon),
h_{\varepsilon,N,F}^{\rm app}(\bar v_S^\varepsilon)
+\bar v_S^\varepsilon),
\bar v_S^\varepsilon
\Big)_{\mathcal Y_{s_N}}.
\]
The first term is nonpositive because \(B_g\) is nonpositive and commutes with
\(A_s\):
\[
(B_g\bar v_S^\varepsilon,\bar v_S^\varepsilon)_{\mathcal Y_{s_N}}
=
(B_gA_s\bar v_S^\varepsilon,A_s\bar v_S^\varepsilon)_{\mathcal Y}
\le0.
\]
For the nonlinear term, the finite-order closure and the high-order smoothness
of the lifted nonlinearities imply that, on bounded subsets of
\(\mathcal Y_{s_N}\),
\[
\left\|
G_\varepsilon\Big(
h_{\varepsilon,N,z}^{\rm app}(v_S),
h_{\varepsilon,N,F}^{\rm app}(v_S)+v_S
\Big)
\right\|_{\mathcal Y_{s_N}}
\le
C_R\bigl(1+\|v_S\|_{\mathcal Y_{s_N}}\bigr)
\]
whenever
\[
\|v_S\|_{\mathcal Y_{s_N}}\le R.
\]
Here \(C_R\) is independent of \(\varepsilon\). Therefore, as long as
\(\bar v_S^\varepsilon\) remains in such a ball,
\[
\frac{d}{dt}
\|\bar v_S^\varepsilon\|_{\mathcal Y_{s_N}}
\le
C_R\bigl(1+\|\bar v_S^\varepsilon\|_{\mathcal Y_{s_N}}\bigr).
\]
By Gronwall's inequality,
\[
\|\bar v_S^\varepsilon(t)\|_{\mathcal Y_{s_N}}
\le
\bigl(\|\bar v_{S,0}^\varepsilon\|_{\mathcal Y_{s_N}}+1\bigr)e^{C_Rt}-1.
\]
A standard continuation argument now yields a bound on \([0,T]\). Namely, if
the graph construction is performed on a ball of radius \(R\), choose \(R\)
larger than the right-hand side evaluated at \(t=T\). Then the solution cannot
leave this ball before time \(T\). This gives
\[
\sup_{0\le t\le T}
\|\bar v_S^\varepsilon(t)\|_{\mathcal Y_{s_N}}
\le C_T,
\]
with \(C_T\) independent of \(\varepsilon\).
\end{proof}

\subsubsection*{Finite-time tracking and reconstruction}

\begin{assumption}[Dissipative lower-order bound for the lifted system]
\label{ass:dissipative-lifted-bound}
We assume that the reaction structure is dissipative in the following sense.
There exist constants \(a>0\) and \(b\ge0\), independent of
\(0<\varepsilon\le\varepsilon_0\), such that every sufficiently regular
solution of the lifted system satisfies
\[
\frac{d}{dt}
\left(
\|z^\varepsilon(t)\|_{\mathcal X}^2
+
\|v^\varepsilon(t)\|_{\mathcal Y}^2
\right)
\le
-a
\left(
\|z^\varepsilon(t)\|_{\mathcal X}^2
+
\|v^\varepsilon(t)\|_{\mathcal Y}^2
\right)
+b.
\]
Moreover, on bounded subsets of
\(\mathcal X\times\mathcal Y\), the lifted nonlinearities are bounded in the
spaces in which they enter the mild formulation:
\[
F_\varepsilon(z,v)\in\mathcal X,
\qquad
G_\varepsilon(z,v)\in\mathcal Y_\delta,
\]
with bounds independent of \(\varepsilon\). This assumption is satisfied, for
example, when the original reaction term has a standard dissipative
reaction-diffusion Lyapunov estimate and the liftings
\(\Lambda_\varepsilon^1,\Lambda_\varepsilon^0\) satisfy the uniform bounds from
\Cref{subsec:thin-domain-application}. See for example \cite{Rothe1984,hollis1987global,Morgan1989}.
\end{assumption}

\begin{proposition}[Uniform finite-time bounds and absorbing ball]
\label{prop:full-lifted-uniform-bound}
Assume the hypotheses of \Cref{subsec:thin-domain-application} and
\Cref{ass:dissipative-lifted-bound}. Let
\[
(z^\varepsilon(t),v^\varepsilon(t))
\]
be a mild solution of the lifted thin-domain system on \([0,T]\). Suppose that
the initial data satisfy
\[
\|z_0^\varepsilon\|_{\mathcal Z_\theta^\varepsilon}
+
\|v_0^\varepsilon\|_{\mathcal Y_1}
\le R_0
\]
uniformly in \(0<\varepsilon\le\varepsilon_0\). Then there exists
\[
R_T=R_T(T,R_0)>0,
\]
independent of \(\varepsilon\), such that
\[
\sup_{0\le t\le T}
\left(
\|z^\varepsilon(t)\|_{\mathcal Z_\theta^\varepsilon}
+
\|v^\varepsilon(t)\|_{\mathcal Y_1}
\right)
\le R_T.
\]
If, in addition, \(T\) is allowed to be large, then there exist constants
\[
R_*>0,
\qquad
t_*=t_*(R_0)>0,
\]
independent of \(\varepsilon\), such that
\[
\|z^\varepsilon(t)\|_{\mathcal Z_\theta^\varepsilon}
+
\|v^\varepsilon(t)\|_{\mathcal Y_1}
\le R_*
\qquad
\text{for all }t\ge t_*.
\]
Thus the lifted semiflow possesses a bounded absorbing set in
\[
\mathcal Z_\theta^\varepsilon\times\mathcal Y_1
\]
with radius independent of \(\varepsilon\).
\end{proposition}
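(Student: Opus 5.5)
The plan is a two-level bootstrap: first a low-order $L^2$-type bound coming from \Cref{ass:dissipative-lifted-bound}, then an upgrade to $\mathcal Z_\theta^\varepsilon\times\mathcal Y_1$ through the mild formulation and the smoothing estimates of \Cref{lem:trace-compatible-fast-smoothing} and \Cref{lem:slow-spectral-splitting-estimates}.

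\emph{Step 1: low-order bound.} Set
\[
E(t):=\|z^\varepsilon(t)\|_{\mathcal X}^2+\|v^\varepsilon(t)\|_{\mathcal Y}^2 .
\]
The initial value $E(0)$ is controlled by $R_0$ via the embeddings $\mathcal Z_\theta^\varepsilon\hookrightarrow\mathcal X$ and $\mathcal Y_1\hookrightarrow\mathcal Y$. I will apply the differential inequality to regular approximating solutions, exactly as in the approximation argument of \Cref{prop:mild-lifted-fast-slow-system}, and pass to the limit. This gives
\[
E(t)\le e^{-at}E(0)+\frac ba .
\]
Hence $E(t)\le R_0'^2+b/a$ on $[0,\infty)$, and $E(t)\le 2b/a$ for $t\ge t_1(R_0)$. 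The constant $t_1(R_0)$ is independent of $\varepsilon$.

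\emph{Step 2: upgrade on a unit window.} By \Cref{ass:dissipative-lifted-bound}, on the $L^2$-ball from Step 1 we have $\|F_\varepsilon\|_{\mathcal X}\le K$ and $\|G_\varepsilon\|_{\mathcal Y_\delta}\le K$, with $K$ independent of $\varepsilon$. I then use the Duhamel formulas on a time window $[t_0,t_0+1]$.

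For the fast component, \Cref{lem:trace-compatible-fast-smoothing} gives
\[
\Big\|\int_{t_0}^t e^{(t-s)L_\varepsilon}F_\varepsilon\,ds\Big\|_{\mathcal Z_\theta^\varepsilon}\le K C_\theta\int_0^\infty \sigma^{-\theta}e^{-c\sigma/\varepsilon^2}d\sigma = KC_\theta\Gamma(1-\theta)c^{\theta-1}\varepsilon^{2(1-\theta)},
\]
which is bounded uniformly (it even tends to $0$). For the homogeneous part there are two cases. When $t_0=0$, I need uniform boundedness of $e^{tL_\varepsilon}$ on $\mathcal Z_\theta^\varepsilon$. On the $\mathcal X_\theta^\varepsilon$ part this follows from the spectral calculus. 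On the $H^{2\theta}$ part I plan to use the uniform equivalence $D((-L_\varepsilon)^\theta)\hookrightarrow H^{2\theta}$ from the proof of \Cref{lem:trace-compatible-fast-smoothing}, together with the fact that $e^{tL_\varepsilon}$ commutes with $(-L_\varepsilon)^\theta$. When $t_0\ge t_1$, I instead use the smoothing estimate at time $t-t_0\ge \tfrac12$, which gives $e^{-c/(2\varepsilon^2)}\|z(t_0)\|_{\mathcal X}$. That quantity is exponentially small and depends only on the $L^2$ absorbing bound.

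For the slow component, the analytic estimate $\|e^{\tau B_g}\|_{\mathcal B(\mathcal Y_\delta,\mathcal Y_1)}\le C\tau^{\delta-1}e^{\tau}$ is independent of $\varepsilon$, since $B_g$ does not depend on $\varepsilon$, and $\tau^{\delta-1}$ is integrable on the window. The homogeneous term is bounded by $\|v_0\|_{\mathcal Y_1}$ on $[0,1]$, and by $C\|v(t_0)\|_{\mathcal Y}$ for $t-t_0\ge\frac12$.

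\emph{Step 3: assembling.} Combining the window estimates gives the finite-time bound $R_T$ by covering $[0,T]$ with overlapping unit windows: on the first window I use the initial data, and on later windows the smoothing from the $L^2$ bound at the left endpoint. The same covering gives the absorbing radius $R_*$ for $t\ge t_*:=t_1(R_0)+1$. This radius depends only on $b/a$ and $K$, and not on $R_0$ or $\varepsilon$.

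\emph{Expected main obstacle.} The hardest point is that \Cref{ass:dissipative-lifted-bound} bounds the nonlinearities in terms of $L^2$ norms of $(z,v)$, whereas $G_\varepsilon$ involves the trace term $R_gz$, which naturally requires $\mathcal Z_\theta^\varepsilon$ control. I will read the assumption literally: the $\mathcal Y_\delta$ bound on $L^2$-bounded sets is granted, which is what makes the bootstrap non-circular. If this reading turns out to be too generous, the fallback is a genuine continuation argument. In that argument I would note that the fast Duhamel integral carries the small factor $\varepsilon^{2(1-\theta)}$, so a linear-growth bound $\|F_\varepsilon\|\le K(1+\|z\|_{\mathcal Z})$ can be absorbed for small $\varepsilon$. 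The slow part, which involves $\tau^{\delta-1}$, would then be closed with the singular Gronwall inequality, as in the proof of \Cref{thm:approximate-slow-manifold}.
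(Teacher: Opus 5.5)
Your architecture matches the paper's: Gronwall on the lower-order inequality of \Cref{ass:dissipative-lifted-bound}, then the Duhamel formulas with the $\mathcal X\to\mathcal Z_\theta^\varepsilon$ fast smoothing and the $\mathcal Y_\delta\to\mathcal Y_1$ slow smoothing, then a restart from a shifted initial time for the absorbing radius. Your unit windows and the extra factor $e^{-c\sigma/\varepsilon^2}$ are harmless refinements. Your literal reading of the assumption is exactly how the paper uses it.

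The problem is the one step you try to justify beyond the paper: an $\varepsilon$-uniform bound for the homogeneous term $e^{tL_\varepsilon}z_0^\varepsilon$ in $\mathcal Z_\theta^\varepsilon$ on the first window. The paper simply writes this term as $C_T\|z_0^\varepsilon\|_{\mathcal Z_\theta^\varepsilon}$. The embedding from the proof of \Cref{lem:trace-compatible-fast-smoothing} only goes one way, $\|u\|_{H^{2\theta}(Q)}\le C\|(-L_\varepsilon)^\theta u\|_{\mathcal X}$. Commuting the semigroup with $(-L_\varepsilon)^\theta$ therefore gives
\[
\|e^{tL_\varepsilon}z_0^\varepsilon\|_{H^{2\theta}(Q)}
\le C\|(-L_\varepsilon)^\theta z_0^\varepsilon\|_{\mathcal X}
= C\varepsilon^{-2\theta}\|(-A_\varepsilon)^\theta z_0^\varepsilon\|_{\mathcal X}
\le C\varepsilon^{-2\theta}\|z_0^\varepsilon\|_{\mathcal Z_\theta^\varepsilon}.
\]
This loses $\varepsilon^{-2\theta}$ because the $\mathcal Z_\theta^\varepsilon$-norm contains $\|(-A_\varepsilon)^\theta z\|_{\mathcal X}$, not $\|(-L_\varepsilon)^\theta z\|_{\mathcal X}$. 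The reverse bound is false. For constant $g$, $z_0=\cos(\pi Y)$ has $\|z_0\|_{\mathcal Z_\theta^\varepsilon}\sim 1$ but $\|(-L_\varepsilon)^\theta z_0\|_{\mathcal X}\sim\varepsilon^{-2\theta}$. Nothing else in your argument covers $0\le t\le\tfrac12$, because the smoothing from $\mathcal X$ degenerates like $t^{-\theta}$. As written, you therefore only obtain an $\varepsilon$-dependent $R_T$.

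In that example the semigroup bound itself holds, since $e^{tL_\varepsilon}z_0=e^{-t\pi^2/(\varepsilon^2g^2)}z_0$; the estimate you need may be true, but your route cannot give it. To close the gap you need $\sup_{t\ge0}\|e^{tL_\varepsilon}\|_{\mathcal B(\mathcal Z_\theta^\varepsilon)}\le C$ with $C$ independent of $\varepsilon$.
\begin{itemize}
\item For constant $g$ this follows from the product Neumann eigenbasis. There the $H^{2\theta}$-norm on the conormal domain is equivalent to an $\varepsilon$-independent weighted $\ell^2$-norm of the coefficients, which $e^{tL_\varepsilon}$ contracts.
\item For nonconstant $g$ it needs a separate argument that neither your proposal nor the paper supplies. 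The graph-norm estimate of \Cref{prop:uniform-H2-projected-fast-operator} does not yield it, since $\|L_\varepsilon u\|_{\mathcal X}$ is not bounded by $\|u\|_{H^2(Q)}$ uniformly in $\varepsilon$.
\item Alternatively, weaken the claim: take the supremum over $[\tau,T]$ for fixed $\tau>0$, or assume $\|(-L_\varepsilon)^\theta z_0^\varepsilon\|_{\mathcal X}$ bounded uniformly (well-prepared data).
\end{itemize}
Your later windows and the absorbing radius $R_*$ are unaffected, since there you only use smoothing from the $L^2$ bound over time lags $t-t_0\ge\tfrac12$.
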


\begin{proof}
We first prove the lower-order dissipative bound. By
\Cref{ass:dissipative-lifted-bound},
\[
\frac{d}{dt}
\left(
\|z^\varepsilon(t)\|_{\mathcal X}^2
+
\|v^\varepsilon(t)\|_{\mathcal Y}^2
\right)
\le
-a
\left(
\|z^\varepsilon(t)\|_{\mathcal X}^2
+
\|v^\varepsilon(t)\|_{\mathcal Y}^2
\right)
+b.
\]
Gronwall's inequality gives
\[
\|z^\varepsilon(t)\|_{\mathcal X}^2
+
\|v^\varepsilon(t)\|_{\mathcal Y}^2
\le
e^{-at}
\left(
\|z_0^\varepsilon\|_{\mathcal X}^2
+
\|v_0^\varepsilon\|_{\mathcal Y}^2
\right)
+
\frac ba.
\]
Hence the solution is uniformly bounded in
\(\mathcal X\times\mathcal Y\) on every finite time interval, and it enters a
bounded absorbing ball in this lower topology after a time depending only on
the initial radius.

We now upgrade the estimate to
\[
\mathcal Z_\theta^\varepsilon\times\mathcal Y_1.
\]
The lifted mild formulation gives
\[
z^\varepsilon(t)
=
e^{t\varepsilon^{-2}A_\varepsilon}z_0^\varepsilon
+
\int_0^t
e^{(t-s)\varepsilon^{-2}A_\varepsilon}
F_\varepsilon(z^\varepsilon(s),v^\varepsilon(s))\,ds,
\]
and
\[
v^\varepsilon(t)
=
e^{tB_g}v_0^\varepsilon
+
\int_0^t
e^{(t-s)B_g}
G_\varepsilon(z^\varepsilon(s),v^\varepsilon(s))\,ds.
\]
The trace-compatible fast smoothing estimate yields
\[
\|e^{t\varepsilon^{-2}A_\varepsilon}\|_{\mathcal B(\mathcal X,\mathcal Z_\theta^\varepsilon)}
\le
Ct^{-\theta}e^{-ct/\varepsilon^2},
\qquad t>0.
\]
Moreover, the analytic semigroup generated by \(B_g\) satisfies
\[
\|e^{tB_g}\|_{\mathcal B(\mathcal Y_\delta,\mathcal Y_1)}
\le
Ct^{-(1-\delta)},
\qquad t>0,
\]
and
\[
\|e^{tB_g}\|_{\mathcal B(\mathcal Y_1,\mathcal Y_1)}
\le C_T,
\qquad 0\le t\le T.
\]
Using the lower-order absorbing bound and the boundedness of the nonlinearities
on bounded lower-order sets, we obtain, for \(0<t\le T\),
\[
\|z^\varepsilon(t)\|_{\mathcal Z_\theta^\varepsilon}
\le
C_T\|z_0^\varepsilon\|_{\mathcal Z_\theta^\varepsilon}
+
C
\int_0^t
(t-s)^{-\theta}
\|F_\varepsilon(z^\varepsilon(s),v^\varepsilon(s))\|_{\mathcal X}
\,ds,
\]
and
\[
\|v^\varepsilon(t)\|_{\mathcal Y_1}
\le
C_T\|v_0^\varepsilon\|_{\mathcal Y_1}
+
C
\int_0^t
(t-s)^{-(1-\delta)}
\|G_\varepsilon(z^\varepsilon(s),v^\varepsilon(s))\|_{\mathcal Y_\delta}
\,ds.
\]
Both kernels are integrable because
\[
0<\theta<1,
\qquad
\delta>0.
\]
Therefore
\[
\sup_{0\le t\le T}
\left(
\|z^\varepsilon(t)\|_{\mathcal Z_\theta^\varepsilon}
+
\|v^\varepsilon(t)\|_{\mathcal Y_1}
\right)
\le
R_T,
\]
with \(R_T\) independent of \(\varepsilon\).

For the absorbing estimate, fix a small \(\tau>0\). For \(t\ge\tau\), write the
mild solution from the shifted initial time \(t-\tau\):
\[
z^\varepsilon(t)
=
e^{\tau \varepsilon^{-2}A_\varepsilon}z^\varepsilon(t-\tau)
+
\int_{t-\tau}^t
e^{(t-s)\varepsilon^{-2}A_\varepsilon}
F_\varepsilon(z^\varepsilon(s),v^\varepsilon(s))\,ds,
\]
and similarly
\[
v^\varepsilon(t)
=
e^{\tau B_g}v^\varepsilon(t-\tau)
+
\int_{t-\tau}^t
e^{(t-s)B_g}
G_\varepsilon(z^\varepsilon(s),v^\varepsilon(s))\,ds.
\]
Once the lower-order solution has entered the absorbing ball in
\(\mathcal X\times\mathcal Y\), the nonlinear terms are uniformly bounded in
\(\mathcal X\) and \(\mathcal Y_\delta\). The same smoothing estimates on the
fixed interval of length \(\tau\) therefore give
\[
\|z^\varepsilon(t)\|_{\mathcal Z_\theta^\varepsilon}
+
\|v^\varepsilon(t)\|_{\mathcal Y_1}
\le R_*
\]
for all sufficiently large \(t\), where \(R_*\) is independent of
\(\varepsilon\). This proves the absorbing-ball statement.
\end{proof}

\begin{theorem}[Thin-domain approximation theorem in lifted variables]
\label{thm:thin-domain-approximation}
Assume the hypotheses of
\Cref{prop:recursive-thin-domain-graph-construction-form}. Let
\[
(z^\varepsilon(t),v^\varepsilon(t))
\]
be a mild solution of the lifted thin-domain system from
\Cref{subsec:thin-domain-application} on \([0,T]\). Write
\[
v^\varepsilon(t)=v_F^\varepsilon(t)+v_S^\varepsilon(t)
\]
according to the splitting
\[
\mathcal Y
=
\mathcal Y_F^\varepsilon
\oplus
\mathcal Y_S^\varepsilon.
\]
Let \(\bar v_S^\varepsilon(t)\) solve the approximate reduced equation
\[
\partial_t\bar v_S^\varepsilon
=
\mathcal R_{\varepsilon,N}^{\rm td}(\bar v_S^\varepsilon),
\qquad
\bar v_S^\varepsilon(0)=v_S^\varepsilon(0).
\]
Assume that the initial data satisfy
\[
\|z_0^\varepsilon\|_{\mathcal Z_\theta^\varepsilon}
+
\|v_0^\varepsilon\|_{\mathcal Y_1}
\le R_0,
\]
and that
\[
\|v_{S,0}^\varepsilon\|_{\mathcal Y_{s_N}}\le r_0
\]
uniformly in \(\varepsilon\). Choose the coefficient construction radius
\(R_{\rm coeff}\) so large that it contains both the high-order reduced
trajectory from
\Cref{prop:high-order-slow-ball-persistence}
and the lower-order full trajectory bound from
\Cref{prop:full-lifted-uniform-bound} on \([0,T]\).
Then there is
a constant \(C_T>0\), independent of \(0<\varepsilon\le\varepsilon_0\), such
that
\[
\begin{aligned}
&\sup_{0\le t\le T}
\Big(
\|z^\varepsilon(t)
-
h_{\varepsilon,N,z}^{\rm app}(\bar v_S^\varepsilon(t))
\|_{\mathcal Z_\theta^\varepsilon}
\\
&\qquad\qquad
+
\|v_F^\varepsilon(t)
-
h_{\varepsilon,N,F}^{\rm app}(\bar v_S^\varepsilon(t))
\|_{\mathcal Y_1}
+
\|v_S^\varepsilon(t)-\bar v_S^\varepsilon(t)\|_{\mathcal Y_1}
\Big)
\\
&\qquad\le
C_T
\Big(
\|z_0^\varepsilon
-
h_{\varepsilon,N,z}^{\rm app}(v_{S,0}^\varepsilon)
\|_{\mathcal Z_\theta^\varepsilon}
+
\|v_{F,0}^\varepsilon
-
h_{\varepsilon,N,F}^{\rm app}(v_{S,0}^\varepsilon)
\|_{\mathcal Y_1}
+
\varepsilon^{2N}
\Big).
\end{aligned}
\]
In particular, if the initial data lie on the approximate graph
\[
\mathcal M_{\varepsilon,N}^{\rm td,app}
:=
\left\{
\left(
h_{\varepsilon,N,z}^{\rm app}(v_S),
h_{\varepsilon,N,F}^{\rm app}(v_S),
v_S
\right):
v_S\in
B_{\mathcal Y_S^\varepsilon\cap\mathcal Y_{s_N}}(0,r)
\right\},
\]
then the lifted solution remains within \(O(\varepsilon^{2N})\) of this graph
on every fixed time interval \([0,T]\).
\end{theorem}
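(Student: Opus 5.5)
The plan is to repeat the tracking argument from the proof of \Cref{thm:approximate-slow-manifold}, now in the trace-compatible topology \(\mathcal Z_\theta^\varepsilon\times\mathcal Y_1\times\mathcal Y_1\). The defect input is \Cref{prop:recursive-thin-domain-graph-construction-form}, and the a priori confinement comes from \Cref{prop:high-order-slow-ball-persistence} and \Cref{prop:full-lifted-uniform-bound}. First I fix \(R_{\rm coeff}\) as in the statement. Then, for all \(t\in[0,T]\), \(\bar v_S^\varepsilon(t)\) lies in the high-order ball where the graph maps \(h^{\rm app}_{\varepsilon,N,z},h^{\rm app}_{\varepsilon,N,F}\) are defined and \(C^1\). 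The full lifted trajectory stays in a ball of \(\mathcal Z_\theta^\varepsilon\times\mathcal Y_1\) of radius \(R_T\) independent of \(\varepsilon\). Hence every Lipschitz constant used below, from \Cref{lem:thin-domain-nonlinear-estimates} and the \(C^1\) bounds on the coefficients, is uniform in \(\varepsilon\).

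Next I introduce the error variables
\[
e_z:=z^\varepsilon-h^{\rm app}_{\varepsilon,N,z}(\bar v_S^\varepsilon),\qquad
e_F:=v_F^\varepsilon-h^{\rm app}_{\varepsilon,N,F}(\bar v_S^\varepsilon),\qquad
e_S:=v_S^\varepsilon-\bar v_S^\varepsilon .
\]
Since \(h^{\rm app}_{\varepsilon,N,z}(\bar v_S^\varepsilon)\in D(A_\varepsilon)\) and \(\bar v_S^\varepsilon\) solves a finite-dimensional ODE, the chain rule applies to the graph terms. Subtracting from the mild lifted split system \eqref{eq:thin-lifted-split-system} then gives mild equations in which \(e_z\) is driven by \(\varepsilon^{-2}A_\varepsilon\), \(e_F\) by \(\varepsilon^{-2}\widetilde B_{g,\varepsilon}\), and \(e_S\) by \(B_g\). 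The forcings are the nonlinear differences plus the defects \(\mathcal I^{\rm td}_{\varepsilon,N,z}(\bar v_S^\varepsilon)\) and \(\mathcal I^{\rm td}_{\varepsilon,N,F}(\bar v_S^\varepsilon)\), together with \(-\Pi_S^\varepsilon\)-differences in the \(e_S\) equation. To avoid differentiating the full solution, I would justify this identity first for smooth data and then pass to the limit exactly as in \Cref{prop:mild-lifted-fast-slow-system}.

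The nonlinear differences are estimated by \(C(\|e_z\|_{\mathcal Z_\theta^\varepsilon}+\|e_F\|_{\mathcal Y_1}+\|e_S\|_{\mathcal Y_1})\) in \(\mathcal X\), respectively \(\mathcal Y_\delta\). This uses the uniform Lipschitz bounds for \(F_\varepsilon,G_\varepsilon\) on the confining ball. It also uses the bound on \(Dh^{\rm app}\) multiplied by the difference of reduced vector fields, where only the \(v_S\)-dependence enters, and that dependence is Lipschitz in \(\mathcal Y_1\) on the finite-dimensional ball. The defects are \(O(\varepsilon^{2N})\) by \Cref{prop:recursive-thin-domain-graph-construction-form}.

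Then I would apply the smoothing bounds. For the fast part, \Cref{lem:trace-compatible-fast-smoothing} gives \(\|e^{t\varepsilon^{-2}A_\varepsilon}\|_{\mathcal B(\mathcal X,\mathcal Z_\theta^\varepsilon)}\le Ct^{-\theta}\). For the initial term I need boundedness of the semigroup on \(\mathcal Z_\theta^\varepsilon\) itself, which I would obtain from the same interpolation argument. For the slow tail, \Cref{lem:slow-spectral-splitting-estimates} gives \(\|e^{tB_g}\|_{\mathcal B(\mathcal Y_\delta,\mathcal Y_1)}\le Ct^{\delta-1}\), and this is uniform on \(\mathcal Y_F^\varepsilon\) and \(\mathcal Y_S^\varepsilon\) because the spectral projections commute with \(B_g\). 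This yields, for \(E(t)\) the sum of the three error norms,
\[
E(t)\le C_T\bigl(E(0)+\varepsilon^{2N}\bigr)+C_T\int_0^t\bigl((t-s)^{-\theta}+(t-s)^{\delta-1}\bigr)E(s)\,ds .
\]
Since \(e_S(0)=0\), \(E(0)\) equals the two initial graph distances. The singular Gronwall lemma then gives the claim, and the final statement follows by taking \(E(0)=0\).

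The main obstacle is the defect term in the \(e_z\) equation. \Cref{prop:recursive-thin-domain-graph-construction-form} bounds \(\mathcal I^{\rm td}_{\varepsilon,N,z}\) only in \(\mathcal X\) and only for \(v_S\) in the high-order ball, whereas the tracking is measured in \(\mathcal Z_\theta^\varepsilon\). I would handle this by always placing the defect inside the Duhamel integral, so that the \(\mathcal X\to\mathcal Z_\theta^\varepsilon\) smoothing with integrable kernel \(t^{-\theta}\) absorbs it, and never evaluating it pointwise in \(\mathcal Z_\theta^\varepsilon\). A second, related subtlety is that the Lipschitz estimates must hold along the full segment between \((z^\varepsilon,v^\varepsilon)\) and the graph point. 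Enlarging the ball to contain both endpoints, which the choice of \(R_{\rm coeff}\) allows, settles this.
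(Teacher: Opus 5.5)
Your proposal is correct and follows essentially the same route as the paper: the same three errors measured against \(h^{\rm app}_{\varepsilon,N,z}(\bar v_S^\varepsilon)\), \(h^{\rm app}_{\varepsilon,N,F}(\bar v_S^\varepsilon)\) and \(\bar v_S^\varepsilon\), the same mild error equations with the \(O(\varepsilon^{2N})\) defects kept inside the Duhamel integrals and smoothed by \Cref{lem:trace-compatible-fast-smoothing} and the \(\mathcal Y_\delta\to\mathcal Y_1\) bound for \(B_g\), and the singular Gronwall lemma. Since \(\bar v_S^\varepsilon\) solves the reduced equation exactly, no term of the form \(Dh^{\rm app}\) times a difference of reduced vector fields appears in the error equations, so that part of your Lipschitz discussion can be dropped.

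One caveat concerns the initial term, where the paper also uses \(\|e^{t\varepsilon^{-2}A_\varepsilon}\|_{\mathcal B(\mathcal Z_\theta^\varepsilon)}\le C\) without comment. Interpolating \(D(L_\varepsilon)\hookrightarrow H^2(Q)\) only bounds \(\|e^{tL_\varepsilon}u\|_{H^{2\theta}(Q)}\) by \(\|(-L_\varepsilon)^\theta u\|_{\mathcal X}=\varepsilon^{-2\theta}\|(-A_\varepsilon)^\theta u\|_{\mathcal X}\). So this \(\varepsilon\)-uniform bound does not follow from ``the same interpolation argument'' and needs a separate justification; it only matters when the initial data lie off the approximate graph.
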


\begin{proof}
Set
\[
H_z:=h_{\varepsilon,N,z}^{\rm app},
\qquad
H_F:=h_{\varepsilon,N,F}^{\rm app},
\qquad
\bar v_S:=\bar v_S^\varepsilon.
\]
Define
\[
e_z(t):=z^\varepsilon(t)-H_z(\bar v_S(t)),
\]
\[
e_F(t):=v_F^\varepsilon(t)-H_F(\bar v_S(t)),
\]
and
\[
e_S(t):=v_S^\varepsilon(t)-\bar v_S(t).
\]
The lifted split system gives
\[
\partial_t z^\varepsilon
=
\varepsilon^{-2}A_\varepsilon z^\varepsilon
+
F_\varepsilon(z^\varepsilon,v_F^\varepsilon+v_S^\varepsilon),
\]
\[
\partial_t v_F^\varepsilon
=
\varepsilon^{-2}\widetilde B_{g,\varepsilon}v_F^\varepsilon
+
\Pi_F^\varepsilon
G_\varepsilon(z^\varepsilon,v_F^\varepsilon+v_S^\varepsilon),
\]
and
\[
\partial_t v_S^\varepsilon
=
B_gv_S^\varepsilon
+
\Pi_S^\varepsilon
G_\varepsilon(z^\varepsilon,v_F^\varepsilon+v_S^\varepsilon).
\]
The reduced equation is
\[
\partial_t\bar v_S
=
B_g\bar v_S
+
\Pi_S^\varepsilon
G_\varepsilon(H_z(\bar v_S),H_F(\bar v_S)+\bar v_S).
\]
Using the definitions of the defects, we obtain
\[
\begin{aligned}
\partial_t e_z
={}&
\varepsilon^{-2}A_\varepsilon e_z
\\
&+
\Big[
F_\varepsilon(z^\varepsilon,v_F^\varepsilon+v_S^\varepsilon)
-
F_\varepsilon(H_z(\bar v_S),H_F(\bar v_S)+\bar v_S)
\Big]
-
\mathcal I_{\varepsilon,N,z}^{\rm td}(\bar v_S),
\end{aligned}
\]
and
\[
\begin{aligned}
\partial_t e_F
={}&
\varepsilon^{-2}\widetilde B_{g,\varepsilon}e_F
\\
&+
\Pi_F^\varepsilon
\Big[
G_\varepsilon(z^\varepsilon,v_F^\varepsilon+v_S^\varepsilon)
-
G_\varepsilon(H_z(\bar v_S),H_F(\bar v_S)+\bar v_S)
\Big]
-
\mathcal I_{\varepsilon,N,F}^{\rm td}(\bar v_S).
\end{aligned}
\]
For the retained slow modes,
\[
\begin{aligned}
\partial_t e_S
={}&
B_ge_S
\\
&+
\Pi_S^\varepsilon
\Big[
G_\varepsilon(z^\varepsilon,v_F^\varepsilon+v_S^\varepsilon)
-
G_\varepsilon(H_z(\bar v_S),H_F(\bar v_S)+\bar v_S)
\Big].
\end{aligned}
\]
Moreover,
\[
e_S(0)=0.
\]

Define
\[
E(t)
:=
\|e_z(t)\|_{\mathcal Z_\theta^\varepsilon}
+
\|e_F(t)\|_{\mathcal Y_1}
+
\|e_S(t)\|_{\mathcal Y_1}.
\]
The local Lipschitz estimates for \(F_\varepsilon\) and \(G_\varepsilon\) in
the lower dynamical topology give
\[
\begin{aligned}
&
\|F_\varepsilon(z^\varepsilon,v_F^\varepsilon+v_S^\varepsilon)
-
F_\varepsilon(H_z(\bar v_S),H_F(\bar v_S)+\bar v_S)\|_{\mathcal X}
\\
&\qquad
+
\|G_\varepsilon(z^\varepsilon,v_F^\varepsilon+v_S^\varepsilon)
-
G_\varepsilon(H_z(\bar v_S),H_F(\bar v_S)+\bar v_S)\|_{\mathcal Y_\delta}
\\
&\qquad\le
C E(t).
\end{aligned}
\]
We now write the error equations in mild form. For compactness, define
\[
\mathcal N_z^\varepsilon(t)
:=
F_\varepsilon(z^\varepsilon(t),v_F^\varepsilon(t)+v_S^\varepsilon(t))
-
F_\varepsilon(H_z(\bar v_S(t)),H_F(\bar v_S(t))+\bar v_S(t)),
\]
and
\[
\mathcal N_G^\varepsilon(t)
:=
G_\varepsilon(z^\varepsilon(t),v_F^\varepsilon(t)+v_S^\varepsilon(t))
-
G_\varepsilon(H_z(\bar v_S(t)),H_F(\bar v_S(t))+\bar v_S(t)).
\]
By the local Lipschitz estimates for \(F_\varepsilon\) and \(G_\varepsilon\),
\[
\|\mathcal N_z^\varepsilon(t)\|_{\mathcal X}
+
\|\mathcal N_G^\varepsilon(t)\|_{\mathcal Y_\delta}
\le
C E(t),
\]
where
\[
E(t)
:=
\|e_z(t)\|_{\mathcal Z_\theta^\varepsilon}
+
\|e_F(t)\|_{\mathcal Y_1}
+
\|e_S(t)\|_{\mathcal Y_1}.
\]

For \(e_z\), the mild formulation and the smoothing estimate from \Cref{lem:trace-compatible-fast-smoothing} give
\[
\begin{aligned}
\|e_z(t)\|_{\mathcal Z_\theta^\varepsilon}
\le{}&
C\|e_z(0)\|_{\mathcal Z_\theta^\varepsilon}
\\
&+
C\int_0^t
(t-s)^{-\theta}e^{-c(t-s)/\varepsilon^2}
\left(
\|\mathcal N_z^\varepsilon(s)\|_{\mathcal X}
+
\|\mathcal I_{\varepsilon,N,z}^{\rm td}(\bar v_S(s))\|_{\mathcal X}
\right)\,ds .
\end{aligned}
\]
By \Cref{prop:recursive-thin-domain-graph-construction-form},
\[
\|\mathcal I_{\varepsilon,N,z}^{\rm td}(\bar v_S(s))\|_{\mathcal X}
\le
C\varepsilon^{2N}.
\]
Since
\[
e^{-c(t-s)/\varepsilon^2}\le 1
\]
and \((t-s)^{-\theta}\) is integrable on \((0,T)\), we obtain
\[
\|e_z(t)\|_{\mathcal Z_\theta^\varepsilon}
\le
C\|e_z(0)\|_{\mathcal Z_\theta^\varepsilon}
+
C\int_0^t
(t-s)^{-\theta}E(s)\,ds
+
C_T\varepsilon^{2N}.
\]

For \(e_F\), note that on the fast tail
\[
\varepsilon^{-2}\widetilde B_{g,\varepsilon}
=
B_g|_{\mathcal Y_F^\varepsilon}.
\]
Hence
\[
e^{t\varepsilon^{-2}\widetilde B_{g,\varepsilon}}
=
e^{tB_g}\Pi_F^\varepsilon .
\]
By the slow spectral splitting from \Cref{lem:slow-spectral-splitting-estimates}, the spectrum of
\(B_g\) on \(\mathcal Y_F^\varepsilon\) is bounded to the left by
\(-c\varepsilon^{-2}\).
Therefore the fast-tail semigroup satisfies
\[
\left\|
e^{t\varepsilon^{-2}\widetilde B_{g,\varepsilon}}
\right\|_{\mathcal B(\mathcal Y_\delta,\mathcal Y_1)}
\le
Ct^{\delta-1}e^{-c't/\varepsilon^2},
\qquad t>0.
\]
The mild formulation for \(e_F\) gives
\[
\begin{aligned}
\|e_F(t)\|_{\mathcal Y_1}
\le{}&
C\|e_F(0)\|_{\mathcal Y_1}
\\
&+
C\int_0^t
(t-s)^{\delta-1}e^{-c'(t-s)/\varepsilon^2}
\left(
\|\Pi_F^\varepsilon\mathcal N_G^\varepsilon(s)\|_{\mathcal Y_\delta}
+
\|\mathcal I_{\varepsilon,N,F}^{\rm td}(\bar v_S(s))\|_{\mathcal Y_\delta}
\right)\,ds .
\end{aligned}
\]
Using
\[
e^{-c'(t-s)/\varepsilon^2}\le1,
\]
the boundedness of \(\Pi_F^\varepsilon\) on \(\mathcal Y_\delta\), and
\[
\|\mathcal I_{\varepsilon,N,F}^{\rm td}(\bar v_S(s))\|_{\mathcal Y_\delta}
\le
C\varepsilon^{2N},
\]
we obtain
\[
\|e_F(t)\|_{\mathcal Y_1}
\le
C\|e_F(0)\|_{\mathcal Y_1}
+
C\int_0^t
(t-s)^{\delta-1}E(s)\,ds
+
C_T\varepsilon^{2N}.
\]

For \(e_S\), we use \(e_S(0)=0\). The mild formulation gives
\[
  e_S(t)
  =
  \int_0^t
  e^{(t-s)B_g}\Pi_S^\varepsilon N_G^\varepsilon(s)\,ds .
\]
Since \(\Pi_S^\varepsilon\) is a spectral projection for the self-adjoint Sturm-Liouville operator \(B_g\), it commutes with \(e^{tB_g}\) and is uniformly bounded on the interpolation scale \(\mc Y_\rho\). 
Hence the analytic smoothing estimate for \(B_g\) gives, for \(t>s\),
\[
  \bigl\|
    e^{(t-s)B_g}\Pi_S^\varepsilon
  \bigr\|_{\mathcal B(\mc Y_\delta,\mc Y_1)}
  \le
  C(t-s)^{\delta-1},
\]
with \(C\) independent of \(\varepsilon\). Therefore
\[
  \|e_S(t)\|_{\mc Y_1}
  \le
  C\int_0^t
  (t-s)^{\delta-1}
  \|\Pi_S^\varepsilon \mc N_G^\varepsilon(s)\|_{\mc Y_\delta}\,ds
  \le
  C\int_0^t
  (t-s)^{\delta-1}E(s)\,ds .
\]
Combining this estimate with the bounds for \(e_z\) and \(e_F\), we obtain
\[
  E(t)
  \le
  C_T\bigl(E(0)+\varepsilon^{2N}\bigr)
  +
  C_T\int_0^t
  \Bigl[(t-s)^{-\theta}+(t-s)^{\delta-1}\Bigr]E(s)\,ds .
\]
Since \(0<\theta<1\) and \(\delta>0\), both kernels are integrable on \((0,T)\). 
The singular Gronwall inequality \cite[Lemma~7.1.1]{Henry1981}
therefore yields
\[
  \sup_{0\le t\le T}E(t)
  \le
  C_T\bigl(E(0)+\varepsilon^{2N}\bigr).
\]

Finally,
\[
E(0)
=
\|z_0^\varepsilon-H_z(v_{S,0}^\varepsilon)\|_{\mathcal Z_\theta^\varepsilon}
+
\|v_{F,0}^\varepsilon-H_F(v_{S,0}^\varepsilon)\|_{\mathcal Y_1},
\]
because \(e_S(0)=0\). This proves the theorem.
\end{proof}

\begin{corollary}[Approximation in the original fluctuation variable]
\label{cor:thin-domain-original-variable-approximation}
Under the hypotheses of \Cref{thm:thin-domain-approximation}, define
\[
h_{\varepsilon,N,w}^{\rm app}(v_S)
:=
h_{\varepsilon,N,z}^{\rm app}(v_S)
+
\Lambda_\varepsilon^1
\bigl(
v_S+h_{\varepsilon,N,F}^{\rm app}(v_S)
\bigr).
\]
Then
\[
\begin{aligned}
&\sup_{0\le t\le T}
\left\|
w^\varepsilon(t)
-
h_{\varepsilon,N,w}^{\rm app}(\bar v_S^\varepsilon(t))
\right\|_{H^{2\theta-\delta}(Q)}
\\
&\qquad\le
C_T
\Big(
\|z_0^\varepsilon
-
h_{\varepsilon,N,z}^{\rm app}(v_{S,0}^\varepsilon)
\|_{\mathcal Z_\theta^\varepsilon}
+
\|v_{F,0}^\varepsilon
-
h_{\varepsilon,N,F}^{\rm app}(v_{S,0}^\varepsilon)
\|_{\mathcal Y_1}
+
\varepsilon^{2N}
\Big).
\end{aligned}
\]
Consequently, the full rescaled thin-domain solution
\[
u^\varepsilon
=
v_S^\varepsilon+v_F^\varepsilon+w^\varepsilon
\]
is approximated on \([0,T]\) by
\[
u_{\varepsilon,N}^{\rm app}(t)
:=
\bar v_S^\varepsilon(t)
+
h_{\varepsilon,N,F}^{\rm app}(\bar v_S^\varepsilon(t))
+
h_{\varepsilon,N,w}^{\rm app}(\bar v_S^\varepsilon(t)).
\]
\end{corollary}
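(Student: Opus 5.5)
The corollary follows from \Cref{thm:thin-domain-approximation} by an algebraic decomposition of the error, followed by norm estimates for each piece. Since $w^\varepsilon=z^\varepsilon+\Lambda_\varepsilon^1v^\varepsilon$ and $v^\varepsilon=v_S^\varepsilon+v_F^\varepsilon$, we write
\[
w^\varepsilon-h_{\varepsilon,N,w}^{\rm app}(\bar v_S^\varepsilon)
=\bigl(z^\varepsilon-h_{\varepsilon,N,z}^{\rm app}(\bar v_S^\varepsilon)\bigr)
+\Lambda_\varepsilon^1\bigl(v_S^\varepsilon-\bar v_S^\varepsilon\bigr)
+\Lambda_\varepsilon^1\bigl(v_F^\varepsilon-h_{\varepsilon,N,F}^{\rm app}(\bar v_S^\varepsilon)\bigr),
\]
using linearity of $\Lambda_\varepsilon^1$. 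Then we bound each of the three terms in $H^{2\theta-\delta}(Q)$ by the quantity $E(t)$ from the proof of \Cref{thm:thin-domain-approximation}.

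\textbf{First term.} The norm of $\mathcal Z_\theta^\varepsilon$ contains the $H^{2\theta}(Q)$-norm. Since $H^{2\theta}(Q)\hookrightarrow H^{2\theta-\delta}(Q)$ with a constant independent of $\varepsilon$, this term is bounded by $\|e_z(t)\|_{\mathcal Z_\theta^\varepsilon}$.

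\textbf{Second and third terms.} Here we use the strong lifting estimate of \Cref{lem:strong-weak-conormal-lifting}, namely $\|\Lambda_\varepsilon^1\eta\|_{H^2(Q)}\le C\varepsilon^2\|\eta\|_{\mathcal Y_1}$. This gives $H^{2\theta-\delta}$-bounds of the form $C\varepsilon^2(\|e_S(t)\|_{\mathcal Y_1}+\|e_F(t)\|_{\mathcal Y_1})$. One point needs checking: the argument $v_F^\varepsilon-h^{\rm app}_{\varepsilon,N,F}(\bar v_S^\varepsilon)$ must lie in $\mathcal Y_1$, and $\Lambda_\varepsilon^1$ must act on it through the same operator used to define the lifted variable. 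Both hold because $\Lambda_\varepsilon^1$ is defined on all of $\mathcal Y_1$ and is independent of the splitting. The spectral projections are bounded on $\mathcal Y_1$, so no extra constants arise.

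\textbf{Conclusion of the estimate.} Summing the three bounds and taking the supremum over $[0,T]$, \Cref{thm:thin-domain-approximation} yields the stated estimate with the same right-hand side. The factor $\varepsilon^2\le 1$ is simply absorbed into $C_T$. The reconstruction statement for $u^\varepsilon$ follows in the same way: we add $v_S^\varepsilon-\bar v_S^\varepsilon$ and $v_F^\varepsilon-h_{\varepsilon,N,F}^{\rm app}(\bar v_S^\varepsilon)$, both controlled in $\mathcal Y_1\hookrightarrow H^{2}(\Omega)$, and extend them constantly in $Y$ via $\mathcal E$. This extension is bounded from $H^s(\Omega)$ to $H^s(Q)$.

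\textbf{Main obstacle.} The only delicate point is bookkeeping of topologies rather than any new analysis. We must ensure that the tracking estimate of \Cref{thm:thin-domain-approximation} in $\mathcal Z_\theta^\varepsilon$ indeed controls an isotropic Sobolev norm uniformly in $\varepsilon$. The $\mathcal X_\theta^\varepsilon$-part alone would not suffice, by the remark on the lack of uniform $H^1$-control. Hence we rely explicitly on the $H^{2\theta}(Q)$ component built into $\mathcal Z_\theta^\varepsilon$. The loss of $\delta$ derivatives in the stated norm is harmless and leaves room for this embedding.
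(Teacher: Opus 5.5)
Your proposal is correct and follows the paper's proof essentially verbatim. It uses the same three-term decomposition via $w^\varepsilon=z^\varepsilon+\Lambda_\varepsilon^1(v_S^\varepsilon+v_F^\varepsilon)$, the same control of the $z$-part through the $H^{2\theta}(Q)$-component of the $\mathcal Z_\theta^\varepsilon$-norm, and the same lifting bound $\|\Lambda_\varepsilon^1\eta\|_{H^2(Q)}\le C\varepsilon^2\|\eta\|_{\mathcal Y_1}\le C\|\eta\|_{\mathcal Y_1}$ before invoking \Cref{thm:thin-domain-approximation}. Your additional justification of the reconstruction of $u^\varepsilon$, using boundedness of $\mathcal E$ and $\mathcal Y_1\hookrightarrow H^2(\Omega)$, fills in the ``consequently'' that the paper leaves implicit.
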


\begin{proof}
By definition of the lifted homogeneous fast variable,
\[
z^\varepsilon
=
w^\varepsilon-\Lambda_\varepsilon^1v^\varepsilon.
\]
Hence
\[
w^\varepsilon
=
z^\varepsilon+\Lambda_\varepsilon^1(v_S^\varepsilon+v_F^\varepsilon).
\]
On the approximate graph,
\[
h_{\varepsilon,N,w}^{\rm app}(\bar v_S^\varepsilon)
=
h_{\varepsilon,N,z}^{\rm app}(\bar v_S^\varepsilon)
+
\Lambda_\varepsilon^1
\bigl(
\bar v_S^\varepsilon
+
h_{\varepsilon,N,F}^{\rm app}(\bar v_S^\varepsilon)
\bigr).
\]
Subtracting gives
\[
\begin{aligned}
w^\varepsilon
-
h_{\varepsilon,N,w}^{\rm app}(\bar v_S^\varepsilon)
={}&
z^\varepsilon
-
h_{\varepsilon,N,z}^{\rm app}(\bar v_S^\varepsilon)
\\
&+
\Lambda_\varepsilon^1
\Big(
v_S^\varepsilon-\bar v_S^\varepsilon
+
v_F^\varepsilon
-
h_{\varepsilon,N,F}^{\rm app}(\bar v_S^\varepsilon)
\Big).
\end{aligned}
\]
The norm of \(\mathcal Z_\theta^\varepsilon\) controls
\(H^{2\theta-\delta}(Q)\), and the lifting estimate from
\Cref{subsec:thin-domain-application} gives
\[
\|\Lambda_\varepsilon^1\eta\|_{H^2(Q)}
\le
C\varepsilon^2\|\eta\|_{\mathcal Y_1}
\le
C\|\eta\|_{\mathcal Y_1}.
\]
Therefore
\[
\begin{aligned}
&
\left\|
w^\varepsilon
-
h_{\varepsilon,N,w}^{\rm app}(\bar v_S^\varepsilon)
\right\|_{H^{2\theta-\delta}(Q)}
\\
&\qquad\le
C
\Big(
\|z^\varepsilon
-
h_{\varepsilon,N,z}^{\rm app}(\bar v_S^\varepsilon)\|_{\mathcal Z_\theta^\varepsilon}
+
\|v_S^\varepsilon-\bar v_S^\varepsilon\|_{\mathcal Y_1}
\\
&\qquad\qquad
+
\|v_F^\varepsilon
-
h_{\varepsilon,N,F}^{\rm app}(\bar v_S^\varepsilon)\|_{\mathcal Y_1}
\Big).
\end{aligned}
\]
The claim follows from \Cref{thm:thin-domain-approximation}.
\end{proof}

\begin{remark}[Status of the finite-order theorem]
The construction above shows that the thin-domain approximation theorem is a
consequence of the actual variational structure from
\Cref{subsec:thin-domain-application}. The singular fast operator is expanded
through the form identity
\[
\mathfrak a_\varepsilon
=
\mathfrak a_0+\varepsilon^2\mathfrak a_1,
\]
while the \(\varepsilon\)-dependent conormal boundary condition is handled by
the coefficient hierarchy and the final correction
\[
C_N^\varepsilon(P_N^\varepsilon).
\]
Thus the defect estimate is not assumed; it is proved by recursive
cancellation of all coefficients through order \(\varepsilon^{2N-2}\). The
only additional requirement compared with the basic slow-manifold construction
is higher regularity of \(g\), \(f\), and the retained slow data, needed to
close the finite-order coefficient recursion.
\end{remark}

\section{A canonical example and the numerical comparison}\label{sec:example}

In this section we apply the existence and approximation results of Sections \ref{sec:manifolds} and \ref{sec:reduced} to the Schnakenberg system and study the effects of the thin domain both analytically and numerically.

The classical Schnakenberg system \cite{schnakenberg1979simple,maini1997spatial} is a reaction-diffusion system of the form 
\begin{equation}\label{eq:schnak}
\begin{aligned}
\partial_t u &= D_u\Delta u + a-u+u^2v,\\
\partial_t v &= D_v\Delta v + b-u^2v.
\end{aligned}
\end{equation}
Here $a,b>0$ and $D_v\gg D_u>0$ are chosen so that the homogeneous equilibrium
\[
(u_*,v_*)=\Bigl(a+b,\frac{b}{(a+b)^2}\Bigr)
\]
is stable for the reaction system but destabilized by diffusion in an appropriate parameter regime.
For an analytic overview and the existence of patterns corresponding to the various parameter regimes we refer to \cite{ward2002existence,iron2004stability}.

This system is a natural test problem for several reasons.
First, it is a standard Turing-type model in which the full thin-domain dynamics can exhibit pattern formation, while the analytical results are well understood.
Moreover, in the thin domain the averaged reduced problem is simply the one-dimensional Schnakenberg system on the base interval, which makes comparison with the full two-dimensional thin-domain PDE especially transparent.
The first manifold correction from \Cref{sec:reduced} can be computed explicitly enough to test whether the manifold-corrected reduced model improves the approximation of the patterns.

\subsection{A heterogeneous Schnakenberg model}\label{subsec:transverse-schnakenberg}

The classical Schnakenberg system on a flat thin domain with Neumann boundary conditions is an important reference case, but it is not yet an appropriate test case of the asymptotic correction procedure from \Cref{sec:reduced}. 
In a flat thin domain the geometry term \(R\) vanishes. Moreover, if the
reaction term is independent of the transverse variable \(Y\), then the
mean-zero fast equation has no forcing when evaluated on \(Y\)-independent
states. In the notation of \Cref{prop: asymptotic exp}, this gives
\(F_0(v)=0\), and hence the first transverse correction \(h_{w,1}\) vanishes.
In fact, \(w\equiv0\) is then an exactly invariant subspace for the
\(Y\)-homogeneous flat problem. Thus all transverse \(w\)-corrections vanish
in this setting. This statement concerns only the transverse mean-zero
component; if one additionally splits the averaged one-dimensional variable
into retained and fast spatial modes, the auxiliary high-mode correction
\(h_{v_F}\) may still be nonzero because the nonlinear one-dimensional
reaction term can generate unresolved spatial modes.
To obtain a genuinely nontrivial correction while keeping the same flat geometry and the same homogeneous Neumann boundary conditions on the thin faces, it is therefore natural to introduce a transverse modulation into the nonlinear reaction term.
In the literature such effects have been studied as heterogeneous perturbations to the Schnakenberg model \cite{ishii2021effect,wong2021spot}.

Let
\[
Q_\varepsilon := \Omega \times (-\varepsilon,\varepsilon), \qquad \Omega=(0,L),
\]
and pass to the rescaled variable $Y=y/\varepsilon \in (-1,1)$. We impose homogeneous Neumann conditions on all boundaries and modify the auto-catalytic term by a mean-zero transverse profile. A convenient choice is
\[
\rho(Y):=\cos(\pi Y), \qquad M\rho = \frac12\int_{-1}^1 \rho(Y)\,dY = 0,
\qquad \partial_Y\rho(\pm 1)=0.
\]
For $|\eta|<1$, so that $1+\eta\rho(Y)$ remains strictly positive, we consider the rescaled Schnakenberg system
\begin{equation}\label{eq:schnakenberg rescaled}
\begin{aligned}
\partial_t u &= D_u\Bigl(\partial_x^2 u + \frac{1}{\varepsilon^2}\partial_Y^2 u\Bigr)
+ a-u + \bigl(1+\eta\rho(Y)\bigr)u^2v,
\\
\partial_t v &= D_v\Bigl(\partial_x^2 v + \frac{1}{\varepsilon^2}\partial_Y^2 v\Bigr)
+ b- \bigl(1+\eta\rho(Y)\bigr)u^2v,
\end{aligned}
\end{equation}
subject to
\[
\partial_x u = \partial_x v =0 \quad \text{on } x=0,L,
\qquad
\partial_Y u = \partial_Y v =0 \quad \text{on } Y=\pm 1.
\]
Here $a,b>0$ and $D_v \gg D_u>0$ are chosen in the classical Turing regime for the averaged one-dimensional Schnakenberg system.

Let
\[
Mf(x):=\frac12\int_{-1}^1 f(x,Y)\,dY
\]
denote the averaging operator in the thin direction, and decompose the solution as
\[
u = U+\phi, \qquad v = V+\psi,
\qquad U:=Mu, \quad V:=Mv, \quad M\phi=M\psi=0.
\]
Next, let
\[
e_0(x):=L^{-1/2},\qquad
e_k(x):=\sqrt{\frac{2}{L}}\cos\!\left(\frac{k\pi x}{L}\right),
\qquad k\ge 1,
\]
be the orthonormal Neumann eigenbasis of \(L^2(0,L)\). Thus
\[
\partial_x^2 e_k=-\mu_k e_k,\qquad
\mu_k:=\left(\frac{k\pi}{L}\right)^2,
\qquad k\in\mathbb N_0.
\]
For a fixed \(\varepsilon>0\), choose \(N=N(\varepsilon)\) according to the
slow-fast spectral splitting used in \Cref{sec:manifolds,sec:reduced}; in
particular, the modes in the range of \(Q_N\) are the modes whose diffusion
decay is of order \(\varepsilon^{-2}\). Define
\[
P_N Z := \sum_{k=0}^N \langle Z,e_k\rangle_{L^2(0,L)} e_k,
\qquad
Q_N:=\mathrm{Id}-P_N.
\]
We apply \(P_N\) and \(Q_N\) componentwise to the averaged pair
\(Z:=(U,V)^\top\).
This yields the splitting
\[
Z = Z_S + Z_F,
\qquad
Z_S := P_N Z = (U_S,V_S)^\top,
\qquad
Z_F := Q_N Z = (U_F,V_F)^\top.
\]
Identifying \(Z_S\) and \(Z_F\) with their \(Y\)-independent extensions to
\(Q:=(0,L)\times(-1,1)\), the full state can be written as
\[
(u,v)^\top = Z_S+Z_F+w,
\qquad
w:=(\phi,\psi)^\top,
\qquad Mw=0.
\]
This is the decomposition used in \Cref{sec:manifolds,sec:reduced}: \(w\) is
the mean-zero transverse fast variable, \(Z_F\) is the fast decaying block of
the averaged variable, and \(Z_S\) is the retained slow component.

It is convenient to introduce the reaction operator
\[
\mathcal{N}(U,V,\phi,\psi;Y) := (1 + \eta \rho(Y))(U+\phi)^2(V+\psi).
\]

Then the averaged and mean-zero equations become
\begin{equation}\label{eq:Schnakenberg split}
\begin{aligned}
\partial_t w &= \frac{1}{\varepsilon^2} A_\varepsilon w + F(w,Z_F,Z_S), \\
\partial_t Z_F &= B Z_F + Q_N G(w,Z_F,Z_S), \\
\partial_t Z_S &= B Z_S + P_N G(w,Z_F,Z_S),
\end{aligned}
\end{equation}
where
\[
A_\varepsilon =
\begin{pmatrix}
D_u(\partial_Y^2 + \varepsilon^2 \partial_x^2) & 0 \\
0 & D_v(\partial_Y^2 + \varepsilon^2 \partial_x^2)
\end{pmatrix},
\qquad
B =
\begin{pmatrix}
D_u \partial_x^2 & 0 \\
0 & D_v \partial_x^2
\end{pmatrix},
\]
and
\[
F(w,Z_F,Z_S) :=
\begin{pmatrix}
-\phi + (I-M)\mathcal{N}(U,V,\phi,\psi;Y) \\
-(I-M)\mathcal{N}(U,V,\phi,\psi;Y)
\end{pmatrix},
\]
\[
G(w,Z_F,Z_S) :=
\begin{pmatrix}
a - U + M\mathcal{N}(U,V,\phi,\psi;Y) \\
b - M\mathcal{N}(U,V,\phi,\psi;Y)
\end{pmatrix},
\]
with
\[
U = U_S + U_F
\qquad \text{and} \qquad
V = V_S + V_F.
\]

\begin{remark}
The flat geometry implies
\[
A_\varepsilon=A_0+\varepsilon^2A_1,
\]
with
\[
A_0=
\begin{pmatrix}
D_u\partial_Y^2 & 0\\
0 & D_v\partial_Y^2
\end{pmatrix},
\qquad
A_1=
\begin{pmatrix}
D_u\partial_x^2 & 0\\
0 & D_v\partial_x^2
\end{pmatrix}.
\]
The operator \(A_0\) is considered on the mean-zero subspace in the
\(Y\)-variable with homogeneous Neumann boundary conditions at \(Y=\pm1\).
On this space it is invertible, since the constant transverse mode has been
removed. Together with the \(\varepsilon\)-dependent slow spectral splitting
described above, this places the example in the formal framework of
\Cref{prop: asymptotic exp}. Since \(M\rho=0\), the leading averaged equation
is still the standard one-dimensional Schnakenberg system on the retained
modes. The role of the transverse heterogeneity is to generate a nontrivial
first correction in the mean-zero \(w\)-component.
\end{remark}

\begin{remark}[A priori bounds for the split Schnakenberg example]
For the Schnakenberg system \eqref{eq:schnakenberg rescaled} with homogeneous Neumann boundary conditions and nonnegative initial data, one expects global nonnegative solutions and global-in-time a priori bounds. 
Indeed, the reaction terms are quasi-positive, so non-negativity is preserved.
Moreover, the first component satisfies
\[
\partial_t u - D_u\Bigl(\partial_x^2 + \frac{1}{\varepsilon^2}\partial_Y^2\Bigr)u
= a-u+\bigl(1+\eta\rho(Y)\bigr)u^2v \ge a-u,
\]
and therefore the parabolic comparison principle yields
\[
u(x,Y,t)\ge a(1-e^{-t}) \qquad \text{for all } (x,Y)\in Q,\ t\ge 0.
\]
Hence, for every fixed $\tau>0$,
\[
u(x,Y,t)\ge c_\tau:=a(1-e^{-\tau})>0 \qquad \text{for all } t\ge \tau.
\]
Since $\rho(Y)\in[-1,1]$ and $|\eta|<1$, we also have
\[
1+\eta\rho(Y)\ge 1-|\eta|>0.
\]
Using the lower bound for $u$ in the second equation of \eqref{eq:schnakenberg rescaled}, we obtain for $t\ge \tau$
\[
\partial_t v - D_v\Bigl(\partial_x^2 + \frac{1}{\varepsilon^2}\partial_Y^2\Bigr)v
= b-\bigl(1+\eta\rho(Y)\bigr)u^2v
\le b-(1-|\eta|)c_\tau^2\,v.
\]
Another comparison argument then gives a uniform $L^\infty$-bound for $v$ on $[\tau,\infty)$.
Together with standard global boundedness results for two-component reaction-diffusion systems and parabolic smoothing on bounded domains, this yields a bounded absorbing set for the present Schnakenberg semiflow.
For more details we refer to \cite{hollis1987global, pierre2010global}.

Consequently, for this example the polynomial nonlinearities need not be assumed globally Lipschitz a priori. 
Instead, we may work with local $C^1$- or $C^\infty$-regularity of the reaction terms together with the forward-time global bounds established above.
Only for the backward trajectories required in the Lyapunov-Perron fixed-point argument; cf. \Cref{lemma: lyapunov perron}, do we introduce a cut-off operator to enforce global bounds.
This, however, does not alter the qualitative behavior of the system as on the bounded region visited
by the solutions under consideration, the cut-off system agrees with the original Schnakenberg system, while the nonlinearities become globally Lipschitz as required by the Lyapunov-Perron argument.

\end{remark}

We now evaluate the coefficient equations from \Cref{prop: asymptotic exp} and \Cref{cor: formal reduced field} for the present example. 
Since
\[
Z_S = (U_S,V_S)^\top
\]
is independent of \(Y\), one obtains
\[
F_0(Z_S) = F(0,0,Z_S) = \eta \rho(Y)U_S^2V_S
\begin{pmatrix}
1\\
-1
\end{pmatrix},
\qquad
G_0(Z_S) = G(0,0,Z_S) =
\begin{pmatrix}
a-U_S+U_S^2V_S\\
b-U_S^2V_S
\end{pmatrix}.
\]
The first coefficient \(h_{w,1}\) is therefore explicit, whereas \(h_{v_F,1}\) is determined by the high-mode projection of the averaged reaction term.
\begin{proposition}[First transverse Schnakenberg correction]\label{prop: schnakenberg 1}
    For the split Schnakenberg system \eqref{eq:Schnakenberg split}, the first coefficients in the expansion from \Cref{prop: asymptotic exp}
    \[
    h_w^\varepsilon(Z_S) \sim \varepsilon^2 h_{w,1}(Z_S) + \varepsilon^4 h_{w,2}(Z_S) + \cdots,
    \qquad
    h_{v_F}^\varepsilon(Z_S) \sim \varepsilon^2 h_{v_F,1}(Z_S) + \varepsilon^4 h_{v_F,2}(Z_S) + \cdots
    \]
    are given by
    \begin{align} \label{eq: h1x correction}
      & h_{w,1}(U_S,V_S;Y) = \frac{\eta U_S^2V_S}{\pi^2}\rho(Y) \begin{pmatrix}
    D_u^{-1}\\
    -D_v^{-1} \end{pmatrix} ,
    \intertext{and}\label{eq: h1y correction}
    & h_{v_F,1}(Z_S) = -\tilde{B}^{-1}Q_N \begin{pmatrix}
    a-U_S+U_S^2V_S\\
    b-U_S^2V_S \end{pmatrix},
    \end{align}
   where \(\tilde{B}\) denotes the rescaled fast operator introduced in \Cref{sec:reduced}.  
\end{proposition}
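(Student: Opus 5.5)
The plan is to apply the first-order coefficient equations \eqref{eq:first-order-hw-corrected} and \eqref{eq:first-order-hF-corrected} from \Cref{prop: asymptotic exp} directly to the split Schnakenberg system \eqref{eq:Schnakenberg split}. First I will record $F_0(Z_S)$ and $G_0(Z_S)$ by setting $w=0$ and $Z_F=0$, so that $U=U_S$, $V=V_S$ and $\phi=\psi=0$. Since $U_S,V_S$ do not depend on $Y$ and $M\rho=0$, we get $(I-M)\bigl[(1+\eta\rho)U_S^2V_S\bigr]=\eta\rho(Y)U_S^2V_S$ and $M\bigl[(1+\eta\rho)U_S^2V_S\bigr]=U_S^2V_S$. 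The term $-\phi$ drops out, which reproduces the displayed formulas for $F_0$ and $G_0$.

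Next, for $h_{w,1}$ I will solve $A_0h_{w,1}=-F_0(Z_S)$ on the mean-zero Neumann space in $Y$, componentwise, using the invertibility of $A_0$ stated in the preceding remark. The source is a multiple of $\rho(Y)=\cos(\pi Y)$, and $\rho$ is an eigenfunction of $\partial_Y^2$ with Neumann conditions at $Y=\pm1$ and mean zero, since $\partial_Y^2\rho=-\pi^2\rho$. Hence $(D_u\partial_Y^2)^{-1}(c\rho)=-c\rho/(\pi^2D_u)$, and similarly for the $D_v$ component. Feeding in the sign pattern $(1,-1)^\top$ of $F_0$ gives $h_{w,1}=\frac{\eta U_S^2V_S}{\pi^2}\rho(Y)\,(D_u^{-1},-D_v^{-1})^\top$. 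Uniqueness follows because $A_0$ is injective on $\ker M$. Here $U_S^2V_S$ depends only on $x$ and enters as a parameter, because $A_0$ acts fiberwise in $Y$.

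For $h_{v_F,1}$ the plan is to invoke \eqref{eq:first-order-hF-corrected}: $\widetilde B\,h_{v_F,1}+Q_NG_0(Z_S)=0$, where $\widetilde B=\varepsilon^2B|_{\mathrm{ran}\,Q_N}$. On the range of $Q_N$ the Neumann eigenvalues satisfy $\varepsilon^2\mu_k\ge c>0$, so $\widetilde B$ is invertible, with uniform bounds as in \Cref{lem:uniform-slow-tail-inverse-high}. Inverting gives \eqref{eq: h1y correction}.

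I expect the only delicate points to be sign conventions and the justification that the hypotheses of \Cref{prop: asymptotic exp} hold. The flat decomposition $A_\varepsilon=A_0+\varepsilon^2A_1$ is exact here, and $F,G$ are polynomial, hence $C^2$. The Lipschitz requirement is handled by the cutoff argument from the a priori bounds remark. The explicit eigenfunction computation itself is routine.
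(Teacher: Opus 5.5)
Your proposal is correct and follows essentially the same route as the paper: insert $F_0(Z_S)=\eta\rho(Y)U_S^2V_S(1,-1)^\top$ and $G_0(Z_S)$ into the two first-order coefficient equations of \Cref{prop: asymptotic exp}, invert $A_0$ fiberwise on the mean-zero Neumann space using $\partial_Y^2\rho=-\pi^2\rho$, and invert $\widetilde B$ on the high-mode block. Your derivation of $F_0$ and $G_0$ from $M\rho=0$ and your justification of the invertibility of $\widetilde B$ on the tail are useful additions. The paper records $F_0$ and $G_0$ just before the proposition and only asserts that $\widetilde B$ is invertible.
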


\begin{proof}
By \eqref{eq:first-order-hw-corrected}, the first transverse coefficient satisfies
\[
A_0h_{w,1}(Z_S)+F_0(Z_S)=0.
\]
For the present system this becomes
\[
\begin{pmatrix}
D_u\partial_Y^2 & 0\\
0 & D_v\partial_Y^2
\end{pmatrix}
h_{w,1}
+
\eta\rho(Y)U_S^2V_S
\begin{pmatrix}
1\\
-1
\end{pmatrix}
=0.
\]
The right-hand side is mean-zero in \(Y\), since \(M\rho=0\). Hence the
equation can be inverted uniquely on the mean-zero Neumann subspace. Because
\(\partial_Y^2\rho=-\pi^2\rho\), this gives \eqref{eq: h1x correction}.

Similarly, \eqref{eq:first-order-hF-corrected} gives
\[
\widetilde B h_{v_F,1}(Z_S)+Q_NG_0(Z_S)=0.
\]
Using
\[
G_0(Z_S)=
\begin{pmatrix}
a-U_S+U_S^2V_S\\
b-U_S^2V_S
\end{pmatrix},
\]
and the invertibility of \(\widetilde B\) on the high-mode block, we obtain
\eqref{eq: h1y correction}.
\end{proof}

\begin{remark}
    The two correction mechanisms are conceptually different. The coefficient \(h_{w,1}\) is created by the transverse heterogeneity and measures the first \(Y\)-dependent correction to the manifold. 
    The coefficient \(h_{v_F,1}\) is created by the splitting of the averaged variable and measures how the nonlinear reaction term generates high spatial modes that are slaved to the retained block \(Z_S\). 
\end{remark}

We next insert the first coefficients into the reduced vector field from Equation \ref{eq:asymp-slow-dynamics-corrected}. Let
\[
J(U,V):=
\begin{pmatrix}
-1+2UV & U^2\\
-2UV & -U^2
\end{pmatrix}
\]
denote the Jacobian of the Schnakenberg reaction term
\[
R(U,V):=
\begin{pmatrix}
a-U+U^2V\\
b-U^2V
\end{pmatrix}.
\]

Then \(D_2G_0(Z_S)\xi = J(U_S,V_S)\xi\) for \(\xi \in Y_F \cap \mc Y_1\). The \(w\)-correction can also be evaluated explicitly.
\begin{corollary}[Corrected Schnakenberg reduced dynamics] \label{cor: schnakenberg 2}
  The reduced dynamics on the retained slow block \(Z_S=(U_S,V_S)^\top\) satisfy the formal expansion
  \begin{align}\label{eq: slow dyn schnakenberg}
      \partial_t Z_S=BZ_S + P_NR(Z_S)+ \varepsilon^2 P_N \left[\Theta(U_S,V_S)
    \begin{pmatrix}
    1\\-1\end{pmatrix}
    + J(U_S,V_S)h_{v_F,1}(Z_S) \right] + O(\varepsilon^4),
  \end{align}
   where
   \begin{align}\label{eq: theta term schnakenberg}
      \Theta(U_S,V_S)=\frac{\eta^2}{2\pi^2} \left(\frac{2U_S^3V_S^2}{D_u} -\frac{U_S^4V_S}{D_v}\right).
   \end{align}
Equivalently,
\begin{align*}
  \partial_t U_S
&=
D_u\partial_x^2 U_S
+ P_N(a-U_S+U_S^2V_S)
+ \varepsilon^2 P_N\bigl(\Theta(U_S,V_S)+[J(U_S,V_S)h_{v_F,1}(Z_S)]_1\bigr)
+ O(\varepsilon^4),  \\
\partial_t V_S
&=
D_v\partial_x^2 V_S
+ P_N(b-U_S^2V_S)
+ \varepsilon^2 P_N\bigl(-\Theta(U_S,V_S)+[J(U_S,V_S)h_{v_F,1}(Z_S)]_2\bigr)
+ O(\varepsilon^4).
\end{align*}
\end{corollary}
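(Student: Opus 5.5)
The plan is to specialize \Cref{cor: formal reduced field} to the split Schnakenberg system \eqref{eq:Schnakenberg split}, with $v_S=Z_S$, $B$ the diagonal diffusion operator and $\pr_{\mc Y_S}=P_N$. That corollary gives
\[
\partial_tZ_S=BZ_S+P_NG_0(Z_S)+\varepsilon^2P_N\bigl[D_1G_0(Z_S)h_{w,1}+D_2G_0(Z_S)h_{v_F,1}\bigr]+O(\varepsilon^4).
\]
Since $G_0(Z_S)=R(Z_S)$, it remains to identify the two linearizations and to insert the coefficients from \Cref{prop: schnakenberg 1}.

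\textbf{Step 1: the $v_F$-derivative.} The averaged pair enters $G$ only through $U=U_S+U_F$ and $V=V_S+V_F$. At $w=0$ we have $M\mathcal N=M[(1+\eta\rho)]U^2V=U^2V$, because $M\rho=0$. Hence $G(0,Z_F,Z_S)=R(Z_S+Z_F)$, and so $D_2G_0(Z_S)\xi=J(U_S,V_S)\xi$, as already noted before the statement. This produces the term $J(U_S,V_S)h_{v_F,1}(Z_S)$.

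\textbf{Step 2: the $w$-derivative.} The component $w=(\phi,\psi)$ enters $G$ only through $\pm M\mathcal N$; the linear term $-U$ contains the average only, not $\phi$. Differentiating at $\phi=\psi=0$ gives
\[
D_1G_0(Z_S)(\phi,\psi)=M\bigl[(1+\eta\rho)(2U_SV_S\phi+U_S^2\psi)\bigr]\begin{pmatrix}1\\-1\end{pmatrix}.
\]
Insert $h_{w,1}=\frac{\eta U_S^2V_S}{\pi^2}\rho(Y)\,(D_u^{-1},-D_v^{-1})^\top$ from \eqref{eq: h1x correction}. Since $U_S,V_S$ do not depend on $Y$, only the fiber averages $M[\rho]=0$ and $M[\rho^2]=\frac12\int_{-1}^1\cos^2(\pi Y)\,dY\cdot\frac12=\frac12$ appear. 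The non-$\eta$ part is therefore annihilated, and the $\eta\rho$ part yields
\[
\frac{\eta}{2}\cdot\frac{\eta U_S^2V_S}{\pi^2}\Bigl(\frac{2U_SV_S}{D_u}-\frac{U_S^2}{D_v}\Bigr)=\Theta(U_S,V_S),
\]
multiplied by $(1,-1)^\top$. This is exactly \eqref{eq: theta term schnakenberg}.

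\textbf{Step 3: componentwise form.} Assembling the pieces gives \eqref{eq: slow dyn schnakenberg}. Since $P_N$ commutes with $B$ and $BZ_S=(D_u\partial_x^2U_S,D_v\partial_x^2V_S)^\top$, the two scalar equations follow by reading off the components.

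\textbf{Expected obstacle.} The only delicate point is bookkeeping rather than analysis. One must check that $D_1G_0$ really sees only the reaction term, i.e.\ that the linear terms $-U$ in $G$ and $-\phi$ in $F$ do not contribute. One must also use the correct normalization $M=\frac12\int_{-1}^1$, which is what produces the factor $\frac12$ in $\Theta$. Everything is formal at the level of \Cref{cor: formal reduced field}, so no remainder estimates beyond $O(\varepsilon^4)$ are needed.
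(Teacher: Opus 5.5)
Your proposal is correct and follows the paper's proof essentially verbatim. Like the paper, you specialize \Cref{cor: formal reduced field}, identify $D_2G_0(Z_S)=J(U_S,V_S)$ and $D_1G_0(Z_S)[\hat\phi,\hat\psi]=M\bigl[(1+\eta\rho)(2U_SV_S\hat\phi+U_S^2\hat\psi)\bigr](1,-1)^\top$, and evaluate on $h_{w,1}$ using $M\rho=0$ and $M[\rho^2]=\tfrac12$. One cosmetic slip: as written, $\frac12\int_{-1}^1\cos^2(\pi Y)\,dY\cdot\frac12$ equals $\frac14$, so drop the trailing factor, since $\int_{-1}^1\cos^2(\pi Y)\,dY=1$ already gives $M[\rho^2]=\frac12$.
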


\begin{proof}
By Equation \eqref{eq:asymp-slow-dynamics-corrected},
\[
\partial_t Z_S
=
BZ_S + P_NG_0(Z_S)
+ \varepsilon^2 P_N\bigl(D_1G_0(Z_S)h_{w,1}(Z_S)+D_2G_0(Z_S)h_{v_F,1}(Z_S)\bigr)
+ O(\varepsilon^4).
\]
The zeroth-order term is \(P_NR(Z_S)\). For the first correction, the derivative with respect to the averaged fast block gives
\[
D_2G_0(Z_S)h_{v_F,1}(Z_S)=J(U_S,V_S)h_{v_F,1}(Z_S).
\]
On the other hand,
\[
D_1G_0(Z_S)[\hat{\phi},\hat{\psi}]
=
M\Bigl[(1+\eta\rho)(2U_SV_S\hat{\phi}+U_S^2\hat{\psi})\Bigr]
\begin{pmatrix}
1\\
-1
\end{pmatrix}.
\]
Substituting \eqref{eq: h1x correction} and using \(M(\rho)=0\) together with
\[
M(\rho^2)=\frac12\int_{-1}^1 \rho(Y)^2\,dY=\frac12,
\]
yields
\[
D_1G_0(Z_S)h_{w,1}(Z_S)
=
\Theta(U_S,V_S)
\begin{pmatrix}
1\\
-1
\end{pmatrix},
\]
with \(\Theta\) given by \eqref{eq: theta term schnakenberg}. This proves \eqref{eq: slow dyn schnakenberg} and its component form.
\end{proof}

\begin{remark}
\Cref{cor: schnakenberg 2} separates the two \(O(\varepsilon^2)\) effects predicted by the theory of \Cref{sec:reduced}.
The term \(\Theta(U_S,V_S)\) is the transverse-manifold correction already visible in the model without the splitting.
The additional term \(J(U_S,V_S)h_{v_F,1}(Z_S)\) is new: it comes from the splitting of the averaged variable and measures the feedback of the fast decaying spatial modes onto the retained modes.
\end{remark}

\begin{remark}[Pattern-forming interpretation]
The correction above has a natural interpretation in the classical theory of diffusion-driven instability. In the thin-domain limit the transverse Neumann modes are shifted to the fast scale by the factor \(\varepsilon^{-2}\partial_Y^2\). 
Hence the leading Turing instability is carried by the \(Y\)-independent component and is governed, to leading order, by the one-dimensional averaged Schnakenberg system on the base interval.
This is consistent with the classical Turing mechanism and with the standard activator-inhibitor interpretation of reaction--diffusion pattern formation.

The present thin-domain setting adds a further effect.
The mean-zero transverse heterogeneity has no leading-order contribution to the averaged
reaction kinetics, but it generates the transverse profile
\[
  u_\perp \sim
  \varepsilon^2
  \frac{\eta U_S^2V_S}{\pi^2D_u}\rho(Y),
  \qquad
  v_\perp \sim
  -\varepsilon^2
  \frac{\eta U_S^2V_S}{\pi^2D_v}\rho(Y).
\]
Thus the thin direction does not create an independent transverse Turing
instability; rather, it produces a small transverse pattern whose amplitude
is modulated by the dominant one-dimensional Turing mode. After insertion
into the averaged equation this profile also produces an
\(O(\varepsilon^2)\) correction to the effective reaction kinetics.
Therefore the averaged model predicts the leading Turing threshold and dominant
wavelength, while the slow-manifold-corrected model predicts the first
thin-domain correction to the bifurcation structure and reconstructs the
transverse residual missed by the purely averaged reduction.

This perspective is closely related to the literature on pattern formation
in heterogeneous reaction--diffusion systems.
Spatial heterogeneity is known to modify classical Turing thresholds, localize unstable modes, and sometimes produce patterns outside the standard homogeneous Turing regime.
See, for example the results in \cite{PageMainiMonk2003} and \cite{KrauseKlikaWoolleyGaffney2020}.
The difference here is that the heterogeneity acts in the thin transverse
direction and is filtered through the fast stable transverse dynamics. The
slow-manifold expansion therefore gives an explicit reduced description of
how this hidden transverse heterogeneity feeds back into the observable
one-dimensional pattern.
\end{remark}

\subsection{Numerical Comparison}

We now complement the analytical observations above by a direct numerical
comparison of the full thin-domain system, the leading averaged reduction,
and the first-order manifold-corrected reconstruction. Throughout this
subsection we use
\[
a=0.10,\qquad b=0.90,\qquad D_u=1.0,\qquad D_v=10.0,\qquad L=6.0,\qquad
\varepsilon=0.15,\qquad \eta=0.8.
\]
We also set \(\rho(Y)=\cos(\pi Y)\) in
\eqref{eq:schnakenberg rescaled}. The initial conditions are perturbations
of the homogeneous equilibrium \((u_*,v_*)\) along unstable spatial modes.

The computations are carried out on a finite spatial grid. For the numerical
tests we do not implement the auxiliary slow-variable splitting from the
Lyapunov-Perron construction. Instead, we work directly with the averaged
variables and set the split fast averaged component equal to zero,
\(v_F\equiv0\). Thus the simulations should be interpreted as a numerical
test of the asymptotic transverse reconstruction from \Cref{sec:reduced} and
\Cref{prop: schnakenberg 1}, rather than as a direct numerical realization
of the exact split slow manifold from \Cref{sec:manifolds}.

We evaluate the profiles at final time \(T=250\), by which the numerical
solutions have reached a steady state for the chosen parameters. To isolate
the genuinely transverse effect, we subtract the cross-sectional averages and
study
\[
u_\perp(x,Y,t):=u(x,Y,t)-Mu(x,t),\qquad
v_\perp(x,Y,t):=v(x,Y,t)-Mv(x,t).
\]
For the leading averaged model, the reconstructed fields are \(Y\)-independent,
so these residuals vanish identically. For the first-order reconstruction we
use the transverse coefficient from \Cref{prop: schnakenberg 1} and define
\[
u_{\mathrm{app}}(x,Y,t)
=
U(x,t)+\varepsilon^2 h_{1,u}(U,V;Y),
\qquad
v_{\mathrm{app}}(x,Y,t)
=
V(x,t)+\varepsilon^2 h_{1,v}(U,V;Y),
\]
where now \(U,V\) denote the averaged numerical solution used in the
no-splitting comparison. Since \(Mh_{1,u}=Mh_{1,v}=0\), the reconstructed
residuals are
\[
u_\perp^{\mathrm{app}}(x,Y,t)
=
\varepsilon^2h_{1,u}(U,V;Y),
\qquad
v_\perp^{\mathrm{app}}(x,Y,t)
=
\varepsilon^2h_{1,v}(U,V;Y).
\]

For the full system, the equilibrium solution (at time $T=250$) has a dominant pattern in the $x$-direction and a small pattern in the transverse $Y$-direction; see \Cref{fig:full1}.
\begin{figure}[ht]
    \centering
    \includegraphics[width=.9\textwidth]{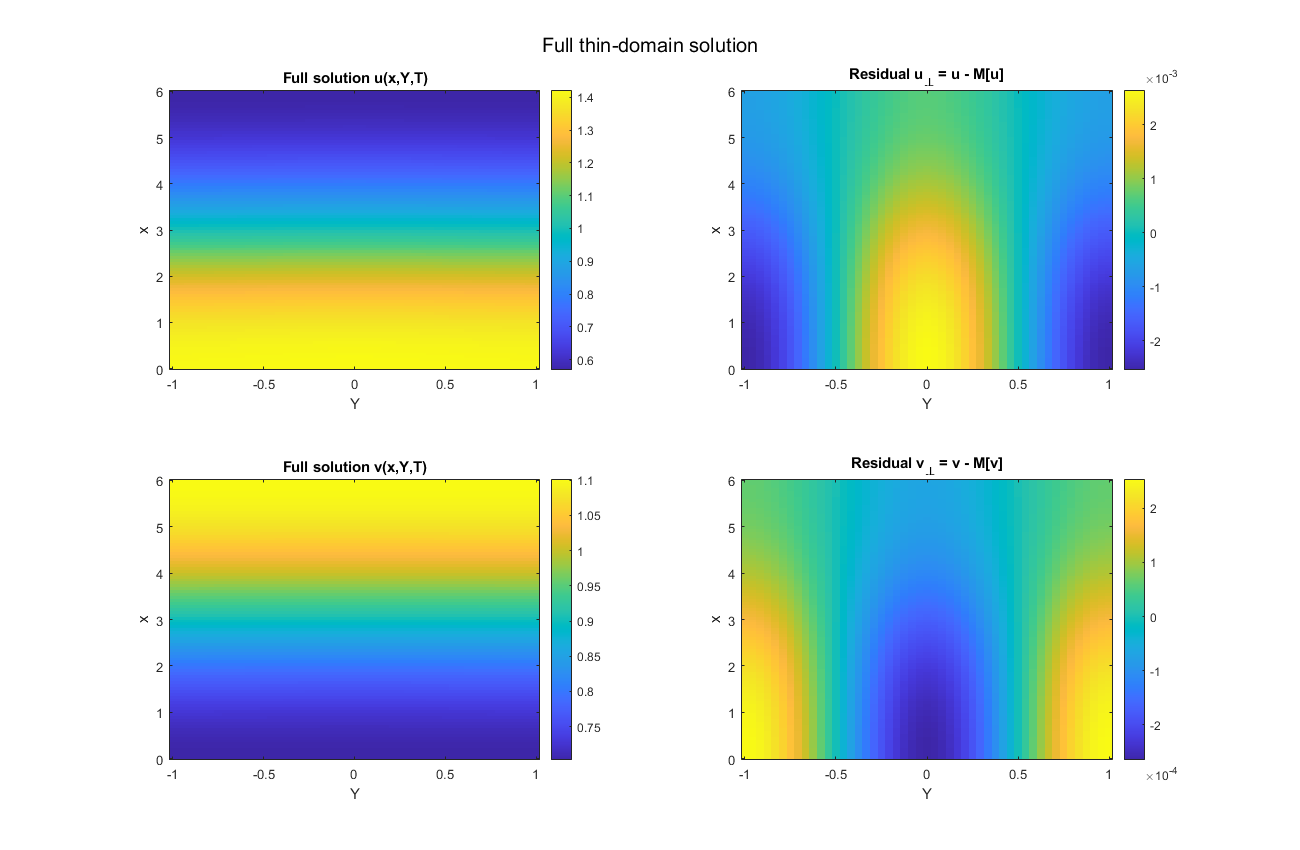}
    \caption{Equilibrium profile of the full Schnakenberg system}
    \label{fig:full1}
\end{figure}

The next question is whether we can recover the small transverse pattern in the two reduced models.

For the fully reduced model on the critical manifold, these residuals vanish identically by construction; see \Cref{fig:reduced1}.
\begin{figure}[H]
    \centering
    \includegraphics[width=.9\textwidth]{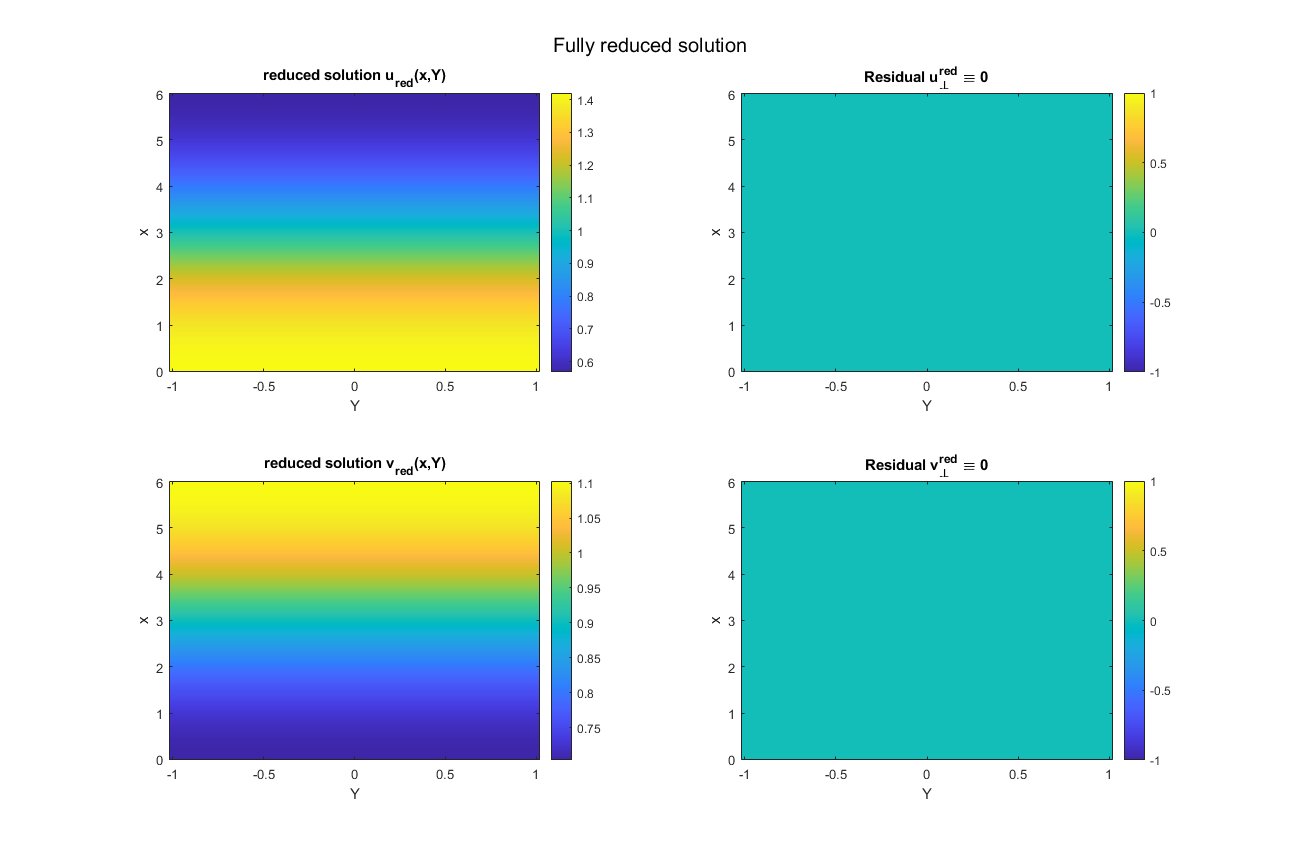}
    \caption{Equilibrium profile of averaged system}
    \label{fig:reduced1}
\end{figure}

For the first-order approximation we reconstruct
\[
u_{\mathrm{app}}(x,Y,t)=U(x,t)+\varepsilon^2 h_{1,u}(U,V;Y),\qquad
v_{\mathrm{app}}(x,Y,t)=V(x,t)+\varepsilon^2 h_{1,v}(U,V;Y),
\]
so that
\[
u_{\perp}^{\mathrm{app}}=\varepsilon^2 h_{1,u}(U,V;Y),\qquad
v_{\perp}^{\mathrm{app}}=\varepsilon^2 h_{1,v}(U,V;Y).
\]
\begin{figure}[H]
    \centering
    \includegraphics[width=.9\textwidth]{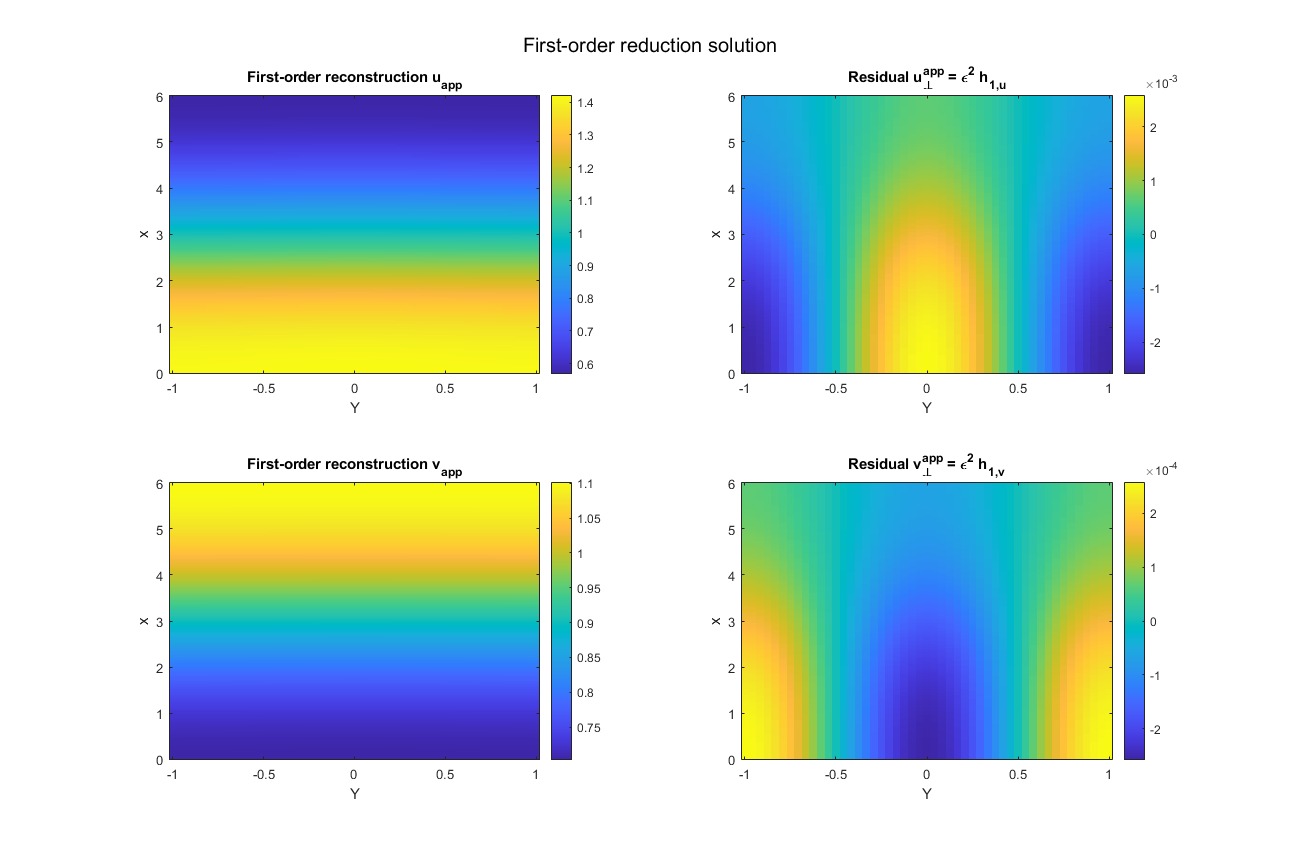}
    \caption{Equilibrium profile of first-order system}
    \label{fig:firstorder1}
\end{figure}

The numerical results show a clear separation between the dominant
one-dimensional pattern and the small transverse correction; see
\Cref{fig:firstorder1,fig:compare}. First, the averaged profile captures the
leading \(x\)-dependence of the full solution very well: the full thin-domain
fields are dominated by a slow pattern in \(x\), while the dependence on \(Y\)
appears only as a small correction. This is precisely the regime predicted by
the reduction theory.

Second, after subtracting the average, the full solution exhibits a coherent
residual profile in the thin direction. For the \(u\)-component this residual
is positive near \(Y=0\) and negative near \(Y=\pm1\), while the
\(v\)-component has the opposite sign and a substantially smaller amplitude.
This agrees with \eqref{eq: h1x correction}: since
\(\rho(Y)=\cos(\pi Y)\), the \(u\)-correction is proportional to
\(D_u^{-1}\rho(Y)\), whereas the \(v\)-correction is proportional to
\(-D_v^{-1}\rho(Y)\).

The residual field of the pure averaged model is identically zero and
therefore misses the transverse structure completely. By contrast, the
first-order reconstruction reproduces both the shape and the size of the
residual accurately. In particular, it captures the sign reversal between
\(u_\perp\) and \(v_\perp\). As \Cref{fig:compare} shows, the \(Y\)-profiles
of the full and first-order residuals nearly overlap, whereas the averaged
model has relative error of order one in the residual because it predicts no
transverse correction.

A second important observation is that the transverse residual amplitude is
not spatially uniform in \(x\). It is largest near the part of the domain
where the slow activator pattern is largest and decreases along the profile.
This is again consistent with \Cref{prop: schnakenberg 1}, because the first
correction is proportional to the slow factor \(U^2V\). Hence the transverse
heterogeneity is slaved to the dominant one-dimensional pattern: the
thin-direction profile is fixed by \(\rho(Y)\), while its amplitude is
modulated by the slow Schnakenberg variables along the base interval.

\begin{figure}[H]
\centering
\begin{subfigure}[b]{1\textwidth}
  \includegraphics[width=1\linewidth]{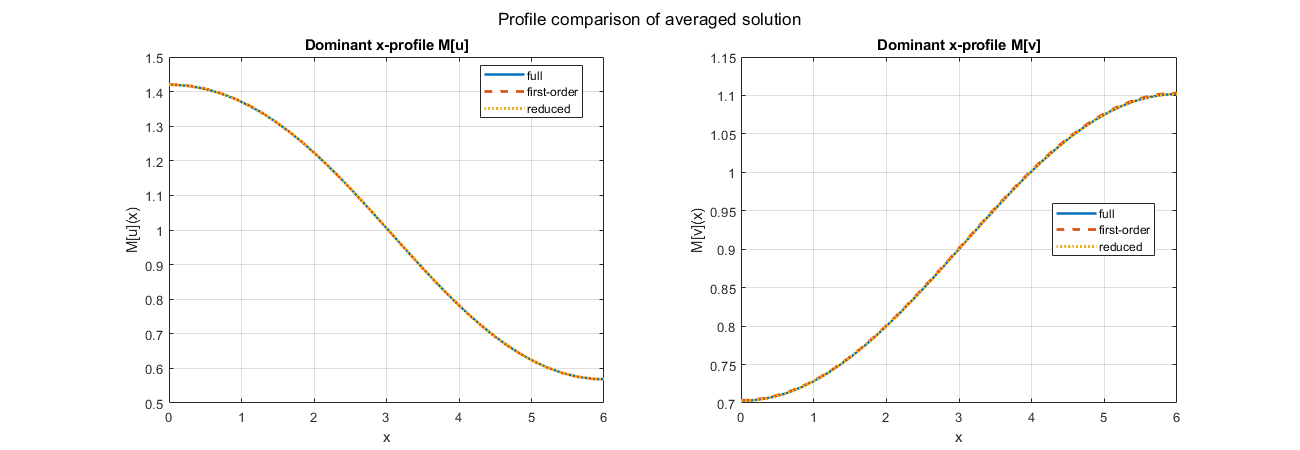}
  \label{fig:avg} 
\end{subfigure}

\medskip 
\begin{subfigure}[b]{1\textwidth}
  \includegraphics[width=1\linewidth]{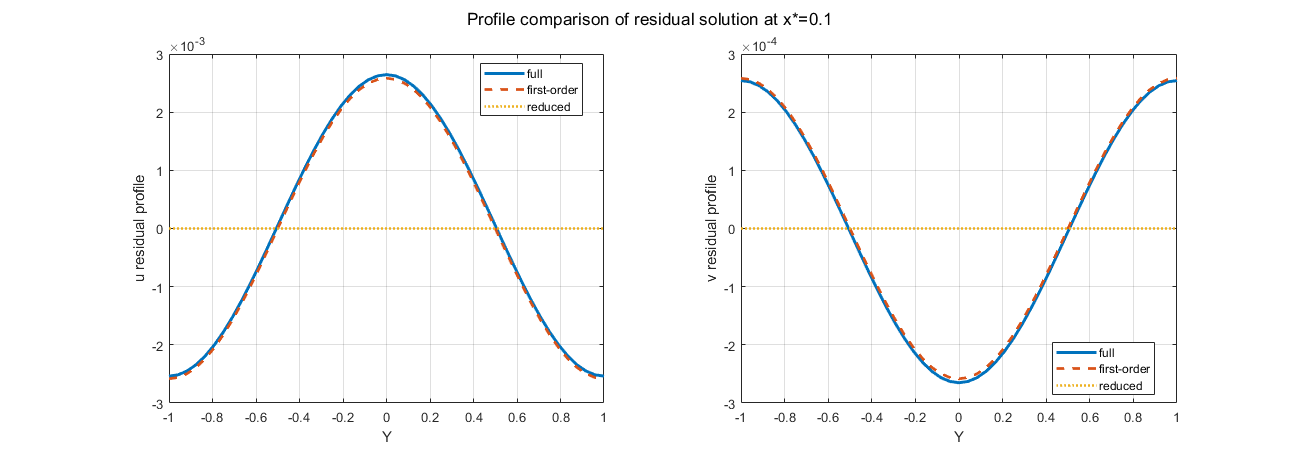}
  \label{fig:res}
\end{subfigure}

\caption{Comparison of the full, first-order, and averaged profiles}
\label{fig:compare}
\end{figure}

\subsection{Conclusion}

Taken together, the asymptotic expansion of the slow manifold and the
numerical simulations support the following interpretation. The leading-order
averaged model captures the dominant slow pattern along the base domain, but
it does not recover the small transverse structure generated by the
heterogeneous reaction term. This missing information is supplied by the
first transverse manifold correction.

In the parameter regime considered here, the correction has only a minor
effect on the dominant \(x\)-profile, but it substantially improves the
reconstruction of the residual \(Y\)-dependence. This is fully consistent
with the asymptotic theory: the first-order slow-manifold term is small in
amplitude, yet it contains the leading-order information needed to recover
the transverse profile of the full thin-domain solution.

From the perspective of pattern formation, this suggests the following
interpretation. The large transverse diffusion filters out independent
transverse Turing modes, so the dominant instability is governed to leading
order by the averaged one-dimensional Schnakenberg system. The transverse
heterogeneity is nevertheless not lost: it reappears through the
slow-manifold correction as a small transverse residual whose amplitude is
slaved to the base-domain Turing pattern. Thus the thin-domain reduction
separates two effects which are often intertwined in heterogeneous
reaction--diffusion systems: the leading diffusion-driven instability along
the extended direction, and the heterogeneity-induced modulation in the
confined direction. This connects the present example to the broader
literature on pattern formation in heterogeneous Turing systems, where
spatial heterogeneity can shift instability thresholds, localize modes, or
generate patterns outside the classical homogeneous Turing regime.

The numerical tests deliberately suppress the auxiliary split fast component
of the averaged variable by imposing \(v_F=0\). Thus they should not be read
as a direct computation of the exact split slow manifold from
\Cref{sec:manifolds}. Rather, they test the transverse reconstruction
predicted by the asymptotic expansion in \Cref{sec:reduced}. In this sense,
the simulations show that, for the transversely heterogeneous Schnakenberg
system, the first-order approximation is not merely a higher-order
perturbation of the averaged equation. It is the leading nontrivial
approximation of the transverse profile of the full solution.

\section{Application to thin tubular domains}\label{sec:tubular domains}
Another natural application and extension of the framework is to thin tubular neighborhoods of curves or hypersurfaces.
We refer to \cite{yanagida1990existence,PrizziRybakowski2002} and the references therein for an overview and to \cite{miura2026thin,DeGaspari2026} for some very recent applications.
If the thin domain is written as a tubular neighborhood of a reference set \(\Gamma\), then after passing to normal coordinates and rescaling the normal variable, the Laplacian takes a Jacobian-weighted form in which the fast transverse diffusion still appears on the \(\varepsilon^{-2}\)-scale, but the geometry is now encoded by the curvature of \(\Gamma\). 
In particular, even for constant thickness, curvature produces additional lower-order drift and tangential correction terms, and the natural averaging operator becomes the cross-sectional average weighted by the metric Jacobian. 

\begin{figure}[H]
\centering
\begin{tikzpicture}[scale=1.05, line cap=round, line join=round, >=Latex]


\fill[blue!10,even odd rule]
plot[smooth cycle, tension=0.85] coordinates {
  (3.90,0.12)
  (2.55,2.20)
  (0.40,2.65)
  (-2.20,1.65)
  (-3.40,-0.18)
  (-2.35,-2.10)
  (-0.10,-2.55)
  (2.35,-1.78)
}
plot[smooth cycle, tension=0.85] coordinates {
  (2.65,0.02)
  (1.75,1.32)
  (0.28,1.62)
  (-1.35,0.95)
  (-2.08,-0.12)
  (-1.35,-1.18)
  (-0.05,-1.52)
  (1.48,-1.05)
};

\draw[thick]
plot[smooth cycle, tension=0.85] coordinates {
  (3.90,0.12)
  (2.55,2.20)
  (0.40,2.65)
  (-2.20,1.65)
  (-3.40,-0.18)
  (-2.35,-2.10)
  (-0.10,-2.55)
  (2.35,-1.78)
};

\draw[thick]
plot[smooth cycle, tension=0.85] coordinates {
  (2.65,0.02)
  (1.75,1.32)
  (0.28,1.62)
  (-1.35,0.95)
  (-2.08,-0.12)
  (-1.35,-1.18)
  (-0.05,-1.52)
  (1.48,-1.05)
};

\draw[thin]
plot[smooth cycle, tension=0.85] coordinates {
  (3.27,0.07)
  (2.15,1.76)
  (0.34,2.12)
  (-1.78,1.28)
  (-2.74,-0.15)
  (-1.85,-1.63)
  (-0.08,-2.04)
  (1.88,-1.40)
};

\node at (-0.15,-0.10) {$\Omega_\varepsilon$};



\coordinate (G) at (2.15,1.76);

\coordinate (Gout) at (2.55,2.20);
\coordinate (Gin)  at (1.75,1.32);

\draw[densely dashed] (Gin) -- (Gout);

\fill (G) circle (1.pt);

\draw[->] (G) -- ($(G)!0.82!(Gout)$);
\node[above right] at ($(G)!0.65!(Gout)$) {$\varepsilon\nu(s)$};


\node[above] at (-0.3,1.96) {$\Gamma(s)$};



\end{tikzpicture}
\caption{A thin tubular neighborhood \(\Omega_\varepsilon\) of a closed reference curve \(\Gamma\).}
\label{fig:tubular-neighborhood}
\end{figure}

\begin{proposition}[Fast-slow splitting on a thin tubular neighborhood]\label{prop:tubular}
Let \(I=\mathbb T_L\) be the one-dimensional torus of length \(L\), and let
\(\Gamma:I\to\mathbb R^2\) be a \(C^2\) arclength parametrization of a closed
regular curve. Let \(\nu(s)\) denote the unit normal and let \(\kappa(s)\) be
the signed curvature, chosen so that
\[
\partial_s\nu(s)=-\kappa(s)\Gamma'(s).
\]
For \(\varepsilon>0\) sufficiently small, assume
\[
a_\varepsilon(s,Y):=1-\varepsilon Y\kappa(s)>0
\qquad\text{for all }(s,Y)\in I\times(-1,1),
\]
and define
\[
\Omega_\varepsilon
:=
\{\Gamma(s)+\varepsilon Y\nu(s):s\in I,\ Y\in(-1,1)\},
\qquad
\Omega:=I\times(-1,1).
\]
Consider
\[
\partial_t\widehat u=\Delta \widehat u+f(\widehat u)
\qquad\text{in }\Omega_\varepsilon,
\]
with homogeneous Neumann boundary conditions on the two tubular boundary
components. Set
\[
u(s,Y,t):=\widehat u(\Gamma(s)+\varepsilon Y\nu(s),t).
\]
Then the rescaled problem on \(Q\) is
\[
\partial_t u
=
\frac{1}{\varepsilon^2a_\varepsilon}
\partial_Y\!\bigl(a_\varepsilon\partial_Yu\bigr)
+
\frac{1}{a_\varepsilon}
\partial_s\!\left(\frac{1}{a_\varepsilon}\partial_su\right)
+
f(u),
\]
with periodic boundary conditions in \(s\) and homogeneous Neumann boundary
conditions
\[
\partial_Yu(s,\pm1,t)=0.
\]
Define the weighted averaging operator
\[
M_\varepsilon u(s)
:=
\frac12\int_{-1}^1 u(s,Y)a_\varepsilon(s,Y)\,dY.
\]
Since
\[
\frac12\int_{-1}^1 a_\varepsilon(s,Y)\,dY=1,
\]
\(M_\varepsilon\) is the weighted projection onto functions independent of
\(Y\). Let
\[
(\mathcal E r)(s,Y):=r(s)
\]
denote the \(Y\)-independent extension of a function on \(I\), and write
\[
v:=M_\varepsilon u,\qquad
w:=u-\mathcal E v.
\]
Then \(v\) is independent of \(Y\) and \(M_\varepsilon w=0\). Moreover, the
rescaled problem can be written as
\[
\partial_t v
=
\partial_s\!\bigl(J_\varepsilon(s)\partial_s v\bigr)
+
R_\varepsilon w
+
M_\varepsilon f(v+w),
\]
\[
\partial_t w
=
\frac{1}{\varepsilon^2a_\varepsilon}
\partial_Y\!\bigl(a_\varepsilon\partial_Yw\bigr)
+
\frac{1}{a_\varepsilon}
\partial_s\!\left(\frac{1}{a_\varepsilon}\partial_sw\right)
-
\mathcal E(R_\varepsilon w)
+
\mathcal Q_\varepsilon v
+
(I-\mathcal E M_\varepsilon)f(v+w),
\]
where
\[
J_\varepsilon(s):=\frac12\int_{-1}^1
\frac{1}{a_\varepsilon(s,Y)}\,dY,
\]
\[
R_\varepsilon w
:=
M_\varepsilon\!\left[
\frac{1}{a_\varepsilon}
\partial_s\!\left(\frac{1}{a_\varepsilon}\partial_s w\right)
\right],
\]
and
\[
\mathcal Q_\varepsilon v
:=
(I-\mathcal E M_\varepsilon)\!\left[
\frac{1}{a_\varepsilon}
\partial_s\!\left(\frac{1}{a_\varepsilon}\partial_s v\right)
\right].
\]
In particular, if \(\kappa\equiv0\), then \(a_\varepsilon\equiv1\),
\(J_\varepsilon\equiv1\), \(R_\varepsilon w=0\), and
\(\mathcal Q_\varepsilon v=0\). Thus the system reduces to the flat-strip
splitting.
\end{proposition}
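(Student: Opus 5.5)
The plan has two stages, mirroring \Cref{prop: rescaled thin domain} and \Cref{prop:general-splitting}: first pull back the Laplacian to normal coordinates, then apply the weighted average and subtract.

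\textbf{Step 1: the rescaled equation.} Use the map $\Phi(s,Y)=\Gamma(s)+\varepsilon Y\nu(s)$. With $\partial_s\nu=-\kappa\Gamma'$, its derivatives are
\[
\partial_s\Phi=a_\varepsilon\Gamma',\qquad \partial_Y\Phi=\varepsilon\nu .
\]
The metric is therefore diagonal, $\mathrm{diag}(a_\varepsilon^2,\varepsilon^2)$, with volume factor $\varepsilon a_\varepsilon$. The Laplace--Beltrami formula
\[
\Delta=\frac1{\varepsilon a_\varepsilon}\Bigl[\partial_s\Bigl(\frac{\varepsilon a_\varepsilon}{a_\varepsilon^2}\partial_s\Bigr)+\partial_Y\Bigl(\frac{\varepsilon a_\varepsilon}{\varepsilon^2}\partial_Y\Bigr)\Bigr]
\]
gives the stated operator directly. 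The boundary components are $Y=\pm1$, where the normal is $\pm\nu$ and $\partial_\nu\widehat u=\varepsilon^{-1}\partial_Yu$. Hence the Neumann conditions become $\partial_Yu=0$. Note that, unlike the graph case, no conormal $\varepsilon^2$-term appears, because the coordinates are orthogonal. Periodicity in $s$ follows since $\Gamma$ is closed. The identity $\frac12\int_{-1}^1 a_\varepsilon\,dY=1$ is immediate because $\int_{-1}^1 Y\,dY=0$. It shows that $M_\varepsilon\mathcal E r=r$, so $M_\varepsilon$ is a projection, $v$ is independent of $Y$, and $M_\varepsilon w=0$.

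\textbf{Step 2: averaging the equation.} Apply $M_\varepsilon$ to the equation; this is legitimate since $a_\varepsilon$ is independent of $t$. The key cancellation is in the transverse term:
\[
M_\varepsilon\Bigl[\frac1{\varepsilon^2a_\varepsilon}\partial_Y(a_\varepsilon\partial_Yu)\Bigr]=\frac1{2\varepsilon^2}\bigl[a_\varepsilon\partial_Yu\bigr]_{Y=-1}^{Y=1}=0
\]
by the Neumann conditions. This is cleaner than the conormal computation in \Cref{prop:general-splitting}, because the weight $a_\varepsilon$ is exactly the Jacobian.

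For the tangential term, split $u=\mathcal Ev+w$. The $w$-part is by definition $R_\varepsilon w$. For the $v$-part, $\partial_sv$ is independent of $Y$, so
\[
M_\varepsilon\Bigl[\frac1{a_\varepsilon}\partial_s\Bigl(\frac1{a_\varepsilon}\partial_sv\Bigr)\Bigr]=\frac12\int_{-1}^1\partial_s\Bigl(\frac{\partial_sv}{a_\varepsilon}\Bigr)dY=\partial_s\bigl(J_\varepsilon\partial_sv\bigr).
\]
Here the $s$-derivative is pulled out of the $Y$-integral, which is justified for $\kappa\in C^1$ and sufficiently smooth $u$. Together these give the slow equation.

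\textbf{Step 3: the fast equation.} Compute $\partial_tw=\partial_tu-\mathcal E\partial_tv$ and substitute both equations. The transverse operator annihilates $\mathcal Ev$, so it acts only on $w$. The tangential operator applied to $\mathcal Ev$, minus $\mathcal E$ of its average, is exactly $\mathcal Q_\varepsilon v$. The tangential operator applied to $w$, minus $\mathcal E R_\varepsilon w$, gives the stated terms. The reaction terms combine to $(I-\mathcal EM_\varepsilon)f$. Finally, when $\kappa\equiv0$ all weights equal $1$, and the remaining terms vanish: $R_\varepsilon w=\partial_s^2M w=0$, and $\mathcal Q_\varepsilon v=(I-\mathcal EM)\partial_s^2v=0$.

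\textbf{Main obstacle.} The bookkeeping is routine; the only delicate point is the Laplace--Beltrami computation in Step 1. It needs the sign convention $\partial_s\nu=-\kappa\Gamma'$ so that $a_\varepsilon=1-\varepsilon Y\kappa$, and it must confirm the orthogonality $\partial_s\Phi\cdot\partial_Y\Phi=0$, which holds because $\Gamma'\perp\nu$. This orthogonality is what makes the boundary flux vanish exactly in Step 2.
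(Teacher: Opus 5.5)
Your proposal is correct and follows essentially the same route as the paper: the Laplace--Beltrami formula in the orthogonal normal coordinates $\Phi(s,Y)=\Gamma(s)+\varepsilon Y\nu(s)$ with volume factor $\varepsilon a_\varepsilon$, then the weighted average $M_\varepsilon$ reducing the transverse term to the vanishing boundary flux $\bigl[a_\varepsilon\partial_Y u\bigr]_{-1}^{1}$, the tangential average of $\mathcal E v$ giving $\partial_s(J_\varepsilon\partial_s v)$, and subtraction to obtain the $w$-equation. Your bookkeeping for $\mathcal Q_\varepsilon v$ and $\mathcal E(R_\varepsilon w)$ in the fast equation is, if anything, more explicit than the paper's.
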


\begin{proof}
Consider the change of variables
\[
\Phi_\varepsilon:\Omega\to\Omega_\varepsilon,
\qquad
\Phi_\varepsilon(s,Y)=\Gamma(s)+\varepsilon Y\nu(s).
\]
Since \(\Gamma\) is parametrized by arclength and
\(\partial_s\nu=-\kappa\Gamma'\), we have
\[
\partial_s\Phi_\varepsilon
=
(1-\varepsilon Y\kappa(s))\Gamma'(s)
=
a_\varepsilon(s,Y)\Gamma'(s),
\qquad
\partial_Y\Phi_\varepsilon
=
\varepsilon\nu(s).
\]
The coordinates are orthogonal, and the metric is
\[
g_\varepsilon
=
a_\varepsilon(s,Y)^2\,ds^2+\varepsilon^2\,dY^2,
\qquad
\sqrt{\det g_\varepsilon}=\varepsilon a_\varepsilon(s,Y).
\]
Hence the Laplace-Beltrami formula gives
\[
\Delta\widehat u
=
\frac{1}{a_\varepsilon}
\partial_s\!\left(\frac{1}{a_\varepsilon}\partial_su\right)
+
\frac{1}{\varepsilon^2a_\varepsilon}
\partial_Y\!\bigl(a_\varepsilon\partial_Yu\bigr).
\]
This proves the rescaled equation. Since the coordinate system is orthogonal
and the thin boundary components are given by \(Y=\pm1\), the homogeneous
Neumann condition on these components becomes
\[
\partial_Yu(s,\pm1,t)=0.
\]

We next apply the weighted average \(M_\varepsilon\). The transverse part drops
out because
\[
M_\varepsilon\!\left[
\frac{1}{\varepsilon^2a_\varepsilon}
\partial_Y(a_\varepsilon\partial_Yu)
\right]
=
\frac{1}{2\varepsilon^2}
\int_{-1}^1
\partial_Y(a_\varepsilon\partial_Yu)\,dY
=
\frac{1}{2\varepsilon^2}
\bigl[a_\varepsilon\partial_Yu\bigr]_{Y=-1}^{Y=1}
=
0.
\]
For the tangential part acting on the \(Y\)-independent function \(v\), we get
\[
M_\varepsilon\!\left[
\frac{1}{a_\varepsilon}
\partial_s\!\left(\frac{1}{a_\varepsilon}\partial_s v\right)
\right]
=
\frac12\int_{-1}^1
\partial_s\!\left(\frac{1}{a_\varepsilon}\partial_s v\right)\,dY
=
\partial_s\!\bigl(J_\varepsilon(s)\partial_s v\bigr).
\]
The remaining averaged tangential contribution is exactly \(R_\varepsilon w\).
This yields the equation for \(v\).

Subtracting the \(Y\)-independent extension of the averaged equation from the
full equation gives the equation for \(w\). The singular transverse term only
acts on \(w\), while the tangential part acting on \(v\) contributes
\(\mathcal Q_\varepsilon v\), and the averaged tangential part of the
\(w\)-equation is subtracted as \(\mathcal E(R_\varepsilon w)\). This gives
the stated system.

If \(\kappa\equiv0\), then \(a_\varepsilon\equiv1\). Therefore
\[
J_\varepsilon\equiv1,\qquad
R_\varepsilon w
=
M_\varepsilon(\partial_s^2w)
=
\partial_s^2(M_\varepsilon w)
=
0,
\]
and
\[
\mathcal Q_\varepsilon v
=
(I-\mathcal E M_\varepsilon)(\partial_s^2v)
=
0.
\]
Thus the flat-strip splitting is recovered.
\end{proof}

\begin{remark}[Open curves and end caps]
The statement of \Cref{prop:tubular} is written for a closed reference curve
in order to avoid additional boundary terms at the ends of the tube. If
\(I=(0,L)\) and the tubular neighborhood has end caps, then the physical
homogeneous Neumann condition also has to be transformed on the boundary
components \(s=0\) and \(s=L\). In the coordinates above this gives
\[
\partial_su=0\qquad\text{on }\{0,L\}\times(-1,1).
\]
After applying the weighted average, this condition induces coupled boundary
conditions for \(v\) and \(w\), because the weight \(a_\varepsilon(s,Y)\)
depends on \(s\). These boundary conditions are analogous to the lateral
boundary conditions in \Cref{sec:general-thin} and should be treated
separately. The periodic formulation avoids this additional boundary
analysis and isolates the transverse fast-slow mechanism.
\begin{figure}[ht]
\centering
\begin{tikzpicture}[scale=1.05, line cap=round, line join=round, >=Latex]

    \def\R{4.2}          
    \def\eps{0.55}       
    \def\thA{215}        
    \def\thB{330}        

    \fill[blue!10]
        ({(\R+\eps)*cos(\thA)},{(\R+\eps)*sin(\thA)})
        arc[start angle=\thA,end angle=\thB,radius=\R+\eps]
        --
        ({(\R-\eps)*cos(\thB)},{(\R-\eps)*sin(\thB)})
        arc[start angle=\thB,end angle=\thA,radius=\R-\eps]
        -- cycle;

    \draw[black,line width=1.0pt]
        ({(\R+\eps)*cos(\thA)},{(\R+\eps)*sin(\thA)})
        arc[start angle=\thA,end angle=\thB,radius=\R+\eps];

    \draw[black,line width=1.0pt]
        ({(\R-\eps)*cos(\thA)},{(\R-\eps)*sin(\thA)})
        arc[start angle=\thA,end angle=\thB,radius=\R-\eps];

    \draw[black,line width=1.0pt]
        ({(\R-\eps)*cos(\thA)},{(\R-\eps)*sin(\thA)})
        -- ({(\R+\eps)*cos(\thA)},{(\R+\eps)*sin(\thA)});

    \draw[black,line width=1.0pt]
        ({(\R-\eps)*cos(\thB)},{(\R-\eps)*sin(\thB)})
        -- ({(\R+\eps)*cos(\thB)},{(\R+\eps)*sin(\thB)});

    \draw[black,thin]
        ({\R*cos(\thA)},{\R*sin(\thA)})
        arc[start angle=\thA,end angle=\thB,radius=\R];

    \def\thM{278}
    \coordinate (G) at ({\R*cos(\thM)},{\R*sin(\thM)});
    \coordinate (Gin) at ({(\R-\eps)*cos(\thM)},{(\R-\eps)*sin(\thM)});
    \coordinate (Gout) at ({(\R+\eps)*cos(\thM)},{(\R+\eps)*sin(\thM)});

    \draw[densely dashed] (Gin) -- (Gout);

     \draw[->,thin] (G) -- ($(G)!0.75!(Gout)$);

    \node at (-1.0,-3.0) {$\Omega_\varepsilon$};
    \node at ($(G)+(-0.8,-0.3)$) {$\Gamma(s)$};
    \node at ($(G)!0.55!(Gout)+(0.53,0.15)$) {$\varepsilon \nu(s)$};


    \node[black] at ({(\R+\eps)*cos(\thA)-0.4},{(\R+\eps)*sin(\thA)-0.1}) {$D_-$};
    \node[black] at ({(\R+\eps)*cos(\thB)+0.35},{(\R+\eps)*sin(\thB)-0.1}) {$D_+$};

\end{tikzpicture}
\caption{A thin tubular neighborhood $\Omega_\varepsilon$.}
\label{fig:exact-tubular-domain}
\end{figure}
\end{remark}

\begin{remark}[Space curves in $\mathbb{R}^3$]
The same rescaling idea extends to tubular neighborhoods of curves embedded in $\mathbb{R}^3$, where the transverse variable is now two-dimensional. Let $\Gamma:I\to\mathbb{R}^3$ be an arclength parametrization of a smooth curve and choose an orthonormal normal frame $E_1(s),E_2(s)$. If one uses a parallel transported (Bishop) frame, then a tubular neighborhood with square cross-section can be written as
\[
\Omega_\varepsilon
=
\left\{
\Gamma(s)+\varepsilon\bigl(Y_1E_1(s)+Y_2E_2(s)\bigr):
s\in I,\ (Y_1,Y_2)\in[-1,1]^2
\right\}.
\]
In these coordinates the metric factor becomes
\[
a_\varepsilon(s,Y_1,Y_2):=1-\varepsilon\bigl(\kappa_1(s)Y_1+\kappa_2(s)Y_2\bigr),
\]
where $\kappa_1,\kappa_2$ are the curvature components with respect to the chosen frame, and the rescaled operator takes the form
\[
\partial_t u
=
\frac{1}{\varepsilon^2a_\varepsilon}
\left[
\partial_{Y_1}\!\bigl(a_\varepsilon \partial_{Y_1}u\bigr)
+
\partial_{Y_2}\!\bigl(a_\varepsilon \partial_{Y_2}u\bigr)
\right]
+
\frac{1}{a_\varepsilon}\partial_s\!\left(\frac{1}{a_\varepsilon}\partial_s u\right)
+
f(u).
\]
Thus the fast operator is now the weighted Neumann Laplacian on the two-dimensional cross-section, and the splitting from \Cref{sec:general-thin} carries over after replacing the one-dimensional averaging operator by
\[
M_\varepsilon u(s)
:=
\frac14\int_{-1}^1\int_{-1}^1
u(s,Y_1,Y_2)\,a_\varepsilon(s,Y_1,Y_2)\,dY_1\,dY_2.
\]

If one instead uses a circular cross-section and polar coordinates
\[
\Omega_\varepsilon
=
\left\{
\Gamma(s)+\varepsilon r\bigl(\cos\phi\,E_1(s)+\sin\phi\,E_2(s)\bigr):
s\in I,\ r\in(0,1),\ \phi\in(0,2\pi)
\right\},
\]
then
\[
a_\varepsilon(s,r,\phi)
=
1-\varepsilon r\bigl(\kappa_1(s)\cos\phi+\kappa_2(s)\sin\phi\bigr),
\]
and the rescaled equation becomes
\[
\partial_t u
=
\frac{1}{\varepsilon^2a_\varepsilon r}
\partial_r\!\bigl(a_\varepsilon r\,\partial_r u\bigr)
+
\frac{1}{\varepsilon^2a_\varepsilon r^2}
\partial_\phi\!\bigl(a_\varepsilon \partial_\phi u\bigr)
+
\frac{1}{a_\varepsilon}\partial_s\!\left(\frac{1}{a_\varepsilon}\partial_s u\right)
+
f(u).
\]
At \(r=0\) the usual regularity condition for polar coordinates is imposed,
and the physical Neumann condition on the lateral boundary of the tube becomes
\(\partial_r u=0\) at \(r=1\).
In this setting the natural weighted averaging operator is
\[
M_\varepsilon u(s)
:=
\frac{1}{\pi}
\int_0^{2\pi}\int_0^1
u(s,r,\phi)\,a_\varepsilon(s,r,\phi)\,r\,dr\,d\phi.
\]
Hence the overall structure is unchanged: the transverse diffusion still acts on the $\varepsilon^{-2}$-scale, while the geometry enters through curvature-dependent lower-order corrections. If one uses the Frenet frame instead of a Bishop frame, additional torsion-induced first-order coupling terms appear, but these remain lower-order geometry contributions and do not alter the basic fast-slow mechanism.
\end{remark}

Once the equation is written in rescaled normal coordinates, one again obtains
a fast-slow system with singular transverse diffusion on the
\(\varepsilon^{-2}\)-scale and an \(O(1)\) tangential part carrying
geometry-dependent correction terms. The following lemma records the basic
semigroup estimate for the projected tubular fast operator.

\begin{lemma}[Fast operator on the tubular mean-zero space]
\label{lem:tubular-fast-operator}
Let
\[
H_\varepsilon:=L^2(\Omega,a_\varepsilon\,ds\,dY),
\qquad
\mathcal X_\varepsilon:=\ker M_\varepsilon\subset H_\varepsilon,
\]
and let
\[
\mc V_\varepsilon
:=
\{w\in H^1(\Omega):M_\varepsilon w=0,\ \partial_Yw=0
\text{ on }Y=\pm1\text{ in the weak Neumann sense}\}.
\]
Assume periodic boundary conditions in \(s\). Define the symmetric form
\[
\mathfrak a_\varepsilon(w,z)
:=
-\frac{1}{\varepsilon^2}
\int_\Omega a_\varepsilon\,\partial_Yw\,\partial_Yz\,ds\,dY
-
\int_\Omega \frac{1}{a_\varepsilon}\,\partial_sw\,\partial_sz\,ds\,dY,
\qquad
w,z\in \mc V_\varepsilon.
\]
Then \(\mathfrak a_\varepsilon\) is densely defined, closed, symmetric and
non-positive on \(\mathcal X_\varepsilon\). Hence it generates a
self-adjoint, sectorial operator
\[
\mathcal A_\varepsilon:D(\mathcal A_\varepsilon)\subset
\mathcal X_\varepsilon\to\mathcal X_\varepsilon
\]
and an analytic contraction \(C_0\)-semigroup on \(\mathcal X_\varepsilon\).
For smooth functions in the operator domain,
\[
\mathcal A_\varepsilon w
=
\frac{1}{\varepsilon^2a_\varepsilon}
\partial_Y\!\bigl(a_\varepsilon\partial_Yw\bigr)
+
(I-\mathcal E M_\varepsilon)
\left[
\frac{1}{a_\varepsilon}
\partial_s\!\left(\frac{1}{a_\varepsilon}\partial_sw\right)
\right].
\]
Equivalently,
\[
\mathcal A_\varepsilon w
=
\frac{1}{\varepsilon^2a_\varepsilon}
\partial_Y\!\bigl(a_\varepsilon\partial_Yw\bigr)
+
\frac{1}{a_\varepsilon}
\partial_s\!\left(\frac{1}{a_\varepsilon}\partial_sw\right)
-
\mathcal E(R_\varepsilon w).
\]
Moreover, there exists \(c_0>0\), independent of sufficiently small
\(\varepsilon\), such that
\[
\sigma(\mathcal A_\varepsilon)
\subset
\left\{z\in\mathbb C:\operatorname{Re}z\le
-\frac{c_0}{\varepsilon^2}\right\}.
\]
\end{lemma}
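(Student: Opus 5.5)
The plan is to realize $\mathcal A_\varepsilon$ through Kato's representation theorem for closed semibounded forms on the Hilbert space $\mathcal X_\varepsilon$, with the inner product of $H_\varepsilon$. This mirrors the proof of \Cref{lem:fast-operator-thin-domain}. The remaining steps are then an integration-by-parts identification of the strong form and a fibrewise weighted Poincar\'e inequality for the spectral bound.

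\textbf{Step 1: uniform bounds on the weight and closedness of the form.}
Since $\Gamma$ is a $C^2$ closed curve, $\kappa$ is bounded. For $\varepsilon\le\varepsilon_0$ small we may therefore assume $\tfrac12\le a_\varepsilon\le\tfrac32$ on $\Omega$. Consequently $H_\varepsilon$ and $L^2(\Omega)$ carry equivalent norms, with constants independent of $\varepsilon$.

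The functional $w\mapsto \int_\Omega a_\varepsilon w\,ds\,dY$ is continuous, so $M_\varepsilon$ is bounded and $\mathcal X_\varepsilon=\ker M_\varepsilon$ is closed in $H_\varepsilon$.

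I would read the Neumann condition in $\mathcal V_\varepsilon$ as the natural (variational) one. In that reading $\mathcal V_\varepsilon=H^1_{\rm per}(\Omega)\cap\ker M_\varepsilon$, which is dense in $\mathcal X_\varepsilon$. To see density, take the smooth mean-zero functions: apply $I-\mathcal E M_\varepsilon$ to $C^\infty$ approximants.

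For fixed $\varepsilon$, the quantity $\|w\|_{H_\varepsilon}^2-\mathfrak a_\varepsilon(w,w)$ is equivalent to $\|w\|_{H^1}^2$, because $a_\varepsilon$ and $1/a_\varepsilon$ are bounded above and below. Hence $-\mathfrak a_\varepsilon$ is closed, symmetric and nonnegative.

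Kato's theorem then yields a self-adjoint operator $\mathcal A_\varepsilon\le0$ on $\mathcal X_\varepsilon$. By the spectral theorem, $\mathcal A_\varepsilon$ is sectorial of angle zero and generates an analytic contraction semigroup.

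\textbf{Step 2: identification of the strong form.}
Take $w\in\mathcal V_\varepsilon$ smooth with $\partial_Y w=0$ at $Y=\pm1$, and $z\in\mathcal V_\varepsilon$. Integrate by parts in $Y$ (the boundary terms vanish) and in $s$ (periodicity). This gives
\[
\mathfrak a_\varepsilon(w,z)=(Tw,z)_{H_\varepsilon},\qquad
Tw:=\frac{1}{\varepsilon^2a_\varepsilon}\partial_Y(a_\varepsilon\partial_Yw)+\frac1{a_\varepsilon}\partial_s\Bigl(\frac1{a_\varepsilon}\partial_sw\Bigr).
\]
Because $z$ ranges only over $\ker M_\varepsilon$, the representing operator is $P_\varepsilon T$, where $P_\varepsilon$ denotes the $H_\varepsilon$-orthogonal projection onto $\mathcal X_\varepsilon$.

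I would then check that $P_\varepsilon=I-\mathcal E M_\varepsilon$. First, $(\mathcal E r,z)_{H_\varepsilon}=2\int_I r\,M_\varepsilon z\,ds$, so $\mathcal X_\varepsilon$ is exactly the orthogonal complement of the $Y$-independent functions. Second, $M_\varepsilon\mathcal E=\mathrm{Id}$, by the normalization $\tfrac12\int_{-1}^1 a_\varepsilon\,dY=1$.

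By the computation in the proof of \Cref{prop:tubular}, $M_\varepsilon$ annihilates the transverse part of $Tw$. Therefore the projection acts only on the tangential term, and
\[
(I-\mathcal E M_\varepsilon)\Bigl[\tfrac1{a_\varepsilon}\partial_s\bigl(\tfrac1{a_\varepsilon}\partial_sw\bigr)\Bigr]=\tfrac1{a_\varepsilon}\partial_s\bigl(\tfrac1{a_\varepsilon}\partial_sw\bigr)-\mathcal E(R_\varepsilon w).
\]
This gives both displayed formulas for $\mathcal A_\varepsilon$.

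\textbf{Step 3: uniform spectral bound.}
Fix $s$ and let $m(s)$ denote the unweighted mean of $w(s,\cdot)$. The weighted mean minimizes the weighted $L^2$ distance to constants, and $M_\varepsilon w=0$ means that weighted mean is $0$. Combining this with the Neumann Poincar\'e inequality on $(-1,1)$ (constant $4/\pi^2$) gives
\[
\int_{-1}^1 a_\varepsilon w^2\,dY\le\int_{-1}^1 a_\varepsilon (w-m)^2\,dY\le\tfrac32\cdot\tfrac4{\pi^2}\int_{-1}^1|\partial_Yw|^2\,dY\le\tfrac{12}{\pi^2}\int_{-1}^1 a_\varepsilon|\partial_Yw|^2\,dY.
\]
Integrating over $s$ and dropping the nonpositive tangential term, we obtain
\[
-\mathfrak a_\varepsilon(w,w)\ge \frac{c_0}{\varepsilon^2}\|w\|_{H_\varepsilon}^2,\qquad c_0=\frac{\pi^2}{12}.
\]
Since $\mathcal A_\varepsilon$ is self-adjoint, this coercivity yields $\sigma(\mathcal A_\varepsilon)\subset(-\infty,-c_0/\varepsilon^2]$, which is the claimed inclusion.

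\textbf{Expected main obstacle.}
I expect the delicate point to be Step 2. One has to identify the projection correctly in the $\varepsilon$-dependent weighted geometry, noting that $\mathcal X_\varepsilon$ itself moves with $\varepsilon$. One also has to make sure the strong formula is asserted only for smooth elements of the domain, as stated, rather than claiming $H^2$-regularity of $D(\mathcal A_\varepsilon)$. That regularity would require a separate elliptic argument analogous to \Cref{rem:strong-form-fast-operator}.
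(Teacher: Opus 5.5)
Your proposal is correct and follows essentially the same route as the paper's proof (Kato's representation theorem on $\mathcal X_\varepsilon$, integration by parts against weighted-mean-zero test functions, and a uniform fibrewise Neumann Poincar\'e inequality). Two steps the paper only sketches are made explicit in your version: the identification of $I-\mathcal E M_\varepsilon$ as the $H_\varepsilon$-orthogonal projection, and the weighted-mean minimization giving $c_0=\pi^2/12$. One caveat, which the paper shares: your density argument and the strong formula both tacitly need $\kappa$ to be weakly differentiable (e.g.\ $\Gamma\in C^3$); for a merely $C^2$ curve, as allowed in \Cref{prop:tubular}, the space $\mathcal V_\varepsilon$ need not even be dense in $\mathcal X_\varepsilon$.
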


\begin{proof}
For sufficiently small \(\varepsilon\), the weight satisfies
\[
0<c_*\le a_\varepsilon(s,Y)\le c^*<\infty
\qquad\text{uniformly on }\Omega.
\]
Thus \(H_\varepsilon\) is uniformly equivalent to \(L^2(\Omega)\), and
\(\mc V_\varepsilon\) is a closed subspace of \(H^1(\Omega)\). The form
\(\mathfrak a_\varepsilon\) is therefore densely defined, closed, symmetric
and non-positive on \(\mathcal X_\varepsilon\). By the representation theorem
for closed symmetric forms, it defines a self-adjoint non-positive operator
\(\mathcal A_\varepsilon\), and hence \(\mathcal A_\varepsilon\) is sectorial
of angle \(0\) and generates an analytic contraction semigroup.

For smooth \(w,z\in \mc V_\varepsilon\), integration by parts gives
\[
\mathfrak a_\varepsilon(w,z)
=
\int_\Omega
\left[
\frac{1}{\varepsilon^2a_\varepsilon}
\partial_Y(a_\varepsilon\partial_Yw)
+
\frac{1}{a_\varepsilon}
\partial_s\!\left(\frac{1}{a_\varepsilon}\partial_sw\right)
\right]
z\,a_\varepsilon\,ds\,dY.
\]
Since the test function \(z\) has weighted mean zero, adding or subtracting
the \(Y\)-independent function
\[
\mathcal E(R_\varepsilon w)
=
\mathcal E M_\varepsilon
\left[
\frac{1}{a_\varepsilon}
\partial_s\!\left(\frac{1}{a_\varepsilon}\partial_sw\right)
\right]
\]
does not change the pairing with \(z\). This identifies the strong expression
for \(\mathcal A_\varepsilon\) on smooth elements of the operator domain.

It remains to prove the uniform spectral gap. The weighted mean-zero
condition implies a uniform transverse Poincare inequality: there exists
\(C_P>0\), independent of sufficiently small \(\varepsilon\), such that
\[
\|w\|_{H_\varepsilon}^2
\le
C_P\|\partial_Yw\|_{H_\varepsilon}^2,
\qquad
w\in \mc V_\varepsilon.
\]
Indeed, this follows from the usual one-dimensional Neumann Poincare
inequality on \((-1,1)\), the uniform equivalence of the weights
\(a_\varepsilon\), and the fact that \(a_\varepsilon=1+O(\varepsilon)\)
uniformly. Therefore
\[
-\mathfrak a_\varepsilon(w,w)
\ge
\frac{1}{\varepsilon^2}
\int_\Omega a_\varepsilon|\partial_Yw|^2\,ds\,dY
\ge
\frac{1}{C_P\varepsilon^2}
\|w\|_{H_\varepsilon}^2.
\]
The spectral theorem for self-adjoint operators gives the claimed spectral
inclusion with \(c_0=C_P^{-1}\).
\end{proof}

\begin{remark}[Identification with a fixed fast space]
The weighted fast space \(\mathcal X_\varepsilon=\ker M_\varepsilon\) depends
on \(\varepsilon\). Hence the abstract framework from \Cref{sec:manifolds}
does not apply literally without an identification of these spaces. A
convenient fixed reference space is
\[
\mathcal X_0:=\left\{w\in L^2(\Omega):\frac12\int_{-1}^1w(s,Y)\,dY=0
\text{ for a.e. }s\right\}.
\]
For \(z\in\mathcal X_0\), define
\[
(\mathcal T_\varepsilon z)(s,Y)
:=
z(s,Y)-M_\varepsilon z(s).
\]
Then \(\mathcal T_\varepsilon z\in\mathcal X_\varepsilon\), and
\(\mathcal T_\varepsilon:\mathcal X_0\to\mathcal X_\varepsilon\) is an
isomorphism for sufficiently small \(\varepsilon\). Its inverse is
\[
(\mathcal S_\varepsilon w)(s,Y)
:=
w(s,Y)-\frac12\int_{-1}^1w(s,Y')\,dY',
\qquad
w\in\mathcal X_\varepsilon.
\]
Moreover, \(\mathcal T_\varepsilon=I+O(\varepsilon)\) in operator norm on
\(L^2(Q)\), and the corresponding Sobolev-space maps are uniformly bounded
on bounded \(\varepsilon\)-intervals. Thus the tubular fast operators can be
transported to a fixed fast space before applying the abstract
Lyapunov-Perron or asymptotic framework.
\end{remark}

\begin{remark}[Remaining challenges for applying the abstract framework]
\label{rem:tubular-remaining-challenges}
The tubular splitting in \Cref{prop:tubular} has the same formal
fast-slow structure as the thin-domain splitting studied in
\Cref{sec:manifolds}. Nevertheless, applying the abstract results of
\Cref{sec:manifolds} to the tubular setting still requires several additional
verification steps.

First, the fast spaces depend on \(\varepsilon\), since the mean-zero
condition is defined through the weighted average \(M_\varepsilon\). The
identification map \(\mathcal T_\varepsilon:\mathcal X_0\to\mathcal X_\varepsilon\)
therefore has to be used to transport the fast equation to a fixed Hilbert
space. After this transport, one has to verify that the resulting family of
fast operators satisfies the assumptions of \Cref{ass: operators} uniformly,
including the sectorial estimates, the exponential stability, and the
corresponding fractional-domain estimates.

Second, the slow operator is no longer simply the flat one-dimensional
Neumann Laplacian. The leading averaged tangential operator is
\[
B_\varepsilon v
:=
\partial_s\!\bigl(J_\varepsilon(s)\partial_s v\bigr),
\qquad
J_\varepsilon(s)
=
\frac12\int_{-1}^1\frac{1}{a_\varepsilon(s,Y)}\,dY .
\]
Thus one has to choose an \(\varepsilon\)-dependent spectral splitting of the
slow space which is compatible with \(B_\varepsilon\), prove that the
spectral projections commute with the slow operator, and establish the
high-mode resolvent and decay estimates required in
\Cref{ass: splitting slow comp}. Since \(B_\varepsilon=B_0+O(\varepsilon^2)\) for
smooth closed curves, this should follow from perturbation theory for
self-adjoint elliptic operators on the circle, but the details have to be
checked.

Third, the curvature-dependent terms
\[
R_\varepsilon w
=
M_\varepsilon\!\left[
\frac{1}{a_\varepsilon}
\partial_s\!\left(\frac{1}{a_\varepsilon}\partial_s w\right)
\right],
\qquad
\mathcal Q_\varepsilon v
=
(I-\mathcal E M_\varepsilon)
\left[
\frac{1}{a_\varepsilon}
\partial_s\!\left(\frac{1}{a_\varepsilon}\partial_s v\right)
\right],
\]
must be estimated in the same spaces used for the abstract nonlinearities.
In particular, one has to prove that these terms are lower order with respect
to the singular transverse operator and that they are uniformly Lipschitz on
the relevant interpolation spaces. These estimates are the tubular analogue
of the geometry-induced estimates in the variable-thickness case.

Fourth, the weighted projection \(M_\varepsilon\) and the extension
\(\mathcal E\) have to be shown to interact well with the nonlinear reaction
term. For globally Lipschitz nonlinearities this is straightforward, but for
polynomial reaction terms one would again need either a priori bounds or a
cut-off argument, as in the flat and variable-thickness settings.

Finally, to apply the asymptotic expansion results from
\Cref{sec:reduced}, one also has to expand the geometric quantities
\(a_\varepsilon\), \(M_\varepsilon\), \(B_\varepsilon\), \(R_\varepsilon\),
and \(\mathcal Q_\varepsilon\) in powers of \(\varepsilon\), with remainders
controlled in the chosen operator norms. In the planar closed-curve case this
is mainly a curvature-dependent perturbation problem. For space curves or
higher-dimensional tubes, additional torsion and normal-frame terms appear
and have to be included in the same estimates.

Thus \Cref{prop:tubular} and \Cref{lem:tubular-fast-operator} provide the
correct structural and fast-semigroup input, but a full application of the
abstract slow-manifold theorem requires, in addition, a fixed-space
formulation, a compatible slow spectral splitting, uniform bounds for the
curvature-induced coupling terms, and a controlled expansion of the weighted
geometric operators.
\end{remark}

\section{Discussion and outlook}\label{sec:discussion}

The main structural message of the paper is that thin-domain reduction is
governed by the interaction between boundary conditions, transverse spectral
structure, and geometry. Homogeneous Neumann conditions on the thin faces form
the distinguished regime for genuine lower-dimensional reduction, because
they preserve the transverse zero mode. Homogeneous Dirichlet conditions
remove this mode and therefore force decay on the fast transverse time scale,
whereas inhomogeneous boundary data may generate liftings, singular forcing
terms, or nontrivial transverse profiles. Thus the first step in any
thin-domain reduction is to understand which boundary conditions preserve a
slow averaged variable.

For thin domains with variable thickness, the averaging procedure leads to a
fast-slow system in which the averaged variable evolves on the base domain
and the mean-zero transverse component is acted on by a strongly stable
operator on the \(\varepsilon^{-2}\)-scale. The variable geometry does not
merely perturb the flat equation: it also enters through transformed boundary
conditions and trace terms, which produce the geometry-induced coupling in the
averaged equation. The construction in \Cref{sec:general-thin,sec:manifolds}
therefore shows that a valid reduction requires both the transverse spectral
gap and a trace-compatible functional-analytic framework.

On the dynamical side, this work identifies two complementary reduction
mechanisms. The splitting-based Lyapunov-Perron construction gives an exact
slow manifold provided the abstract hypotheses from \Cref{sec:manifolds} are
satisfied. In particular, one needs a uniformly stable fast operator, suitable
sectorial estimates, globally Lipschitz or localized nonlinearities, and a
spectral splitting of the slow variable compatible with the unbounded slow
operator. This last point is the main dynamical obstruction: without such a
splitting, the slow evolution cannot in general be continued backward on the
full slow phase space.

The second mechanism is the asymptotic construction from the invariance
equation. Under the assumptions of \Cref{sec:reduced}, the approximate graph
obtained by solving the coefficient equations gives a controlled approximation
to the full dynamics on finite time intervals. In the thin-strip setting,
\Cref{subsec:thin-domain-approximation-theorem} proves this statement to finite
order in the lifted variables by combining the form expansion
\(\mathfrak a_\varepsilon=\mathfrak a_0+\varepsilon^2\mathfrak a_1\) with the
conormal coefficient hierarchy. More generally, when an exact slow manifold
exists, the same expansion approximates the exact graph in the relevant
topology. What remains open beyond the present theorem is the reliability of
such approximate reduced objects in regimes where neither the exact
Lyapunov--Perron construction nor the finite-order conormal closure is
available.

The heterogeneous Schnakenberg example illustrates the practical role of the
first transverse correction. The leading averaged equation captures the
dominant one-dimensional pattern along the base interval, but it cannot
recover the \(Y\)-dependent residual. The first-order transverse correction
does recover this residual to leading order. In the numerical comparison we
deliberately suppress the auxiliary split fast component of the averaged
variable by setting \(v_F=0\). Hence the computations should be read as a test
of the asymptotic transverse reconstruction, rather than as a direct numerical
realization of the full split Lyapunov--Perron manifold.

Several extensions remain natural. First, the theory should be developed for
higher-dimensional base domains. The formal splitting is similar, but the
analysis requires higher-dimensional versions of the trace estimates,
elliptic regularity results, and spectral decompositions used here. Second,
one can study more general non-flat geometries, including curved or tubular
domains. The tubular calculation in \Cref{sec:tubular domains} shows that the same
fast transverse mechanism persists after replacing the flat average by a
Jacobian-weighted average. A full application of the abstract slow-manifold
results in that setting, however, still requires the fixed-space
identification of the weighted mean-zero spaces, a compatible tangential
spectral splitting, and uniform control of the curvature- and torsion-induced
coupling terms.

A further direction is to incorporate boundary conditions that generate
nontrivial liftings or boundary layers. The analysis in \Cref{sec:boundary}
shows that inhomogeneous Neumann data can create singular averaged forcing
unless an appropriate compatibility scaling is imposed, while inhomogeneous
Dirichlet data may leave an order-one transverse profile. Such regimes are
not merely perturbations of the homogeneous Neumann setting; they may require
a different choice of slow variables or a renormalized limit problem.

Finally, an important open problem is the combination of the present
slow-manifold viewpoint with homogenization in thin domains with oscillatory
thickness, for instance
\[
Q_\varepsilon
=
\{(x,y):x\in\Omega,\ 0<y<\varepsilon g(x,x/\varepsilon)\}.
\]
See \cite{arrieta2011semilinear,arrieta2011homogenization}. In this regime,
the rescaling used in \Cref{sec:general-thin} produces coefficients depending
on both \(x\) and \(x/\varepsilon\). The transformed tangential derivatives
and the transformed Neumann condition then contain additional singular
oscillatory contributions. Consequently, the one-scale averaging argument of
the present paper is no longer sufficient by itself. A satisfactory reduction
theory should combine the fast transverse relaxation mechanism with a
two-scale homogenization analysis. Understanding how invariant-manifold
corrections interact with homogenized geometry-induced terms appears to be a
promising direction for future work.


\appendix

\section{Appendix: Uniform conormal estimates for the projected fast operator and finite-order liftings}
\label{app:uniform-parameter-elliptic-estimate}

In this appendix we isolate the conormal regularity inputs used in
\Cref{subsec:thin-domain-application} and
\Cref{subsec:thin-domain-approximation-theorem}. The first input is the
\(H^2\)-level parameter-elliptic estimate used in
Lemma~\ref{lem:trace-compatible-fast-smoothing}, which yields the uniform
graph-norm estimate for the projected fast operator. The second input is the
finite-order conormal trace lifting used in the coefficient hierarchy of
\Cref{lem:form-expansion-conormal-hierarchy}. The purpose is not to reprove the
full parameter-dependent elliptic regularity theory. Rather, we state the
precise conormal estimates needed here, verify that the geometry-induced
projection term is lower order at the graph-norm level, and record the
finite-order trace right inverses used in
\Cref{subsec:thin-domain-approximation-theorem}.

The estimate used in \Cref{subsec:thin-domain-application} is
\begin{equation}
\label{eq:uniform-parameter-elliptic-estimate-app}
\|u\|_{H^2(Q)}
\le
C\bigl(
\|L_\varepsilon u\|_{\mathcal X}
+
\|u\|_{\mathcal X}
\bigr),
\qquad
u\in D(L_\varepsilon),
\end{equation}
where \(L_\varepsilon:=\varepsilon^{-2}A_\varepsilon\), and where the constant
\(C\) is independent of \(0<\varepsilon\le \varepsilon_0\).

We recall the notation
\[
Q=(0,L)\times(0,1),
\qquad
q(x):=\frac{g'(x)}{g(x)},
\qquad
\mathcal D:=\partial_x-Yq(x)\partial_Y,
\]
and
\[
Mu(x):=\int_0^1u(x,Y)\,dY,
\qquad
\mathcal X:=\ker M.
\]
The projected fast operator has the formal strong expression
\begin{align*}
  L_\varepsilon u
=
\frac1{\varepsilon^2g(x)^2}\partial_Y^2u
+
\mathcal D^2u
-
\mathcal E(R_gu),
\end{align*}
where
\[
R_gu
:=
-\frac1g\partial_x\bigl(g'u(\cdot,1)\bigr),
\qquad
\mathcal E\phi(x,Y):=\phi(x).
\]
The corresponding homogeneous conormal boundary conditions are
\[
\partial_Yu=0
\qquad\text{on }Y=0,
\]
\[
\partial_Yu-\varepsilon^2gg'\mathcal Du=0
\qquad\text{on }Y=1,
\]
and
\[
\mathcal Du=0
\qquad\text{on }\partial\Omega\times(0,1).
\]

\subsection*{A.1. The local conormal problem}

We first remove the projection term. Define the local operator
\begin{align}
\mathscr L_\varepsilon u
:=
\frac1{\varepsilon^2g(x)^2}\partial_Y^2u+\mathcal D^2u.
\label{eq:local-operator-appendix}
\end{align}
Thus
\[
L_\varepsilon u
=
\mathscr L_\varepsilon u-\mathcal E(R_gu).
\]

We shall use the following standard parameter-elliptic regularity theorem.
It is a scalar second-order version of the Agmon-Douglis-Nirenberg
elliptic estimate with parameter, combined with the regularity theory for
polygonal domains with compatible conormal boundary conditions; see
\cite{AgmonDouglisNirenberg1959,AgranovichVishik1964,Grisvard1985,McLean2000}.

The regularity result used below is not new as an abstract elliptic theorem.
It is a standard consequence of parameter-dependent
Agmon-Douglis-Nirenberg estimates for scalar second-order elliptic boundary
value problems with conormal boundary conditions, together with regularity
theory for compatible conormal problems on polygonal domains. The role of the
following discussion is to record why the present rescaled operator
\[
\mathscr L_\varepsilon
=
\varepsilon^{-2}g^{-2}\partial_Y^2+\mathcal D^2
\]
fits this standard framework uniformly for \(\lambda=\varepsilon^{-1}\), and
why the endpoint condition \(g'(0)=g'(L)=0\) removes oblique corner couplings.
The nonlocal projection term in the projected fast operator is then treated
separately by a trace interpolation estimate.

\begin{theorem}[Uniform parameter-elliptic conormal regularity]
\label{thm:uniform-local-conormal-regularity}
Let
\[
Q=(0,L)\times(0,1),\qquad
q(x):=\frac{g'(x)}{g(x)},\qquad
\mathcal D:=\partial_x-Yq(x)\partial_Y,
\]
and assume
\[
g\in C^3([0,L]),\qquad
0<g_*\le g(x)\le g^*,
\qquad
g'(0)=g'(L)=0.
\]
Set
\[
\mathscr L_\varepsilon u
:=
\frac1{\varepsilon^2g(x)^2}\partial_Y^2u+\mathcal D^2u .
\]
Let \(\mathscr L_\varepsilon\) be the variational conormal realization on
\(L^2(Q)\) with boundary conditions
\[
\partial_Yu=0
\qquad\text{on }Y=0,
\]
\[
\partial_Yu-\varepsilon^2gg'\mathcal D u=0
\qquad\text{on }Y=1,
\]
and
\[
\mathcal D u=0
\qquad\text{on }\partial\Omega\times(0,1).
\]
Then there exist \(C>0\) and \(\varepsilon_0>0\), independent of
\(0<\varepsilon\le\varepsilon_0\), such that
\[
D(\mathscr L_\varepsilon)\subset H^2(Q),
\]
and
\[
\|u\|_{H^2(Q)}
\le
C\bigl(
\|\mathscr L_\varepsilon u\|_{L^2(Q)}
+
\|u\|_{L^2(Q)}
\bigr),
\qquad
u\in D(\mathscr L_\varepsilon).
\]
Moreover, the conormal boundary conditions hold in the trace sense for every
\(u\in D(\mathscr L_\varepsilon)\).
\end{theorem}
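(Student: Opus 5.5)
The plan is a Rellich-type energy identity: square the equation, integrate by parts in the cross term, and exploit the factor \(\varepsilon^{-2}\) in front of the good terms to absorb all geometric commutators. The estimate is first proved \emph{a priori} for \(u\in H^2(Q)\cap D(\mathscr L_\varepsilon)\). Qualitative \(H^2\)-regularity of the domain is settled separately, for each fixed \(\varepsilon\). For this I would use the standard regularity theory for second-order conormal problems on polygons \cite{Grisvard1985}. The hypothesis \(g'(0)=g'(L)=0\) gives \(q=0\) at \(x=0,L\), so \(\mathcal D=\partial_x\) on the lateral sides. The upper conormal condition also reduces to \(\partial_Y u=0\) near the corners. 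Hence every corner of \(Q\) is a right-angle Neumann--Neumann corner, which carries no singular functions. Local \(H^2\)-regularity along each side then follows from tangential difference quotients. Uniformity in \(\varepsilon\) is the real content of the theorem.

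For the a priori bound, set \(f:=\mathscr L_\varepsilon u\) and expand \(\int_Q |f|^2 g\,dx\,dY\). This gives
\[
\varepsilon^{-4}\|g^{-2}\partial_Y^2u\|^2+\|\mathcal D^2u\|^2+2\varepsilon^{-2}\bigl(g^{-2}\partial_Y^2u,\mathcal D^2u\bigr)_g .
\]
In the cross term I would integrate by parts once in \(Y\) and once along \(\mathcal D\), whose adjoint with respect to \(g\,dx\,dY\) is \(-\mathcal D\) plus a zeroth-order term plus boundary contributions. The leading part of the result is \(\varepsilon^{-2}\|g^{-1/2}\partial_Y\mathcal Du\|^2\). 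What remains is of three kinds.
\begin{itemize}
\item \emph{Commutators.} These come from \([\partial_Y,\mathcal D]=-q\partial_Y\) and from the \(x\)-dependence of \(g^{-2}\). They are bounded by \(\varepsilon^{-2}\|\partial_Y^2u\|\,\|\partial_Y u\|\), \(\varepsilon^{-2}\|\partial_Y\mathcal Du\|\,\|\partial_Yu\|\) and similar products.
\item \emph{Boundary terms at \(Y=0\).} These vanish because \(\partial_Yu=0\) there.
\item \emph{Boundary terms at \(Y=1\).} These contain \(\partial_Yu|_{Y=1}\). Substituting the conormal condition \(\partial_Yu=\varepsilon^2gg'\mathcal Du\) cancels the prefactor \(\varepsilon^{-2}\), leaving \(O(1)\) traces such as \(\int_\Omega g'\,\mathcal Du\,\mathcal D^2u\,dx\) at \(Y=1\).
\item \emph{Lateral boundary terms.} These vanish because \(\mathcal Du=0\) and \(q=0\) on \(\partial\Omega\times(0,1)\).
\end{itemize}

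The remaining terms are handled as follows. The commutators are absorbed by Young's inequality together with the weak bound \(\|\partial_Yu\|^2\le C\varepsilon^2(\|f\|\,\|u\|+\|u\|^2)\), obtained by testing the equation with \(u\). For the \(Y=1\) trace terms I would rewrite \(\mathcal D^2u\) through \(\partial_x\) of the trace \(\mathcal Du(\cdot,1)\) and integrate by parts in \(x\); the corner condition \(g'=0\) kills the endpoint terms. The remainder is bounded by \(C\|\mathcal Du(\cdot,1)\|_{L^2(\Omega)}^2\). By the trace inequality this is at most \(\eta\|\partial_Y\mathcal Du\|^2+C_\eta\|\mathcal Du\|^2\), with \(\|\mathcal Du\|^2\lesssim\|f\|\|u\|+\|u\|^2\) from the energy identity. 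The first piece is absorbed into \(\varepsilon^{-2}\|\partial_Y\mathcal Du\|^2\) for \(\varepsilon\le\varepsilon_0\). This yields
\[
\varepsilon^{-2}\|\partial_Y^2u\|+\varepsilon^{-1}\|\partial_Y\mathcal Du\|+\|\mathcal D^2u\|\le C\bigl(\|f\|+\|u\|\bigr).
\]

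Finally, the full \(H^2\)-norm is recovered from the identities \(\partial_x=\mathcal D+Yq\partial_Y\) and \(\partial_x^2=\mathcal D^2+\) terms in \(\partial_Y\mathcal D\), \(\partial_Y^2\), \(\partial_Y\) with \(C^1\) coefficients. Each of these is controlled by the previous display, since \(\varepsilon\le1\), so no \(\varepsilon\)-loss occurs. The trace statement follows from \(u\in H^2(Q)\) together with the variational identity, as in Remark~\ref{rem:strong-form-fast-operator}.

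The main obstacle I expect is the bookkeeping of the \(Y=1\) boundary terms and the corners in the cross term. This is exactly where \(\varepsilon^{-2}\) must be cancelled by the \(\varepsilon^2\) in the conormal condition, and where \(g'(0)=g'(L)=0\) is needed to prevent oblique corner contributions. If the direct computation becomes unwieldy, I would first localize with a partition of unity, treating interior, edge and corner patches separately. On the corner patches I would use an even reflection across \(x=0,L\), which is admissible because the conditions reduce to pure Neumann conditions there.
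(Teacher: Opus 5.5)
Your proposal is correct, but it takes a genuinely different route from the paper's. The paper never computes anything. It checks three things:
\begin{itemize}
\item the principal symbol satisfies $p_\varepsilon\ge c(\xi^2+\lambda^2\eta^2)$ with $\lambda=\varepsilon^{-1}$;
\item the three boundary operators are multiples of the conormal derivatives, so the Lopatinskii--Shapiro condition holds;
\item $g'(0)=g'(L)=0$ makes every corner a right-angle Neumann--Neumann corner.
\end{itemize}
It then cites parameter-dependent Agmon--Douglis--Nirenberg estimates together with Grisvard's polygonal theory, for both the regularity and the $\varepsilon$-uniform constant.

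You share the corner discussion, but you use the citation only for qualitative regularity at fixed $\varepsilon$. Uniformity comes instead from an explicit Rellich (Miranda--Talenti) identity. This gives a self-contained proof of exactly the point the paper asserts by reference, namely uniformity under an anisotropic scaling in which $\varepsilon$ multiplies only $\partial_Y^2$. It also gives the sharper anisotropic bound on $\varepsilon^{-2}\|\partial_Y^2u\|+\varepsilon^{-1}\|\partial_Y\mathcal Du\|+\|\mathcal D^2u\|$. The paper's route is shorter and purely structural, but it rests entirely on the citation.

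Your computation is in fact cleaner than you expect. The interior commutator terms cancel exactly, as they must, since $(\varepsilon g)^{-1}\partial_Y\mathcal Du$ is the pull-back of $\partial_x\partial_y\widehat u$. The two $Y=1$ contributions also combine:
\begin{itemize}
\item the $Y$-integration by parts gives $\int_\Omega g^{-1}\partial_Yu\,\mathcal D^2u$;
\item the $\mathcal D$-integration by parts gives $\int_\Omega g'g^{-2}\partial_Yu\,\partial_Y\mathcal Du$.
\end{itemize}
Together they reduce to $\int_\Omega g^{-1}\partial_Yu\,\partial_x[\mathcal Du(\cdot,1)]\,dx$. After substituting the conormal condition and integrating by parts in $x$, you get the exact identity
\[
\|\mathscr L_\varepsilon u\|_g^2=\varepsilon^{-4}\|g^{-3/2}\partial_Y^2u\|^2+\|g^{1/2}\mathcal D^2u\|^2+2\varepsilon^{-2}\|g^{-1/2}\partial_Y\mathcal Du\|^2-\int_\Omega g''\,|\mathcal Du(\cdot,1)|^2\,dx .
\]
Here $\|\cdot\|_g$ is the norm of $L^2(Q,g\,dx\,dY)$. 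This is the Miranda--Talenti identity on the physical thin domain pulled back to $Q$; the upper boundary of that domain has curvature of size $\varepsilon g''$. Your trace-plus-energy absorption then closes the estimate exactly as you describe.

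Do keep the companion term from the $\mathcal D$-integration. If you only rewrite $\mathcal D^2u=\partial_x(\mathcal Du)-q\,\partial_Y\mathcal Du$ at $Y=1$, you are left with a trace of $\partial_Y\mathcal Du$, which $H^2$ does not control.

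Two smaller points:
\begin{itemize}
\item The derivation passes through third derivatives. Establish the identity for smooth elements of the strong conormal domain and extend it to $H^2$ elements by approximation.
\item The upper condition reduces to $\partial_Yu=0$ only at the corner points, not near them. That is all the frozen-coefficient corner model needs.
\end{itemize}
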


\begin{proof}
We write
\[
\lambda:=\varepsilon^{-1}\ge 1.
\]
The proof is an application of the standard parameter-dependent
Agmon-Douglis-Nirenberg estimate for scalar second-order elliptic boundary
value problems with conormal boundary conditions, combined with the regularity
theory for compatible conormal problems on polygons. We verify the hypotheses
for the present operator uniformly in \(\varepsilon\).

\emph{Step 1: principal symbol and uniform parameter ellipticity.}
The principal part of \(-\mathscr L_\varepsilon\) has symbol
\[
p_\varepsilon(x,Y;\xi,\eta)
=
(\xi-Yq(x)\eta)^2+\lambda^2g(x)^{-2}\eta^2 .
\]
Since \(q\) is bounded and \(0<g_*\le g\le g^*\), there is a constant
\(c>0\), independent of \(\lambda\ge\lambda_0\), such that
\[
p_\varepsilon(x,Y;\xi,\eta)
\ge
c\bigl(\xi^2+\lambda^2\eta^2\bigr)
\]
for all \((x,Y)\in\overline Q\) and all \((\xi,\eta)\in\mathbb R^2\).
Indeed,
\[
(\xi-Yq\eta)^2
\ge
\frac12\xi^2-C\eta^2,
\]
and hence
\[
p_\varepsilon(x,Y;\xi,\eta)
\ge
\frac12\xi^2+
\bigl(\lambda^2(g^*)^{-2}-C\bigr)\eta^2.
\]
For \(\lambda\) sufficiently large this gives the claimed lower bound. 
Thus the family is uniformly strongly elliptic in the parameter-weighted
anisotropic sense associated with \(\lambda=\varepsilon^{-1}\). This is the
parameter-ellipticity condition used in the conormal estimate below.

\emph{Step 2: boundary operators and the complementing condition.}
The boundary conditions are precisely the conormal boundary conditions
associated with the principal part, up to multiplication by non-vanishing
scalar factors.

On \(Y=0\), the conormal derivative is proportional to
\(\partial_Yu\), and the boundary condition is
\[
\partial_Yu=0.
\]
On \(Y=1\), the conormal derivative associated with the principal symbol is
proportional to
\[
-q\partial_xu+\bigl(q^2+\lambda^2g^{-2}\bigr)\partial_Yu.
\]
Multiplying the boundary condition
\[
\partial_Yu-\varepsilon^2gg'\mathcal D u=0
\]
by \(\lambda^2g^{-2}\) gives
\[
-q\partial_xu+\bigl(q^2+\lambda^2g^{-2}\bigr)\partial_Yu=0,
\]
because \(q=g'/g\). Thus the upper boundary condition is exactly the
corresponding conormal condition.

On the lateral sides \(x=0,L\), the conormal derivative is proportional to
\[
\mathcal D u=\partial_xu-Yq(x)\partial_Yu,
\]
which is exactly the imposed boundary condition. Hence the boundary operators
satisfy the Lopatinskii-Shapiro complementing condition for the uniformly
parameter-elliptic operator \cite{krupchyk2006sharpiro}. The complementing constants are uniform for
\(\lambda\ge\lambda_0\), because the conormal operators are the natural
boundary operators of the uniformly parameter-elliptic principal part.

\emph{Step 3: corner compatibility.}
The domain \(Q\) is a rectangle. At the corners the boundary operators meet
orthogonally. The endpoint condition
\[
g'(0)=g'(L)=0
\]
implies \(q=0\) at \(x=0,L\). Hence the lateral conormal condition reduces at
the corners to the standard condition
\[
\partial_xu=0,
\]
while the horizontal boundary conditions reduce to vertical Neumann-type
conditions. Therefore the corner model is the usual compatible Neumann-Neumann
model on a right angle. The standard polygonal conormal regularity theorem
therefore gives \(H^2\)-regularity up to the corners. No corner singularity
with exponent below \(2\) is generated.

\emph{Step 4: application of the known parameter-elliptic regularity theorem.}
By the parameter-dependent Agmon-Douglis-Nirenberg estimate with conormal
boundary conditions \cite{AgmonDouglisNirenberg1959,AgmonDouglisNirenberg1964}, together with the compatible polygonal regularity result \cite{Grisvard1985},
the variational conormal realization of \(\mathscr L_\varepsilon\) satisfies
\[
D(\mathscr L_\varepsilon)\subset H^2(Q)
\]
and
\[
\|u\|_{H^2(Q)}
\le
C\bigl(
\|\mathscr L_\varepsilon u\|_{L^2(Q)}
+
\|u\|_{L^2(Q)}
\bigr),
\qquad
u\in D(\mathscr L_\varepsilon),
\]
with \(C\) independent of \(0<\varepsilon\le\varepsilon_0\). The conormal
boundary conditions hold in the trace sense because they are the natural
boundary conditions of the variational realization and the solution belongs to
\(H^2(Q)\).
\end{proof}

\subsection*{A.2. A trace interpolation estimate for the projection term}

We next estimate the nonlocal projection term
\[
\mathcal E(R_gu).
\]
The difficulty is that \(R_gu\) contains a tangential derivative of the trace
\(u(\cdot,1)\). Nevertheless, it is of strictly lower order than the full
\(H^2(Q)\)-norm in the following interpolation sense.

\begin{lemma}[Trace interpolation for \(R_g\)]
\label{lem:trace-interpolation-Rg}
For every \(\eta>0\), there exists \(C_\eta>0\) such that
\begin{align}
  \|R_gu\|_{L^2(\Omega)}
\le
\eta\|u\|_{H^2(Q)}
+
C_\eta\|u\|_{L^2(Q)}
\label{eq:Rg-trace-interpolation}  
\end{align}

for all \(u\in H^2(Q)\).
\end{lemma}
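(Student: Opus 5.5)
The plan is to show that $R_g$ maps $H^2(Q)$ into $L^2(\Omega)$ through a chain of strictly subcritical trace and Sobolev spaces, and then to insert an Ehrling/Lions interpolation step. Expanding the definition gives
\[
R_gu=-\frac{g'}{g}\,\partial_x\bigl(u(\cdot,1)\bigr)-\frac{g''}{g}\,u(\cdot,1).
\]
Since $g\in C^3(\overline\Omega)$ and $g\ge g_*>0$, both coefficients $g'/g$ and $g''/g$ are bounded. Hence
\[
\|R_gu\|_{L^2(\Omega)}\le C\,\|u(\cdot,1)\|_{H^1(\Omega)}.
\]
This reduces the problem to estimating the upper trace in $H^1(\Omega)$.

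Next, fix some $s\in(3/2,2)$, for instance $s=7/4$. The trace theorem on the rectangle, applied to the smooth side $\Omega\times\{1\}$, gives
\[
\|u(\cdot,1)\|_{H^1(\Omega)}\le\|u(\cdot,1)\|_{H^{s-1/2}(\Omega)}\le C\|u\|_{H^s(Q)}.
\]
Restricting to one face is harmless: one can either extend $u$ to $\mathbb R^2$ via a Stein extension and then restrict to the line $Y=1$, or localize away from the corners with a cutoff. Combining this with the first step,
\[
\|R_gu\|_{L^2(\Omega)}\le C\|u\|_{H^s(Q)},\qquad s<2.
\]

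The final step is interpolation. For Sobolev spaces on the Lipschitz domain $Q$,
\[
\|u\|_{H^s(Q)}\le C\|u\|_{H^2(Q)}^{s/2}\|u\|_{L^2(Q)}^{1-s/2}.
\]
Young's inequality with exponents $2/s$ and $2/(2-s)$ turns this into
\[
\|u\|_{H^s(Q)}\le\eta'\|u\|_{H^2(Q)}+C_{\eta'}\|u\|_{L^2(Q)}
\]
for every $\eta'>0$. Choosing $\eta'=\eta/C$ yields \eqref{eq:Rg-trace-interpolation}. Alternatively, one could obtain the same bound from Ehrling's lemma, using the compact embedding $H^2(Q)\hookrightarrow H^s(Q)$ and the continuous embedding $H^s(Q)\hookrightarrow L^2(Q)$.

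I expect the main obstacle to be only the justification of the trace and interpolation inequalities on the rectangle, which is not a smooth domain. I would handle this by using a bounded extension operator $H^k(Q)\to H^k(\mathbb R^2)$ for $k=0,2$, which is available for Lipschitz domains. Interpolating this operator shows it is also bounded on $H^s$. Then all the estimates can be carried out on $\mathbb R^2$, where trace and interpolation are standard via the Fourier transform, and the results are restricted back to $Q$. No boundary conditions and no dependence on $\varepsilon$ enter at any point, so the constants $C_\eta$ depend only on $g$ and $Q$.
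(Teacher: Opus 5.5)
Your proposal is correct and takes essentially the same route as the paper. You expand $R_gu$ into a trace term and a tangential-derivative trace term, bound both by $\|u\|_{H^s(Q)}$ for some $s\in(3/2,2)$ via the trace theorem, and conclude by interpolating between $L^2(Q)$ and $H^2(Q)$ and applying Young's inequality. Your explicit handling of the trace on the single flat side $Y=1$ through an extension operator is a careful refinement that the paper leaves implicit.
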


\begin{proof}
Since
\[
R_gu
=
-\frac1g\partial_x\bigl(g'u(\cdot,1)\bigr),
\]
we can write
\[
R_gu
=
-\frac{g''}{g}u(\cdot,1)
-
\frac{g'}g\partial_xu(\cdot,1).
\]
Because \(g\in C^3(\overline\Omega)\) and \(g\ge g_*>0\), it is enough to
control
\[
\|u(\cdot,1)\|_{L^2(\Omega)}
+
\|\partial_xu(\cdot,1)\|_{L^2(\Omega)}.
\]

Choose any
\[
s\in\left(\frac32,2\right),
\]
for example \(s=7/4\). The trace theorem gives
\[
u(\cdot,1)\in H^{s-1/2}(\Omega),
\]
and, since \(\partial_xu\in H^{s-1}(Q)\),
\[
\partial_xu(\cdot,1)\in H^{s-3/2}(\Omega).
\]
Because \(s>3/2\), we have \(H^{s-3/2}(\Omega)\hookrightarrow L^2(\Omega)\).
Therefore
\[
\|R_gu\|_{L^2(\Omega)}
\le
C\|u\|_{H^s(Q)}.
\]
By interpolation between \(L^2(Q)\) and \(H^2(Q)\),
\[
\|u\|_{H^s(Q)}
\le
C
\|u\|_{H^2(Q)}^{s/2}
\|u\|_{L^2(Q)}^{1-s/2}.
\]
Young's inequality gives, for every \(\eta>0\),
\[
\|u\|_{H^s(Q)}
\le
\eta\|u\|_{H^2(Q)}
+
C_\eta\|u\|_{L^2(Q)}.
\]
Combining the preceding estimates proves
\eqref{eq:Rg-trace-interpolation}.
\end{proof}

Since \(\mathcal E(R_gu)\) is independent of \(Y\) and the fibre interval has
length \(1\), we have
\[
\|\mathcal E(R_gu)\|_{L^2(Q)}
=
\|R_gu\|_{L^2(\Omega)}.
\label{eq:E-Rg-norm}
\]

\subsection*{A.3. Proof of the projected estimate}

We can now prove the desired estimate.

\begin{proposition}[Uniform \(H^2\)-regularity of the projected fast operator]
\label{prop:uniform-H2-projected-fast-operator}
Let \(L_\varepsilon:=\varepsilon^{-2}A_\varepsilon\) be the operator on
\(\mathcal X=\ker M\) induced by the variational form from \Cref{subsec:thin-domain-application}.
Assume
\[
g\in C^3([0,L]),\qquad 0<g_*\le g\le g^*,
\qquad g'(0)=g'(L)=0.
\]
Then there exist \(C>0\) and \(\varepsilon_0>0\) such that, for every
\(0<\varepsilon\le\varepsilon_0\),
\[
D(L_\varepsilon)\subset H^2(Q)\cap\mathcal X.
\]
Moreover, every \(u\in D(L_\varepsilon)\) satisfies the homogeneous conormal
boundary conditions
\[
\partial_Yu=0 \quad\text{on }Y=0,
\]
\[
\partial_Yu-\varepsilon^2gg'\mathcal D u=0
\quad\text{on }Y=1,
\]
and
\[
\mathcal D u=0
\quad\text{on }\partial\Omega\times(0,1)
\]
in the trace sense. Finally,
\[
\|u\|_{H^2(Q)}
\le
C\bigl(
\|L_\varepsilon u\|_{\mathcal X}
+
\|u\|_{\mathcal X}
\bigr),
\qquad u\in D(L_\varepsilon),
\]
with \(C\) independent of \(0<\varepsilon\le\varepsilon_0\).
\end{proposition}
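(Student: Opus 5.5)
The plan is to reduce the projected operator \(L_\varepsilon\) to the local conormal operator \(\mathscr L_\varepsilon\) of Theorem~\ref{thm:uniform-local-conormal-regularity}, treating the nonlocal term \(\mathcal E(R_gu)\) as a perturbation controlled by Lemma~\ref{lem:trace-interpolation-Rg}. The argument has three steps: identify the domain, transfer the regularity, and absorb the projection term.

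\emph{Step 1: relating the two variational problems.} Let \(u\in D(L_\varepsilon)\) and set \(h:=L_\varepsilon u\in\mathcal X\). By definition,
\[
\varepsilon^{-2}\mathfrak a_\varepsilon(u,\phi)=-(h,\phi)_g \qquad\text{for all }\phi\in\mathcal V=H^1(Q)\cap\ker M .
\]
Let \(\ell\) be the local form on all of \(H^1(Q)\) whose operator is \(\mathscr L_\varepsilon\). It is the same expression as \(\varepsilon^{-2}\mathfrak a_\varepsilon\), but without the mean-zero constraint on test functions. A general \(\phi\in H^1(Q)\) splits as \(\phi=(\phi-\mathcal E M\phi)+\mathcal E M\phi\), and the first summand lies in \(\mathcal V\).

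For the \(Y\)-independent part \(\mathcal E\psi\), one computes \(\ell(u,\mathcal E\psi)\) directly. Then \(\partial_Y\mathcal E\psi=0\) and \(\mathcal D\mathcal E\psi=\psi'\), so
\[
\ell(u,\mathcal E\psi)=\int_\Omega\Bigl(\int_0^1\mathcal Du\,dY\Bigr)\psi' g\,dx .
\]
Using \(M(\mathcal Du)=\partial_xMu-\frac{g'}{g}u(\cdot,1)+\frac{g'}{g}Mu\) from the proof of Theorem~\ref{prop:general-splitting}, together with \(Mu=0\), this equals \(-\int_\Omega g' u(\cdot,1)\psi'\,dx\). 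This is exactly the weak form of \(-\int \frac{1}{g}\partial_x(g'u(\cdot,1))\,\psi\, g\,dx=(R_gu,\psi)_g\). No boundary term arises, because \(g'=0\) at \(\partial\Omega\).

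This identity requires the trace \(u(\cdot,1)\) with enough regularity. A priori \(u\in H^1\) gives only an \(H^{1/2}\) trace, so \(R_gu\in H^{-1/2}(\Omega)\). The conclusion is that \(u\) is a variational solution of the local conormal problem with right-hand side \(h+\mathcal E(R_gu)\), where the right-hand side initially lies only in a negative space.

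\emph{Step 2: regularity bootstrap.} If \(R_gu\in L^2\), then Theorem~\ref{thm:uniform-local-conormal-regularity} gives \(u\in H^2(Q)\), with \(\mathscr L_\varepsilon u=L_\varepsilon u+\mathcal E(R_gu)\) and the conormal traces holding.

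To reach \(R_gu\in L^2\) from \(u\in H^1\), I would bootstrap. First apply the local theory in a lower-order form: the right-hand side lies in \(H^{-1/2}\)-type data, which should yield \(u\in H^{3/2-}\), so the trace \(u(\cdot,1)\) lies in \(H^{1-}\) and \(R_gu\in H^{0-}\). One further iteration then gives \(R_gu\in L^2\).

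As a fallback, I would argue by perturbation instead. For small \(\eta\), the operator \(\mathscr L_\varepsilon-\mathcal E R_g\) with the conormal boundary conditions is invertible from \(H^2\)-conormal functions onto \(L^2\) for large spectral shift \(\mu\), using the estimate from Step~3. Its solution then coincides with \(u\) by uniqueness for the coercive form, and this directly identifies \(D(L_\varepsilon)\subset H^2(Q)\cap\mathcal X\).

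\emph{Step 3: the uniform estimate.} For \(u\in D(L_\varepsilon)\), now known to lie in \(H^2\), combine the two earlier results:
\[
\|u\|_{H^2}\le C\bigl(\|L_\varepsilon u\|+\|\mathcal E(R_gu)\|+\|u\|\bigr)\le C\|L_\varepsilon u\|+C\eta\|u\|_{H^2}+C(1+C_\eta)\|u\| .
\]
Choosing \(\eta=1/(2C)\) absorbs the \(H^2\) term, and the constant stays uniform in \(\varepsilon\) because both earlier constants are. The weighted and unweighted \(L^2\) norms are equivalent, so the estimate holds in \(\mathcal X\).

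I expect the main obstacle to be Step~2, that is, showing rigorously that the variational domain of the projected operator lies in \(H^2\). The subtle points are the test-function decomposition across \(\ker M\) and the low regularity of \(R_gu\) at the outset. The perturbative invertibility argument is the cleanest route I would try first.
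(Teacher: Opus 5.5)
Your proposal is correct, but it reaches the result by a genuinely different route. The paper works with strong solutions throughout. It proves the absorbed estimate (your Step~3) on the strong mean-zero conormal domain, uniformly along the homotopy $g_\tau=1+\tau(g-1)$; there $M\mathscr L_{\varepsilon,\tau}u=R_\tau u$ keeps $L_{\varepsilon,\tau}$ inside $\ker M$. It then obtains surjectivity of $I-L_{\varepsilon,1}$ for smooth data by the method of continuity, starting from the flat case. Finally it identifies the strong solution with the variational resolvent and reaches general $u\in D(L_\varepsilon)$ by density.

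You instead start from the variational domain. The splitting $\phi=(\phi-\mathcal EM\phi)+\mathcal EM\phi$ shows that every $u\in D(L_\varepsilon)$ already solves the \emph{local} conormal problem with datum $L_\varepsilon u+\mathcal E(R_gu)$. What remains is a regularity upgrade, done most cleanly by your Neumann series. For $\mu\ge1$, Theorem~\ref{thm:uniform-local-conormal-regularity} and self-adjointness in $(\cdot,\cdot)_g$ give $\|(\mu-\mathscr L_\varepsilon)^{-1}\|_{\mathcal B(L^2,H^2)}\le C$. Lemma~\ref{lem:trace-interpolation-Rg} then gives $\|\mathcal ER_g(\mu-\mathscr L_\varepsilon)^{-1}\|\le C\eta+C_\eta/\mu<1$.

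Your route avoids the homotopy, the identification of the $\tau$-dependent strong domains, and the density step. It also makes explicit that $D(L_\varepsilon)\subset H^2(Q)$ need only hold qualitatively, since uniformity in $\varepsilon$ comes from the final absorption. The paper's route, in exchange, never handles $R_gu$ at $H^1$ regularity and never leaves $\ker M$.

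Two details need fixing.
\begin{itemize}
\item The solution $\tilde u$ of the unconstrained problem is not a priori mean-zero, so uniqueness for the form of $A_\varepsilon$ on $\mathcal V$ does not apply directly. You can identify $\tilde u$ with $u$ in either of two ways.
\begin{itemize}
\item Use coercivity on all of $H^1(Q)$ of $\mu(d,d)_g+\varepsilon^{-2}\mathfrak a_\varepsilon(d,d)+\int_\Omega g'\,d(\cdot,1)\,\partial_xMd\,dx$. This holds for small $\varepsilon$ and large $\mu$, using $\partial_xMd=M\mathcal Dd+q\,(d(\cdot,1)-Md)$ and the one-dimensional trace bound $\|d(\cdot,1)\|_{L^2(\Omega)}\le C(\|d\|+\|\partial_Yd\|)$.
\item Alternatively, note that $v=M\tilde u$ solves $\mu v-\frac1g(gv')'-\frac1g(g'v)'=0$ with $v'=0$ on $\partial\Omega$, hence $v=0$ for large $\mu$.
\end{itemize}
\item In Step~1 there is a sign slip: $-\int_\Omega g'u(\cdot,1)\psi'\,dx=-(R_gu,\psi)_g$, not $+(R_gu,\psi)_g$. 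The correct sign is the one that produces your stated datum $h+\mathcal E(R_gu)$, consistent with $L_\varepsilon=\mathscr L_\varepsilon-\mathcal ER_g$.
\end{itemize}
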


\begin{proof}
We divide the proof into three steps. The only delicate point is the
strong solvability of the shifted projected resolvent. In particular, the
homotopy used below is chosen so that the operator maps the mean-zero space
\(\mathcal X=\ker M\) into itself for every value of the homotopy parameter.

\emph{Step 1: a uniform strong a priori estimate.}
We first prove the estimate on the strong conormal domain. We actually prove
it uniformly along a homotopy of the geometry.

For \(\tau\in[0,1]\), set
\[
g_\tau(x):=1+\tau(g(x)-1).
\]
Then there are constants \(0<c_g\le C_g<\infty\), independent of \(\tau\), such that
\[
c_g\le g_\tau(x)\le C_g,\qquad
\|g_\tau\|_{C^3([0,L])}\le C_g,\qquad
g_\tau'(0)=g_\tau'(L)=0 .
\] 
Define
\[
q_\tau:=\frac{g_\tau'}{g_\tau},
\qquad
\mathcal D_\tau:=\partial_x-Yq_\tau(x)\partial_Y,
\]
and
\[
\mathscr L_{\varepsilon,\tau}u
:=
\frac1{\varepsilon^2g_\tau(x)^2}\partial_Y^2u
+
\mathcal D_\tau^2u.
\]
The corresponding homogeneous conormal boundary conditions are
\[
\partial_Yu=0
\qquad\text{on }Y=0,
\]
\[
\partial_Yu-\varepsilon^2g_\tau g_\tau'\mathcal D_\tau u=0
\qquad\text{on }Y=1,
\]
and
\[
\mathcal D_\tau u=0
\qquad\text{on }\partial\Omega\times(0,1).
\]
Finally, define
\[
R_\tau u:=
-\frac1{g_\tau}\partial_x\bigl(g_\tau' u(\cdot,1)\bigr),
\qquad
L_{\varepsilon,\tau}u
:=
\mathscr L_{\varepsilon,\tau}u-\mathcal E(R_\tau u).
\]

Let
\[
u\in H^2(Q)\cap\mathcal X
\]
satisfy the above homogeneous \(g_\tau\)-conormal boundary conditions. The
averaging computation from the fast-slow splitting, applied with \(g\)
replaced by \(g_\tau\), gives
\[
M\mathscr L_{\varepsilon,\tau}u=R_\tau u.
\]
Hence
\[
M L_{\varepsilon,\tau}u
=
M\mathscr L_{\varepsilon,\tau}u-R_\tau u
=
0.
\]
Thus \(L_{\varepsilon,\tau}\) maps the strong mean-zero conormal domain into
\(\mathcal X\) for every \(\tau\in[0,1]\).

The uniform parameter-elliptic conormal estimate from
\Cref{thm:uniform-local-conormal-regularity}, applied to the uniformly
bounded family \(g_\tau\), yields
\[
\|u\|_{H^2(Q)}
\le
C\bigl(
\|\mathscr L_{\varepsilon,\tau}u\|_{L^2(Q)}
+
\|u\|_{L^2(Q)}
\bigr),
\]
with \(C\) independent of \(0<\varepsilon\le\varepsilon_0\) and
\(\tau\in[0,1]\). Since
\[
\mathscr L_{\varepsilon,\tau}u
=
L_{\varepsilon,\tau}u+\mathcal E(R_\tau u),
\]
we obtain
\[
\|u\|_{H^2(Q)}
\le
C\bigl(
\|L_{\varepsilon,\tau}u\|_{L^2(Q)}
+
\|\mathcal E(R_\tau u)\|_{L^2(Q)}
+
\|u\|_{L^2(Q)}
\bigr).
\]
The trace interpolation estimate of Lemma~\ref{lem:trace-interpolation-Rg}
holds uniformly with \(R_g\) replaced by \(R_\tau\), because the family
\(g_\tau\) is uniformly bounded in \(C^3\) and uniformly bounded away from
zero. Hence, for every \(\eta>0\),
\[
\|\mathcal E(R_\tau u)\|_{L^2(Q)}
\le
\eta\|u\|_{H^2(Q)}
+
C_\eta\|u\|_{L^2(Q)},
\]
with \(C_\eta\) independent of \(\tau\) and \(\varepsilon\). Choosing
\(\eta>0\) sufficiently small and absorbing the
\(\eta\|u\|_{H^2(Q)}\)-term into the left-hand side gives
\[
\|u\|_{H^2(Q)}
\le
C\bigl(
\|L_{\varepsilon,\tau}u\|_{L^2(Q)}
+
\|u\|_{L^2(Q)}
\bigr).
\]
Since all weights \(g_\tau\) are uniformly equivalent to \(1\), this is
equivalently
\begin{align}
\|u\|_{H^2(Q)}
\le
C\bigl(
\|L_{\varepsilon,\tau}u\|_{\mathcal X}
+
\|u\|_{\mathcal X}
\bigr),
\label{eq:uniform-projected-estimate-along-path}
\end{align}
where \(C\) is independent of \(0<\varepsilon\le\varepsilon_0\) and
\(\tau\in[0,1]\).

We shall also use the corresponding resolvent estimate. For smooth \(u\) in
the \(g_\tau\)-conormal mean-zero domain, integration by parts gives
\[
\bigl(-L_{\varepsilon,\tau}u,u\bigr)_{\mathcal X_{g_\tau}}
=
\varepsilon^{-2}\mathfrak a_{\varepsilon,\tau}(u,u)
\ge
0,
\]
where
\[
\mathfrak a_{\varepsilon,\tau}(u,u)
=
\int_Q
\frac1{g_\tau(x)^2}|\partial_Yu|^2 g_\tau(x)\,dx\,dY
+
\varepsilon^2
\int_Q
|\mathcal D_\tau u|^2 g_\tau(x)\,dx\,dY .
\]
Consequently,
\[
\|u\|_{\mathcal X}
\le
C\|(I-L_{\varepsilon,\tau})u\|_{\mathcal X}.
\]
Combining this with \eqref{eq:uniform-projected-estimate-along-path} and
\(L_{\varepsilon,\tau}u=u-(I-L_{\varepsilon,\tau})u\), we obtain
\begin{align}
\|u\|_{H^2(Q)}
\le
C\|(I-L_{\varepsilon,\tau})u\|_{\mathcal X},
\label{eq:uniform-resolvent-strong-estimate}
\end{align}
again uniformly in \(0<\varepsilon\le\varepsilon_0\) and \(\tau\in[0,1]\).

\emph{Step 2: strong solvability of the shifted projected resolvent for
smooth data.}
Let
\[
h\in C^\infty(\overline Q)\cap\mathcal X .
\]
We claim that the target resolvent problem
\begin{align}
(I-L_{\varepsilon,1})u=h
\label{eq:target-projected-resolvent}
\end{align}
has a unique solution
\[
u\in H^2(Q)\cap\mathcal X
\]
satisfying the homogeneous \(g\)-conormal boundary conditions.

For each \(\tau\in[0,1]\), let \(\mathfrak D_{\varepsilon,\tau}^{\rm str}\)
denote the strong mean-zero conormal domain
\[
\mathfrak D_{\varepsilon,\tau}^{\rm str}
:=
\left\{
u\in H^2(Q)\cap\mathcal X:
\begin{array}{ll}
\partial_Yu=0, & Y=0,\\[1mm]
\partial_Yu-\varepsilon^2g_\tau g_\tau'\mathcal D_\tau u=0, & Y=1,\\[1mm]
\mathcal D_\tau u=0, & \partial\Omega\times(0,1)
\end{array}
\right\}.
\]
Define
\[
T_{\varepsilon,\tau}
:
\mathfrak D_{\varepsilon,\tau}^{\rm str}\to\mathcal X,
\qquad
T_{\varepsilon,\tau}u:=(I-L_{\varepsilon,\tau})u .
\]
The domains \(\mathfrak D_{\varepsilon,\tau}^{\rm str}\) vary with \(\tau\).
To apply the method of continuity, we identify them with a fixed Banach space
as follows. Let
\[
\mathcal T_{\varepsilon,\tau}u
:=
\left(
Mu,\,
\partial_Yu|_{Y=0},\,
(\partial_Yu-\varepsilon^2g_\tau g_\tau'\mathcal D_\tau u)|_{Y=1},\,
\mathcal D_\tau u|_{\partial\Omega\times(0,1)}
\right).
\]
Let \(\mathfrak T\) denote the compatible trace space consisting of the
prescribed fibre mean and the four Neumann-type boundary traces:
\[
\mathfrak T
:=
\mathbb R_M
\times H^{1/2}(\Omega)
\times H^{1/2}(\Omega)
\times H^{1/2}((0,1))^2,
\]
with the usual corner compatibility conditions appropriate for normal
derivatives of \(H^2(Q)\)-functions. At \(\varepsilon=0\), the trace map
\[
\mathcal T_0\xi
=
\left(
M\xi,\,
\partial_Y\xi|_{Y=0},\,
\partial_Y\xi|_{Y=1},\,
\partial_x\xi|_{\partial\Omega\times(0,1)}
\right)
\]
is the standard Neumann trace map on the rectangle, together with the
prescribed mean. By the standard trace-extension theorem for compatible
Neumann data on polygons, \(\mathcal T_0\) admits a bounded right inverse
\[
\mathcal R_0:\mathfrak T\to H^2(Q).
\]
We now compare \(\mathcal T_\varepsilon\) with \(\mathcal T_0\). On the upper
boundary,
\[
\mathcal B_\varepsilon\xi-\partial_Y\xi
=
-\varepsilon^2gg'\mathcal D\xi
=
-\varepsilon^2gg'\partial_x\xi
+\varepsilon^2Y(g')^2\partial_Y\xi .
\]
Hence
\[
\|\mathcal B_\varepsilon\xi-\partial_Y\xi\|_{H^{1/2}(\Omega)}
\le C\varepsilon^2\|\xi\|_{H^2(Q)}.
\]
On the lateral boundary the assumption \(g'(0)=g'(L)=0\) implies
\[
\mathcal D\xi=\partial_x\xi
\qquad\text{on }\partial\Omega\times(0,1).
\]
Therefore
\[
\|\mathcal T_\varepsilon-\mathcal T_0\|_{\mathcal L(H^2(Q),\mathfrak T)}
\le C\varepsilon^2 .
\]
It follows that
\[
\mathcal T_\varepsilon\mathcal R_0
=
I_{\mathfrak T}
+
(\mathcal T_\varepsilon-\mathcal T_0)\mathcal R_0
\]
is invertible on \(\mathfrak T\) for all sufficiently small
\(\varepsilon>0\). We define
\[
\mathcal R_\varepsilon
:=
\mathcal R_0
(\mathcal T_\varepsilon\mathcal R_0)^{-1}.
\]
Then
\[
\mathcal T_\varepsilon\mathcal R_\varepsilon=I_{\mathfrak T},
\]
and the Neumann-series estimate gives
\[
\|\mathcal R_\varepsilon\|_{\mathcal L(\mathfrak T,H^2(Q))}
\le C
\]
with \(C\) independent of sufficiently small \(\varepsilon\).

Therefore, the maps \(\mathcal T_{\varepsilon,\tau}\)
form a continuous family of trace maps and admit uniformly bounded right
inverses on the compatible trace spaces. 
Hence the kernels
\[
\ker\mathcal T_{\varepsilon,\tau}
=
\mathfrak D_{\varepsilon,\tau}^{\rm str}
\]
can be identified with a fixed closed subspace of \(H^2(Q)\) by uniformly
bounded isomorphisms. Under this identification $T_{\varepsilon,\tau}$
is a continuous family of bounded operators from the fixed strong domain into
\(\mc X\).

Let
\[
\mathcal I
:=
\{\tau\in[0,1]:T_{\varepsilon,\tau}:
\mathfrak D_{\varepsilon,\tau}^{\rm str}\to\mc X
\text{ is an isomorphism}\}.
\]
The uniform resolvent estimate
\[
\|u\|_{H^2(Q)}
\le C\|T_{\varepsilon,\tau}u\|_{\mc X}
\]
shows that \(T_{\varepsilon,\tau}\) is injective and has closed range, uniformly
in \(\tau\). The continuity of \(T_{\varepsilon,\tau}\) under the above
identification implies that \(\mathcal I\) is open. The same uniform estimate
also implies that \(\mathcal I\) is closed: if \(\tau_n\in\mathcal I\) and
\(\tau_n\to\tau\), then the solutions
\[
T_{\varepsilon,\tau_n}u_n=h
\]
are uniformly bounded in \(H^2(Q)\), and compactness/continuity gives a limit
\(u\in\mathfrak D_{\varepsilon,\tau}^{\rm str}\) satisfying
\(T_{\varepsilon,\tau}u=h\).

At \(\tau=0\), the operator is the shifted Neumann product operator on the
rectangle restricted to the mean-zero space, and separation of variables shows
that \(T_{\varepsilon,0}\) is an isomorphism. Hence \(\mathcal I=[0,1]\).
In particular \(T_{\varepsilon,1}\) is onto \(\mc X\), and
\eqref{eq:target-projected-resolvent} has a unique strong solution in
\(H^2(Q)\cap\mc X\) satisfying the target homogeneous conormal boundary
conditions.

It remains to identify this strong solution with the variational resolvent
solution of the operator \(L_\varepsilon=L_{\varepsilon,1}\). Let
\[
\mathcal V:=H^1(Q)\cap\mathcal X .
\]
For the strong solution, integration by parts using the homogeneous conormal
boundary conditions gives, for all \(\phi\in\mathcal V\),
\[
\bigl((I-L_\varepsilon)u,\phi\bigr)_{\mathcal X_g}
=
(u,\phi)_{\mathcal X_g}
+
\varepsilon^{-2}\mathfrak a_\varepsilon(u,\phi).
\]
Thus \(u\) solves the same variational resolvent problem as the operator
\(L_\varepsilon=\varepsilon^{-2}A_\varepsilon\). Since \(I-L_\varepsilon\) is
injective on \(\mathcal X\), the strong solution and the variational
resolvent solution coincide.

\emph{Step 3: passage from smooth data to arbitrary domain elements.}
Let now
\[
u\in D(L_\varepsilon),
\qquad
h:=(I-L_\varepsilon)u\in\mathcal X.
\]
Choose a sequence
\[
h_n\in C^\infty(\overline Q)\cap\mathcal X
\]
such that
\[
h_n\to h
\qquad\text{in }\mathcal X.
\]
Let \(u_n\) be the strong solution constructed in Step 2:
\[
(I-L_\varepsilon)u_n=h_n.
\]
By the identification with the variational resolvent,
\[
u_n=(I-L_\varepsilon)^{-1}h_n.
\]
Hence
\[
u_n\to (I-L_\varepsilon)^{-1}h=u
\qquad\text{in }\mathcal X.
\]

Applying the strong estimate from Step 1, with \(\tau=1\), to
\(u_n-u_m\), we obtain
\[
\|u_n-u_m\|_{H^2(Q)}
\le
C\bigl(
\|L_\varepsilon(u_n-u_m)\|_{\mathcal X}
+
\|u_n-u_m\|_{\mathcal X}
\bigr).
\]
Since
\[
(I-L_\varepsilon)(u_n-u_m)=h_n-h_m,
\]
and \(L_\varepsilon\) is self-adjoint and non-positive on \(\mathcal X_g\),
the resolvent bound gives
\[
\|u_n-u_m\|_{\mathcal X}
+
\|L_\varepsilon(u_n-u_m)\|_{\mathcal X}
\le
C\|h_n-h_m\|_{\mathcal X}.
\]
Therefore
\[
\|u_n-u_m\|_{H^2(Q)}
\le
C\|h_n-h_m\|_{\mathcal X}.
\]
Thus \((u_n)\) is Cauchy in \(H^2(Q)\). Let
\[
\widetilde u:=\lim_{n\to\infty}u_n
\qquad\text{in }H^2(Q).
\]
Since \(u_n\to u\) in \(\mathcal X\), we have \(\widetilde u=u\). Hence
\[
u\in H^2(Q)\cap\mathcal X.
\]
The homogeneous conormal boundary conditions pass to the limit by continuity
of the trace maps from \(H^2(Q)\). Therefore every \(u\in D(L_\varepsilon)\)
belongs to the strong conormal domain.

Finally, applying \eqref{eq:uniform-projected-estimate-along-path} with
\(\tau=1\) gives
\[
\|u\|_{H^2(Q)}
\le
C\bigl(
\|L_\varepsilon u\|_{\mathcal X}
+
\|u\|_{\mathcal X}
\bigr),
\]
with \(C\) independent of \(0<\varepsilon\le\varepsilon_0\). This proves the
claim.
\end{proof}

\begin{remark}[Consistency check: flat case]
If \(g\) is constant, then \(q=0\), \(\mathcal D=\partial_x\), and
\(R_g=0\). The estimate reduces to the elementary bound for
\[
\mathscr L_\varepsilon
=
\varepsilon^{-2}g^{-2}\partial_Y^2+\partial_x^2
\]
with homogeneous Neumann boundary conditions and the mean-zero constraint in
\(Y\). Expanding in the product Neumann basis gives eigenvalues
\[
-\varepsilon^{-2}g^{-2}(k\pi)^2-\left(\frac{n\pi}{L}\right)^2,
\qquad k\ge1,\ n\ge0,
\]
and the graph norm of \(\mathscr L_\varepsilon\) uniformly controls the
standard \(H^2(Q)\)-norm. Thus the abstract estimate agrees with the explicit
separation-of-variables computation.
\end{remark}

\subsection*{A.4. Finite-order conormal trace liftings}
\phantomsection\label{app:finite-order-conormal-liftings}

The finite-order theorem in \Cref{subsec:thin-domain-approximation-theorem}
uses conormal trace right inverses in the anisotropic coefficient spaces
\(\mathcal W_{N,j}\). We record this input separately, so that the boundary
correction in \Cref{lem:form-expansion-conormal-hierarchy} rests on the same
compatible conormal trace framework as the projected graph-norm estimate above.

\begin{proposition}[Finite-order conormal trace lifting]
\label{prop:finite-order-conormal-trace-lifting}
Let \(N\in\mathbb N\), set \(s_N:=2N+6\), and let \(0\le j\le N\). Assume
\[
g\in C^{s_N+2}([0,L]),
\qquad
0<g_*\le g\le g^*,
\qquad
 g'(0)=g'(L)=0.
\]
Let \(\beta\in H^{s_N-2j}(\Omega)\) satisfy the usual endpoint compatibility
conditions for the conormal trace problem with homogeneous lateral conormal
data. Then there exists a bounded linear lifting
\[
\mathscr B_j:H^{s_N-2j}(\Omega)\to \mathcal W_{N,j}
\]
such that
\[
M\mathscr B_j\beta=0,
\]
\[
\partial_Y\mathscr B_j\beta=0
\qquad\text{on }Y=0,
\]
\[
\partial_Y\mathscr B_j\beta=\beta
\qquad\text{on }Y=1,
\]
and
\[
\mathcal D\mathscr B_j\beta=0
\qquad\text{on }\partial\Omega\times(0,1).
\]
Moreover,
\[
\|\mathscr B_j\beta\|_{\mathcal W_{N,j}}
\le
C\|\beta\|_{H^{s_N-2j}(\Omega)},
\]
where \(C\) depends on \(N\), \(g_*\), \(g^*\), and
\(\|g\|_{C^{s_N+2}}\), but not on \(\varepsilon\).
\end{proposition}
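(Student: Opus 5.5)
The plan is to write down an explicit separated-variables lifting and then correct its lateral trace. Fix a profile $\chi\in C^\infty([0,1])$ with
\[
\chi'(0)=0,\qquad \chi'(1)=1,\qquad \int_0^1\chi(Y)\,dY=0 ,
\]
for example $\chi(Y)=\tfrac12Y^2-\tfrac16$. As a first candidate, set
\[
\mathscr B_j^{(0)}\beta(x,Y):=\beta(x)\chi(Y).
\]
This candidate has zero fibre mean, $\partial_Y\mathscr B_j^{(0)}\beta=0$ on $Y=0$, and $\partial_Y\mathscr B_j^{(0)}\beta=\beta$ on $Y=1$. Since the $Y$-dependence is a fixed smooth function, it lies in $H^{s_N-2j}(\Omega;H^2(0,1))$ with norm bounded by $C\|\beta\|_{H^{s_N-2j}}$. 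Because $M$ acts only in $Y$, it also lies in $\mathcal X$, so it belongs to $\mathcal W_{N,j}$ with the asserted bound.

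The remaining condition is $\mathcal D\mathscr B_j\beta=0$ on $\partial\Omega\times(0,1)$. By the endpoint hypothesis $g'(0)=g'(L)=0$ we have $q=0$ at $x=0,L$, so there $\mathcal D=\partial_x$. The lateral condition therefore reduces to $\beta'(0)\chi(Y)=0$ and $\beta'(L)\chi(Y)=0$ for all $Y$. I would read the ``usual endpoint compatibility conditions'' as exactly $\beta'(0)=\beta'(L)=0$; these are the conditions inherited at the corners from $\partial_Y u=\beta$ and $\partial_x u=0$ meeting at $(0,1)$ and $(L,1)$. If those conditions hold, the lifting $\mathscr B_j\beta:=\beta\chi$ is already admissible and the proof is complete. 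This step uses no elliptic theory, and the constant depends only on $\chi$, not on $\varepsilon$ or $g$.

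If the compatibility class is taken to be larger, for instance if $\beta$ is only required to be in $H^{s_N-2j}$, I would modify $\beta$ near the endpoints. Choose cutoffs $\psi_0,\psi_L\in C^\infty_c$ supported near $x=0$ and $x=L$, with $\psi_0'(0)=1$, $\psi_0'(L)=0$, $\psi_L'(L)=1$ and $\psi_L'(0)=0$. Add the term
\[
-\bigl(\beta'(0)\psi_0(x)+\beta'(L)\psi_L(x)\bigr)\tilde\chi(Y),
\]
where $\tilde\chi$ has zero mean and satisfies $\tilde\chi'(0)=\tilde\chi'(1)=0$, for instance $\tilde\chi(Y)=\cos(\pi Y)$. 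This correction does not change either $Y$-trace. However, it cannot cancel $\beta'(0)\chi(Y)$ identically in $Y$, because $\chi$ and $\tilde\chi$ are independent profiles. This is why the compatibility condition $\beta'(0)=\beta'(L)=0$ is genuinely needed.

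The main obstacle is therefore not analytic. It is pinning down precisely which compatibility conditions are intended so that the construction is linear and bounded on a closed subspace. I would state the result with $\beta'$ vanishing at the endpoints, and note that this holds automatically for the data $\beta_N^\varepsilon=\varepsilon^{2(N+1)}gg'\mathcal Dp_N(\cdot,1)$ used in \Cref{lem:form-expansion-conormal-hierarchy}. To check this, differentiate $gg'\mathcal Dp_N(\cdot,1)$ in $x$ and evaluate at the endpoints. One finds that the derivative reduces to $g(0)g''(0)\mathcal Dp_N(0,1)$, and this vanishes because $\mathcal Dp_N=0$ on the lateral boundary; the same holds at $x=L$.
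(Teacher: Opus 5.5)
Your construction is correct, and it takes a different route from the paper. The paper argues abstractly. It considers the full trace map $\mathcal T_{N,j}u=\bigl(Mu,\partial_Yu|_{Y=0},\partial_Yu|_{Y=1},\mathcal Du|_{\partial\Omega\times(0,1)}\bigr)$ on $\mathcal W_{N,j}$ and uses $g'(0)=g'(L)=0$ to replace $\mathcal D$ by $\partial_x$ laterally. It then invokes a standard trace-extension theorem for compatible Neumann-type data on a rectangle to obtain a bounded right inverse, and applies that inverse to $(0,0,\beta,0)$. You instead write down the tensor-product lifting $\beta(x)\chi(Y)$ with $\chi(Y)=\tfrac12Y^2-\tfrac16$. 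This lifting has zero fibre mean and the correct $Y$-traces for every $\beta$. Because $q$ vanishes at the endpoints, it satisfies $\mathcal D(\beta\chi)=\beta'\chi=0$ on $\partial\Omega\times(0,1)$ exactly when $\beta'(0)=\beta'(L)=0$.

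Your route is self-contained and lands directly in the anisotropic space $H^{s_N-2j}(\Omega;H^2(0,1))$. That is a space where an isotropic trace-extension theorem, as cited in the paper, does not immediately give the right inverse. Your constant is just $\|\chi\|_{H^2}$, so it is independent of $g$, $N$ and $\varepsilon$. You also turn the paper's unspecified ``usual endpoint compatibility conditions'' into the explicit condition $\beta'(0)=\beta'(L)=0$.

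Your corner argument shows this condition is the right one. For elements of $\mathcal W_{N,j}$, $\partial_x\partial_Yu$ is continuous on $\overline Q$, so its value at $(0,1)$ must equal both $\beta'(0)$ and $0$. By contrast, the paragraph about the $\tilde\chi$ correction only shows that one particular modification fails; it does not prove necessity, though necessity is not part of the statement. Your check that $\beta_N^\varepsilon=\varepsilon^{2(N+1)}gg'\mathcal Dp_N(\cdot,1)$ satisfies the compatibility condition is correct, and it supplies a verification the paper leaves implicit.

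What the paper's approach buys is generality. A right inverse for the whole trace map handles arbitrary compatible data on all four boundary pieces at once, in the same framework as the $\varepsilon$-perturbed right inverse $\mathcal R_\varepsilon$ used elsewhere in the paper. For the single datum $(0,0,\beta,0)$ required here, your explicit formula is sufficient.
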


\begin{proof}
This is the higher-order analogue of the trace right-inverse construction used
in the proof of \Cref{prop:uniform-H2-projected-fast-operator}. Consider the
finite-order trace map
\[
\mathcal T_{N,j}u
:=
\left(
Mu,
\partial_Yu|_{Y=0},
\partial_Yu|_{Y=1},
\mathcal Du|_{\partial\Omega\times(0,1)}
\right)
\]
acting on \(\mathcal W_{N,j}\). Since \(s_N-2j\ge6\), all traces which enter
this map are controlled in the classical Sobolev trace sense. The endpoint
condition \(g'(0)=g'(L)=0\) implies that \(\mathcal D=\partial_x\) on the
lateral boundary and at the corners. Hence the compatible trace space is the
standard Neumann-type trace space on the rectangle, up to multiplication by
smooth coefficients bounded in \(C^{s_N+1}\).

The standard trace-extension theorem for compatible conormal data on a rectangle
therefore gives a bounded right inverse for \(\mathcal T_{N,j}\), with constants
controlled by the stated bounds on \(g\). Applying this right inverse to the
compatible data
\[
(0,0,\beta,0)
\]
gives \(\mathscr B_j\beta\). This yields the stated boundary conditions, the
mean-zero constraint, and the asserted estimate.
\end{proof}

\bibliographystyle{siam}
\bibliography{lit}
\end{document}